\theoremstyle{plain}
\newtheorem{thm}{Theorem}[section]
\newtheorem{prop}[thm]{Proposition}
\newtheorem{lem}[thm]{Lemma}
\newtheorem{cor}[thm]{Corollary}
\newtheorem{thmA}{Theorem}
\theoremstyle{definition}
\newtheorem{dfn}[thm]{Definition}
\newtheorem{rem}[thm]{Remark}
\newtheorem{exa}[thm]{Example}
\newcommand{\N}{\mathbb{N}}
\newcommand{\Q}{\mathbb{Q}}
\newcommand{\R}{\mathbb{R}}
\newcommand{\C}{\mathbb{C}}
\newcommand{\OO}{\mathcal{O}}
\newcommand{\can}{\mathrm{can}}
\newcommand{\alg}{\mathrm{alg}}
\newcommand{\loc}{\mathrm{loc}}
\newcommand{\dbar}{{\overline{\partial}}}
\newcommand{\lla}[0]{{\langle\!\hspace{0.04cm} \!\langle}}
\newcommand{\rra}[0]{{\rangle\!\hspace{0.04cm}\!\rangle}}
\DeclareMathOperator{\codim}{codim}
\DeclareMathOperator{\mult}{mult}
\DeclareMathOperator{\Exc}{Exc}
\DeclareMathOperator{\Supp}{Supp}
\DeclareMathOperator{\diam}{diam}
\DeclareMathOperator{\Bs}{Bs}
\DeclareMathOperator{\sB}{\mathbf{B}}
\DeclareMathOperator{\PSH}{\mathrm{PSH}}
\DeclareMathOperator{\ddiv}{div}
\DeclareMathOperator{\Div}{Div}
\renewcommand{\tocsection}[3]{%
  \indentlabel{\@ifnotempty{#2}{\ignorespaces#1 \makebox[1em][r]{#2}.\quad}}#3}
\renewcommand{\tocsubsection}[3]{%
  \indentlabel{\@ifnotempty{#2}{\ignorespaces#1 \indent@subsec@num#2.\quad}}#3}
\def\indent@subsec@num#1{%
  \ifx#1\@secnumber
    \@secnumber
  \else
    \expandafter\indent@subsec@num@aux\expandafter#1%
  \fi
}
\def\indent@subsec@num@aux#1.#2.{\makebox[1em][r]{#1}.#2.}
\begin{document}
\title[Minimal metrics and the Abundance conjecture]{Metrics with minimal singularities and\\ the Abundance conjecture}

	\author{Vladimir Lazi\'c}
	\address{Fachrichtung Mathematik, Campus, Geb\"aude E2.4, Universit\"at des Saarlandes, 66123 Saarbr\"ucken, Germany}
	\email{lazic@math.uni-sb.de}
		
	\dedicatory{To Thomas Peternell on the occasion of his 70th birthday, with admiration}
	\thanks{2020 \emph{Mathematics Subject Classification}: 14E30, 32U40, 32J25.\newline
	\indent \emph{Keywords}: Minimal Model Program, Abundance conjecture, singular metrics, currents with minimal singularities, supercanonical currents}

	\begin{abstract}
	The Abundance conjecture predicts that on a minimal projective klt pair $(X,\Delta)$, the adjoint divisor $K_X+\Delta$ is semiample. When $\chi(X,\OO_X)\neq0$, we give a necessary and sufficient condition for the conjecture to hold in terms of the asymptotic behaviour of multiplier ideals of currents with minimal singularities of small twists of $K_X+\Delta$. Furthermore, we prove fundamental structural properties as well as regularity and weak convergence behaviour of an important class of currents with minimal singularities: the supercanonical currents. The results of the paper indicate strongly that supercanonical currents are central to the completion of the proof of the Abundance conjecture for minimal klt pairs $(X,\Delta)$ with $\chi(X,\OO_X)\neq0$. 
	\end{abstract}

\maketitle

	\begingroup
		\hypersetup{linkcolor=black}
		\setcounter{tocdepth}{2}
			\makeatletter
				\def\l@subsection{\@tocline{2}{0pt}{2.5pc}{5pc}{}}
			\makeatother
		\tableofcontents
	\endgroup

\newpage

\section{Introduction}

The Abundance conjecture is one of the most important open problems in algebraic geometry. It predicts that on a projective klt pair $(X,\Delta)$, if the adjoint divisor $K_X+\Delta$ is nef, then it is semiample; in other words, there exist a fibration $f\colon X\to Z$ and an ample $\R$-divisor $A$ on $Z$ such that $K_X+\Delta\sim_\R f^*A$. The conjecture is classically known for curves and surfaces, whereas for threefolds it was a fantastic achievement obtained in \cite{Miy87,Miy88a,Miy88b,Kaw92,KMM94}. In arbitrary dimension, the conjecture holds for pairs of log general type \cite{Sho85,Kaw85b}, for pairs of numerical dimension $0$ \cite{Nak04}, and for varieties satisfying Miyaoka's equality \cite{IMM24}. 

In dimensions at least $4$, up to now there has only been one general result due to Lazi\'c and Peternell \cite{LP18,LP20b}, and to Gongyo and Matsumura \cite{GM17}: assuming the Minimal Model Program in lower dimensions, the divisor $K_X+\Delta$ is semiample if $\chi(X,\OO_X)\neq0$ and if the pullback of $K_X+\Delta$ to a resolution of $X$ is hermitian semipositive. Very little seems to be known about the Abundance conjecture in higher dimensions when $\chi(X,\OO_X)=0$, unless $X$ is uniruled \cite{LM21}.

The papers \cite{LP18,LP20b} show, more generally, that half of the Abundance conjecture -- the Nonvanishing conjecture -- holds when $\chi(X,\OO_X)\neq0$, if the pullback of $K_X+\Delta$ to a resolution of $X$ has a \emph{singular metric with generalised algebraic singularities}. This class of metrics, discussed in detail in \S\ref{subsec:generalisedanalytic}, is a singular generalisation of hermitian semipositive metrics and is a natural class of metrics from the point of view of the Minimal Model Program. Op.\ cit.\ indicated strongly that understanding this class of metrics is crucial for progress on the Abundance conjecture.

The quest for metrics with generalised algebraic singularities on adjoint divisors $K_X+\Delta$ is the main motivation for this paper.

The best candidates for such metrics are \emph{metrics with minimal singularities}. Singular metrics with minimal singularities on an $\R$-divisor $L$ on a compact K\"ahler manifold induce the smallest norms among all possible positively curved singular metrics on $L$, modulo certain compatibility conditions for singularities of metrics. Such metrics are notoriously difficult to work with as they are usually very transcendental and can be only implicitly described. However, they have some very good properties which we recall in Section \ref{sec:minimalsings}, which distinguish them from other singular metrics on $L$.

In this paper we investigate how metrics with minimal singularities on divisors $K_X+\Delta+\varepsilon A$ behave when $\varepsilon\downarrow0$, where $A$ is an ample divisor on $X$. We prove two main results:
\begin{enumerate}[\normalfont (a)]
\item the Abundance conjecture can be reinterpreted as a statement about good asymptotic behaviour of multiplier ideals of currents with minimal singularities, and
\item supercanonical currents are excellent candidates to prove such good behaviour of multiplier ideals, and thus complete the proof of the Abundance conjecture when $\chi(X,\OO_X)\neq0$.
\end{enumerate}

\smallskip

\paragraph{\textbf{Notation.}}

If $T$ is a closed positive current a compact K\"ahler manifold $X$, we use the notation $\mathcal I(T)_{\min}$ for the multiplier ideal of any closed positive current with minimal singularities in the cohomology class of $T$, see \S\ref{subsec:multiplier} and \S\ref{subsection:comparisonsings}.

\medskip

\noindent{\sc The first main result}

\medskip

Our first main result is that on a minimal klt pair $(X,\Delta)$ with $\chi(X,\OO_X)\neq0$, the Abundance conjecture is equivalent to an approximation property of multiplier ideals of currents with minimal singularities associated to divisors $K_X+\Delta$ and $K_X+\Delta+\frac1m A$ for $m\in\N_{>0}$, where $A$ is an ample divisor on $X$.\footnote{We prove this assuming the Minimal Model Program in lower dimensions. This is a natural and necessary condition in all current work on the Abundance conjecture, considering that we aim to prove it by induction on the dimension.} Roughly speaking, this approximation property says that the multiplier ideals of currents with minimal singularities associated to large multiples of $K_X+\Delta$ and $K_X+\Delta+\frac1m A$ are almost the same when $m\to\infty$.

This statement has two parts, given in Proposition \ref{prop:MMPimpliesapproximation} and Theorem \ref{thm:main1}. The first observation is that this approximation statement of multiplier ideals is a consequence of the Abundance conjecture: this is the content of the following proposition, whose proof is given in Section \ref{sec:goodapproximations}.

\begin{prop}\label{prop:MMPimpliesapproximation}
Let $(X,\Delta)$ be a projective klt pair such that $K_X+\Delta$ is semiample. Let $\pi\colon Y\to X$ be a log resolution of $(X,\Delta)$ and write 
$$K_Y+\Delta_Y\sim_\R\pi^*(K_X+\Delta)+E,$$
where $\Delta_Y$ and $E$ are effective $\R$-divisors without common components. Let $A$ be an ample $\R$-divisor on $Y$. Then there exist an effective divisor $D$ on $Y$ and a sequence of positive integers $\{m_\ell\}_{\ell\in\N_{>0}}$ such that $m_\ell\to\infty$ and
$$\textstyle\mathcal I\big(\ell(K_Y+\Delta_Y+\frac{1}{m_\ell} A)\big)_{\min}\subseteq \mathcal I\big(\ell(K_Y+\Delta_Y)\big)_{\min}\otimes\OO_Y(D)\quad\text{for all }\ell.$$
\end{prop}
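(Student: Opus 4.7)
The plan is to analyze both multiplier ideals via Boucksom's divisorial Zariski decomposition and to exploit the semi-ampleness of $K_X+\Delta$. Semi-ampleness produces a fibration $f\colon X\to Z$ and an ample $\R$-divisor $H$ on $Z$ with $K_X+\Delta\sim_\R f^*H$. Setting $g:=f\circ\pi\colon Y\to Z$ and fixing a K\"ahler form $\omega_H$ representing $H$, the pullback $g^*\omega_H$ is a smooth semipositive representative of $\pi^*(K_X+\Delta)$. Then the explicit closed positive current $T_\ell:=\ell\,g^*\omega_H+\ell[E]$ lies in $\ell(K_Y+\Delta_Y)$ with singularities exactly along $\ell E$, yielding
$$\mathcal I(\ell(K_Y+\Delta_Y))_{\min}\supseteq \mathcal I(\ell E)=\OO_Y(-\lfloor\ell E\rfloor).$$
This lower bound on the RHS is all I will actually need; pinning down equality is not required.

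For the LHS, set $\alpha_{\ell,m}:=\ell(K_Y+\Delta_Y+\frac{1}{m}A)=\ell\pi^*(K_X+\Delta)+\frac{\ell}{m}A+\ell E$, whose first two summands form an ample class. Boucksom's divisorial Zariski decomposition writes $\alpha_{\ell,m}=N(\alpha_{\ell,m})+Z(\alpha_{\ell,m})$ with $N(\alpha_{\ell,m})=\sum_i\nu_i(\ell,m)\,E_i$ supported on $\Supp(E)$: any prime divisor outside $\Supp(E)$ has trivial generic Lelong number, as witnessed by $T_\ell+\frac{\ell}{m}\omega_A$, which is smooth off $E$. The same current also gives $\nu_i(\ell,m)\le\ell a_i$, where $a_i$ is the coefficient of $E_i$ in $E$. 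As $m\to\infty$, the classes $\alpha_{\ell,m}=\alpha_{\ell,\infty}+\frac{\ell}{m}A$ decrease to $\alpha_{\ell,\infty}:=\ell(K_Y+\Delta_Y)$, and the definition of Boucksom's minimal multiplicity as a supremum over ample perturbations gives $\nu_i(\ell,m)\uparrow\nu(\alpha_{\ell,\infty},E_i)$. An explicit calculation of the divisorial Zariski decomposition of $\ell(K_Y+\Delta_Y)$, using $\pi$-exceptionality of $E$ together with the negativity lemma to forbid closed positive currents in this class whose Lelong number along $E_i$ is strictly less than $\ell a_i$, pins down $\nu(\alpha_{\ell,\infty},E_i)=\ell a_i$.

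Set $D:=\sum_iE_i$, the reduced support of $E$, and for each $\ell$ pick $m_\ell\to\infty$ so that $\ell a_i-\nu_i(\ell,m_\ell)<1$ for every $i$; this is possible by the convergence above and a diagonal choice (there are finitely many $E_i$). Then $\lfloor\nu_i(\ell,m_\ell)\rfloor\ge\lfloor\ell a_i\rfloor-1$. Since the minimal current in $\alpha_{\ell,m_\ell}$ has Lelong number $\nu_i(\ell,m_\ell)$ along $E_i$ by definition of Boucksom's multiplicities, the standard SNC bound on multiplier ideals by divisorial Lelong numbers gives
$$\mathcal I(\alpha_{\ell,m_\ell})_{\min}\subseteq\OO_Y\Bigl(-\textstyle\sum_i\lfloor\nu_i(\ell,m_\ell)\rfloor E_i\Bigr)\subseteq\OO_Y\Bigl(-\textstyle\sum_i(\lfloor\ell a_i\rfloor-1)E_i\Bigr)=\OO_Y(-\lfloor\ell E\rfloor)\otimes\OO_Y(D)\subseteq\mathcal I(\ell(K_Y+\Delta_Y))_{\min}\otimes\OO_Y(D),$$
the last inclusion by the first paragraph. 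This is the required statement.

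The crux of the argument is the convergence $\nu_i(\ell,m)\to\ell a_i$ as $m\to\infty$, which reduces to identifying the divisorial Zariski negative part of $\ell(K_Y+\Delta_Y)$ as exactly $\ell E$. In dimension two this is immediate from the negative-definiteness of the intersection matrix of $\pi$-exceptional divisors, but in higher dimensions it requires a careful application of the negativity lemma combined with the semi-ampleness of $\pi^*(K_X+\Delta)$, drawing on the structural and regularity results for currents with minimal singularities recalled in Section~\ref{sec:minimalsings}. A secondary technicality is the passage from Lelong numbers to multiplier ideals for the minimal current, which is straightforward in our SNC setting thanks to $\pi$ being a log resolution, but should be invoked carefully since the minimal current may a priori carry non-divisorial singularities (these can only shrink the multiplier ideal further, hence do no harm to the one-sided inclusion we want).
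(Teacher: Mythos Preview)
Your argument is correct and in fact more direct than the paper's. The paper deduces the proposition from Proposition~\ref{prop:MMPimpliesapproximation0}, whose proof passes to a higher model via Corollary~\ref{cor:samemodel}(b): one runs a $(K_Y+\Delta_Y)$-MMP and further MMPs for the perturbed pairs to show that \emph{all} currents $T_{\varepsilon,\min}$ (including $\varepsilon>0$) have generalised analytic singularities descending to a common resolution, computes the multiplier ideals there exactly via Theorem~\ref{thm:DEL+DP}(b)(c), and then pushes forward. Your approach stays on $Y$ and sidesteps the MMP for the perturbed classes entirely: you never need to know that $T_{\frac1m,\min}$ has generalised analytic singularities, only the one-sided divisorial bound of Theorem~\ref{thm:DEL+DP}(e), together with Lemma~\ref{lem:Tminsigma}(a) (your ``negativity lemma'' step, which pins down $N_\sigma(K_Y+\Delta_Y)=E$) and the elementary lower-semicontinuity $\sigma_{E_i}(K_Y+\Delta_Y+\tfrac1m A)\uparrow\sigma_{E_i}(K_Y+\Delta_Y)$.

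Two small points worth making explicit in your write-up. First, the identification $\nu_i(\ell,m)=\nu(T_{\alpha_{\ell,m},\min},E_i)$ uses that $\alpha_{\ell,m}$ is big (so Boucksom's $\sigma$-function agrees with the Lelong number of a minimal current, as recalled in \S\ref{subsec:nakayama}); this holds since $A$ is ample. Second, the equality $\mathcal I(\ell E)=\OO_Y(-\lfloor\ell E\rfloor)$ requires $E$ to have simple normal crossings support; this is automatic since $E$ is $\pi$-exceptional and $\pi$ is a log resolution, but it is worth saying. The paper's route, being heavier, does buy a two-sided conclusion built into Proposition~\ref{prop:MMPimpliesapproximation0}, but for the proposition as stated the reverse inclusion is free from Lemma~\ref{lem:descendingminimal}, so nothing is lost.
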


To explain the conclusion of this proposition, note that, in its notation, we always have 
$$\textstyle\mathcal I\big(\ell(K_Y+\Delta_Y)\big)_{\min}\subseteq\mathcal I\big(\ell(K_Y+\Delta_Y+\frac{1}{m_\ell} A)\big)_{\min}\quad\text{for all }\ell$$
by Lemma \ref{lem:descendingminimal}. Therefore, Proposition \ref{prop:MMPimpliesapproximation} says that the multiplier ideals $\mathcal I\big(\ell(K_Y+\Delta_Y)\big)_{\min}$ and $\mathcal I\big(\ell(K_Y+\Delta_Y+\frac{1}{m_\ell} A)\big)_{\min}$ are almost equal.

The first main result of the paper is that for pairs $(X,\Delta)$ with $\chi(X,\OO_X)\neq0$ we have the converse to Proposition \ref{prop:MMPimpliesapproximation}.

\begin{thmA}\label{thm:main1}
Assume the existence of good minimal models for projective klt pairs in dimensions at most $n-1$.

Let $(X,\Delta)$ be a projective klt pair of dimension $n$ such that $K_X+\Delta$ is nef and $\Delta$ is a $\Q$-divisor. Let $\pi\colon Y\to X$ be a log resolution of $(X,\Delta)$ and write 
$$K_Y+\Delta_Y\sim_\Q\pi^*(K_X+\Delta)+E,$$
where $\Delta_Y$ and $E$ are effective $\Q$-divisors without common components. Let $A$ be an ample $\R$-divisor on $Y$, and assume that there exist an effective divisor $D$ on $Y$ and a sequence of positive integers $\{m_\ell\}_{\ell\in\N_{>0}}$ such that $m_\ell\to\infty$ and
$$\textstyle\mathcal I\big(\ell(K_Y+\Delta_Y+\frac{1}{m_\ell} A)\big)_{\min}\subseteq \mathcal I\big(\ell(K_Y+\Delta_Y)\big)_{\min}\otimes\OO_Y(D)\quad\text{for all }\ell.$$
If $\kappa(X,K_X+\Delta)\geq0$ or $\chi(X,\OO_X)\neq0$, then $K_X+\Delta$ is semiample.
\end{thmA}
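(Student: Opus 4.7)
The overall strategy is to reduce the desired Abundance statement to constructing a singular metric with generalised algebraic singularities on $\pi^*(K_X+\Delta)$. Once such a metric exists, the Lazi\'c--Peternell theorem \cite{LP18,LP20b} cited in the introduction yields Nonvanishing, i.e.\ $\kappa(X,K_X+\Delta)\ge 0$, under the hypothesis $\chi(X,\OO_X)\neq 0$. Combining Nonvanishing (or the direct assumption $\kappa\geq 0$) with the assumed existence of good minimal models for klt pairs in dimensions $\leq n-1$, one then upgrades to semiampleness by the MMP-theoretic reduction of Abundance to Abundance on the base of the Iitaka fibration via the canonical bundle formula. Thus the substantive content of the theorem is the construction of the metric from the multiplier-ideal hypothesis.

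For this construction, fix for each $\ell$ a current with minimal singularities $T_\ell$ in the big class $\ell(K_Y+\Delta_Y+\frac{1}{m_\ell}A)$, so that $\mathcal I(T_\ell)=\mathcal I(\ell(K_Y+\Delta_Y+\frac{1}{m_\ell}A))_{\min}$. The hypothesis asserts $\mathcal I(T_\ell)\subseteq\mathcal I(\ell(K_Y+\Delta_Y))_{\min}\otimes\OO_Y(D)$ for all $\ell$. I would combine this with effective global generation for multiplier ideals of adjoint type (a Siu-type argument with an auxiliary very ample $H$) to produce, for each $\ell$, an effective $\Q$-divisor $\tfrac{1}{\ell}\Xi_\ell$ whose class is $K_Y+\Delta_Y+O(\tfrac{1}{\ell})+O(\tfrac{1}{m_\ell})$ and whose singularities refine those of $\tfrac{1}{\ell}T_\ell$. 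Passing to a weak limit of $\tfrac{1}{\ell}[\Xi_\ell]$ in the space of closed positive $(1,1)$-currents should then produce a limit current $T_\infty$ in the class $K_Y+\Delta_Y$.

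The principal obstacle is to show that $T_\infty$ has genuinely \emph{generalised algebraic} singularities, as opposed to a merely transcendental psh limit. The fixed divisor $D$ appearing in the hypothesis is precisely the uniform algebraic object that controls the defect between the minimal singularities on $K_Y+\Delta_Y$ and those on its small ample perturbations, and it is what prevents the multiplier ideals from drifting chaotically along the diminished base locus of $K_Y+\Delta_Y$ as $\ell,m_\ell\to\infty$. I expect the detailed structural and regularity results on supercanonical currents, announced elsewhere in the paper, to be the decisive tool that pins down the algebraic character of $T_\infty$. Once $T_\infty$ is identified as a current with generalised algebraic singularities, applying \cite{LP18,LP20b} gives Nonvanishing, and the MMP reduction outlined in the first paragraph concludes semiampleness of $K_X+\Delta$.
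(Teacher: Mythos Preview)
Your high-level plan---construct a current with generalised algebraic singularities in $\{K_Y+\Delta_Y\}$, then invoke \cite{LP18,LP20b} for Nonvanishing---matches the paper's architecture. But the mechanism you propose for the construction is not the one the paper uses, and the gap you yourself flag is real and not closed by the tool you hope will close it.

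The paper does \emph{not} use supercanonical currents anywhere in the proof of Theorem~\ref{thm:main1}; those are the subject of Theorem~\ref{thm:main2} and are logically independent. Nor does it produce effective divisors $\Xi_\ell$ and take weak limits. Instead, the decisive input is birational: since $K_X+\Delta$ is nef, the pair $(Y,\Delta_Y)$ has a minimal model, and one can run a $(K_Y+\Delta_Y)$-MMP with scaling of $A$. This forces, for all sufficiently small $\varepsilon>0$, the currents $T_{\varepsilon,\min}$ to have generalised analytic singularities that \emph{descend to a single fixed resolution}, with divisorial parts supported in a fixed simple normal crossings divisor (Corollary~\ref{cor:samemodel}). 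In the paper's language, the approximation is automatically \emph{excellent}, and Theorem~\ref{thm:valuationscurrents} then shows directly that $T_{0,\min}$ has generalised analytic singularities, by comparing Lelong numbers along divisorial valuations via the multiplier-ideal hypothesis (Lemma~\ref{lem:valuationscurrents}). Rationality of the divisorial part comes from $N_\sigma(\pi^*(K_X+\Delta))$ being a $\Q$-divisor on a minimal model (Lemma~\ref{lem:NsigmaMMP}). Your weak-limit approach, by contrast, has no mechanism to prevent the limit from being transcendental; the bound by $D$ controls multiplier ideals, not the analytic structure of a limiting current.

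Finally, the passage from Nonvanishing to semiampleness is not via a canonical-bundle-formula reduction to the Iitaka base, but via \cite[Theorem~1.5]{GM17}: once $\kappa\geq 0$ and the residual part of the Siu decomposition on a minimal model has all Lelong numbers zero, Gongyo--Matsumura gives semiampleness directly.
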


Part \ref{part:asymptoticapproximations} of the paper is dedicated to the proof of this theorem. It follows immediately from Theorem \ref{thm:main1a}, which proves a much more precise statement.

Theorem \ref{thm:main1} and Proposition \ref{prop:MMPimpliesapproximation} together show that, when $\chi(X,\OO_X)\neq0$, the Abundance conjecture is a statement about the behaviour of multiplier ideals of currents with minimal singularities. (We stress that this does not depend on any particular \emph{choice} of currents with minimal singularities: this gives significant flexibility that we will exploit several times in the paper). This is the first main contribution of this work.

We explain briefly the strategy of the proof of Theorem \ref{thm:main1}. First we introduce and study in detail \emph{asymptotically equisingular approximations}: a sequence of closed almost positive $(1,1)$-currents $\{T_m\}_{m\in\N}$ on a compact K\"ahler manifold $X$ is an asymptotically equisingular approximation of a closed almost positive $(1,1)$-current $T$ on $X$ if there exist an effective divisor $D$ on $X$ and a sequence of positive integers $\{m_\ell\}_{\ell\in\N_{>0}}$ such that $m_\ell\to\infty$ and we have the inclusions of multiplier ideals
$$\mathcal I(\ell T_{m_\ell})\otimes\OO_X({-}D)\subseteq\mathcal I(\ell T)\subseteq \mathcal I(\ell T_{m_\ell})\otimes\OO_X(D)\quad\text{for all }\ell.$$
Note that we do not require that the currents $T_m$ converge weakly to $T$, hence asymptotically equisingular approximations would seem to be too weak for successful applications in practice. We will see, however, that they are perfectly suited to the context of the Minimal Model Program. Stronger forms of approximations appeared in connection to the regularisation techniques of Demailly \cite{Dem92,DPS01,Cao14}, but equisingular approximations considered there do not seem suitable for applications within the Minimal Model Program; they did, however, motivate the definition of asymptotically equisingular approximations, as will be apparent in Sections \ref{sec:equisingular} and \ref{sec:goodapproximations}.

In order to make asymptotically equisingular approximations useful within the context of the Minimal Model Program, we introduce in Section \ref{sec:excellentapproximations} a much stronger version of approximations of currents: \emph{excellent approximations}. We show in Theorem \ref{thm:valuationscurrents} that the existence of excellent approximations of a current with minimal singularities $T$ is equivalent to $T$ having generalised analytic singularities. We combine this information in Section \ref{sec:excellentandMMP} with the techniques from \cite{LP18,LP20b} to deduce certain strong cohomological properties of the sheaves of differential forms. Finally, in Theorem \ref{thm:main1a} we show that in the context of the Minimal Model Program, asymptotically equisingular approximations are always excellent: this allows to prove the Nonvanishing by further application of the methods from \cite{LP18,LP20b}, and then semiampleness follows from the main result of \cite{GM17}.

For completeness we remark here that when $X$ is uniruled in Theorem \ref{thm:main1}, then we know that $\kappa(X,K_X+\Delta)\geq0$ by \cite[Theorem 1.1]{LM21}. Therefore, when it comes to the Nonvanishing conjecture, the main remaining case is the case of non-uniruled varieties.

\medskip

\noindent{\sc The second main result}

\medskip

Theorem \ref{thm:main1} implies that understanding multiplier ideals of currents with minimal singularities, and especially their behaviour under perturbations, is fundamental for the proof of the Abundance conjecture. This is where supercanonical currents enter the picture.

The second goal of the paper is to study in detail a very specific choice of currents with minimal singularities: the \emph{supercanonical currents} introduced by Tsuji in \cite{Tsu07,Tsu11} and investigated in much greater generality and detail by Berman and Demailly \cite{BD12}. The origins of supercanonical currents can be traced back to the work of Narasimhan and Simha \cite{NS68}, where they were examined in the case of ample line bundles. We study supercanonical currents in detail in Section \ref{sec:supercanonical}.

We explain first the main idea behind supercanonical currents. Usually, the existence of at least one current with minimal singularities in a pseudoeffective cohomology class is shown by using a suitable $L^\infty$-condition; this is explained in Section \ref{sec:minimalsings}. This seems, however, not to be suited for use in birational geometry. In contrast, supercanonical currents are defined by an exponential $L^1$-condition, see Section \ref{sec:supercanonical}. On a technical level, this makes them adapted to proofs involving estimates in which one uses H\"older's inequality. Crucially for us, this allows to use techniques of Berman, Demailly and others to show that supercanonical currents can actually be calculated by using only algebraic data: concretely, a supercanonical current of a big line bundle $L$ depends only on the global holomorphic sections of powers of $L$, see Theorem \ref{thm:supercanbig}. A large portion of Part \ref{part:approximations} is dedicated to showing this fundamental fact. We will then be able to prove much better regularity properties of such currents compared to other currents with minimal singularities. This is especially useful in the context of the Minimal Model Program, as we will see in Theorem \ref{thm:main2}.

The paper \cite{BD12} studies supercanonical currents on a projective klt pair $(X,\Delta)$ such that $K_X+\Delta$ is big and proves several of its properties. In this paper we define supercanonical currents on any pseudoeffective line bundle, inspired by the definition in op.\ cit. The definition in Section \ref{sec:supercanonical} is somewhat more transparent than that in \cite{BD12}, and we simplify the construction by viewing it from a slightly different standpoint. This allows to give a streamlined and precise proof of the behaviour of supercanonical currents on big line bundles in Section \ref{sec:supercanbig}: one of the main new ingredients is a result on uniform bounds of norms of sections of adjoint line bundles given in Theorem \ref{thm:boundedsections}. Further explanations will be given at the beginning of Section \ref{sec:supercanbig}.

After the case of big line bundles is settled, the main problem is to analyse what happens when the line bundle $L$ is only pseudoeffective, and how supercanonical currents associated to divisors $L+\varepsilon A$ behave when $\varepsilon\downarrow0$, where $A$ is an ample divisor on $X$. Following a suggestion from \cite[Generalization 5.24]{BD12}, we show that the corresponding supercanonical currents of $L+\varepsilon A$ converge weakly to a supercanonical current of $L$, and deduce additional strong regularity properties.

Specialising to the context of the Minimal Model Program, the following is our second main result. (We use the following notation introduced in Section \ref{sec:supercanonical}: if $\theta$ is a smooth closed real $(1,1)$-form on a compact complex manifold $X$ whose cohomology class is pseudoeffective, then $T_{\theta,\can}$ denotes the supercanonical current associated to $\theta$.)

\begin{thmA}\label{thm:main2}
Let $(X,\Delta)$ be a projective klt pair such that $\Delta$ is a $\Q$-divisor and $K_X+\Delta$ is pseudoeffective. Let $\pi\colon Y\to X$ be a log resolution of $(X,\Delta)$ and write 
$$K_Y+\Delta_Y\sim_\Q\pi^*(K_X+\Delta)+E,$$
where $\Delta_Y$ and $E$ are effective $\Q$-divisors without common components. Let $A$ be an ample $\Q$-divisor on $Y$, and let $\alpha$ and $\omega$ be fixed smooth $(1,1)$-forms in the cohomology classes of $K_Y+\Delta_Y$ and $A$, respectively. Then:
\begin{enumerate}[\normalfont (a)]
\item for each $\varepsilon>0$ the supercanonical current $T_{\alpha+\varepsilon\omega,\can}$ depends only on the holomorphic global sections of multiples of $K_Y+\Delta_Y+\varepsilon A$,
\item the supercanonical currents $T_{\alpha+\varepsilon\omega,\can}$ converge weakly to the supercanonical current $T_{\alpha,\can}$ as $\varepsilon\downarrow0$.
\end{enumerate}
If additionally $K_X+\Delta$ is nef, then there exists a positive rational number $\delta$ such that:
\begin{enumerate}[\normalfont (a)]
\item[{\normalfont (c)}] the non-nef loci $\sB_-(K_Y+\Delta_Y+\varepsilon A)$ do not depend on $0\leq\varepsilon\leq\delta$, and they are equal to the non-ample loci $\sB_+(K_Y+\Delta_Y+\varepsilon A)$ for $0<\varepsilon\leq\delta$,
\item[{\normalfont (d)}] for each $0<\varepsilon\leq\delta$ the supercanonical current $T_{\alpha+\varepsilon\omega,\can}$ has continuous local potentials away from the non-nef locus $\sB_-(K_Y+\Delta_Y)$,
\item[{\normalfont (e)}] for any two rational numbers $\varepsilon_1,\varepsilon_2\in (0,\delta]$ and for any $t\in[0,1]$ the supercanonical current
$$tT_{\alpha+\varepsilon_1\omega,\can}+(1-t)T_{\alpha+\varepsilon_2\omega,\can}$$
is the current with minimal singularities in the cohomology class of the divisor $K_Y+\Delta_Y+\big(t\varepsilon_1+(1-t)\varepsilon_2\big)A$.
\end{enumerate}
\end{thmA}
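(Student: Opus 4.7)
The plan is to handle (a) immediately by reduction to the big case, then establish (b) by a compactness-plus-extremality argument, and finally treat (c)--(e) as a package once the nefness hypothesis is added.

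For (a), since $K_X+\Delta$ is pseudoeffective by hypothesis, so is $\pi^*(K_X+\Delta)=K_Y+\Delta_Y-E$; hence
$$K_Y+\Delta_Y+\varepsilon A \;=\; \pi^*(K_X+\Delta)+E+\varepsilon A$$
is the sum of a pseudoeffective class, an effective divisor, and an ample $\Q$-divisor, and is therefore big for every $\varepsilon>0$. Theorem \ref{thm:supercanbig} then applies directly and delivers the claim that $T_{\alpha+\varepsilon\omega,\can}$ depends only on the algebra of holomorphic global sections of multiples of $K_Y+\Delta_Y+\varepsilon A$.

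For (b), the family $\{T_{\alpha+\varepsilon\omega,\can}\}_{\varepsilon\in(0,1]}$ has uniformly bounded mass (the relevant cohomology classes vary in a compact subset of the pseudoeffective cone), so it is weakly sequentially precompact. Any sequence $\varepsilon_k\downarrow 0$ admits a weakly convergent subsequence whose limit $T^*$ is a closed positive current in the cohomology class of $\alpha$. To identify $T^*$ with $T_{\alpha,\can}$ I would use the extremal characterization of supercanonical currents via the exponential $L^1$-condition from Section \ref{sec:supercanonical}: lower semicontinuity of this envelope under weak limits gives a lower bound on the limit potential, while using $T_{\alpha,\can}+\varepsilon\omega$ as a competitor in the variational problem defining $T_{\alpha+\varepsilon\omega,\can}$, in the spirit of \cite[Generalization 5.24]{BD12}, yields the matching upper bound. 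Tracking these bounds through the limit is the main technical obstacle in (b), since the exponential constraint is nonlinear and has to be handled by a Fatou- or Hartogs-type argument.

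Now assume $K_X+\Delta$ is nef. The family $\{\sB_+(K_Y+\Delta_Y+\varepsilon A)\}_{\varepsilon>0}$ is decreasing in $\varepsilon$ and consists of Zariski-closed subsets, so by Noetherianity it stabilizes at a positive rational threshold $\delta$; the stable value equals $\sB_-(K_Y+\Delta_Y)$ by the standard identity $\sB_-(L)=\bigcap_{\varepsilon>0}\sB_+(L+\varepsilon A)$, and $\sB_-$ is unchanged by addition of the nef class $\varepsilon A$, which gives (c). For (d), Section \ref{sec:supercanbig} shows that the supercanonical current of a big class has continuous local potentials outside its augmented base locus; by (a) and (c), this locus is precisely $\sB_-(K_Y+\Delta_Y)$. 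Finally, for (e), each $T_{\alpha+\varepsilon_i\omega,\can}$ is a current with minimal singularities in its (big) cohomology class by the Berman--Demailly-type results recalled in Section \ref{sec:supercanonical}, so their convex combination lies in the class of $K_Y+\Delta_Y+\big(t\varepsilon_1+(1-t)\varepsilon_2\big)A$ and has continuous local potentials outside $\sB_-(K_Y+\Delta_Y)$ by (d). Minimality of its singularities on $\sB_-(K_Y+\Delta_Y)$ then follows by comparing the convex combination with the supercanonical current in the intermediate class, exploiting convexity of the exponential $L^1$-functional; I expect this comparison on the non-nef locus, together with the limit argument in (b), to be the two main obstacles of the proof.
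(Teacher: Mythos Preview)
Your handling of (a) is fine and matches the paper. Your sketch for (b) is in the right spirit, though the paper proceeds more directly: by Lemma~\ref{lem:supercanfirstproperties}(c) the potentials $\varphi_{\alpha+\varepsilon\omega,\can}$ decrease as $\varepsilon\downarrow 0$, so the limit exists pointwise, and Theorem~\ref{thm:supercanpsef}\eqref{enu:f} identifies it with $\varphi_{\alpha,\can}$ using the algebraic approximation by sections; no subsequence extraction is needed.

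The serious gaps are in (c) and (e), where you completely omit the MMP input that the paper relies on. For (c), your Noetherianity argument does not work: the family $\{\sB_+(K_Y+\Delta_Y+\varepsilon A)\}_{\varepsilon>0}$ is indexed by a continuum, so descending-chain arguments do not apply, and your assertion that ``$\sB_-$ is unchanged by addition of the nef class $\varepsilon A$'' is not a general fact and is precisely what must be proved. The paper instead invokes Theorem~\ref{thm:localPL}: since $K_X+\Delta$ is nef, the pair $(Y,\Delta_Y)$ has a minimal model, and running an MMP with scaling produces a \emph{single} birational model $X'$ which is a good minimal model of $(Y,\Delta_Y+\varepsilon A)$ for every $\varepsilon\in(0,\delta]$. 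The Negativity lemma and the affine dependence of the discrepancy divisor $E_\varepsilon$ on $\varepsilon$ then pin down the asymptotic loci uniformly.

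For (e), your proposed argument via convexity of the exponential $L^1$-functional goes in the wrong direction. Lemma~\ref{lem:supercanfirstproperties}(a) gives $t\varphi_{\alpha+\varepsilon_1\omega,\can}+(1-t)\varphi_{\alpha+\varepsilon_2\omega,\can}\leq\varphi_{\alpha+(t\varepsilon_1+(1-t)\varepsilon_2)\omega,\can}$, which says the convex combination is \emph{more} singular than the supercanonical potential of the intermediate class; for minimality you need the opposite inequality up to a bounded term, and this does not follow from convexity or from continuity of potentials off $\sB_-$. The paper's Theorem~\ref{thm:localPL}(c) obtains it by passing to the common good model $X'$: there, each $T_{\varepsilon,\min}$ decomposes as a semiample current plus a fixed exceptional divisor depending affinely on $\varepsilon$, and Lemma~\ref{lem:minsingeasy}(c) shows that convex combinations of minimal currents in semiample classes remain minimal. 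Without this MMP structure, the statement in (e) is not known.
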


Parts (a) and (b) of the theorem are very delicate and they hold more generally for pseudoeffective divisors which are not necessarily adjoint, see Theorem \ref{thm:supercanpsef} for a much more precise statement. Part (d) holds also in that more general context, albeit with a weaker estimate of the size of the regularity locus. The other statements rely crucially on the fact that we are working with adjoint divisors, and they depend on the Minimal Model Program, see Theorem \ref{thm:localPL}.

The main aim of Theorem \ref{thm:main2} is to gain precise information on the behaviour of multiplier ideals associated to supercanonical currents under perturbations by an ample divisor, in order to combine it with Theorem \ref{thm:main1} to obtain the proof of the Abundance conjecture for minimal projective klt pairs $(X,\Delta)$ with $\chi(X,\OO_X)\neq0$. The more detailed  results from Sections \ref{sec:supercanbig} and \ref{sec:proofofMain2} indicate how this might be achieved, see Theorem \ref{thm:supercanbig}\eqref{enu:13} and Theorem \ref{thm:supercanpsef}\eqref{enu:d}.

The algebraicity statements (a) and (b) as well as the regularity statement (d) of Theorem \ref{thm:main2} are very strong and, combined with Theorem \ref{thm:main1}, we expect them to be crucial for the completion of the proof of the Abundance conjecture for minimal projective klt pairs $(X,\Delta)$ with $\chi(X,\OO_X)\neq0$. 

\smallskip

\paragraph{\textbf{On the organisation of the paper.}}

This work contains as many ingredients from complex birational geometry as it does from pluripotential theory. I have attempted to make it accessible to both birational and complex geometers. This possibly resulted in the inclusion of proofs of some results which might be considered standard or classical by some readers.

\medskip

{\footnotesize
\paragraph{\textbf{Acknowledgements.}}
It is my great pleasure to dedicate this paper to Thomas Peternell. This present work builds on our joint quest towards abundance-related problems that we started almost a decade ago. He has provided constant support and has been a source of of wonderful mathematical ideas. This paper would not have been possible without him.

I had several conversations online about a very preliminary version of the ideas presented here with Jean-Pierre Demailly in the summer of 2021, partly together with Thomas Peternell. Jean-Pierre clarified several things about his Bergman kernel techniques and the paper \cite{BD12}. These conversations have had a very big impact on this paper.

I am very grateful to Nikolaos Tsakanikas for discussions about the content of Section \ref{sec:localPL} and for extensive comments which improved the presentation of the paper. I am grateful to Vincent Guedj and Zhixin Xie for useful comments and suggestions.

I gratefully acknowledge support by the Deutsche Forschungsgemeinschaft (DFG, German Research Foundation) – Project-ID 286237555 – TRR 195 and Project-ID 530132094.}

\newpage

\part{Preliminaries}

\section{Preliminaries: pluripotential theory}

Much of the material discussed here can be found in \cite{Dem12,Kli91,GZ17} or in the introductory sections of \cite{Bou02,Bou04}. The notes \cite{BT06,Vu21} present many of the foundational results with more details or clarity. The presentation in \cite{GZ05} is exceptionally clear.

We collect some definitions and results for the benefit of the reader and to settle the notation and terminology. In this paper we use the convention that $d^c=\frac{1}{2\pi i}(\partial-\dbar)$, so that $dd^c=\frac{i}{\pi}\partial\dbar$. We denote by 
$$B(x,r)=\{x\in\C^n\mid \|x\|<r\}\quad\text{and}\quad S(x,r)=\{x\in\C^n\mid \|x\|=r\}$$
the open ball and the sphere of radius $r$ and with centre $x$ in $\C^n$. All manifolds in the paper are connected. The notation $\fint$ is used for the averaged integral, i.e.\ for the integral divided by the volume of the set over which the integration is made.

\subsection{Bott-Chern cohomology}

If $X$ is a complex manifold, we define the Bott-Chern $(1,1)$-cohomology space $H^{1,1}_\mathrm{BC}(X,\C)$ as the quotient of the space of $d$-closed smooth $(1,1)$-forms modulo the $dd^c$-exact smooth $(1,1)$-forms, and we denote by $H^{1,1}_\mathrm{BC}(X,\R)$ the space of its real points. It can be shown by a partition of unity argument that $H^{1,1}_\mathrm{BC}(X,\C)$ is isomorphic to the quotient of the space of $d$-closed $(1,1)$-currents modulo the $dd^c$-exact $(1,1)$-currents. If additionally $X$ is compact and K\"ahler, then $H^{1,1}_\mathrm{BC}(X,\C)$ is isomorphic to the Dolbeault cohomology group $H^{1,1}(X,\C)$.

If $T$ is a closed $(1,1)$-current on a complex manifold $X$, we denote by $\{T\}$ its class in $H^{1,1}_\mathrm{BC}(X,\C)$. If $T$ is a real closed $(1,1)$-current, then $\{T\}\in H^{1,1}_\mathrm{BC}(X,\R)$, and the representatives of $\{T\}$ are the closed currents of the form $T+dd^c\varphi$, where $\varphi$ is a real current of degree $0$. If $T$ is a representative of a class $\alpha\in H^{1,1}_\mathrm{BC}(X,\C)$, we write $T\in\alpha$; if $T'\in\alpha$ is another representative, we also write $T\equiv T'$.

\subsection{Almost positive currents}\label{subsection:almostpositive}

Let $X$ be a complex manifold of dimension $n$. A continuous $(n-1,n-1)$-form $\varphi$ on $X$ is \emph{positive} if it can be written locally as a finite non-negative linear combination of forms of type 
$$(i\alpha_1\wedge \overline\alpha_1)\wedge \ldots\wedge(i\alpha_{n-1}\wedge \overline\alpha_{n-1}),$$
where $\alpha_i$ are $(1,0)$-forms. Positivity of forms is a pointwise property which does not depend on local coordinates. 

A $(1,1)$-current $T$ on $X$ is \emph{positive} if $T(\varphi)$ is a positive measure for every smooth positive $(n-1,n-1)$-form
$\varphi$, and we write $T\geq0$. A positive $(1,1)$-current is always real. If $T$ and $T'$ are two $(1,1)$-currents on $X$, we write $T\geq T'$ if $T-T'\geq0$. If $\varphi=i\sum h_{jk}dz_j\wedge d\overline{z_k}$ is a real continuous $(1,1)$-form, then $\varphi$ is positive if and only if $\big(h_{jk}(x)\big)$ is a positive semidefinite hermitian matrix for all $x\in X$­.

If now $D$ is an irreducible analytic subset of pure codimension $1$ in $X$, then we denote by $[D]$ the \emph{current of integration} on the regular part of $D$: this is a closed positive $(1,1)$-current. If we have an effective $\R$-divisor $G=\delta_1G_1+\dots+\delta_rG_r$ on $X$, then we call the closed positive $(1,1)$-current $[G]:=\delta_1[G_1]+\dots+\delta_r[G_r]$ the \emph{current of integration on $G$}. If there is no danger of confusion, we drop the brackets and write simply $G$ for the current of integration on $G$.

A real $(1,1)$-current $T$ on $X$ is \emph{almost positive} if $T\geq\gamma$ for some real continuous $(1,1)$-form $\gamma$ on $X$.

If $f\colon Y\to X$ is a surjective holomorphic map between complex manifolds and if $T$ is a closed almost positive $(1,1)$-current on $X$, then one can easily define its \emph{pullback} $f^*T$ to $Y$ such that $\{f^*T\}=f^*\{T\}$, see \cite[2.2.3]{Bou04}.

We will need the following easy result.

\begin{lem}\label{lem:diagonalisation}
Let $X$ be a compact complex manifold of dimension $n$ and let $\omega$ be a smooth positive definite $(1,1)$-form on $X$. Let $\{\theta_j\}_{j\in J}$ be a collection of real $(1,1)$-forms on $X$ whose coefficients are locally uniformly bounded on $X$. Then there exists a constant $C>0$ such that $\theta_j+C\omega\geq0$ for all $j$.
\end{lem}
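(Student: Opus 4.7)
The plan is to reduce the assertion to a pointwise eigenvalue estimate with respect to $\omega$, and then globalise using the compactness of $X$. First I would observe that at each point $x\in X$, since $\omega(x)$ is a positive definite hermitian form on the holomorphic tangent space $T_x X$, there is a unique $\omega(x)$-hermitian endomorphism $A_j(x)$ of $T_x X$ characterised by $\theta_j(x)(\xi,\overline\eta)=\omega(x)(A_j(x)\xi,\overline\eta)$ for all tangent vectors $\xi,\eta$. The form $\theta_j+C\omega$ is positive at $x$ if and only if every eigenvalue of $A_j(x)$ is at least $-C$, so it suffices to produce a lower bound on these eigenvalues that is uniform in both $x\in X$ and $j\in J$.

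Next I would invoke compactness to cover $X$ by finitely many coordinate charts $U_1,\dots,U_N$ with compact closure contained in larger charts on which the local uniform boundedness hypothesis supplies constants $M_k>0$ such that every coefficient of every $\theta_j$ restricted to $U_k$, computed in the chosen coordinates, is bounded by $M_k$. Setting $M:=\max_k M_k$ gives a global uniform bound on the coefficient matrices of the $\theta_j$ across the cover. Since $\omega$ is smooth and positive definite on the compact manifold $X$, the smallest eigenvalue of the hermitian coefficient matrix of $\omega$ on each $\overline{U_k}$ is a continuous positive function and therefore attains a positive minimum $c_k>0$; putting $c:=\min_k c_k>0$ yields a uniform positive lower bound for the eigenvalues of $\omega$ in every chart of the cover.

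A routine linear-algebra estimate for hermitian matrices shows that in each chart $U_k$ the operator norm of $A_j(x)$ is controlled solely in terms of $M$, $c$, and $n$: if $H_j$ and $G$ denote the coefficient matrices of $\theta_j$ and $\omega$ in the local coordinates on $U_k$, then $A_j(x)$ is represented by $G(x)^{-1}H_j(x)$, whose eigenvalues (which are intrinsic and independent of the chart) lie in an interval $[-\Lambda,\Lambda]$ for some $\Lambda=\Lambda(M,c,n)>0$. Choosing $C:=\Lambda$ then gives $\theta_j(x)+C\omega(x)\geq 0$ at every $x\in X$ and for every $j\in J$, as required.

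There is really no serious obstacle here: the content of the lemma is a pointwise hermitian-matrix inequality, and the only thing to watch is that positivity is intrinsic whereas matrix representations depend on the choice of chart. The finite covering argument, combined with the uniform boundedness of the coefficients of the $\theta_j$ and the strict positivity of $\omega$, reconciles the two viewpoints and produces the global constant $C$.
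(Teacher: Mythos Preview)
Your argument is correct, and in fact more direct than the paper's. The paper proceeds by first observing that, on each coordinate neighbourhood $U_x$, the uniformly bounded family $\{\theta_j\}$ lies in the convex hull of finitely many smooth forms $\widetilde\theta_1,\dots,\widetilde\theta_r$; it then simultaneously diagonalises each $\widetilde\theta_k$ with $\omega$ at $x$ via a linear change of coordinates, reads off a local constant $C_x$ that works for the finitely many $\widetilde\theta_k$ (hence for all convex combinations), and finishes by compactness. Your route bypasses both the convex-hull reduction and the diagonalisation step: by writing the eigenvalues of $A_j(x)$ as those of $G(x)^{-1}H_j(x)$ and bounding the latter's spectral radius by $\|G^{-1}\|\cdot\|H_j\|\leq nM/c$, you obtain the constant $C$ in one stroke. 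The paper's detour through finitely many extremal forms is convenient when one wants to invoke the spectral theorem on individual smooth forms rather than on the whole family, but your operator-theoretic estimate is cleaner and gives an explicit constant $C=nM/c$.
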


\begin{proof}
Fix a point $x\in X$ and a coordinate neighbourhood $U_x$ centred at $x$ such that the coefficients of all $\theta_j$ are uniformly bounded on $U_x$. Then we may find finitely many real smooth $(1,1)$-forms $\widetilde\theta_1,\dots,\widetilde\theta_r$ on $U_x$ such that each $\theta_j$ is a convex linear combination of the forms $\widetilde\theta_k$. By the spectral theorem for hermitian operators, for each $k$ there exists a linear change of local coordinates $f_{k,x}\colon U_x\to U_x$ such that
$$f_{k,x}^*\widetilde\theta_k=\frac{i}{2}\sum_{\ell=1}^n\lambda_{k,\ell,x}dz_\ell\wedge d\overline{z_\ell}\quad\text{and}\quad f_{k,x}^*\omega=\frac{i}{2}\sum_{\ell=1}^n dz_\ell\wedge d\overline{z_\ell}\quad\text{at }x.$$
Then it is clear that, by possibly shrinking $U_x$, there exists a constant $C_x$ on $U_x$ such that $f_{k,x}^*\widetilde\theta_k+C_xf_{k,x}^*\omega\geq0$ on $U_x$ for all $1\leq k\leq r$. Therefore, $\widetilde\theta_k+C_x\omega\geq0$ on $U_x$ for all $1\leq k\leq r$ by \cite[Proposition III.1.17]{Dem12}, hence $\theta_j+C_x\omega\geq0$ on $U_x$ for all $j\in J$. We conclude by the compactness of $X$.
\end{proof}

\subsection{Plurisubharmonic functions}

In this subsection $X$ is a complex manifold of dimension $n$. A function $\varphi \colon X \to [-\infty, +\infty)$ is \emph{plurisubharmonic} or \emph{psh} if it is upper semicontinuous, locally integrable, and satisfies the mean value inequality
$$ f^*\varphi (0) \leq \fint_\Delta f^*\varphi\, dV_\Delta $$
for any holomorphic mapping $f\colon \Delta \to X$ from the open unit disk $\Delta \subseteq \C$. Every plurisubharmonic function is \emph{subharmonic}, i.e.\ it satisfies the mean value inequality
$$ f^*\varphi(0) \leq \fint_B f^*\varphi\, dV_B $$
for any open embedding $f\colon B \to X$ of the open unit ball $B\subseteq \C^n$.

Psh functions on $X$ are locally bounded from above and belong to $L^p_\loc(X)$ for any $1\leq p<\infty$. If additionally $X$ is compact, then any psh function on $X$ is constant.

A subset $A$ of $X$ is \emph{locally pluripolar} if it is locally contained in the pole set $\{u=-\infty\}$ of a psh function $u$. Since each psh function is locally integrable, the set $A$ is of Lebesgue measure zero and the complement $X\setminus A$ is dense in $X$.

A closed $(1,1)$-current $T$ on $X$ is positive if and only if for each $x\in X$ there exists an open subset $x\in U\subseteq X$ such that $T$ can be locally written as $T=dd^c \varphi$ for a psh function $\varphi$ on $U$. The function $\varphi$ is a \emph{local potential} of $T$ on $U$.

We will often need the following well-known properties of subharmonic functions.

\begin{lem}\label{lem:strongusc}
Let $\Omega\subseteq\C^n$ be a domain.
\begin{enumerate}[\normalfont (a)]
\item Let $\varphi$ be a subharmonic function on $\Omega$ and let $A\subseteq\Omega$ be a set of Lebesgue measure zero. Then
$$\limsup\limits_{z'\to z,\,z'\in\Omega\setminus A}\varphi(z')=\varphi(z)\quad\text{for every }z\in\Omega.$$
\item Let $\varphi$ be a subharmonic function on $\Omega$. Then
$$\displaystyle \varphi(z)=\lim\limits_{r\to0}\fint_{B(z,r)}\varphi(z)dV\quad\text{for every }z\in\Omega.$$
\item Let $\varphi$ and $\psi$ be subharmonic functions on $\Omega$ and assume that $\varphi\leq\psi$ almost everywhere. Then $\varphi\leq\psi$.
\end{enumerate}
\end{lem}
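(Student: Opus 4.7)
I would prove (b) first, since (a) and (c) both follow from it easily.

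For (b), applying the definition of subharmonicity to the open embedding $w\mapsto z+rw$ from the unit ball $B\subseteq\C^n$ into $\Omega$ gives the submean inequality $\varphi(z)\leq\fint_{B(z,r)}\varphi\,dV$ for every $r>0$ with $B(z,r)\subseteq\Omega$; hence $\liminf_{r\to 0}\fint_{B(z,r)}\varphi\,dV\geq\varphi(z)$. The reverse inequality comes from upper semicontinuity: given $\varepsilon>0$ there exists $r_0>0$ such that $\varphi\leq\varphi(z)+\varepsilon$ on $B(z,r_0)$, which forces $\fint_{B(z,r)}\varphi\,dV\leq\varphi(z)+\varepsilon$ for $0<r\leq r_0$, and therefore $\limsup_{r\to 0}\fint_{B(z,r)}\varphi\,dV\leq\varphi(z)$.

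For (a), the inequality $\limsup_{z'\to z,\,z'\in\Omega\setminus A}\varphi(z')\leq\varphi(z)$ is immediate from upper semicontinuity. For the opposite direction I would set $L:=\limsup_{z'\to z,\,z'\in\Omega\setminus A}\varphi(z')$ and fix $\varepsilon>0$; then there exists $r_0>0$ with $\varphi(w)\leq L+\varepsilon$ for all $w\in B(z,r_0)\cap(\Omega\setminus A)$. Since $A$ has Lebesgue measure zero, $\fint_{B(z,r)}\varphi\,dV=\fint_{B(z,r)\setminus A}\varphi\,dV\leq L+\varepsilon$ for $0<r\leq r_0$; taking $r\to 0$ and invoking (b) yields $\varphi(z)\leq L+\varepsilon$, and then $\varepsilon\to 0$ gives $\varphi(z)\leq L$.

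For (c), the hypothesis $\varphi\leq\psi$ almost everywhere implies $\fint_{B(z,r)}\varphi\,dV\leq\fint_{B(z,r)}\psi\,dV$ for every admissible $r$ and every $z\in\Omega$; letting $r\to 0$ and applying (b) to both sides gives $\varphi(z)\leq\psi(z)$ pointwise. I do not expect any serious obstacle: the lemma packages three standard facts about subharmonic functions, and the only step requiring care is the use of the measure-zero assumption on $A$ in (a), which dovetails cleanly with the ball-average identity established in (b).
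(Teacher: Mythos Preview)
Your proof is correct and uses essentially the same ingredients as the paper's: the submean inequality, upper semicontinuity, and the fact that a Lebesgue-null set does not affect ball averages. The only difference is organizational: the paper collapses (a) and (b) into a single chain of inequalities
\[
\varphi(z)\leq\lim_{r\to0}\fint_{B(z,r)}\varphi\,dV=\lim_{r\to0}\fint_{B(z,r)\setminus A}\varphi\,dV\leq\limsup_{z'\to z,\,z'\in\Omega\setminus A}\varphi(z')\leq\varphi(z),
\]
and then deduces (c) from (a) rather than from (b); but this is purely cosmetic.
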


\begin{proof}
Part (c) follows immediately from (a). We will show (a) and (b) simultaneously. For a fixed $z\in\Omega$, the mean value inequality on balls $B(z,r)\subseteq\Omega$ and the upper semicontinuity of $\varphi$ give
\begin{align*}
\varphi(z)&\leq\lim_{r\to0}\fint_{B(z,r)}\varphi(z)dV=\lim_{r\to0}\fint_{B(z,r)\setminus A}\varphi(z)dV\\
&\leq\lim_{r\to0}\sup_{B(z,r)\setminus A}\varphi(z')=\limsup_{z'\to z,\,z'\in\Omega\setminus A}\varphi(z')\leq\varphi(z).
\end{align*}
This finishes the proof.
\end{proof}

\subsection{Quasi-psh functions}\label{subsec:quasi-psh}

As mentioned above, a psh function on a compact complex manifold is always constant. A more suitable notion on compact complex manifolds is that of \emph{quasi-plurisubharmonic} or \emph{quasi-psh} functions: a function $\varphi\colon X\to[-\infty,+\infty)$ on a complex manifold $X$ is quasi-psh if it is locally equal to the sum of a psh function and a smooth function. Equivalently, $\varphi$ is quasi-psh if it is locally integrable and upper semicontinuous, and there exists a smooth closed real $(1,1)$-form $\theta$ on $X$ such that $\theta+dd^c\varphi\geq 0$ in the sense of currents. A good introduction to quasi-psh functions is in \cite{GZ05}.

Now, if $\theta$ is a real continuous $(1,1)$-form on $X$ and if $\varphi$ is a quasi-psh function on $X$ such that $\theta+dd^c\varphi\geq 0$ in the sense of currents, then we say that $\varphi$ is \emph{$\theta$-psh} and we denote the set of all $\theta$-psh functions by $\PSH(X,\theta)$. The weak topology on the set $\{\theta+dd^c\varphi\mid\varphi\in\PSH(X,\theta)\}$ corresponds to the $L^1_\loc(X)$-topology on $\PSH(X,\theta)$. The set 
$$\big\{\varphi\in\PSH(X,\theta)\mid \sup\nolimits_X\varphi=0\big\}$$
is compact in this topology, as we will see in Theorem \ref{thm:compactnessquasipsh}. 

If $X$ is additionally compact, if $\theta$ is a smooth closed real $(1,1)$-form on $X$ and if $T$ is a closed almost positive $(1,1)$-current in $\{\theta\}$, then there exists a closed real smooth $(1,1)$-form $\gamma$ such that $\gamma+T\geq0$, and clearly $\gamma+T\in\{\gamma+\theta\}$. By the $dd^c$-lemma there exists $\varphi\in\PSH(X,\gamma+\theta)$, which is unique up to an additive constant,  such that $\gamma+T=(\gamma+\theta)+dd^c\varphi$, hence
$$T=\theta+dd^c\varphi.$$
By adopting the terminology from \cite{BEGZ10}, such a function $\varphi$ is called a \emph{global potential} of $T$; global potentials depend, up to an additive constant, on the choice of $\theta$, but not of $\gamma$. 

A subset of $X$ is \emph{pluripolar} if it is contained in the pole set $\{\varphi=-\infty\}$ of a quasi-psh function $\varphi$ on $X$.

We will need the following easy consequence of Lemma \ref{lem:strongusc}(c).

\begin{cor}\label{cor:strongusc}
Let $X$ be a complex manifold and let $\varphi$ and $\psi$ be quasi-psh functions on $X$. If $\varphi\leq\psi$ holds almost everywhere, then $\varphi\leq\psi$.
\end{cor}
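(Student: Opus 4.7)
The plan is to reduce the statement to Lemma \ref{lem:strongusc}(c) by working in local coordinates and modifying both functions by a common smooth term so that they become honestly plurisubharmonic. Since the conclusion $\varphi\le\psi$ is pointwise, it suffices to verify it on a neighbourhood of each point of $X$.

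Fix $x_0\in X$ and choose a relatively compact coordinate ball $U\subseteq X$ around $x_0$ identified with a ball $\Omega\subseteq\C^n$ via coordinates $z_1,\dots,z_n$. By definition of quasi-psh, there are smooth closed real $(1,1)$-forms $\theta_1,\theta_2$ with $\theta_1+dd^c\varphi\ge0$ and $\theta_2+dd^c\psi\ge0$ on $X$. On $\overline U$ the coefficients of $\theta_1$ and $\theta_2$ are uniformly bounded, so picking $C>0$ sufficiently large, the smooth function $H:=C\sum_{j=1}^n|z_j|^2$ on $U$ satisfies $dd^cH\ge\theta_1$ and $dd^cH\ge\theta_2$ on $U$ (one can either argue directly, or invoke Lemma \ref{lem:diagonalisation} applied to $\{\theta_1,\theta_2\}$ after passing to a relatively compact neighbourhood). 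Consequently, $\varphi+H$ and $\psi+H$ are both psh on $U$, hence in particular subharmonic.

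The hypothesis $\varphi\le\psi$ almost everywhere on $X$ restricts to $\varphi+H\le\psi+H$ almost everywhere on $U$. Applying Lemma \ref{lem:strongusc}(c) to the subharmonic functions $\varphi+H$ and $\psi+H$ on $\Omega$ yields $\varphi+H\le\psi+H$ everywhere on $U$, and subtracting $H$ gives $\varphi\le\psi$ pointwise on $U$. As $x_0$ was arbitrary, this proves the corollary.

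There is no genuine obstacle here; the only point that needs care is that the smooth correction $H$ is chosen to dominate both $\theta_i$ simultaneously, so that the reduction from quasi-psh to psh can be applied to $\varphi$ and $\psi$ with the \emph{same} modification and the inequality $\varphi\le\psi$ almost everywhere is preserved.
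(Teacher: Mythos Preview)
Your proof is correct and follows essentially the same approach as the paper's: both work locally around a fixed point, find a single smooth function whose $dd^c$ dominates both $\theta_1$ and $\theta_2$ (you use $C|z|^2$ explicitly, the paper uses a local potential $\xi$ of a dominating closed form $\omega$), add it to $\varphi$ and $\psi$ to make them genuinely psh, and then invoke Lemma~\ref{lem:strongusc}(c).
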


\begin{proof}
Let $\theta_1$ and $\theta_2$ be real smooth $(1,1)$-forms on $X$ such that $\varphi_1\in\PSH(X,\theta_1)$ and $\varphi_2\in\PSH(X,\theta_2)$. Fix a point $x\in X$. As in the proof of Theorem \ref{thm:compactnessquasipsh}(a) below, there exist an open neighbourhood $U$ of $x$ and a smooth closed form $\omega$ on $U$ such that $\omega\geq\theta_1$ and $\omega\geq\theta_2$ on $U$. Hence, $\varphi_1$ and $\varphi_2$ are $\omega$-psh on $U$. If $\xi$ is a local potential of $\omega$ on $U$, then $\xi+\varphi_1$ and $\xi+\varphi_2$ are psh and $\xi+\varphi_1\leq\xi+\varphi_2$ almost everywhere on $U$. We conclude by Lemma \ref{lem:strongusc}(c).
\end{proof}

\subsection{Upper semicontinuous regularisation}\label{subsec:upper}

Let $\Omega\subseteq\C^n$ be an open subset and let $u$ be a function on $\Omega$. We define its \emph{upper semicontinuous regularisation} as
$$u^*(z)=\lim_{\varepsilon\to0}\sup_{B(z,\varepsilon)}u\quad\text{for }z\in\Omega.$$
Then $u^*$ is the smallest upper semicontinuous function which is $\geq u$. This notion extends easily to quasi-psh functions on a complex manifold.

Consider a family $\{u_\alpha\}$ of psh functions on $\Omega$ which is locally uniformly bounded from above, and
set $u:=\sup_\alpha u_\alpha$. Then the function $u^*$ is psh and we have $u^*=u$ almost everywhere, see \cite[Theorem I.5.7]{Dem12}. We will need very often the following extension of this and other important compactness results to quasi-psh functions.

\begin{thm}\label{thm:compactnessquasipsh}
Let $X$ be a complex manifold and let $\theta$ be a continuous real $(1,1)$-form on $X$.
\begin{enumerate}[\normalfont (a)]
\item  Consider a family $\{\varphi_\alpha\}$ of $\theta$-psh functions on $X$ which is locally uniformly bounded from above, and
set $\varphi:=\sup_\alpha \varphi_\alpha$. Then $\varphi^*\in\PSH(X,\theta)$ and $\varphi^*=\varphi$ almost everywhere.
\item Assume additionally that $X$ is compact. Let $\{\varphi_j\}$ be a sequence of $\theta$-psh functions on $X$ which are uniformly bounded from above. Then either the sequence $\{\varphi_j\}$ converges uniformly to ${-}\infty$ or it has a subsequence which converges in $L^1_\loc(X)$ and almost everywhere to a function in $\PSH(X,\theta)$.
\item Let $\{\varphi_j\}$ be a sequence of $\theta$-psh functions on $X$ which are locally uniformly bounded from above. Then the function $\Big(\limsup\limits_{j\to\infty}\varphi_j\Big)^*$ is $\theta$-psh.
\item Let $\{\varphi_j\}$ be a sequence of $\theta$-psh functions on $X$ which are locally uniformly bounded from above. If the sequence is decreasing, then either it converges uniformly to ${-}\infty$ or it converges in $L^1_\loc(X)$ to the $\theta$-psh function $\lim\limits_{j\to\infty}\varphi_j$. If the sequence is increasing, then it converges in $L^1_\loc(X)$ and almost everywhere to the $\theta$-psh function $\Big(\lim\limits_{j\to\infty}\varphi_j\Big)^*$.
\item Assume additionally that $X$ is compact. Let $\theta'$ be a positive continuous $(1,1)$-form on $X$, and consider a sequence of real numbers $\varepsilon_j\downarrow0$. For each positive integer $j$, let $\varphi_j\in\PSH(X,\theta+\varepsilon_j\theta')$ and assume that all $\varphi_j$ are uniformly bounded from above. Then either the sequence $\{\varphi_j\}$ converges uniformly to ${-}\infty$ or it has a subsequence which converges in $L^1_\loc(X)$ and almost everywhere to a function in $\PSH(X,\theta)$.
\end{enumerate}
\end{thm}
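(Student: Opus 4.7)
The strategy is to reduce each of (a)--(e) to the corresponding classical statement for honest plurisubharmonic functions on open subsets of $\C^n$. The reduction is local: fixing $x_0 \in X$ and a coordinate chart $U$ around $x_0$ biholomorphic to a ball, Lemma~\ref{lem:diagonalisation} produces a smooth closed positive $(1,1)$-form $\omega$ on $\bar U$ with $\omega \geq \theta$ pointwise. Let $\xi$ be a smooth local potential with $dd^c \xi = \omega$ on $U$. Then every $\theta$-psh function $\varphi$ on $U$ satisfies $dd^c(\xi+\varphi) = \omega + dd^c\varphi \geq (\omega - \theta) + (\theta + dd^c\varphi) \geq 0$, so $\xi + \varphi$ is a genuine psh function on $U$. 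By shrinking $U$ further and using continuity of $\theta$ one can arrange a sequence of smooth closed forms $\omega_k \geq \theta$ on $U$ converging to $\theta$ uniformly, which will let one upgrade $\omega_k$-psh statements to $\theta$-psh ones by testing against smooth positive compactly supported $(n-1,n-1)$-forms and passing to the limit.

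For (a), apply the classical envelope theorem \cite[Theorem I.5.7]{Dem12} to $\{\xi + \varphi_\alpha\}$ on $U$: since $\xi$ is continuous, $(\sup_\alpha(\xi+\varphi_\alpha))^* = \xi + \varphi^*$, this function is psh on $U$ and equals $\xi + \varphi$ almost everywhere on $U$. Hence $\omega_k + dd^c\varphi^* \geq 0$ for each $k$; taking $k \to \infty$ with uniform convergence $\omega_k \to \theta$ gives $\theta + dd^c\varphi^* \geq 0$, i.e.\ $\varphi^* \in \PSH(X,\theta)$, and $\varphi^* = \varphi$ almost everywhere. Part (d) is classical for psh functions and transfers to the $\theta$-psh case by the same local reduction; the dichotomy in the decreasing case between uniform convergence to $-\infty$ and $L^1_\loc$-convergence comes from the standard fact that a decreasing limit of psh functions is either identically $-\infty$ on each connected component---giving uniform divergence on compacts via the sub-mean value inequality---or locally integrable.

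Part (c) follows from (a) applied to $\{\varphi_k\}_{k\geq j}$, which gives that each $\Psi_j := \big(\sup_{k \geq j}\varphi_k\big)^*$ lies in $\PSH(X,\theta)$; the sequence $\{\Psi_j\}$ is decreasing and its pointwise limit coincides with $\big(\limsup_j \varphi_j\big)^*$ outside a pluripolar set, so one concludes by (d). For (b), the local reduction yields, on each chart $U$, psh functions $\xi + \varphi_j$ uniformly bounded above; the classical $L^1_\loc$-compactness theorem for psh functions provides a subsequence converging either uniformly to $-\infty$ or in $L^1_\loc$ and almost everywhere to a psh function. A diagonal extraction over a countable atlas combined with the compactness of $X$ patches these local subsequences into a global one, and (a) identifies the limit up to a pluripolar set. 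Finally, (e) is a perturbed version of (b): since $\varepsilon_j \leq \varepsilon_1$, the $\varphi_j$ are uniformly $\omega$-psh for any fixed smooth closed $\omega \geq \theta + \varepsilon_1\theta'$, so (b) yields a subsequential $L^1_\loc$-limit $\varphi_\infty$; testing $(\theta + \varepsilon_{j_k}\theta') + dd^c\varphi_{j_k} \geq 0$ against a smooth positive compactly supported test form and using $\varepsilon_{j_k}\theta' \to 0$ uniformly gives $\theta + dd^c\varphi_\infty \geq 0$.

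The main obstacle is the passage from $\omega$-psh statements, for smooth $\omega \geq \theta$, to genuine $\theta$-psh statements in the local reduction. While this is handled via weak continuity of $dd^c$ against positive test forms, one must verify that the approximating sequence $\omega_k \to \theta$ can always be chosen with $\omega_k \geq \theta$ on a fixed neighbourhood, which in turn depends only on continuity of $\theta$; this is the core technical point that makes the reduction work for continuous (as opposed to merely smooth) $\theta$. A secondary subtlety is the globalisation in (b) and (e), where diagonal extraction across countably many charts is required to combine the local compactness statements into a single convergent subsequence on $X$.
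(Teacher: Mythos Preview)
Your approach is essentially the same as the paper's: both reduce locally to the classical psh theory by dominating $\theta$ pointwise by smooth closed forms. Your sequence $\omega_k \geq \theta$ with $\omega_k \to \theta$ is precisely what the paper constructs via the spectral theorem, writing $dd^c(q-\varepsilon|z|^2)\leq\theta\leq dd^c(q+\varepsilon|z|^2)$ at a point and taking $\omega_\varepsilon = dd^c(q+\varepsilon|z|^2)$; your phrasing ``using continuity of $\theta$ one can arrange $\omega_k\geq\theta$'' is exactly this step, so you should make that explicit rather than leaving it as an assertion.

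There are a couple of organisational differences worth noting. You derive (c) from (a) and (d), while the paper invokes a direct local result \cite[Proposition 2.9.17]{Kli91}; your route works, but the identification of $\lim_j\Psi_j$ with $(\limsup\varphi_j)^*$ requires more than agreement outside a pluripolar set---you need that an usc function equal a.e.\ to a psh function and bounded above by it must coincide with it everywhere, which follows from Lemma~\ref{lem:strongusc}(a). Conversely, the paper deduces (d) from (b) and (c), while you argue (d) directly from the local psh statements; both are fine. For (e), your argument is in fact slightly more direct than the paper's: you observe that all $\varphi_j$ lie in $\PSH(X,\theta+\varepsilon_1\theta')$ and pass to the limit in the distributional inequality $(\theta+\varepsilon_{j_k}\theta')+dd^c\varphi_{j_k}\geq 0$, whereas the paper iterates the extraction to land in each $\PSH(X,\theta+\varepsilon_i\theta')$ and then invokes Corollary~\ref{cor:strongusc} to identify the limits. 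Your way avoids that extra bookkeeping.
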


\begin{proof}
For (a) we extract the proof from \cite[Proposition 2.1.3]{Bou02}. Fix a point $x\in X$. By the spectral theorem for hermitian operators and by the continuity of $\theta$, for each $\varepsilon>0$ there exists real numbers $\lambda_j$ and a neighbourhood $U_\varepsilon$ of $x$ with local coordinates $z=(z_1,\dots,z_n)$ such that, if we set $q(z):=\sum \lambda_j|z_j|^2$, we have
$$dd^c\big(q(z)-\varepsilon|z|^2\big)\leq\theta\leq dd^c\big(q(z)+\varepsilon|z|^2\big).$$
Then on $U_\varepsilon$ each function $q(z)+\varepsilon|z|^2+\varphi_\alpha(z)$ is psh, hence so is the function 
$$\big(\sup\nolimits_\alpha\{q+\varepsilon|z|^2+\varphi_\alpha\}\big)^*=q+\varepsilon|z|^2+\varphi^*$$
by the paragraph before the theorem. Therefore, 
$$dd^c\varphi^*+\theta+2\varepsilon dd^c|z|^2\geq dd^c\varphi^*+dd^cq(z)+\varepsilon dd^c|z|^2\geq0$$
on $U_\varepsilon$. Letting $\varepsilon\to0$ we obtain $dd^c\varphi^*+\theta\geq0$ at $x$. Since $x$ was arbitrary, this shows that $\varphi$ is $\theta$-psh.

The proof of (b) is similar, using the local result for psh functions \cite[Proposition I.5.9]{Dem12}; see \cite[Proposition 2.5.7]{Vu21} for details.

Part (c) follows similarly as (a) from the local result for psh functions \cite[Proposition 2.9.17]{Kli91}.

Part (d) follows from (b) and (c).

Now we prove (e). Assume that the sequence $\{\varphi_j\}$ does not converge uniformly to ${-}\infty$. For positive integers $k\geq k'$ we have $\theta+\varepsilon_{k'}\theta'\geq\theta+\varepsilon_k\theta'$, and hence 
$$\varphi_k\in\PSH(X,\theta+\varepsilon_{k'}\theta').$$
Therefore, by (b) there exists a subsequence $\{\varphi_{j_1}\}$ of $\{\varphi_j\}$ which converges in $L^1_\loc(X)$ and almost everywhere to a function
$$\varphi\in\PSH(X,\theta+\varepsilon_1\theta').$$
We will be done if we show that $\varphi\in\PSH(X,\theta)$, and for this it suffices to prove that $\varphi\in\PSH(X,\theta+\varepsilon_i\theta')$ for all $i$, since $\theta$ is the weak limit of the sequence $\{\theta+\varepsilon_i\theta'\}$. To this end, by (b) we inductively have that, for all $i\geq2$, there exists a subsequence $\{\varphi_{j_i}\}$ of $\{\varphi_{j_{i-1}}\}$ which converges in $L^1_\loc(X)$ and almost everywhere to a function $\eta_i\in\PSH(X,\theta+\varepsilon_i\theta')$, thus $\eta_i=\varphi$ almost everywhere. But then $\eta_i=\varphi$ by Corollary \ref{cor:strongusc}, and in particular, $\varphi\in\PSH(X,\theta+\varepsilon_i\theta')$. This finishes the proof.
\end{proof}

\subsection{Positivity of classes}

Let $X$ be compact complex manifold, let $\omega$ be a fixed smooth positive $(1,1)$-form on $X$, and consider a cohomology class $\alpha\in H^{1,1}_\mathrm{BC}(X,\R)$. Then $\alpha$ is:
\begin{enumerate}[\normalfont (i)]
\item \emph{pseudoeffective} if there exists a closed positive $(1,1)$-current $T\in\alpha$;
\item \emph{nef} if for each $\varepsilon>0$ there exists a smooth form $\theta_\varepsilon\in\alpha$ such that $\theta_\varepsilon\geq{-}\varepsilon\omega$;
\item \emph{big} if there exist $\varepsilon>0$ and a closed $(1,1)$-current $T\in\alpha$ such that $T\geq\varepsilon\omega$.
\end{enumerate}
These definitions do not depend on the choice of $\omega$, and they correspond to the usual notions from algebraic geometry when $X$ is projective and $\alpha$ is an algebraic class.

\subsection{Lelong numbers}

Let $\Omega\subseteq\C^n$ be an open subset and let $\varphi$ be a psh function on $\Omega$. The \emph{Lelong number} of $\varphi$ at a point $x\in\Omega$ is
$$ \nu(\varphi,x):=\lim_{r\to0}\frac{\sup_{B(x,r)}\varphi}{\log r}; $$
this is equivalent to other definitions in the literature by \cite[Example III.6.9]{Dem12}. Thus, if $\nu(\varphi,x)>0$, then $\varphi(x)={-}\infty$, but the converse does not always hold. The Lelong number $\nu(\varphi,x)$ does not depend on the choice of local coordinates around $x$. For psh functions $u$ and $v$ on $\Omega$ and for each point $x\in \Omega$ we have
$$ \nu(u+v,x)=\nu(u,x)+\nu(v,x); $$
note that $u+v$ is a psh function by Example \ref{exa:quasi-psh}(d) below.

Let now $T$ be a closed positive $(1,1)$-current on a complex manifold $X$. Then locally at a point $x\in X$ we can write $T=dd^c\varphi$ for a psh function $\varphi$, and we define the Lelong number of $T$ at $x$ as
$$\nu(T,x):=\nu(\varphi,x);$$
this does not depend on the choice of $\varphi$. If $Y$ is an analytic subset of $X$ and if $x\in X$, then a result of Thie states that $\nu(Y,x)$ is equal to the multiplicity of $Y$ at $x$.

We will need the following result which compares the Lelong numbers under pullbacks \cite[Corollary 4]{Fav99}, see also \cite[Th\'eor\`eme 5.1]{Kis00}.

\begin{thm}\label{thm:Favre}
Let $f\colon Y\to X$ be a surjective holomorphic map between compact complex manifolds. Then there exists a constant $C>0$ such that for every closed positive $(1,1)$-current $T$ on $X$ and for all points $y\in Y$ and $x:=f(y)\in X$ we have
$$\nu(T,x)\leq\nu(f^*T,y)\leq C\nu(T,x).$$
\end{thm}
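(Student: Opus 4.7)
The plan is local and then global. First, by compactness of $Y$, it suffices to prove the stronger statement that for each $y_0\in Y$ there exist an open neighbourhood $V_{y_0}\subseteq Y$ and a constant $C_{y_0}>0$ such that
$$\nu(T,f(w))\leq\nu(f^*T,w)\leq C_{y_0}\nu(T,f(w))$$
for every closed positive $(1,1)$-current $T$ on $X$ and every $w\in V_{y_0}$; a finite subcover of $Y$ then yields the theorem with $C:=\max_iC_{y_i}$. So I fix $y\in Y$, set $x:=f(y)$, pass to coordinate charts $V\ni y$ and $U\ni x$ with $f(V)\subseteq U$, and write $T=dd^c\varphi$ locally on $U$ for a psh function $\varphi$; then $f^*T=dd^c(\varphi\circ f)$ on $V$, so both Lelong numbers reduce to those of psh local potentials.

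For the first inequality I would exploit the Lipschitz behaviour of $f$ on $\overline V$: there exists $L>0$ with $f(B(w,r))\subseteq B(f(w),Lr)$ for $w\in V$ and small $r>0$. Taking suprema yields $\sup_{B(w,r)}(\varphi\circ f)\leq\sup_{B(f(w),Lr)}\varphi$, and dividing by $\log r<0$ and using $\log(Lr)/\log r\to1$ as $r\downarrow0$ gives
$$\nu(f^*T,w)=\nu(\varphi\circ f,w)\geq\nu(\varphi,f(w))=\nu(T,f(w)).$$

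The second inequality is the hard direction and is the content of \cite{Fav99,Kis00}. The strategy is to establish a local \L{}ojasiewicz-type bound: after possibly shrinking $V$, there exist $c>0$ and an integer $k\geq1$ such that for every $u\in U$ sufficiently close to $x$ one can find $w'\in V$ with $f(w')=u$ and
$$|w'-y|\leq c\,|u-x|^{1/k}.$$
This is a quantitative open-mapping statement for the analytic map germ $f\colon(V,y)\to(U,x)$, and it follows from the classical \L{}ojasiewicz inequality applied to the graph $\Gamma_f\subseteq V\times U$ (equivalently, via generic slicing one reduces to the finite-to-one case when the fibre through $y$ is positive-dimensional, where $k$ is the local multiplicity of $f$ at $y$). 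Granted such a bound, the asymptotic expansion $\sup_{B(x,r)}\varphi\leq\nu(\varphi,x)\log r+o(\log r)$ as $r\downarrow0$ transfers, via the substitution $u=f(w')$, into $\varphi(f(w'))\leq k\cdot\nu(\varphi,x)\log|w'-y|+O(1)$, which gives $\nu(\varphi\circ f,y)\leq k\cdot\nu(\varphi,x)$. To promote this pointwise bound at $y$ to a uniform bound for $w\in V_y$, I would use the upper semicontinuity of Lelong numbers together with the upper semicontinuity of the \L{}ojasiewicz exponent $k(w)$ along $V$, shrinking $V$ as necessary to fix the exponent.

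The main obstacle is establishing the quantitative \L{}ojasiewicz-type bound \emph{uniformly} in a neighbourhood of $y$ (in both the exponent $k$ and the constant $c$), given that $f$ need not be equidimensional and may have a critical locus passing through $y$; this is precisely where the non-trivial local analytic geometry of the map germ enters, and it is the heart of the arguments in \cite{Fav99,Kis00}. Once that local uniform bound is in hand, compactness of $Y$ immediately yields the global constant $C$.
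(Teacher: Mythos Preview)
The paper does not give its own proof of this theorem: it is quoted verbatim as a known result, with references to \cite[Corollary 4]{Fav99} and \cite[Th\'eor\`eme 5.1]{Kis00}. So there is nothing to compare your argument against in the paper itself; your task is really to outline a proof of the cited result.

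Your reduction by compactness and your proof of the left inequality are both correct and standard. For the right inequality, your outline is in the spirit of Kiselman's argument in \cite{Kis00}: control the Lelong number of $\varphi\circ f$ at $y$ via a \L{}ojasiewicz-type openness estimate $|w'-y|\leq c\,|u-x|^{1/k}$ for some local lift $w'$ of $u$. Two points deserve care. First, the existence of such lifts presupposes that $f(V)$ contains a full neighbourhood of $x$; this is not automatic at every $y$ for a merely surjective holomorphic map (fibre dimension can jump), and you should say explicitly how you obtain it, e.g.\ by slicing to a germ of a finite map through $y$ and invoking the open mapping theorem there. Second, the ``upper semicontinuity of the \L{}ojasiewicz exponent $k(w)$ along $V$'' is exactly the non-trivial uniformity statement you need, and it does not follow from soft arguments; this is the substantive content of \cite{Fav99,Kis00}. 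It would be more honest to say that you reduce the theorem to a uniform \L{}ojasiewicz inequality for the family of map germs $f\colon(Y,w)\to(X,f(w))$, $w\in V$, and then cite those references for that inequality.

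For context, Favre's proof in \cite{Fav99} proceeds differently: it approximates $\varphi$ by psh functions with analytic singularities (Demailly regularisation), reducing the inequality to a statement about multiplicities of analytic cycles under pullback, where the constant $C$ comes from intersection theory. That route avoids the pointwise \L{}ojasiewicz analysis entirely, at the cost of invoking Demailly's approximation theorem.
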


If $T$ is a closed almost positive $(1,1)$-current on $X$ and if $x$ is a point in $X$ with local coordinates $z=(z_1,\dots,z_n)$ around $x$, then there exists a positive constant $C$ such that $T+Cdd^c|z|^2\geq0$ locally around $x$. Then we define the Lelong number $\nu(T,x)$ as $\nu(T+Cdd^c|z|^2,x)$; this does not depend on the choice of $C$.

For $c\geq0$ define the \emph{Lelong upperlevel sets} as 
$$E_c(T):=\{x\in X\mid \nu(T,x)\geq c\}.$$
Then a fundamental theorem of \cite{Siu74} states that for each $c>0$, the set $E_c(T)$ is a proper analytic subset of $X$. Thus, for any analytic subset $Y$ of $X$ we may define the \emph{generic Lelong number of $T$ along $Y$} as 
$$\nu(T,Y):=\inf\{\nu(T,x)\mid x\in Y\},$$
which is equal to $\nu(T,x)$ for a very general point $x\in Y$.

\subsection{Divisorial valuations}

Let $X$ be compact complex manifold. Following \cite[B.5 and B.6]{BBJ21}, a \emph{prime divisor over $X$} denotes a prime divisor $E\subseteq X'$, where $\mu\colon X'\to X$ is a resolution. We say that two prime divisors $E_1\subseteq X_1$ and $E_2\subseteq X_2$ over $X$ are \emph{equivalent} if there exists a common resolution $X'$ of $X_1$ and $X_2$ such that the strict transforms of $E_1$ and $E_2$ on $X'$ coincide. When $X$ is projective, a prime divisor over $X$ is the same thing as a geometric divisorial valuation on $X$ by \cite[Lemma 2.45]{KM98}.

Let $T$ be a closed positive $(1,1)$-current on $X$. If $E$ is a prime divisor on a resolution $f\colon Y\to X$, we denote
$$\nu(T,E):=\nu(f^*T,E).$$
If $E'$ is another prime divisor over $X$ equivalent to $E$, then $\nu(T,E)=\nu(T,E')$. If $D$ is an $\R$-divisor on $X$, then we define the \emph{multiplicity of $D$ along $E$} by
$$\mult_E D:=\mult_E f^*D.$$

\subsection{Siu decomposition}\label{subsec:Siudecomposition}
If $X$ is a complex manifold and if $T$ is a closed positive $(1,1)$-current on $X$, then there exist at most countably many codimension $1$ irreducible analytic subsets $D_k$ such that $T$ has the \emph{Siu decomposition}
$$T=R+\sum\nu(T,D_k)\cdot D_k,$$
where $R$ is a closed positive $(1,1)$-current such that $\codim_X E_c(R)\geq2$ for each $c>0$. In this paper we call $\sum\nu(T,D_k)\cdot D_k$ the \emph{divisorial part} and $R$ the \emph{residual part} of (the Siu decomposition of) $T$.

Now assume that $T$ is closed almost positive $(1,1)$-current, and let $\gamma$ be a continuous form on $X$ such that $T\geq\gamma$. Then one can construct the Siu decomposition $T=\sum\nu(T,D_k)\cdot D_k+R$ of $T$ similarly as above, where now $R$ is a closed almost positive $(1,1)$-current satisfying $R\geq\gamma$.

With notation as above, if $\pi\colon Y\to X$ is a resolution and if 
$$\pi^*T=R_Y+\sum\nu(\pi^*T,D'_\ell)\cdot D'_\ell$$
is the Siu decomposition of $\pi^*T$, then it is clear that each $D'_\ell$ is a component of $\pi^*D_k$ for some $k$, or it is a $\pi$-exceptional divisor. In particular, if the divisorial part of $T$ is an $\R$-divisor, then so is the divisorial part of $\pi^*T$.

\subsection{Singular metrics}

Let $L$ be a holomorphic line bundle on a complex manifold $X$. A \emph{singular hermitian metric} or simply a \emph{singular metric} $h$ on $L$ is a metric given in every trivialisation $\theta\colon L|_{\Omega}\to \Omega \times \C$ by 
$$ h(\xi,\xi):=|\theta (\xi)|^2e^{-2\varphi(x)}\quad\text{for }x\in \Omega,\ \xi \in L_x \text{ and } \varphi\in L^1_{\loc}(\Omega).$$
We also denote $|\cdot|_h:=h(\cdot\,,\cdot)^{1/2}$. The function $\varphi$ is called the \emph{local weight} of $h$ with respect to the trivialisation $\theta$. The \emph{curvature current} $\Theta_h(L):=dd^c\varphi$ is globally defined and lies in $\{L\}\in H^{1,1}_\mathrm{BC}(X,\R)$. The curvature current $\Theta_h(L)$ of $h$ is \emph{semipositive} if it is positive in the sense of currents. 

Now fix a smooth metric $h_\infty$ on $L$. Then there exists a locally integrable function $\varphi$ on $X$ such that $h=h_\infty e^{-2\varphi}$, and we call $\varphi$ the \emph{global weight} of $h$ with respect to the reference metric $h_\infty$. Then we have
$$\Theta_h(L)=\Theta_{h_\infty}(L)+dd^c\varphi.$$
Conversely, for any closed $(1,1)$-current $T\in\{L\}$ there exists a degree $0$ current $\varphi$ such that $T=\Theta_{h_\infty}(L)+dd^c\varphi$. When additionally $T$ is almost positive, then $\varphi\in L^1_{\loc}(X)$, hence every almost positive current $T\in\{L\}$ is the curvature current of a singular hermitian metric on $L$, and the global weight $\varphi$ is a quasi-psh function on $X$.

We now mention several examples of quasi-psh functions which are relevant for this paper.

\begin{exa}\label{exa:quasi-psh}\mbox{}
\begin{enumerate}[\normalfont (a)]
\item Let $\Omega\subseteq\C^n$ be an open subset and let $f_1,\dots,f_m$ be holomorphic functions on $\Omega$. Then $\log\big(|f_1|^2+\dots+|f_m|^2\big)$ is psh on $\Omega$ by \cite[Example I.5.12]{Dem12}.
\item More generally, let $L$ be a holomorphic line bundle with a continuous metric $h$ on a complex manifold $X$, and consider global holomorphic sections $\sigma_1,\dots,\sigma_m$ of $L$. Then the function $\varphi\colon X\to[-\infty,+\infty)$ given by $$\varphi:=\frac{1}{2}\log\big(|\sigma_1|^2_h+\dots+|\sigma_m|^2_h\big)$$
is quasi-psh on $X$: indeed, let $\theta$ be a local continuous weight of $h$ on some trivialisation of $L$. Then locally and by (a) we have
$$dd^c(\theta+\varphi)=\frac{1}{2}dd^c\log\big(|\sigma_1|^2+\dots+|\sigma_m|^2\big)\geq0,$$
hence the function $\theta+\varphi$ is psh. In particular, globally we have $\Theta_h(L)+dd^c\varphi\geq0$, hence the curvature current of the singular metric $he^{-2\varphi}$ on $L$ is semipositive. The metric $he^{-2\varphi}$ and the current $\Theta_h(L)+dd^c\varphi$ do not depend on the choice of $h$.
\item In the context of (b), let $\sigma$ be a global holomorphic section of $L$ and let $\sum m_iD_i$ be the zero-divisor of $f$. Then we have the \emph{global Lelong-Poincar\'e equation}
$$\Theta_h(L)+dd^c\log|\sigma|_h = \sum m_iD_i,$$
understood in the sense of currents, see \cite[Theorem III.2.15 and Section V.13]{Dem12}.
\item Let $\Omega\subseteq\C^n$ be an open subset, let $u_1,\dots,u_r$ be psh functions on $\Omega$ and let $\chi\colon[-\infty,+\infty)^r\to[-\infty,+\infty)$ be a convex function which is non-decreasing in each coordinate. Then by \cite[Theorem I.5.6]{Dem12}, the function $\chi(u_1,\dots,u_r)$ is psh on $\Omega$. In particular, the functions $u_1+\dots+u_r$, $\max\{u_1,\dots,u_r\}$ and $e^{u_1}+\dots+e^{u_r}$ are psh on $\Omega$.
\item Let $X$ be a complex manifold, let $\theta$ be a continuous real $(1,1)$-form on $X$, and let $\{\varphi_j\}$ be a sequence of $\theta$-psh functions on $X$ which are locally uniformly bounded from above. If $\sum\varepsilon_j$ is a convergent series of positive real numbers, then the function $\sum\varepsilon_j\varphi_j$ is $\theta$-psh. Indeed, there exists a constant $C$ such that $\varphi_j':=\varphi_j-C\leq0$ for all $j$. Then each partial sum $\Phi_k:=\sum_{j\leq k}\varepsilon_j\varphi_j'$ is $\theta$-psh by (d), and the sequence $\{\Phi_k\}$ is decreasing, hence $\sum\varepsilon_j\varphi_j'=\lim\limits_{k\to\infty}\Phi_k\in\PSH(X,\theta)$ by Theorem \ref{thm:compactnessquasipsh}(d); note that the limit is not ${-}\infty$ since the union of pluripolar sets of all $\varphi_j$ is of Lebesgue measure zero in $X$. Since $\sum\varepsilon_j\varphi_j=\sum\varepsilon_j\varphi_j'+C\sum\varepsilon_j$, we obtain the claim.
\end{enumerate}
\end{exa}

We will need later the following remark, which we extract from the proof of \cite[Theorem 1.1]{Vu19}.

\begin{rem}\label{rem:pluripolarcountable}
Let $X$ be a compact complex manifold, let $\theta$ be a continuous real $(1,1)$-form on $X$, and let $\{\varphi_j\}$ be a sequence of $\theta$-psh functions on $X$. Then the union of pluripolar loci of all $\varphi_j$ is again a pluripolar set. In order to see this, first note that by subtracting a constant from each $\varphi_j$, we may assume that $\varphi_j\leq0$ for all $j$. Then by Example \ref{exa:quasi-psh}(e) the function $\varphi:=\sum j^{-2}\varphi_j$ is $\theta$-psh, and the union of pluripolar loci of all $\varphi_j$ is contained in the set $\{\varphi={-}\infty\}$.
\end{rem}

\subsection{Multiplier ideals}\label{subsec:multiplier}

If $\varphi$ is a quasi-psh function on a complex manifold $X$, the \emph{multiplier ideal sheaf} $\mathcal I(\varphi)\subseteq \OO_X$ is defined by 
$$ \mathcal I(\varphi)(U):=\{f\in \OO_X(U)\mid |f|e^{-\varphi}\in L^2_{\loc}(U)\} $$ 
for every open set $U\subseteq X$. Note that we set $|f(x)|e^{-\varphi(x)}=0$ at points $x \in X$ where $f(x) = 0$ and $\varphi(x) = {-}\infty$. The sheaf $\mathcal I(\varphi)$ is a coherent ideal sheaf on $X$. 

If now $h$ is a singular metric on a holomorphic line bundle $L$ on $X$ whose curvature current $\Theta_h(L)$ is almost positive, then its associated global weight $\varphi$ (with respect to some fixed smooth metric on $L$) is quasi-psh, and we define $\mathcal I(h):=\mathcal I(\varphi)$. This does not depend on the choice of the smooth metric on $L$.

Finally, if $T$ is a closed almost positive $(1,1)$-current on $X$, then any of its associated global potentials $\varphi$ (see \S\ref{subsec:quasi-psh}) is quasi-psh, and we define $\mathcal I(T):=\mathcal I(\varphi)$. This does not depend on the choice of $\varphi$. If $\nu(T,x)<1$ at a point $x\in X$, then $\mathcal I(T)_x=\OO_{X,x}$ by Skoda's lemma \cite[Lemma 5.6]{Dem01}.

The following is a fundamental result, proved first in \cite[3.3]{GZ15}; similar results with easier proofs are in \cite[Theorem 1.1]{Lem17} and \cite[Corollary 1.1 and Remark 1.4]{Hie14}.

\begin{thm}\label{thm:GuanZhou}
Let $\varphi$ and $\{\varphi_i\}_{i\in\N}$ be psh functions on an open set $U$ of a complex manifold $X$. Assume that $\varphi_i\leq\varphi$ for all $i$, and that the sequence $\{\varphi_i\}$ converges to $\varphi$ in $L^1_\loc(X)$. Then for every $U'\Subset U$ there exists a positive integer $i_0$ such that $\mathcal I(\varphi_i)|_{U'}=\mathcal I(\varphi)|_{U'}$ for all $i\geq i_0$.
\end{thm}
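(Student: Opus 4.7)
The inclusion $\mathcal I(\varphi_i)|_{U'} \subseteq \mathcal I(\varphi)|_{U'}$ is immediate for every $i$: since $\varphi_i \leq \varphi$, we have $|f|^2 e^{-2\varphi} \leq |f|^2 e^{-2\varphi_i}$ pointwise, so local integrability of the right-hand side implies local integrability of the left-hand side. The content of the theorem is therefore the reverse inclusion $\mathcal I(\varphi)|_{U'} \subseteq \mathcal I(\varphi_i)|_{U'}$ for $i$ large.

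The plan is first to reduce to finitely many test sections using coherence, then to self-improve the given integrability via Demailly's strong openness, and finally to transfer the improved integrability to the sequence $\{\varphi_i\}$ via Hölder's inequality combined with the $L^1_\loc$-convergence hypothesis. Concretely, fix open sets $U' \Subset V \Subset U$. Since $\mathcal I(\varphi)$ is a coherent ideal sheaf, I would pick finitely many sections $f_1, \dots, f_N \in \Gamma(V, \mathcal I(\varphi))$ generating $\mathcal I(\varphi)|_V$; because these also generate the stalks of $\mathcal I(\varphi)|_{U'}$, it suffices to show that each $f_j$ lies in $\mathcal I(\varphi_i)(U')$ for all $i \gg 0$. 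Thus the problem reduces to the following assertion: for any fixed holomorphic function $f$ on $V$ with $\int_V |f|^2 e^{-2\varphi}\,dV < \infty$,
$$\int_{U'} |f|^2 e^{-2\varphi_i}\,dV < \infty \quad \text{for all } i \gg 0.$$

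The decisive input is Demailly's strong openness theorem, proved by Guan-Zhou: the finite integrability of $|f|^2 e^{-2\varphi}$ near $\overline{U'}$ self-improves to the existence of a real number $p > 1$ and an open set $W$ with $U' \Subset W \Subset V$ such that $\int_W |f|^2 e^{-2p\varphi}\,dV < \infty$. Armed with this extra exponent, I would apply Hölder's inequality with conjugate exponents $p$ and $q = p/(p-1)$ to the decomposition
$$|f|^2 e^{-2\varphi_i} = \big(|f|^{2/p} e^{-2\varphi}\big) \cdot \big(|f|^{2/q} e^{2(\varphi - \varphi_i)}\big),$$
which yields
$$\int_{U'} |f|^2 e^{-2\varphi_i}\,dV \leq \left(\int_{U'} |f|^2 e^{-2p\varphi}\,dV\right)^{1/p} \left(\int_{U'} |f|^2 e^{2q(\varphi - \varphi_i)}\,dV\right)^{1/q}.$$
The first factor is finite by strong openness; for the second factor, I would exploit $\varphi - \varphi_i \geq 0$ together with the $L^1_\loc$-smallness of $\varphi - \varphi_i$ and Hartogs-type uniform control of psh envelopes, together with a further Hölder trade-off against a slightly smaller strong-openness exponent, in order to convert the $L^1$-smallness of the non-negative function $\varphi - \varphi_i$ into a uniform bound on the exponential integral.

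The main obstacle is strong openness itself, whose proof is a delicate $L^2$-extension argument running through the Ohsawa-Takegoshi theorem in Hiep's approach, or through Bergman-kernel approximation in the Berndtsson-Lempert approach. The coherent-sheaf reduction and the Hölder/Hartogs bootstrap that converts strong openness plus $L^1_\loc$-convergence of $\varphi_i$ into the desired uniform integrability are essentially soft, although the exponential estimate for the second Hölder factor is the most subtle of these secondary steps.
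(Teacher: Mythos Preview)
The paper does not give its own proof of this theorem: it is stated as a known result and attributed to Guan--Zhou \cite[3.3]{GZ15}, with the simpler later proofs of Lempert and Hiep cited as alternatives. So there is no ``paper's proof'' to compare against, and your task was really to reconstruct one of those arguments.

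Your outline correctly identifies that the depth lies in strong openness, and your coherent-sheaf reduction to finitely many test sections is fine. The gap is in the second H\"older factor. You write the estimate
\[
\int_{U'} |f|^2 e^{-2\varphi_i}\,dV \leq \left(\int_{U'} |f|^2 e^{-2p\varphi}\,dV\right)^{1/p} \left(\int_{U'} |f|^2 e^{2q(\varphi - \varphi_i)}\,dV\right)^{1/q}
\]
and then assert that the second factor is handled by ``Hartogs-type uniform control of psh envelopes'' together with a further H\"older trade-off. This is where the argument is not complete. The function $\varphi-\varphi_i$ is \emph{not} plurisubharmonic, so Hartogs' lemma and the standard psh compactness toolkit do not apply to it, and $L^1$-smallness of a nonnegative function does not by itself bound an exponential integral (Chebyshev only gives decay like $1/\log t$ for the superlevel sets, which is not integrable). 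One genuinely needs an additional input here --- for instance the Demailly--Koll\'ar semi-continuity theorem for complex singularity exponents, which controls $\int e^{-2c\psi}$ uniformly for $\psi$ in an $L^1$-neighbourhood of a fixed psh function --- and that is itself a nontrivial result, not a soft bootstrap. Calling this step ``essentially soft'' undersells it.

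More to the point, none of the cited proofs proceed by first isolating the statement ``$\mathcal I(\varphi)=\mathcal I\big((1+\varepsilon)\varphi\big)$'' and then running your H\"older argument. Hiep's proof and the relevant part of Guan--Zhou establish the quantitative $L^1$-stability statement \emph{directly} via Ohsawa--Takegoshi extension with sharp estimates, and Lempert's proof goes through Berndtsson's positivity of direct images; in each case the theorem you are asked about and the strong openness property emerge together from the same effective estimate, rather than one being deduced from the other by elementary means.
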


We will need the following important result, Theorem \ref{thm:DEL+DP}. In order to state it, we need a piece of notation: Assume that $T$ is a closed positive $(1,1)$-current on a complex manifold $X$, which can be written as a sum
$$T=\sum_{i=1}^\infty\lambda_i D_i,$$
where $\lambda_i\geq0$ for all $i$ and each $D_i$ is a prime divisor on $X$; in other words, the residual part of the Siu decomposition of $T$ is zero. Then $\lfloor T\rfloor$ denotes the closed positive $(1,1)$-current
$$\lfloor T\rfloor:=\sum_{i=1}^\infty\lfloor\lambda_i\rfloor D_i.$$

\begin{thm}\label{thm:DEL+DP}
Let $X$ be a complex manifold.
\begin{enumerate}[\normalfont (a)]
\item Let $T_1$ and $T_2$ be two closed almost positive $(1,1)$-currents on $X$. Then
$$\mathcal I(T_1+T_2)\subseteq\mathcal I(T_1)\cdot\mathcal I(T_2).$$

\item Let $T_1$ and $T_2$ be two closed almost positive $(1,1)$-currents on $X$. If for $x\in X$ we have $\nu(T_1,x)=0$, then
$$\mathcal I(T_1+T_2)_x=\mathcal I(T_2)_x.$$

\item If $G$ is an effective $\R$-divisor on $X$ with simple normal crossings support, then
$$ \mathcal I(G)=\OO_X({-}\lfloor G\rfloor). $$

\item If $G$ is a closed positive $(1,1)$-current on $X$ whose residual part is zero, then $\lfloor G\rfloor$ is a divisor on $X$ and
$$ \mathcal I(G)\subseteq\OO_X({-}\lfloor G\rfloor). $$

\item Let $T$ be a closed almost positive $(1,1)$-current on $X$ and let
$$ T = R+D $$
be its Siu decomposition, where $R$ is the residual part and $D$ is the divisorial part. 
Then $\lfloor D\rfloor$ is a divisor on $X$, we have
$$ \textstyle \mathcal I(T)\subseteq\OO_X(-\lfloor D\rfloor), $$
and this inclusion is an equality on a Zariski open subset $U$ with the property that $\codim_X(X\setminus U)\geq2$. 
\end{enumerate}
\end{thm}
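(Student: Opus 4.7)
My plan is to establish the five parts in order, with the deep subadditivity statement (a) serving as the cornerstone on which the others depend.

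For (a), the approach is the diagonal-restriction method of Demailly--Ein--Lazarsfeld adapted to almost positive currents: on the product $X \times X$ with projections $\pi_1, \pi_2$, form the closed almost positive current $\pi_1^* T_1 + \pi_2^* T_2$; by a Fubini-type argument its multiplier ideal is the external tensor product of $\mathcal I(T_1)$ and $\mathcal I(T_2)$. Then restrict to the diagonal $\Delta_X \subseteq X \times X$ using the Ohsawa--Takegoshi $L^2$-extension theorem to descend the inclusion to $X$. The technical heart is carrying out $L^2$-extension in the singular metric setting, which requires a careful choice of smooth reference forms and a regularisation step.

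For (b), both inclusions are needed. The direction $\mathcal I(T_1 + T_2)_x \subseteq \mathcal I(T_2)_x$ follows immediately from (a) combined with Skoda's integrability lemma, which yields $\mathcal I(T_1)_x = \OO_{X,x}$ since $\nu(T_1, x) = 0$. The reverse inclusion is more delicate: for $f \in \mathcal I(T_2)_x$, the Guan--Zhou strong openness theorem produces $\varepsilon > 0$ such that $|f|^{2(1+\varepsilon)} e^{-2(1+\varepsilon)\varphi_2}$ is locally integrable near $x$, where $\varphi_2$ is a local potential of $T_2$; then H\"older's inequality with conjugate exponents $1+\varepsilon$ and $(1+\varepsilon)/\varepsilon$ reduces integrability of $|f|^2 e^{-2(\varphi_1+\varphi_2)}$ to that of $e^{-2((1+\varepsilon)/\varepsilon)\varphi_1}$, which is guaranteed by Skoda because $\nu(T_1,x) = 0$. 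Part (c) is a direct local coordinate computation: after straightening $G = \sum a_i \{z_i = 0\}$, the standard monomial integrability criterion shows that $|f|^2 / \prod |z_i|^{2 a_i}$ lies in $L^1_\loc$ if and only if $\mult_{\{z_i = 0\}} f \geq \lfloor a_i \rfloor$ for each $i$.

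For (d), first observe that $\lfloor G \rfloor$ is a well-defined divisor since only finitely many components $D_i$ can pass through any fixed point (closed positive currents have locally finite mass). To prove the inclusion, fix $D_i$ and a very general smooth point $x \in D_i$ lying on no other $D_j$: near $x$, the current $G$ differs from $\lambda_i [D_i]$ by a smooth form, so (c) gives $\mult_{D_i} f \geq \lfloor \lambda_i \rfloor$ for any $f \in \mathcal I(G)_x$, and the conclusion propagates to all of $D_i$. For (e), apply (d) to the divisorial part $D$ to obtain $\mathcal I(D) \subseteq \OO_X(-\lfloor D\rfloor)$; combined with (a), this yields $\mathcal I(T) \subseteq \mathcal I(R) \cdot \mathcal I(D) \subseteq \OO_X(-\lfloor D\rfloor)$. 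For the equality on $U$, take $U$ to be the complement of an analytic subset of codimension at least $2$ containing the pairwise intersections and singular loci of the components of $\lfloor D \rfloor$ together with a Lelong upperlevel set $E_c(R)$ for $c$ sufficiently small (whose codimension is at least $2$ by hypothesis); on $U$ the divisor $D$ is SNC so (c) gives $\mathcal I(D)_x = \OO_X(-\lfloor D\rfloor)_x$, and $\nu(R,\cdot)$ is small enough that a quantitative version of the H\"older--Skoda argument from (b) yields $\mathcal I(T)_x = \mathcal I(D)_x$. The main obstacle throughout is part (a): all subsequent parts are essentially formal consequences of (a) combined with local coordinate computations and the standard integrability theorems of Skoda and Guan--Zhou.
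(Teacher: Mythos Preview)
Your outline for (a), (b), and (c) matches the standard references the paper cites, and your overall architecture is correct. There are, however, two genuine gaps in (d) and (e).

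In (d), the claim that near a very general smooth point $x\in D_i$ the current $G$ differs from $\lambda_i[D_i]$ by a \emph{smooth} form is false: the remaining sum $\sum_{j\neq i}\lambda_j[D_j]$ may have infinitely many components accumulating near $x$, so it need not be smooth there. What you can say is that it has Lelong number zero at $x$, and then invoke (b) rather than (c) directly. The paper avoids this entirely by a different splitting: it writes $G=(G-\lfloor G\rfloor)+\lfloor G\rfloor$, uses (a) to get $\mathcal I(G)\subseteq\mathcal I(\lfloor G\rfloor)$, then handles the \emph{integral} divisor $\lfloor G\rfloor$ (which has only finitely many components locally, by Siu's theorem applied to $E_1(G)$) via (c) on its smooth locus together with a torsion-freeness argument to extend across codimension~$2$.

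The more serious gap is in (e). Removing a single Lelong upperlevel set $E_c(R)$ for one small $c$ does not suffice: your ``quantitative H\"older--Skoda'' argument requires $\nu(R,x)<\varepsilon/(1+\varepsilon)$ where $\varepsilon$ is the openness exponent coming from Guan--Zhou, and that exponent depends on the germ $f$ and on $D$, so no uniform $c$ works. The paper instead removes $\bigcup_{c>0}E_c(R)$, giving $\nu(R,x)=0$ outright, and applies (b) directly; since this union is only a countable union of codimension~$\geq 2$ analytic sets (not itself closed), the paper then uses that the locus where the two coherent sheaves $\mathcal I(T)$ and $\OO_X(-\lfloor D\rfloor)$ differ is analytic to conclude it has codimension~$\geq 2$. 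Your sketch also asserts ``on $U$ the divisor $D$ is SNC'', but you only removed singularities and intersections of components of $\lfloor D\rfloor$, not of $D$ itself, which may have infinitely many components with coefficient $<1$; the paper sidesteps this by reducing via (b) to a single smooth component $\Gamma$ at each point.
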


\begin{rem}\label{rem:sheaves}
Here and elsewhere in the paper, if $D$ is an integral divisor on a complex manifold, then $\OO_X(D)$ denotes the subsheaf of the sheaf of meromorphic functions on $X$ whose divisor of zeroes and poles is precisely $D$. Thus, if $D$ is effective, then $\OO_X({-}D)$ is the sheaf of germs of holomorphic functions on $X$ which vanish along $D$; in particular, we have $\OO_X({-}D)\subseteq\OO_X$, and if $D'$ is another integral divisor on $X$, then  we have  $\OO_X({-}D)\subseteq\OO_X({-}D')$ if and only if $D'\leq D$.
\end{rem}

\begin{proof}[Proof of Theorem \ref{thm:DEL+DP}]
Part (a) is \cite[Theorem 2.6(ii)]{DEL00}, and part (b) is \cite[Lemma 2.14]{LP20b}. Part (c) is well known \cite[Remark 5.9]{Dem01}. 

For (d), first note that the closed positive $(1,1)$-current $G':=\lfloor G\rfloor$ is a divisor since the Lelong upperlevel set $E_1(G)$ is analytic. Then $G-G'$ is also a closed positive $(1,1)$-current on $X$, and by (a) we have
\begin{equation}\label{eq:inclusions}
\mathcal I(G)\subseteq\mathcal I(G-G')\cdot\mathcal I(G')\subseteq\mathcal I(G').
\end{equation}
Let $V$ be the maximal Zariski open subset of $X$ such that $G'|_V$ is a smooth divisor. Then $\codim_X(X\setminus V)\geq2$, and by (c) we have $\mathcal I(G')|_V=\OO_V({-}G')$. Since $\mathcal I(G')$ is torsion free and $\OO_X({-}G')$ is a line bundle, it follows that $\mathcal I(G')\subseteq\OO_X({-}G')$, which together with \eqref{eq:inclusions} implies (d).

Part (e) is \cite[Proposition 3.2]{DP03}; since the notation and context is slightly different, we provide the proof for the benefit of the reader. The current $D':=\lfloor D\rfloor$ is a divisor since $E_1(T)$ is an analytic subset of $X$. Then $T-D'$ is also a closed almost positive $(1,1)$-current on $X$, and by (a) and (d) we have
$$ \mathcal I(T)\subseteq\mathcal I(T-D')\cdot\mathcal I(D')\subseteq\mathcal I(D')\subseteq \OO_X({-}D'), $$
which gives the second claim in (e).

Now we show the last claim in (e). If $D_i$ are the components of $D$ and if $D_{i,\mathrm{sing}}$ is the singular locus of $D_i$ for each $i$, set
$$Z:=\bigcup_i D_{i,\mathrm{sing}}\cup\bigcup_{k,\ell}(D_k\cap D_\ell)\cup\bigcup_{c>0}E_c(R).$$
Then $Z$ is the union of at most countably many analytic subsets of $X$ of codimension at least $2$, and it suffices to show that
\begin{equation}\label{eq:atapoint}
\mathcal I(T)_x=\OO_X({-}D')_x\quad\text{for all }x\in X\setminus Z,
\end{equation}
since the locus in $X$ where the coherent sheaves $\mathcal I(T)$ and $\OO_X(-\lfloor D\rfloor)$ differ is an analytic subset of $X$. To that end, fix $x\in X\setminus Z$. Assume first that $x$ does not belong to any component of $D$. Then $\nu(T,x)=0$ by the definition of $Z$, hence by Skoda's lemma we have $\mathcal I(T)_x=\OO_{X,x}$, and clearly also $\OO_X(-\lfloor D\rfloor)_x=\OO_{X,x}$, which shows \eqref{eq:atapoint} in this first case. Finally, assume that $x$ belongs to a component $\Gamma$ of $D$ and set $R_1:=R+\big(D-\nu(T,\Gamma)\cdot\Gamma\big)$. Then by the definition of $Z$ we have $\nu(R_1,x)=0$, thus (b) and (c) yield
\begin{align*}
\mathcal I(T)_x&=\mathcal I\big(R_1+\nu(T,\Gamma)\cdot\Gamma\big)_x=\mathcal I\big(\nu(T,\Gamma)\cdot\Gamma\big)_x\\
&=\OO_X\big({-}\lfloor\nu(T,\Gamma)\rfloor\cdot\Gamma\big)_x=\OO_X({-}D')_x,
\end{align*}
which gives \eqref{eq:atapoint} also in this second case, and finishes the proof.
\end{proof}

We will also need the following consequence of the change of variables formula.

\begin{lem}\label{lem:Cao}
Let $X$ be a complex manifold and let $f\colon Y\to X$ be a resolution of $X$. Let $\varphi_1$ and $\varphi_2$ be two quasi-psh functions on $X$ such that $\mathcal I(\varphi_1)\subseteq \mathcal I(\varphi_2)$. If $A:=K_Y-f^*K_X$, then 
$$\mathcal I(f^*\varphi_1)\otimes\OO_X(-A)\subseteq \mathcal I(f^*\varphi_2).$$
\end{lem}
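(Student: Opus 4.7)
The plan is to exploit the change-of-variables formula for $f$ together with the identity $f_*\OO_Y = \OO_X$, which holds because $f$ is proper bimeromorphic and $X$ is smooth (hence normal). Both sides of the claimed inclusion are coherent ideal subsheaves of $\OO_Y$, so it is enough to check the inclusion of sections on an open cover, and I will use the cover consisting of opens of the form $V = f^{-1}(U)$ for $U \subseteq X$ open: if $\mathcal F, \mathcal G \subseteq \OO_Y$ satisfy $\mathcal F(V_i) \subseteq \mathcal G(V_i)$ on such a cover, then the subsheaf intersection $\mathcal F \cap \mathcal G$ agrees with $\mathcal F$ on each $V_i$, hence globally.

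On such a $V$, the identification $\OO_Y(V) = \OO_X(U)$ via $f^*$ allows us to write every $h \in \OO_Y(V)$ as $h = f^*\tilde h$ for a unique $\tilde h \in \OO_X(U)$. In local holomorphic coordinates $f^*dV_X = |J|^2\,dV_Y$, where $J$ is the Jacobian determinant of $f$, and the divisor of $J$ is precisely $A = K_Y - f^*K_X$; a local defining function $t_A$ of $A$ therefore satisfies $|t_A|^2 \asymp |J|^2$ on relatively compact subsets. The change-of-variables formula then gives, for any quasi-psh $\varphi$ on $X$,
$$\int_V |h|^2\,|J|^2\,e^{-2f^*\varphi}\,dV_Y \;=\; \int_U |\tilde h|^2\,e^{-2\varphi}\,dV_X.$$

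Now take $h \in \mathcal I(f^*\varphi_1)(V)$ and write $h = f^*\tilde h$. Since $|J|^2$ is locally bounded, multiplying the locally integrable $|h|^2 e^{-2f^*\varphi_1}$ by $|J|^2$ preserves local integrability, so the identity above yields $\tilde h \in \mathcal I(\varphi_1)(U)$. The hypothesis then gives $\tilde h \in \mathcal I(\varphi_2)(U)$, and reading the identity in reverse produces $|h|^2\,|J|^2\,e^{-2f^*\varphi_2} \in L^1_{\loc}(V)$. The comparison $|t_A|^2 \lesssim |J|^2$ finally gives $|h\,t_A|^2 e^{-2f^*\varphi_2} \in L^1_{\loc}(V)$, i.e.\ $h \cdot t_A \in \mathcal I(f^*\varphi_2)(V)$. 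Local sections of $\mathcal I(f^*\varphi_1) \otimes \OO_Y(-A)$ on $V$ are generated by such products $h \cdot t_A$, which yields the required section-level inclusion; the sheaf inclusion is then inherited by the preceding reduction.

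The main point to get right is that the twist by $\OO_Y(-A)$ on the left matches exactly the factor $|J|^2$ picked up by the change of variables. This alignment is automatic because $A$ is by definition the zero divisor of $J$, so no further technical obstacle appears; the remaining work is the book-keeping above together with the subsheaf argument.
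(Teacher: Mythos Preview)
Your change-of-variables computation is correct and is indeed the heart of the argument, but the sheaf-theoretic reduction at the start is flawed. The claim that for coherent ideals $\mathcal F,\mathcal G\subseteq\OO_Y$, checking $\mathcal F(f^{-1}(U))\subseteq\mathcal G(f^{-1}(U))$ for all open $U\subseteq X$ implies $\mathcal F\subseteq\mathcal G$ is \emph{false}. A concrete counterexample: let $f\colon Y\to X=\C^2$ be the blowup at the origin with exceptional divisor $E$, pick a point $p\in E$, and take $\mathcal F=\mathfrak m_p$ and $\mathcal G=\OO_Y(-E)$. Since every section of $\OO_Y$ on $f^{-1}(U)$ is a pullback $f^*\tilde g$, one checks that $\mathcal F(f^{-1}(U))=\mathcal G(f^{-1}(U))$ for every $U$ (both equal $\{f^*\tilde g:\tilde g(0)=0\}$ when $0\in U$). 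Yet at any $q\in E\setminus\{p\}$ we have $\mathcal F_q=\OO_{Y,q}\not\subseteq(t_E)_q=\mathcal G_q$. Your justification via ``$\mathcal F\cap\mathcal G$ agrees with $\mathcal F$ on each $V_i$, hence globally'' fails because two subsheaves can have the same sections on a cover without being equal: sections on $f^{-1}(U)$ do not see behaviour transverse to the fibres of $f$.

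The underlying problem is that a germ $h\in\mathcal I(f^*\varphi_1)_y$ at a point $y$ on the exceptional locus need \emph{not} extend to a section on any $f^{-1}(U)$, so it need not be of the form $f^*\tilde h$; your computation only handles those $h$ that are. The paper itself just cites \cite[Lemma 2.2]{Cao14}, so there is no in-text proof to compare against, but to repair your argument one needs a genuinely stalk-local step rather than the cover reduction --- for instance, working through the pushforward identity $f_*\big(\mathcal I(f^*\varphi)\otimes\OO_Y(A)\big)=\mathcal I(\varphi)$ from \cite[Proposition 5.8]{Dem01} more carefully.
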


\begin{proof}
This is \cite[Lemma 2.2]{Cao14}; note that there is a typo in that statement: the divisor $E$ in op.\ cit.\ should be defined as $E=K_{\widetilde{X}}-\pi^*K_X$.
\end{proof}

\subsection{Currents with analytic singularities}

A closed almost positive $(1,1)$-current $T$ on a compact complex manifold $X$, and any of its global potentials $\varphi$, are said to have \emph{analytic singularities} if there exist a coherent ideal sheaf $\mathcal I$ and a constant $c>0$ such that, locally on $X$, we have
$$ \varphi=c\log(|f_1|^2+\dots+|f_k|^2)+u, $$
where $u$ is smooth and $f_1,\dots,f_k$ are local generators of $\mathcal I $. The current $T$ is smooth outside of the co-support of $\mathcal I$. Now, if $\pi\colon Y\to X$ is a resolution of $X$ which factors through the blowup of the scheme $V(\mathcal I )$, there exists an effective divisor $D$ on $Y$ such that $\pi^{-1}\mathcal I = \OO_Y({-}D)$, and the Siu decomposition of $\pi^*T$ has the form
$$\pi^*T=\theta+cD,$$
where $\theta$ is a smooth $(1,1)$-form. If $T\geq\gamma$ for some smooth form $\gamma$, then $\theta\geq\pi^*\gamma$.

\subsection{Currents with generalised analytic singularities}\label{subsec:generalisedanalytic}

We need a generalisation of the concept of currents with analytic singularities introduced in \cite{LP20b}. A closed almost positive $(1,1)$-current $T$ on a compact complex manifold $X$, and any of its global potentials $\varphi$, are said to have \emph{generalised analytic singularities} if there exists a resolution $\pi\colon Y \to X$ such that the Siu decomposition of $\pi^*T$ has the form
$$\pi^*T=\Theta+D,$$
where $\Theta$ is a closed almost positive $(1,1)$-current whose all Lelong numbers are zero and $D$ is an effective $\R$-divisor on $Y$. In that case we say that the current $T$ \emph{descends} to $Y$. If $D$ is additionally a $\Q$-divisor, we say that $T$ has \emph{generalised algebraic singularities}.

Clearly, if a closed almost positive $(1,1)$-current has analytic singularities, then it has generalised analytic singularities.

Let $f\colon Z\to Y$ be a further resolution, and set $g:=\pi\circ f$. Then the current $f^*\Theta$ has all Lelong numbers zero by Theorem \ref{thm:Favre}, hence the Siu decomposition of $g^*T$ has the form
$$g^*T=f^*\Theta+f^*D.$$
Thus, if $f$ is a sufficiently high resolution, then we may assume that the support of the divisorial part $f^*D$ has simple normal crossings.

\subsection{Scalar products and norms}

Let $X$ be a complex manifold of dimension $n$ with a hermitian metric $\omega$, and $L$ be a hermitian line bundle on $X$ with a singular metric $h$. If $u$ and $v$ are $L$-valued $(p,q)$-forms with measurable coefficients, then $|u|_{h,\omega}$ denotes the pointwise norm on $\bigwedge^{p,q}T_X^*\otimes L$ induced by the hermitian metric on $T_X$ whose fundamental form is $\omega$ and by $h$, $\langle u,v\rangle_{h,\omega}$ is the corresponding scalar product, and $dV_\omega:=\omega^n/n!$ is the volume form associated to $\omega$; cf.\ \cite[p.\ 99]{Hoer65}. Set
$$\lla u,v \rra_{h,\omega}:=\int_X \langle u, v\rangle_{h,\omega}\, dV_\omega\quad\text{and}\quad \|u\|_{h,\omega}:=\lla u,u \rra_{h,\omega}^{1/2}.$$
If $L_{p,q}^2(X,L)_{h,\omega}$ is the set of $L$-valued $(p,q)$-forms with measurable coefficients such that $\|u\|_{h,\omega}<\infty$, then $L_{p,q}^2(X,L)_{h,\omega}$ is a Hilbert space with the scalar product $\lla \cdot\, ,\cdot \rra_{h,\omega}$.

If $\sigma$ is a global holomorphic section of the line bundle $\OO_X(K_X)\otimes L$, then we may view it as a smooth $L$-valued $(n,0)$-form and we write $\|\sigma\|_{h,\omega}$ for the corresponding norm. If $h_{K_X}$ is the smooth metric on $\OO_X(K_X)$ induced by the hermitian metric on $T_X$ whose fundamental form is $\omega$, and if $g:=h_{K_X}h$ is the induced metric on $\OO_X(K_X)\otimes L$, then we also write $\|\sigma\|_g:=\|\sigma\|_{h,\omega}$.

We will need the following remark in the proof of Theorem \ref{thm:boundedsections}.

\begin{rem}\label{rem:vanishat0}
With notation as above, fix a smooth metric $h_0$ on the line bundle $\OO_X(K_X)\otimes L$ and assume that $|\cdot|_{h,\omega}=|\cdot|_{h_0}e^{-\varphi}$, where $\varphi$ is a locally integrable function on $X$ which is bounded from above by a constant $C$. Assume that there exists a coordinate ball $U\subseteq X$, an integrable function $\theta\colon U\to\R\cup\{{-}\infty\}$ and a section $s\in C^\infty\big(U,\OO_X(K_X)\otimes L\big)$ such that
$$\int_U |s|^2_{h,\omega}e^{-2\theta}dV_\omega<\infty,$$
but the function $e^{-2\theta}$ is not locally integrable around some point $x\in U$. Then we claim that $s(x)=0$. Indeed, assume that $s(x)\neq 0$, and pick a small ball $x\in V\subseteq U$ such that $M:=\min\{|s(x)|_{h_0}\mid x\in V\}>0$. Then
$$Me^{-2C}\int_V e^{-2\theta}dV_\omega\leq\int_U |s|_{h_0}^2e^{-2\varphi}e^{-2\theta}dV_\omega=\int_U |s|^2_{h,\omega}e^{-2\theta}dV_\omega<\infty,$$
hence $\int_V e^{-2\theta}dV_\omega<\infty$, a contradiction which implies the claim.
\end{rem}

\subsection{H\"ormander's estimates}

We will need the following result which follows by expanding on the techniques of H\"ormander $L^2$ estimates \cite{Hoer65,Hoer90}. The most general result of this form is in \cite[Th\'eor\`eme 5.1 and Lemme 3.2]{Dem82}, where it was proved for complete K\"ahler varieties; see also \cite[Corollary 5.3]{Dem01}. In this paper we only need it for projective manifolds, in which case the proof is much simpler, see \cite[Theorem 3.1]{Dem92b} or \cite[Lecture 5, Theorem 1.1]{Ber10}

\begin{thm}\label{thm:Hoermander}
Let $X$ be a compact K\"ahler manifold with a K\"ahler form $\omega$. Let $L$ be a line bundle on $X$ with a singular metric $h$ such that $\Theta_h(L)\geq\varepsilon\omega$. Then for every form $v\in L^2_{p,q}(X,L)_{h,\omega}$ with $q\geq1$ and $\dbar v=0$ there exists a form $u\in L^2_{p,q-1}(X,L)_{h,\omega}$ such that
$$ \dbar u=v\quad\text{and}\quad \|u\|^2_{h,\omega}\leq\frac{1}{2\pi q\varepsilon}\|v\|^2_{h,\omega}.$$
\end{thm}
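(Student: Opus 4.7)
The plan is to follow the classical two-step strategy: first establish the estimate when $h$ is smooth via the Bochner--Kodaira--Nakano (BKN) identity combined with Hilbert-space duality, and then reduce the general singular case to this by a regularisation argument.

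\emph{Smooth case.} Assuming $h$ is smooth, $\dbar$ is closed and densely defined from $L^2_{p,q-1}(X,L)_{h,\omega}$ to $L^2_{p,q}(X,L)_{h,\omega}$, with densely defined adjoint $\dbar^*$. By Hahn--Banach applied to the range of $\dbar$, the existence of $u$ with $\dbar u = v$ and $\|u\|^2_{h,\omega} \leq C \|v\|^2_{h,\omega}$ is equivalent to the dual estimate
$$|\lla v, \alpha\rra_{h,\omega}|^2 \leq C \|\dbar^* \alpha\|^2_{h,\omega} \|v\|^2_{h,\omega} \quad \text{for all } \alpha \in \mathrm{Dom}(\dbar^*),$$
with $C = \tfrac{1}{2\pi q \varepsilon}$. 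Decomposing $\alpha$ orthogonally into its components in $\ker \dbar$ and $(\ker \dbar)^\perp$ --- the latter lying in $\ker \dbar^*$ by the closed-range theorem, which is available on a compact K\"ahler manifold thanks to Hodge theory --- reduces the problem to the case $\alpha \in \ker \dbar$. In that case the BKN identity on a K\"ahler manifold gives
$$\|\dbar \alpha\|^2_{h,\omega} + \|\dbar^* \alpha\|^2_{h,\omega} = \|D' \alpha\|^2_{h,\omega} + \|D'^* \alpha\|^2_{h,\omega} + \lla [i\Theta_h(L), \Lambda_\omega]\alpha, \alpha\rra_{h,\omega},$$
where $D'$ is the $(1,0)$-component of the Chern connection and $\Lambda_\omega$ is the adjoint of wedging with $\omega$. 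The hypothesis $\Theta_h(L) \geq \varepsilon \omega$, together with the pointwise eigenvalue computation for $[i\Theta_h(L), \Lambda_\omega]$ (with the $2\pi$ arising from the normalisation $dd^c = \tfrac{i}{\pi}\partial\dbar$ of the paper), yields $\lla [i\Theta_h(L), \Lambda_\omega]\alpha, \alpha\rra_{h,\omega} \geq 2\pi q \varepsilon \|\alpha\|^2_{h,\omega}$. Combined with $\dbar \alpha = 0$ this gives $\|\alpha\|^2_{h,\omega} \leq \tfrac{1}{2\pi q \varepsilon}\|\dbar^* \alpha\|^2_{h,\omega}$, and Cauchy--Schwarz on $|\lla v, \alpha\rra_{h,\omega}|$ concludes.

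\emph{Singular case.} For a general singular $h = h_\infty e^{-2\varphi}$ with $\Theta_h(L) \geq \varepsilon \omega$, where $h_\infty$ is a smooth reference metric and $\varphi$ is a quasi-psh global weight, invoke Demailly's regularisation theorem --- available on the compact K\"ahler manifold $X$ via chart-wise convolution glued by a partition of unity, the K\"ahler hypothesis being used to make the loss of positivity arbitrarily small --- to produce a decreasing sequence of smooth functions $\varphi_\nu \searrow \varphi$ with $\Theta_{h_\nu}(L) \geq (\varepsilon - \delta_\nu)\omega$ for $h_\nu := h_\infty e^{-2\varphi_\nu}$ and some $\delta_\nu \downarrow 0$. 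The smooth case applied to each $h_\nu$ yields solutions $u_\nu$ of $\dbar u_\nu = v$ with
$$\|u_\nu\|^2_{h_\nu, \omega} \leq \tfrac{1}{2\pi q (\varepsilon - \delta_\nu)} \|v\|^2_{h_\nu, \omega} \leq \tfrac{1}{2\pi q (\varepsilon - \delta_\nu)} \|v\|^2_{h, \omega},$$
the last inequality following from $\varphi_\nu \geq \varphi$. For each fixed $\nu_0$, the tail $\{u_\nu\}_{\nu \geq \nu_0}$ is then uniformly bounded in the fixed Hilbert space $L^2_{p,q-1}(X,L)_{h_{\nu_0},\omega}$, so a diagonal extraction produces a weak limit $u$ satisfying $\dbar u = v$ as a current equation. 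The required bound $\|u\|^2_{h,\omega} \leq \tfrac{1}{2\pi q \varepsilon} \|v\|^2_{h,\omega}$ follows from the monotone convergence $e^{-2\varphi_\nu} \nearrow e^{-2\varphi}$ combined with weak lower semicontinuity of the $L^2$-norm.

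\emph{Main obstacle.} The chief technical delicacy is the regularisation step: one must approximate the singular $h$ by smooth metrics with only an arbitrarily small loss of positivity of the curvature. On a compact K\"ahler manifold this is a standard but non-trivial application of Demailly's approximation of quasi-psh functions by mollified convolutions glued via a partition of unity; dropping the K\"ahler hypothesis would force the much more delicate complete-K\"ahler framework of \cite{Dem82}. Once the smooth approximations are in hand, the BKN identity and functional-analytic reduction are classical on compact K\"ahler manifolds, and passing to the limit requires only routine weak-compactness and Fatou-type arguments.
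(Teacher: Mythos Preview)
The paper does not give a proof of this theorem; it is stated with references to \cite{Hoer65,Hoer90,Dem82,Dem01,Dem92b,Ber10}, with the remark that on projective manifolds the argument is much simpler than the complete-K\"ahler version of \cite{Dem82}. Your outline --- Bochner--Kodaira--Nakano combined with Hilbert-space duality in the smooth case, followed by regularisation of the singular weight and a weak-compactness passage to the limit --- is exactly the classical route taken in those references, so on the level of strategy there is nothing to distinguish.

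One point is worth flagging. The eigenvalue bound $\lla [i\Theta_h(L), \Lambda_\omega]\alpha, \alpha\rra_{h,\omega} \geq 2\pi q \varepsilon \|\alpha\|^2_{h,\omega}$ that you assert for arbitrary bidegree $(p,q)$ only follows from the standard pointwise diagonalisation when $p=n$: in general the eigenvalues of $[i\Theta,\Lambda_\omega]$ on a $(p,q)$-form are of the shape $\sum_{k\in K}\gamma_k-\sum_{j\notin J}\gamma_j$ (with $|J|=p$, $|K|=q$, and $\gamma_1,\dots,\gamma_n$ the eigenvalues of the curvature relative to $\omega$), and for $p<n$ the subtracted term need not vanish. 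The sources the paper cites in fact formulate the estimate for $(n,q)$-forms, and this is the only case the paper ever invokes (in Theorem~\ref{thm:boundedsections} one solves $\dbar u=f_x$ with $f_x$ an $L$-valued $(n,1)$-form). So your argument is complete for what is actually needed; the stated generality to all $(p,q)$ is a mild overreach in the theorem's formulation rather than a defect of your sketch. A minor aside: the inclusion $(\ker\dbar)^\perp\subseteq\ker\dbar^*$ follows directly from $(\ker\dbar)^\perp\subseteq(\operatorname{range}\dbar)^\perp$ and the definition of the adjoint, so your appeal to the closed-range theorem there is not actually required.
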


\section{Preliminaries: birational geometry}

A \emph{fibration} is a projective surjective morphism with connected fibres between two normal varieties.

We write $D \geq 0$ for an effective $\R$-divisor $D$ on a normal variety $X$. If $f\colon X\to Y$ is a surjective morphism of normal varieties and if $D$ is an $\R$-divisor on $X$, then $D$ is \emph{$f$-exceptional} if $\codim_Y f(\Supp D) \geq 2$.

If $X$ is a normal projective variety and if $D$ is an $\R$-Cartier $\R$-divisor on $X$, we denote $|D|_\R:=\{D'\geq 0 \mid D'\sim_\R D\}$. 

A \emph{pair} $(X,\Delta)$ consists of a normal variety $X$ and a Weil $\R$-divisor $\Delta\geq0$ such that the divisor $K_X+\Delta$ is $\R$-Cartier. The standard reference for the foundational definitions and results on the singularities of pairs and the Minimal Model Program (MMP) is \cite{KM98}, and we use these freely in this paper. We recall additionally that flips for klt pairs exist by \cite[Corollary 1.4.1]{BCHM}. We use the MMP with scaling of an ample (or just big) divisor as described in \cite[Remark 3.10.10]{BCHM}.

\begin{rem}\label{rem:uniruled}
We will need the following observation in Section \ref{sec:excellentandMMP}: if $(X,\Delta)$ is a $\Q$-factorial pair such that $X$ is not uniruled, then $K_X+\Delta$ is pseudoeffective. Indeed, let $\pi\colon Y\to X$ be a resolution of $X$. Then $Y$ is not uniruled, hence the divisor $K_Y$ is pseudoeffective by \cite[Corollary 0.3]{BDPP}. Then the divisor $K_X\sim_\R\pi_*K_Y$ is pseudoeffective, and the claim is immediate.
\end{rem}

\subsection{Models}\label{subsec:models}

We recall the definition of negative maps, of minimal models and of good minimal models.

\begin{dfn}
Let $X$ and $Y$ be $\Q$-factorial varieties, and let $D$ be an $\R$-divisor on $X$. A birational contraction $f\colon X\dashrightarrow Y$ is \emph{$D$-non-positive} (respectively \emph{$D$-negative}) if there exists a resolution $(p,q)\colon W\to X\times Y$ of the map $f$ such that 
$$p^*D\sim_\R q^*f_*D+E,$$
where $E\geq0$ is a $q$-exceptional $\R$-divisor (respectively, $E\geq0$ is a $q$-exceptional $\R$-divisor and $\Supp E$ contains the proper transform of every $f$-exceptional divisor).
	\begin{center}
		\begin{tikzcd}
			& W \arrow[dl, "p" swap] \arrow[dr, "q"] && \\
			X \arrow[rr, dashed, "f" ] && Y
		\end{tikzcd}
	\end{center} 
If $f$ is $D$-negative and additionally $f_*D$ is nef, the map $f$ is a \emph{minimal model} for $D$. If moreover $f_*D$ is semiample, the map $f$ is a \emph{good minimal model} for $D$, or simply a \emph{good model} for $D$.
\end{dfn}

We use these notions almost exclusively for divisors of the form $D=K_X+\Delta$, where $(X,\Delta)$ is a klt pair. Then we talk of minimal and good models of a klt pair $(X,\Delta)$.

Note that if $(X,\Delta)$ is a klt pair, then it has a good model if and only if there exists a Minimal Model Program with scaling of an ample divisor which terminates with a good model of $(X,\Delta)$; this follows from the proof of \cite[Lemma 2.1]{Laz24}.

\subsection{Nakayama--Zariski and Boucksom--Zariski functions}\label{subsec:nakayama}

There are two ways to assign asymptotic functions to pseudoeffective classes: the algebraic construction from \cite{Nak04} and the analytic construction from \cite{Bou04}. They coincide on projective manifolds, but we will need both constructions in this paper.

Let $X$ be a $\Q$-factorial projective variety and let $\Gamma$ be a prime divisor on $X$. Nakayama \cite{Nak04} defined \emph{$\sigma_\Gamma$-functions} on the pseudoeffective cone of $X$; this was originally done when $X$ is smooth, but the definition works well in the $\Q$-factorial setting \cite[Lemma 2.12]{LX23}. We explain briefly their construction. If $D$ is a big $\R$-divisor on $X$, set
$$ \sigma_\Gamma (D) := \inf \{ \mult_\Gamma \Delta \mid 0 \leq\Delta \sim_\R D \}; $$
and if $D$ is a pseudoeffective $\R$-divisor on $X$ and $A$ is an ample $\R$-divisor on $X$, define
$$ \sigma_\Gamma (D) := \lim_{\varepsilon\downarrow 0} \sigma_\Gamma (D+\varepsilon A); $$
this does not depend on the choice of $A$ and is compatible with the definition above for big divisors. Moreover, $\sigma_\Gamma(D)$ only depends on the numerical class of $D$, hence $\sigma_\Gamma$ is well-defined on the pseudoeffective cone of $X$. Each function $\sigma_\Gamma$ is homogeneous of degree $1$, convex and lower semicontinuous on the cone of pseudoeffective divisors on $X$, and it is continuous on the cone of big divisors on $X$.
   
Set
$$ N_\sigma (D) := \sum_\Gamma \sigma_\Gamma(D)\cdot \Gamma\quad\text{and}\quad P_\sigma:=D-N_\sigma(D), $$
where the formal sum runs through all prime divisors $\Gamma$ on $X$. Both $N_\sigma(D)$ and $P_\sigma(D)$ are $\R$-divisors on $X$, and the decomposition $ D = P_\sigma (D) + N_\sigma (D) $ is the \emph{Nakayama--Zariski decomposition} of $D$.

If $X$ is a compact Kähler manifold and if $\Gamma$ is an analytic prime divisor on $X$, Boucksom \cite{Bou04} defined \emph{$\nu(\cdot,\Gamma)$-functions} on the cone of pseudoeffective classes in $H^{1,1}(X,\R)$, and he showed that they coincide with Nakayama's $\sigma_\Gamma$-functions when one considers algebraic classes. To avoid possible confusion with Lelong numbers, we will denote these Boucksom's functions also by $\sigma_\Gamma$. We explain briefly their construction, adopting for the moment the concept of currents with minimal singularities which will be dealt with in detail in Section \ref{sec:minimalsings}.

Let $\alpha$ be a pseudoeffective class in $H^{1,1}(X,\R)$. After fixing a reference K\"ahler form $\omega$, and if $T_{\min,\varepsilon}$ is a current with minimal singularities in the class $\alpha+\varepsilon\{\omega\}$ for a positive real number $\varepsilon$, set
$$\sigma_\Gamma(\alpha):=\inf_{x\in\Gamma}\sup_{\varepsilon>0}\nu(T_{\min,\varepsilon},x);$$
this does not depend on the choice of $\omega$, and one has $\sigma_\Gamma(\alpha)=\nu(T_{\min},\Gamma)$ when $\alpha$ is a big class and $T_{\min}\in \alpha$ is a current with minimal singularities.

\begin{rem}
Even though the notation is slightly different, the definition above is equivalent to that from \cite{Bou04}. We explain this briefly now. If $\alpha$ is a class in $H^{1,1}(X,\R)$ and if $\omega$ is a K\"ahler form on $X$, then \cite[\S2.8]{Bou04} introduces $\alpha[\gamma]$ as the set of closed almost positive $(1,1)$-currents $T\in\alpha$ such that $T\geq\gamma$. Then \cite[\S3.1]{Bou04} defines
$$\sigma_\Gamma(\alpha):=\inf_{x\in\Gamma}\sup_{\varepsilon>0}\nu(\widetilde T_{\min,\varepsilon},x),$$
where for each $\varepsilon>0$, $\widetilde T_{\min,\varepsilon}$ is the current with minimal singularities in $\alpha[{-}\varepsilon\omega]$; this is defined analogously as for pseudoeffective classes in Section \ref{sec:minimalsings}. Now, since $\omega$ is closed, one shows easily that $T_{\min,\varepsilon}=\widetilde T_{\min,\varepsilon}+\varepsilon\omega$, which yields that the definition from \cite{Bou04} is equivalent to the one given in this paper.
\end{rem}

The following lemma is well known and we include the proof for completeness.

\begin{lem}\label{lem:NsigmaMMP}
Let $(X,\Delta)$ be a projective log canonical pair such that $K_X+\Delta$ is pseudoeffective, $\Delta$ is a $\Q$-divisor and such that $(X,\Delta)$ has a minimal model. If $f\colon Y\to X$ is a resolution, then $N_\sigma\big(f^*(K_X+\Delta)\big) $ is a $\Q$-divisor.
\end{lem}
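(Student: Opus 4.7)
The plan is to use a minimal model of $(X,\Delta)$ to write $f^*(K_X+\Delta)$ as a sum of a nef divisor and an effective $\Q$-divisor supported on a suitable exceptional locus, and then to recognise this as the Nakayama--Zariski decomposition via the negativity lemma.

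By hypothesis there exists a minimal model $\varphi\colon X\dashrightarrow X'$ of $(X,\Delta)$; set $\Delta':=\varphi_*\Delta$, which is a $\Q$-divisor since $\Delta$ is, and note that $K_{X'}+\Delta'$ is nef. After possibly passing to a higher resolution (and descending at the end via pushforward), I may assume that $Y$ dominates $X'$ via some morphism $\beta\colon Y\to X'$, so that $(f,\beta)\colon Y\to X\times X'$ resolves $\varphi$. By the $(K_X+\Delta)$-negativity of $\varphi$ there is a $\Q$-linear equivalence
\[
f^*(K_X+\Delta)\sim_\Q \beta^*(K_{X'}+\Delta')+E,
\]
where $E\geq 0$ is a $\beta$-exceptional $\Q$-divisor on $Y$.

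The key step is to identify this as the Nakayama--Zariski decomposition of $f^*(K_X+\Delta)$. I would invoke the classical result of Nakayama \cite{Nak04}: if a pseudoeffective $\R$-divisor decomposes as $D=\mu^*G+F$ with $\mu$ birational, $G$ nef, and $F\geq 0$ $\mu$-exceptional, then $N_\sigma(D)=F$. The upper bound $N_\sigma(f^*(K_X+\Delta))\leq E$ follows immediately from subadditivity of $\sigma_\Gamma$ and the vanishing of $\sigma_\Gamma$ on nef classes. The reverse inequality is the content of Nakayama's result and ultimately rests on the negativity lemma applied to $\beta$: any effective representative of $f^*(K_X+\Delta)$ must pick up the full coefficient of $E$ along its $\beta$-exceptional support. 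Granted this, $N_\sigma(f^*(K_X+\Delta))=E$ is a $\Q$-divisor.

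Finally, to handle an arbitrary resolution $f\colon Y\to X$ not necessarily factoring through $X'$, I would take a birational morphism $g\colon W\to Y$ resolving the indeterminacies of $\varphi\circ f$, apply the argument above on $W$, and then push down via the standard compatibility $g_*N_\sigma(g^*D)=N_\sigma(D)$, which holds because $\sigma_\Gamma$ is preserved under pullback for $g$-non-exceptional prime divisors. The main obstacle is the Nakayama identification step outlined above; everything else is a routine application of the $(K_X+\Delta)$-negativity of the minimal model and standard birational manipulations.
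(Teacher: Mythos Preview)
Your proof is correct and follows essentially the same approach as the paper: resolve the indeterminacies of the minimal model map, use the $(K_X+\Delta)$-negativity to write the pullback as a nef pullback plus an effective exceptional $\Q$-divisor $E$, identify $N_\sigma$ with $E$ via Nakayama's lemma (the paper cites \cite[Lemma 2.4]{LP20a} for this), and descend to an arbitrary resolution by the pushforward compatibility $w_*N_\sigma(w^*D)=N_\sigma(D)$ (the paper cites \cite[Lemma 2.13]{LX23}). The only difference is in which references are quoted for these two standard facts.
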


\begin{proof}
Let $\varphi\colon (X,\Delta)\dashrightarrow (X',\Delta')$ be a minimal model of $(X,\Delta)$ and let $(p,q)\colon W\to X\times X'$ be a resolution of indeterminacies of $\varphi$ such that $W$ is smooth. We may assume that $p$ factors through $f$; let $w\colon W\to Y$ be the resulting map.
	\begin{center}
		\begin{tikzcd}
			Y \arrow[d, "f" swap] & W \arrow[l, "w" swap] \arrow[dl, "p" swap] \arrow[dr, "q"] && \\
			X \arrow[rr, dashed, "\varphi" ] && X'
		\end{tikzcd}
	\end{center} 
Then there exists an effective $q$-exceptional $\Q$-divisor $E$ on $W$ such that
$$p^*(K_X+\Delta)\sim_\Q q^*(K_{X'}+\Delta')+E.$$
Then $N_\sigma\big(p^*(K_X+\Delta)\big)=N_\sigma\big(q^*(K_{X'}+\Delta')\big)+E=E$ by \cite[Lemma 2.4]{LP20a}, hence by \cite[Lemma 2.13]{LX23} we have
$$ N_\sigma\big(f^*(K_X+\Delta)\big)=w_*N_\sigma\big(p^*(K_X+\Delta)\big)=w_*E, $$
which proves the lemma.
\end{proof}

\subsection{Stable, diminished and augmented base loci}

A good reference for basic results on the \emph{asymptotic base loci} treated in this subsection is \cite{ELMNP}, see also \cite[Section 2]{TX23}.

If $X$ is a normal projective variety and if $D$ is a pseudoeffective $\R$-Cartier $\R$-divisor on $X$, the \emph{stable base locus} of $D$ is
$$\sB(D):=\bigcap_{D'\in |D|_\R}\Supp D';$$
this is a closed subset of $X$.

\begin{rem}\label{rem:stableQ}
If $D$ is a $\Q$-divisor, by \cite[Lemma 3.5.3]{BCHM} this is equivalent to saying that $\sB(D)=\bigcap\limits_{k\in\N}\Bs|kD|$, hence by \cite[Proposition 2.1.21]{Laz04} we have $\sB(D)=\Bs|kD|$ for all $k$ sufficiently divisible.
\end{rem}

The \emph{diminished base locus} of $D$ is
$$\sB_-(D):=\bigcup_{A\text{ ample on }X}\sB(D+A);$$
this only depends on the numerical equivalence class of $D$ and is a countable union of closed subsets of $X$. If $X$ is additionally $\Q$-factorial, then $N_\sigma(D)$ is the divisorial part of $\sB_-(D)$, see \cite[Lemma 2.17]{LX23}. This locus is sometimes called the \emph{non-nef locus} of $D$; we use both names for this locus interchangeably.

The \emph{augmented base locus} of $D$ is
$$\sB_+(D):=\bigcap_{A\text{ ample on }X}\sB(D-A);$$
it only depends on the numerical equivalence class of $D$ and is a closed subset of $X$. This locus is sometimes called the \emph{non-ample locus} of $D$; we use both names for this locus interchangeably.

\begin{rem}\label{rem:augmented}
We have
\begin{equation}\label{eq:inclusionsloci}
\sB_-(D)\subseteq\sB(D)\subseteq\sB_+(D).
\end{equation}
Further, by \cite[Proposition 1.5]{ELMNP} we have $\sB(D)_+=\sB(D-A)$ for any ample $\R$-divisor $A$ whose numerical class is of sufficiently small norm. From this it is easy to deduce that $D$ is ample if and only if $\sB_+(D)=\emptyset$. By \cite[Lemma 1.14]{ELMNP} we have
$$\sB_-(D)=\bigcup_{A\text{ ample on }X}\sB_+(D+A).$$
\end{rem}

\subsection{Finite generation}

We review now several facts about finitely generated multigraded rings and the existence of minimal models, which will be used in Section \ref{sec:localPL}.

If $X$ is a normal projective variety and if $D$ is a $\Q$-Cartier $\Q$-divisor on $X$, we define the global sections of $D$ by 
$$ H^0(X,D)=\{f\in k(X)\mid \ddiv f+D \geq 0 \}; $$
note that clearly $H^0(X,D)=H^0(X,\lfloor D\rfloor)$. If $D_1,\dots,D_r$ are $\Q$-Cartier $\Q$-divisor on $X$, we define the corresponding \emph{divisorial ring} as
$$ \mathfrak R:=R(X;D_1, \dots, D_r):=\bigoplus_{(n_1,\dots, n_r)\in \N^r} H^0(X,n_1D_1+\dots + n_rD_r). $$
The \emph{support} of $\mathfrak R$, denoted by $\Supp\mathfrak{R}$, is the convex hull of all integral divisors $D$ in the cone $\sum_{i=1}^r\R_+ D_i\subseteq \Div_\R(X)$ such that $H^0(X,D)\neq0$. 

The following result gives the most important example of a finitely generated divisorial ring. The first part of Theorem \ref{thm:models} was proved in \cite[Corollary 1.1.9]{BCHM} and \cite[Theorem A]{CL12a}; see also \cite[Theorem 2]{CL13} and Remark \ref{rem:fingen} for the formulation we adopt in this paper. The second part is a special case of \cite[Theorem 5.4]{KKL16}, and can be also deduced from \cite[Theorem F]{BCHM}.

\begin{thm}\label{thm:models}
Let $X$ be a $\Q$-factorial projective variety and let $\Delta_i$ be $\Q$-divisors on $X$ such that each pair $(X,\Delta_i)$ is klt for $i=1,\dots,r$. Assume that for each $i$ that $\Delta_i$ is big or that $K_X+\Delta_i$ is big. Then the ring
$$ \mathfrak R=R(X;K_X+\Delta_1,\dots,K_X+\Delta_r) $$
is finitely generated. Moreover, $\Supp \mathfrak R$ is a rational polyhedral cone and there is a finite rational polyhedral subdivision $\Supp \mathfrak R=\bigcup \mathcal{C}_k$ with the property that for each $k$ there exist a $\Q$-factorial projective variety $X_k$ and a birational contraction $\varphi_k\colon X\dashrightarrow X_k$ such that $\varphi_k$ is a minimal model for every klt pair $(X,B_k)$ with $K_X+B_k\in\mathcal C_k$.
\end{thm}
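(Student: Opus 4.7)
The plan is essentially to assemble the statement from the cited results, since both halves are established theorems. Still, let me outline the strategy one would follow if proving them ab initio.

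For the finite generation of $\mathfrak R$, the first step is to reduce to the case where each pair $(X,\Delta_i)$ admits a good minimal model. The bigness hypothesis is what drives this: if $\Delta_i$ is big we are in the setting of \cite{BCHM}, which gives existence of log terminal models and finite generation of the adjoint ring directly; if instead $K_X+\Delta_i$ is big, a $(K_X+\Delta_i)$-MMP with scaling of an ample divisor terminates with a good minimal model by \cite{BCHM}. Once every summand has a good model, one extracts a common smooth birational model dominating all the $\varphi_i$, descends the grading via pullback, and uses the semiampleness of the push-forwards of $K_X+\Delta_i$ on each $X_i$ to deduce finite generation of $\mathfrak R$. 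This is precisely the inductive route followed in \cite{CL12a} and streamlined in \cite{CL13}.

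For the polyhedral part, I would argue as follows. Finite generation of $\mathfrak R$ together with a Gordan-lemma type argument shows that $\Supp\mathfrak R$ is a rational polyhedral cone in $\Div_\R(X)$. The chamber decomposition is then a Mori-chamber-type statement: as $B$ varies in $\Supp\mathfrak R$, only finitely many birational contractions $\varphi_k\colon X\dashrightarrow X_k$ arise as minimal models of $(X,B)$, and the locus where a given $\varphi_k$ is a minimal model is cut out by rational linear conditions (positivity of $(\varphi_k)_*B$ against the finitely many curves contracted by divisorial contractions on $X_k$, etc.). This is exactly the content of \cite[Theorem 5.4]{KKL16}, and can alternatively be extracted from the finiteness of minimal models in \cite[Theorem F]{BCHM}.

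The main obstacle, and the reason this is stated rather than proved, is of course the deep input from \cite{BCHM}: both the existence of good minimal models under bigness and the finiteness-of-models statement that powers the chamber decomposition. Since the result is used as a black box in Section \ref{sec:localPL}, the cleanest way to present it is to cite \cite{BCHM}, \cite{CL12a}, \cite{CL13} for the first assertion and \cite{KKL16} for the second, as done in the paper; Remark \ref{rem:fingen} presumably records the precise formulation being imported.
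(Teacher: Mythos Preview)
Your proposal is correct and matches the paper's approach: the theorem is assembled from the cited results, with \cite{BCHM}, \cite{CL12a}, \cite{CL13} for finite generation and \cite[Theorem 5.4]{KKL16} (or \cite[Theorem F]{BCHM}) for the chamber decomposition. The only point worth noting is that the concrete reduction in Remark \ref{rem:fingen} is slightly different from your ab initio sketch: rather than passing through good models and semiampleness, the paper simply replaces each $\Delta_i$ with $K_X+\Delta_i$ big by $\Delta_i+\varepsilon E_i$ for an effective $E_i\sim_\Q K_X+\Delta_i$, so that the new boundary is big and $K_X+\Delta_i+\varepsilon E_i\sim_\Q(1+\varepsilon)(K_X+\Delta_i)$, reducing directly to \cite[Theorem 2]{CL13}.
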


\begin{rem}\label{rem:fingen}
Even though the formulation is slightly different, Theorem \ref{thm:models} follows easily from \cite[Theorem 2]{CL13}. Indeed, without loss of generality we may assume that $K_X+\Delta_i$ are big for $i\leq k$ and that $\Delta_i$ are big for $i>k$. For each $i\leq k$ let $E_i$ be an effective $\Q$-divisor such that $K_X+\Delta_i\sim_\Q E_i$, and pick a rational number $\varepsilon>0$ such that $(X,\Delta_i+\varepsilon E_i)$ is klt for each $i\leq k$. Then by \cite[Theorem 2]{CL13} the ring
$$R(X;K_X+\Delta_1+\varepsilon E_1,\dots,K_X+\Delta_k+\varepsilon E_k,K_X+\Delta_{k+1},\dots,K_X+\Delta_r).$$ 
Since $K_X+\Delta_i+\varepsilon E_i\sim_\Q(1+\varepsilon)(K_X+\Delta_i)$ for $i\leq k$, the ring $\mathfrak R$ is finitely generated by \cite[Lemma 2.25]{CL12a}.
\end{rem}

\section{Auxiliary results}

\subsection{(Pluri)subharmonic functions}
In this paper we need very precise properties of (pluri)subharmonic functions. We start with the following easy lemma.

\begin{lem}\label{lem:limit}
Let $\Omega\subseteq\C^n$ be a domain, let $x\in\Omega$ and let $f\colon \Omega\to\R\cup\{{-}\infty\}$ be an upper semicontinuous function. Let $\{a_m\}$ and $\{b_m\}$ be sequences of positive real numbers such that $\lim\limits_{m\to\infty}a_m=1$ and $\lim\limits_{m\to\infty}b_m=0$, and denote $c_m:=\sup_{B(x,1/m)}f$. Then:
\begin{enumerate}[\normalfont (a)]
\item $\lim\limits_{m\to\infty}a_mc_m\leq f(x)$,
\item $\lim\limits_{m\to\infty}b_mc_m\leq 0$,
\item if $f$ is subharmonic, then $\lim\limits_{m\to\infty}a_mc_m=f(x)$,
\item if $f$ is psh, then $\lim\limits_{m\to\infty}\frac1m c_m=0$.
\end{enumerate}
\end{lem}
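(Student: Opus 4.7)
My strategy centres on the preliminary observation that, by upper semicontinuity of $f$, the sequence $\{c_m\}$ is decreasing (the balls $B(x,1/m)$ are nested and shrinking) and converges to $f(x)$ in $[-\infty,+\infty)$. I would prove this first: $c_m\geq f(x)$ is trivial, while upper semicontinuity gives $c_m\leq f(x)+\varepsilon$ for $m$ large (for any $\varepsilon>0$), and analogously $c_m\leq -M$ eventually when $f(x)=-\infty$. Parts (a)--(d) then become statements about products of sequences with known limits.

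For (a), (b) and (c), I would split into the cases $f(x)>-\infty$ and $f(x)=-\infty$. When $f(x)$ is finite, $c_m$ is bounded and the standard product rule immediately gives $a_mc_m\to f(x)$ and $b_mc_m\to 0$, yielding all three parts at once. When $f(x)=-\infty$, $c_m\to-\infty$; for (a) I would use that $a_m\geq 1/2$ eventually, whence $a_mc_m\leq c_m/2\to-\infty=f(x)$, and for (b) I would fix $m_0$ with $c_{m_0}<0$ and exploit the monotonicity $c_m\leq c_{m_0}$ together with $b_m>0$ to obtain $b_mc_m\leq b_mc_{m_0}\to 0$. Part (c) then follows since the argument for (a) already produces equality under upper semicontinuity alone, the subharmonicity hypothesis playing no further role.

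Part (d) is the only place where plurisubharmonicity is genuinely used. If $f(x)>-\infty$ the argument above already yields $c_m/m\to 0$. The substantive case is $f(x)=-\infty$, where I would invoke the finiteness of the Lelong number $\nu(f,x)\in[0,\infty)$: its very definition as $\lim_{r\to 0}\sup_{B(x,r)}f/\log r$, specialised to $r=1/m$, forces $|c_m|=O(\log m)$, and hence $\frac{1}{m}c_m\to 0$. The main obstacle throughout is the indeterminate form $0\cdot\infty$ arising when $f(x)=-\infty$: for (b) it is resolved by the monotonicity of $c_m$, and for (d) by the finiteness of the Lelong number, which is the sole psh input in the lemma.
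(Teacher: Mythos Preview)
Your approach is correct and essentially identical to the paper's: both note that $\{c_m\}$ decreases to a limit determined by upper semicontinuity, handle (a)--(c) by elementary product-of-limits arguments, and prove (d) via the finiteness of the Lelong number $\nu(f,x)=\lim_{m\to\infty}c_m/(-\log m)$. Your observation that (c) already follows from upper semicontinuity alone---since $c_m\geq f(x)$ trivially forces equality in (a)---is correct and is a small simplification over the paper, which invokes subharmonicity through Lemma~\ref{lem:strongusc}(a) at that point.
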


\begin{proof}
Note that the sequence $\{c_m\}$ is decreasing, hence converging to a value in $\R\cup\{{-}\infty\}$. Therefore,
$$\lim_{m\to\infty}a_mc_m=\lim_{m\to\infty}c_m=\limsup_{x'\to x}f(x')\leq f(x),$$
which shows (a). When $f$ is subharmonic, then the last inequality is an equality by Lemma \ref{lem:strongusc}(a), which gives (c).

If $\lim\limits_{m\to\infty}c_m\in\R$, then $\lim\limits_{m\to\infty}b_mc_m=0$; otherwise we have $c_m<0$ for all $m\gg0$, thus (b) follows.

For (d), note that
$$\lim_{m\to\infty}\frac{c_m}{m}=\lim_{m\to\infty}\frac{c_m}{\log m}\frac{\log m}{m}.$$
Since $\lim\limits_{m\to\infty}\frac{c_m}{{-}\log m}=\nu(f,x)$ and $\lim\limits_{m\to\infty}\frac{\log m}{m}=0$, the claim follows.
\end{proof}

The following two approximation results are much deeper, and they will be crucial in Part \ref{part:approximations}.

\begin{lem}\label{lem:convergencepsh}
Let $X$ be a complex manifold and let $\alpha$ be a continuous $(1,1)$-form on $X$.  Let $\{\varphi_n\}$ be a sequence of $\alpha$-psh functions which are locally uniformly bounded from above and which converge in $L^1_\loc(X)$ to a function $\varphi\in\PSH(X,\alpha)$. Then for every sequence of points $\{x_n\}$ in $X$ which converges to a point $x\in X$ we have
$$\varphi(x)\geq\limsup_{n\to\infty}\varphi_n(x_n).$$
\end{lem}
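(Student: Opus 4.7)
The plan is to reduce the statement to the classical case of plurisubharmonic functions by locally absorbing the form $\alpha$ into a smooth potential, and then to apply the mean value inequality together with $L^1_\loc$-convergence.

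Fix a point $x\in X$. By the spectral theorem argument used in the proof of Theorem \ref{thm:compactnessquasipsh}(a) (which only requires $\alpha$ to be continuous), one can choose a coordinate neighbourhood $U$ of $x$ and a smooth function $\rho$ on $U$ such that $dd^c\rho\geq\alpha$ on $U$. Set
$$\psi_n:=\varphi_n+\rho\quad\text{and}\quad \psi:=\varphi+\rho.$$
Since $dd^c\rho\geq\alpha$, each $\psi_n$ and $\psi$ is psh on $U$, the sequence $\{\psi_n\}$ is locally uniformly bounded from above, and $\psi_n\to\psi$ in $L^1_\loc(U)$. Because $\rho$ is continuous at $x$, we have $\rho(x_n)\to\rho(x)$, so it is enough to prove
$$\psi(x)\geq\limsup_{n\to\infty}\psi_n(x_n).$$

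Fix $r>0$ small enough that $\overline{B(x,2r)}\subseteq U$; then for $n$ sufficiently large $B(x_n,r)\subseteq B(x,2r)$. The mean value inequality for the psh function $\psi_n$ gives
$$\psi_n(x_n)\leq\fint_{B(x_n,r)}\psi_n\,dV.$$
We claim that $\fint_{B(x_n,r)}\psi_n\,dV\to\fint_{B(x,r)}\psi\,dV$ as $n\to\infty$. Indeed, writing
$$\int_{B(x_n,r)}\psi_n\,dV-\int_{B(x,r)}\psi\,dV=\int_{B(x_n,r)}(\psi_n-\psi)\,dV+\bigg(\int_{B(x_n,r)}\psi\,dV-\int_{B(x,r)}\psi\,dV\bigg),$$
the first term tends to zero since $\psi_n\to\psi$ in $L^1\big(B(x,2r)\big)$, and the second tends to zero because $\psi\in L^1_\loc(U)$ and the symmetric difference $B(x_n,r)\,\triangle\,B(x,r)$ has Lebesgue measure going to zero, so absolute continuity of the integral applies. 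Since $\mathrm{Vol}\big(B(x_n,r)\big)=\mathrm{Vol}\big(B(x,r)\big)$, the claim follows.

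Combining the two displays,
$$\limsup_{n\to\infty}\psi_n(x_n)\leq\fint_{B(x,r)}\psi\,dV,$$
and letting $r\downarrow 0$ and invoking Lemma \ref{lem:strongusc}(b) we conclude $\limsup_{n\to\infty}\psi_n(x_n)\leq\psi(x)$, as required.

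The main subtle point in this argument is the convergence of the moving averages $\fint_{B(x_n,r)}\psi_n\,dV$; it relies on combining $L^1_\loc$-convergence of $\{\psi_n\}$ with absolute continuity of the integral to control the vanishing contribution on the symmetric difference of the balls. Once this convergence is in place, the remaining steps (reduction to the psh case via a smooth local potential for $\alpha$, and the mean-value/limit argument) are essentially routine.
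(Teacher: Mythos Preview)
Your proof is correct and follows essentially the same strategy as the paper: reduce locally to genuine psh functions via a smooth potential, apply the mean value inequality on shrinking balls centred at $x_n$, establish convergence of the averages, and conclude by letting $r\downarrow 0$ via Lemma~\ref{lem:strongusc}(b). The only technical difference is in the convergence-of-averages step: the paper exponentiates (working with $e^{\theta+\varphi_n}$), passes to an a.e.-convergent subsequence, and invokes dominated convergence, whereas you argue more directly using the given $L^1_\loc$-convergence together with absolute continuity on the symmetric difference of the balls---your route is arguably cleaner since it avoids both the exponentiation and the subsequence extraction.
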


\begin{proof}
Set $a:=\limsup\limits_{n\to\infty}\varphi_n(x_n)$. Then by passing to subsequences of $\{\varphi_n\}$ and $\{x_n\}$ we may assume that $\varphi_n$ converges to $\varphi$ almost everywhere and
$$a=\lim_{n\to\infty}\varphi_n(x_n).$$
As in the proof of Theorem \ref{thm:compactnessquasipsh}(a), locally around $x$ there exists a smooth closed form $\omega\geq\alpha$. By replacing $\alpha$ by $\omega$ and $X$ by a small neighbourhood around $x$, we may assume that $\alpha$ is smooth and closed.

Fix a small coordinate ball $B(x,2r)$ in $X$ such that the functions $\varphi_n$ are uniformly bounded from above on $B(x,2r)$ and let $\theta$ be a smooth potential of $\alpha$ on $B(x,2r)$. Then the functions $\theta+\varphi$ and all $\theta+\varphi_n$ are psh on $B(x,2r)$. We may assume that $x_n\in B(x,r)$, so that $B(x_n,r)\subseteq B(x,2r)$, and let $\chi_A$ denote the characteristic function of a set $A\subseteq B(x,2r)$. Then the sequence $\{e^{\theta+\varphi_n}\chi_{B(x_n,r)}\}$ is uniformly bounded from above on $B(x,2r)$, and converges almost everywhere to $e^{\theta+\varphi}\chi_{B(x,r)}$, hence by Lebesgue's dominated convergence theorem and by the mean value inequality we have
\begin{align}
\fint_{B(x,r)}e^{\theta+\varphi}dV_\omega&=\lim_{n\to\infty}\fint_{B(x_n,r)}e^{\theta+\varphi_n}dV_\omega \label{eq:0abc}\\
&\geq \lim_{n\to\infty}e^{\theta(x_n)+\varphi_n(x_n)}=e^{\theta(x)+a}.\notag
\end{align}
By letting $r\to0$ in \eqref{eq:0abc} we conclude by Lemma \ref{lem:strongusc}(b) that 
$$e^{\theta(x)+\varphi(x)}\geq e^{\theta(x)+a},$$
which gives the desired inequality.
\end{proof}

\begin{lem}\label{lem:approximatedense}
Let $\varphi$ be a subharmonic function on a domain $\Omega\subseteq\C^n$ and let $A\subseteq\Omega$ be a set of Lebesgue measure zero such that $\Omega\setminus A$ is dense in $\Omega$. Then there exists a countable set $D\subseteq \Omega\setminus A$ which is dense in $\Omega$, such that for every $z\in\Omega$ there exists a sequence $\{z_q\}$ in $D$ with
$$\lim\limits_{q\to\infty}z_q=z\quad\text{and}\quad \lim\limits_{q\to\infty}\varphi(z_q)=\varphi(z).$$
\end{lem}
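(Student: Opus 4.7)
The plan is to build the set $D$ from a countable dense base $Q\subseteq\Omega$ (for instance, points with rational coordinates) by adjoining to $D$, for each $q\in Q$ and each integer $m\geq1$ with $\overline{B(q,1/m)}\subseteq\Omega$, a point $z_{q,m}\in B(q,1/m)\setminus A$ chosen so that $\varphi(z_{q,m})$ is close to the local supremum of $\varphi$. Concretely, Lemma \ref{lem:strongusc}(a) applied pointwise yields $\sup_{B(q,1/m)\setminus A}\varphi=\sup_{B(q,1/m)}\varphi$, and since $\varphi$ is locally bounded from above this common value is finite. Hence one may select $z_{q,m}\in B(q,1/m)\setminus A$ with
$$\varphi(z_{q,m})>\sup\nolimits_{B(q,1/m)}\varphi-\frac{1}{m}.$$
Define $D$ to be the countable collection of these points. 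Then $D\subseteq\Omega\setminus A$, and $D$ is dense in $\Omega$: any $z\in\Omega$ can be approached by choosing $q\in Q$ with $|q-z|<\varepsilon/2$ and $m>2/\varepsilon$, so that $|z_{q,m}-z|<\varepsilon$.

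Next I verify the approximation property. Fix $z\in\Omega$. If $\varphi(z)={-}\infty$, then any sequence in $D$ converging to $z$ satisfies $\varphi(z_k)\to{-}\infty$ by upper semicontinuity, so there is nothing to prove. If $\varphi(z)\in\R$, I produce for each integer $k\geq1$ (large enough that $\overline{B(z,1/k)}\subseteq\Omega$) a point $z_k\in D$ with $|z_k-z|<1/k$ and $\varphi(z_k)>\varphi(z)-1/k$. By Lemma \ref{lem:strongusc}(a) there exists $w_k\in\Omega\setminus A$ with
$$|w_k-z|<\frac{1}{4k}\quad\text{and}\quad\varphi(w_k)>\varphi(z)-\frac{1}{2k}.$$
Pick an integer $M_k>\max\{2k,2/\varepsilon_k\}$ (for a suitable tolerance $\varepsilon_k\to 0$) and then $q_k\in Q$ with $|q_k-w_k|<1/(2M_k)$; in particular $w_k\in B(q_k,1/M_k)$. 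Setting $z_k:=z_{q_k,M_k}\in D$, the triangle inequality gives $|z_k-z|<1/M_k+|q_k-w_k|+|w_k-z|<1/k$, while the inclusion $w_k\in B(q_k,1/M_k)$ forces $\sup_{B(q_k,1/M_k)}\varphi\geq\varphi(w_k)$, so that
$$\varphi(z_k)>\varphi(w_k)-\frac{1}{M_k}>\varphi(z)-\frac{1}{2k}-\frac{1}{M_k}>\varphi(z)-\frac{1}{k}.$$
Combined with $\limsup_{k\to\infty}\varphi(z_k)\leq\varphi(z)$ from upper semicontinuity, this yields $\varphi(z_k)\to\varphi(z)$, as required.

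The subtle point—which is the main obstacle—is to make \emph{simultaneous} spatial and value approximation. The construction of $D$ alone does not automatically link the value of $\varphi$ at $z_{q,m}$ to the value of $\varphi$ at an arbitrary target $z$; the bridge is the auxiliary point $w_k\in\Omega\setminus A$ provided by Lemma \ref{lem:strongusc}(a), which encodes the value $\varphi(z)$ up to small error. By approximating $w_k$ by $q_k\in Q$ carefully enough that $w_k$ still lies inside $B(q_k,1/M_k)$, the defining inequality of $z_{q_k,M_k}$ transfers the lower bound from $\varphi(w_k)$ to $\varphi(z_k)$. All the other ingredients (existence of a countable dense $Q$, the sup-equality on $B\setminus A$, and $\varphi$'s local boundedness from above) are standard consequences of subharmonicity and of the hypothesis that $A$ has Lebesgue measure zero.
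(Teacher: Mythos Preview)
Your proof is correct and follows essentially the same approach as the paper's: both construct $D$ by selecting, for each rational center and small rational radius, a point of $\Omega\setminus A$ on which $\varphi$ is within a controlled error of the local supremum (via Lemma~\ref{lem:strongusc}(a)), and then verify the approximation property by the same lemma combined with upper semicontinuity. Your argument is marginally more direct (you avoid the paper's contradiction step and handle the case $\varphi(z)=-\infty$ explicitly), but the structure is the same; the stray ``$\varepsilon_k$'' in your choice of $M_k$ is an editorial artifact---$M_k>2k$ (and large enough that $\overline{B(q_k,1/M_k)}\subseteq\Omega$) already suffices for your triangle-inequality and value estimates.
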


\begin{proof}
Denote by $\|\cdot\|$ the euclidean norm on $\C^n$. Set 
$$\mathcal C:=\{(y,r)\in\Q^{2n}\times\Q\mid B(y,2r)\subseteq\Omega\},$$
where we view $\Q^{2n}$ as a subset of $\C^n$. For each $(y,r)\in\mathcal C$, let $\widetilde{z}_{y,r}$ be a point in $\overline{B(y,r)}$ such that $\varphi(\widetilde{z}_{y,r})=\max\big(\varphi|_{\overline{B(y,r)}}\big)$. Then by applying Lemma \ref{lem:strongusc}(a) to the point $\widetilde{z}_{y,r}$ we obtain that there exists a point
\begin{equation}\label{eq:0ee}
z_{y,r}\in (\Omega\setminus A)\cap B(y,2r)
\end{equation}
such that 
\begin{equation}\label{eq:0e}
\varphi(z_{y,r})\geq\varphi(\widetilde{z}_{y,r})-r=\max\big(\varphi|_{\overline{B(y,r)}}\big)-r.
\end{equation}

Then we claim that the countable set
$$D:=\{z_{y,r}\mid (y,r)\in\mathcal C\}\subseteq\Omega\setminus A$$
is dense in $\Omega$. Indeed, consider a point $w\in\Omega$. Let $m_0$ be a positive integer such that $B(w,2^{-m_0})\subseteq\Omega$, and for each $m\geq m_0$ pick points
$$w_m\in\Q^{2n}\cap B(w,2^{-m-1}).$$
Then $(w_m,2^{-m-2})\in\mathcal C$ by the definition of $\mathcal C$, hence $$z_{w_m,2^{-m-2}}\in B(w_m,2^{-m-1})\cap D$$
by \eqref{eq:0ee} and by the definition of $D$. Therefore, $z_{w_m,2^{-m-2}}\in B(w,2^{-m})$ for any $m\geq m_0$, which proves that $D$ is dense in $\Omega$.

Now, fix $z\in\Omega$. To finish the proof it suffices to show that for each $\varepsilon>0$ there exists
$$z'\in D\cap B(z,\varepsilon)\quad\text{with}\quad|\varphi(z)-\varphi(z')|<\varepsilon.$$
Assume otherwise. Then there exists $\varepsilon>0$ such that $B(z,\varepsilon)\subseteq\Omega$ and such that for all $z'\in D\cap B(z,\varepsilon)$ we have $|\varphi(z)-\varphi(z')|\geq \varepsilon$. By Lemma \ref{lem:strongusc}(a), this implies that there exists a rational number $0<\delta\leq\varepsilon/3$ such that
\begin{equation}\label{eq:0f}
\text{for all }z'\in D\cap B(z,3\delta)\text{ we have }\varphi(z')\leq\varphi(z)-\varepsilon.
\end{equation}
Pick a point $z_0\in\Q^{2n}\cap B(z,\delta)$. Then the point $z_{z_0,\delta}\in D\cap B(z_0,2\delta)$ constructed as above belongs to $D\cap B(z,3\delta)$, and by \eqref{eq:0e} we have
$$\varphi(z_{z_0,\delta})\geq\max\big(\varphi|_{\overline{B(z_0,\delta)}}\big)-\delta\geq\varphi(z)-\delta,$$
which contradicts \eqref{eq:0f}. This concludes the proof.
\end{proof}

We will, in fact, need the following global version of the previous lemma, which follows from Lemma \ref{lem:approximatedense} by compactness.

\begin{cor}\label{cor:approximatedense}
Let $\varphi$ be a quasi-psh function on a compact complex manifold $X$ and let $A\subseteq X$ be a set of Lebesgue measure zero such that $X\setminus A$ is dense in $X$. Then there exists a countable set $D\subseteq X\setminus A$ which is dense in $X$, such that for every $z\in X$ there exists a sequence $\{z_q\}$ in $D$ with
$$\lim\limits_{q\to\infty}z_q=z\quad\text{and}\quad \lim\limits_{q\to\infty}\varphi(z_q)=\varphi(z).$$
\end{cor}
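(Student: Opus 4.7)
The plan is to reduce the global statement to the local one (Lemma \ref{lem:approximatedense}) via a finite cover of $X$ by coordinate charts, exploiting the local structure of quasi-psh functions as psh plus smooth.

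First, since $X$ is compact, I would choose a finite cover $\{U_1,\dots,U_N\}$ of $X$ by coordinate charts (identified with domains in $\C^n$) on which $\varphi$ admits a decomposition $\varphi|_{U_i} = u_i + s_i$, where $u_i$ is psh and $s_i$ is smooth; this is possible by the very definition of a quasi-psh function recalled in \S\ref{subsec:quasi-psh}. Before applying Lemma \ref{lem:approximatedense} to each $u_i$, two small points need to be checked: that $A\cap U_i$ has Lebesgue measure zero (which is invariant under holomorphic changes of coordinates, hence unambiguous), and that $U_i\setminus A$ is dense in $U_i$. The latter holds since any nonempty open subset of $U_i$ is also open in $X$ and therefore meets the dense set $X\setminus A$.

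Next, for each $i$, I apply Lemma \ref{lem:approximatedense} to the subharmonic function $u_i$ on $U_i$ with exceptional set $A\cap U_i$ to obtain a countable dense subset $D_i\subseteq U_i\setminus A$ such that every $z\in U_i$ is the limit of a sequence $\{z_q\}\subseteq D_i$ satisfying $u_i(z_q)\to u_i(z)$. Setting $D := \bigcup_{i=1}^N D_i$ gives a countable subset of $X\setminus A$ that is dense in $X$, as density is inherited from each piece $D_i\subseteq U_i$ together with the fact that the $U_i$ cover $X$.

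Finally, to verify the convergence of $\varphi$-values at an arbitrary $z\in X$, I pick an index $i$ with $z\in U_i$ and take the sequence $\{z_q\}\subseteq D_i$ provided by Lemma \ref{lem:approximatedense}. Since $s_i$ is smooth and in particular continuous, $s_i(z_q)\to s_i(z)$, and combining this with $u_i(z_q)\to u_i(z)$ gives
\[
\varphi(z_q) = u_i(z_q) + s_i(z_q) \longrightarrow u_i(z) + s_i(z) = \varphi(z),
\]
as required. There is no real obstacle here beyond bookkeeping: the only point that needs attention is the measure-zero and density of $A\cap U_i$ and $U_i\setminus A$ in the chart, which is straightforward once one notes that both properties pass to open subsets.
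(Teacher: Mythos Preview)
Your proof is correct and is exactly the argument the paper has in mind when it says the corollary ``follows from Lemma \ref{lem:approximatedense} by compactness'': you make explicit the finite cover by coordinate charts, the local decomposition $\varphi = u_i + s_i$ with $u_i$ psh (hence subharmonic, so Lemma \ref{lem:approximatedense} applies), and the passage back to $\varphi$ via continuity of $s_i$. There is nothing to add.
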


\subsection{Estimates of sections of line bundles}

The following two lemmas will be essential in Part \ref{part:approximations}.

\begin{lem}\label{lem:holomorphicpsh}
Let $U\subseteq \C^n$ be a domain and let $\{L_j\}_{j\in J}$ be a collection of $\Q$-divisors on $U$. For each $j\in J$, let $h_j$ be a smooth metric on $L_j$ with the associated curvature $\Theta_j$, and assume that the $(1,1)$-forms $\Theta_j$ are uniformly bounded on $U$. Then for each $x\in X$ there exist constants $C>0$ and $r_0>0$ such that for every ball $B(x,r)\subseteq U$ with $r\leq r_0$ and for each $\sigma\in H^0\big(B(x,r),mL_j\big)$ with $j\in J$ and $m\in\N$ such that $mL_j$ is Cartier,
\begin{enumerate}[\normalfont (a)]
\item the function
$$\log|\sigma(z)|_{h_j^m}+mC|z-x|^2$$
is psh on $B(x,r)$, and
\item we have
$$ |\sigma(x)|^2_{h_j^m}\leq e^{2m Cr^2}\fint_{B(x,r)}|\sigma|^2_{h_j^m}dV_\omega. $$
\end{enumerate}
\end{lem}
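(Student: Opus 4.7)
The plan is to reduce both statements to a standard psh computation in a local trivialisation, and then exponentiate to get (b). I would fix $x\in U$ and choose a small coordinate ball $B(x,r_0)\subseteq U$ on which each $L_j$ is trivial and $h_j$ has a smooth local weight $\psi_j$ with $\Theta_j = dd^c\psi_j$. The first step is to produce a single constant $C>0$, independent of $j\in J$, such that
$$\Theta_j + C\, dd^c|z-x|^2 \geq 0 \quad\text{on }B(x,r_0),\ \text{for all }j\in J.$$
This is where the hypothesis of uniform boundedness of $\{\Theta_j\}$ enters: since $dd^c|z-x|^2$ is a positive multiple of the Euclidean K\"ahler form, the same pointwise spectral argument used in Lemma \ref{lem:diagonalisation} yields such a $C$, after possibly shrinking $r_0$.

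For part (a), given $\sigma\in H^0\bigl(B(x,r), mL_j\bigr)$ with $r\leq r_0$, I would use the trivialised expression $|\sigma|_{h_j^m}^2 = |\sigma|_{\text{triv}}^2\, e^{-2m\psi_j}$. Since $\log|\sigma|_{\text{triv}}$ is psh by Example \ref{exa:quasi-psh}(a), taking $dd^c$ of
$$2\log|\sigma|_{h_j^m} + 2mC|z-x|^2 = \log|\sigma|_{\text{triv}}^2 - 2m\psi_j + 2mC|z-x|^2$$
gives a current bounded below by $-2m\Theta_j + 2mC\, dd^c|z-x|^2 \geq 0$. Dividing by $2$, the function $\log|\sigma(z)|_{h_j^m} + mC|z-x|^2$ is psh on $B(x,r)$, which is precisely (a).

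For part (b) I would exponentiate: by Example \ref{exa:quasi-psh}(d), the function $|\sigma(z)|_{h_j^m}^2\, e^{2mC|z-x|^2}$ is psh (being the exponential of a psh function), so the sub-mean value inequality at $x$ yields
$$|\sigma(x)|^2_{h_j^m} \leq \fint_{B(x,r)} |\sigma|_{h_j^m}^2\, e^{2mC|z-x|^2}\, dV_\omega \leq e^{2mCr^2}\fint_{B(x,r)} |\sigma|_{h_j^m}^2\, dV_\omega,$$
where the last bound uses $|z-x|^2\leq r^2$ on $B(x,r)$. The only substantive point in the whole argument is securing the $j$-uniform constant $C$, and this is immediate from the curvature hypothesis; the rest is mechanical, so I do not expect a genuine obstacle.
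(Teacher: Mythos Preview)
Your approach is essentially the same as the paper's: obtain a uniform constant $C$ from Lemma~\ref{lem:diagonalisation}, deduce (a) from the fact that $m\Theta_j + dd^c\log|\sigma|_{h_j^m}\geq 0$ (Example~\ref{exa:quasi-psh}(b)), then exponentiate via Example~\ref{exa:quasi-psh}(d) and apply the mean value inequality for (b). The only slip is a sign: you ask for $\Theta_j + C\,dd^c|z-x|^2\geq 0$, but your own computation for (a) needs $-\Theta_j + C\,dd^c|z-x|^2\geq 0$ (and this is what the paper uses); Lemma~\ref{lem:diagonalisation} applied to $\{-\Theta_j\}$ gives exactly that.
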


\begin{proof}
Fix $x\in X$. By the proof of Lemma \ref{lem:diagonalisation} applied to the standard K\"ahler metric on $\C^n$, there exist constants $C>0$ and $r_0>0$ such that ${-}\Theta_j+Cdd^c |z-x|^2\geq0$ on $B(x,r_0)$ for all $j\in J$. For $j\in J$, for $m\in\N$ such that $mL_j$ is Cartier, for $r\leq r_0$ and for $\sigma\in H^0\big(B(x,r),mL_j\big)$ we have $m\Theta_j+dd^c\log|\sigma|_{h_j^m}\geq0$ on $B(x,r)$ by Example \ref{exa:quasi-psh}(b), hence
$$dd^c\log|\sigma|_{h_j^m}+mCdd^c |z-x|^2\geq0\quad \text{on } B(x,r).$$
This shows (a). 

Now, for each $j\in J$ consider the smooth metric $g_j:=h_je^{C|z-x|^2}$ on $L_j|_{B(x,r_0)}$. Since
$$|\sigma|^2_{g_j^m}=e^{2\log|\sigma|_{h_j^m}+2mC|z-x|^2},$$
the function $|\sigma|^2_{g_j^m}$ is psh on $B(x,r)$ by (a) and by Example \ref{exa:quasi-psh}(d), hence the mean value inequality at the point $x$ gives
$$ |\sigma(x)|^2_{h_j^m}=|\sigma(x)|^2_{g_j^m}\leq\fint_{B(x,r)}|\sigma|^2_{g_j^m}dV_\omega \leq e^{2m Cr^2}\fint_{B(x,r)}|\sigma|^2_{h_j^m}dV_\omega, $$
which finishes the proof.
\end{proof}

\begin{lem}\label{lem:compactuniform}
Let $X$ be a complex compact manifold and let $L$ be a line bundle on $X$ with a continuous metric $h$. Let $V\subseteq H^0(X,L)$ be a compact subset with respect to a norm $\|\cdot\|$ on $H^0(X,L)$. Consider sections $\{\sigma_j\}_{j\in\N}$ in $V$ such that $\lim\limits_{j\to\infty}\sigma_j=\sigma_0$ in the norm $\|\cdot\|$. Then the following holds.
\begin{enumerate}[\normalfont (a)]
\item The sections $\sigma_j$ converge uniformly to $\sigma_0$ in the metric $h$, i.e.\ for every $\varepsilon>0$ there exists a positive integer $N$ such that $|\sigma_0(z)-\sigma_j(z)|_h\leq\varepsilon$ for all $j\geq N$ and all $z\in X$.
\item For any sequence of points $\{x_j\}$ in $X$ such that $\lim\limits_{j\to\infty}x_j=x_0$ we have
$$\lim\limits_{j\to\infty}|\sigma_j(x_j)|_h=|\sigma_0(x_0)|_h.$$
\end{enumerate}
\end{lem}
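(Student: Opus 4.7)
The proof rests on two standard facts: first, $H^0(X,L)$ is finite-dimensional by the Cartan--Serre finiteness theorem for coherent sheaves on compact complex manifolds; second, all norms on a finite-dimensional vector space are equivalent. The compactness hypothesis on $V$ is not needed for the argument, which uses only the convergence $\sigma_j\to\sigma_0$ in $\|\cdot\|$.

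For part (a), I would introduce the sup-norm
$$\|\sigma\|_\infty:=\sup_{z\in X}|\sigma(z)|_h$$
on $H^0(X,L)$. Since $h$ is a continuous metric and $X$ is compact, the function $z\mapsto |\sigma(z)|_h$ is continuous and bounded on $X$, so $\|\sigma\|_\infty<\infty$. Because $h$ is a genuine metric (not identically zero on any fibre), $\|\sigma\|_\infty=0$ forces $\sigma\equiv 0$, so $\|\cdot\|_\infty$ is indeed a norm on $H^0(X,L)$. By finite-dimensionality, the norms $\|\cdot\|$ and $\|\cdot\|_\infty$ are equivalent, so the assumption $\|\sigma_j-\sigma_0\|\to 0$ yields $\|\sigma_j-\sigma_0\|_\infty\to 0$, which is exactly uniform convergence of $\sigma_j$ to $\sigma_0$ in $h$.

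For part (b), I would use the reverse triangle inequality for $|\cdot|_h$ to write
$$\bigl||\sigma_j(x_j)|_h - |\sigma_0(x_0)|_h\bigr| \leq |\sigma_j(x_j)-\sigma_0(x_j)|_h + \bigl||\sigma_0(x_j)|_h - |\sigma_0(x_0)|_h\bigr|.$$
The first term is bounded by $\|\sigma_j-\sigma_0\|_\infty$, which tends to $0$ by part (a). The second term tends to $0$ because $\sigma_0$ is a holomorphic (in particular continuous) section, $h$ is continuous, and $x_j\to x_0$, so the real-valued function $z\mapsto |\sigma_0(z)|_h$ is continuous at $x_0$.

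The only delicate point is the invocation of finite-dimensionality of $H^0(X,L)$; everything else is routine. If one preferred to avoid Cartan--Serre, an alternative would be to cover $X$ by finitely many trivialising coordinate balls and use local Cauchy estimates together with Lemma \ref{lem:holomorphicpsh}(b) (or a Montel-type argument) to upgrade convergence in $\|\cdot\|$ to local uniform convergence on each chart, and then patch; but this is strictly more work than appealing to equivalence of norms.
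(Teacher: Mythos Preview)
Your proof is correct and follows essentially the same approach as the paper: both arguments rest on the finite-dimensionality of $H^0(X,L)$ (the paper chooses a basis and argues with coefficients, while you invoke equivalence of norms directly), and part (b) is handled identically via the triangle inequality and continuity of $z\mapsto|\sigma_0(z)|_h$. Your packaging via $\|\cdot\|_\infty$ is slightly more streamlined than the paper's explicit basis-and-compactness argument, but the underlying idea is the same.
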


\begin{proof}
Fix a basis $e_1,\dots,e_n$ of $H^0(X,L)$, and write 
$$\sigma_j=\sum\limits_{i=1}^n\alpha_{j,i}e_j\quad\text{for some }\alpha_{j,i}\in\C.$$
Then $\lim\limits_{j\to\infty}\alpha_{j,i}=\alpha_{0,i}$ by assumption. Fix $\varepsilon>0$. Then, by the continuity of $h$, for each $z_0\in X$ there exists $r_{z_0}>0$ and a positive integer $N_{z_0}$ such that
$$\sum_{i=1}^n|\alpha_{0,i}-\alpha_{j,i}|\cdot|e_i(z)|_h\leq\varepsilon$$
for all $j\geq N_{z_0}$ and $z\in B(z_0,r_{z_0})$, hence the triangle inequality gives
\begin{equation}\label{eq:6}
|\sigma_0(z)-\sigma_j(z)|_h\leq\varepsilon
\end{equation}
for all $j\geq N_{z_0}$ and $z\in B(z_0,r_{z_0})$. By compactness we can find finitely many points $z_1,\dots,z_k\in X$ such that the balls $B(z_i,r_{z_i})$ cover $X$. If we set $N:=\max\{N_{z_i}\mid 1\leq i\leq k\}$, then $|\sigma_0(z)-\sigma_j(z)|_h\leq\varepsilon$ for all $j\geq N$ and all $z\in X$ by \eqref{eq:6}, which shows (a).

To show (b), fix $\varepsilon>0$. Then there exists a positive integer $N_1$ such that 
$$\Big||\sigma_0(x_0)|_h-|\sigma_0(x_j)|_h\Big|\leq\varepsilon\quad\text{for all }j\geq N_1$$
On the other hand, by (a) there exists a positive integer $N_2$ such that $|\sigma_0(z)-\sigma_j(z)|_h\leq\varepsilon$ for all $j\geq N_2$ and all $z\in X$. In particular,
$$\Big||\sigma_0(x_j)|_h-|\sigma_j(x_j)|_h\Big|\leq|\sigma_0(x_j)-\sigma_j(x_j)|_h\leq\varepsilon\quad\text{for every }j\geq N_2.$$
Therefore, for all $j\geq\max\{N_1,N_2\}$ we have
$$\Big||\sigma_0(x_0)|_h-|\sigma_j(x_j)|_h\Big|\leq 2\varepsilon,$$
which finishes the proof.
\end{proof}

\subsection{Special quasi-psh functions}

The following results construct particular quasi-psh functions which will be needed in Part \ref{part:approximations}.

\begin{lem}\label{lem:logarithmic}
Let $X$ be a smooth projective variety and let $D$ be a pseudoeffective $\Q$-divisor on $X$. Let $A$ be an ample $\Q$-divisor on $X$, let $\omega\in\{A\}$ be a positive smooth form and let $\alpha\in\{D+A\}$ be a smooth form. Then for each rational number $\varepsilon\in(0,1)$ there exists a quasi-psh function $\psi_\varepsilon$ on $X$ which has logarithmic singularities with poles along $\sB(D+\varepsilon A)$ such that
$$\alpha+dd^c\psi_\varepsilon\geq(1-\varepsilon)\omega.$$
\end{lem}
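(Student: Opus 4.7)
Proof proposal. My plan is to produce $\psi_\varepsilon$ as the logarithm of (the squared norm of) a basis of sections of a sufficiently divisible multiple of $D+\varepsilon A$, corrected by a smooth potential. The key observation is that, because $D$ is pseudoeffective and $A$ is ample, the $\Q$-divisor $D+\varepsilon A$ is big for every rational $\varepsilon>0$, so $\sB(D+\varepsilon A)$ has a concrete section-theoretic description via Remark \ref{rem:stableQ}.

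First I would fix a smooth hermitian metric $h_0$ on $\OO_X(D+\varepsilon A)$ with curvature form $\Theta_0\in\{D+\varepsilon A\}$. Using Remark \ref{rem:stableQ}, choose a positive integer $k$ large and divisible enough that $k(D+\varepsilon A)$ is an integral Cartier divisor and
\[
\Bs|k(D+\varepsilon A)|=\sB(D+\varepsilon A).
\]
Let $\sigma_1,\dots,\sigma_N$ be a basis of $H^0\bigl(X,k(D+\varepsilon A)\bigr)$. By Example \ref{exa:quasi-psh}(b) applied to $h_0^{k}$, the function
\[
\varphi_\varepsilon:=\frac{1}{2k}\log\bigl(|\sigma_1|_{h_0^k}^2+\dots+|\sigma_N|_{h_0^k}^2\bigr)
\]
is quasi-psh on $X$, its pole set is exactly $\Bs|k(D+\varepsilon A)|=\sB(D+\varepsilon A)$, the poles are logarithmic in the usual analytic sense, and $\Theta_0+dd^c\varphi_\varepsilon\geq 0$ in the sense of currents.

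Next I would apply the $\ddbar$-lemma on the compact K\"ahler manifold $X$ to the relation of Bott--Chern classes
\[
\{\alpha\}-(1-\varepsilon)\{\omega\}=\{D+A\}-(1-\varepsilon)\{A\}=\{D+\varepsilon A\}=\{\Theta_0\},
\]
producing a smooth real function $u$ on $X$ with $\alpha-(1-\varepsilon)\omega=\Theta_0+dd^c u$. Then I set
\[
\psi_\varepsilon:=\varphi_\varepsilon-u.
\]
Since $u$ is smooth, $\psi_\varepsilon$ is quasi-psh with exactly the same logarithmic pole locus $\sB(D+\varepsilon A)$ as $\varphi_\varepsilon$, and
\[
\alpha+dd^c\psi_\varepsilon=(1-\varepsilon)\omega+\Theta_0+dd^cu+dd^c(\varphi_\varepsilon-u)=(1-\varepsilon)\omega+(\Theta_0+dd^c\varphi_\varepsilon)\geq(1-\varepsilon)\omega,
\]
which is exactly the required curvature inequality.

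There is no real obstacle here; the only point that deserves a moment of attention is the equality $\Bs|k(D+\varepsilon A)|=\sB(D+\varepsilon A)$, which is precisely what Remark \ref{rem:stableQ} guarantees for $k$ sufficiently divisible, and the bigness of $D+\varepsilon A$, which ensures that such a basis of sections actually exists (so that $\varphi_\varepsilon$ is not identically $-\infty$).
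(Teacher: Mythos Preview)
Your proof is correct and follows essentially the same approach as the paper: both construct $\psi_\varepsilon$ from the logarithm of a basis of sections of a sufficiently divisible multiple of $D+\varepsilon A$, using Remark~\ref{rem:stableQ} to identify the pole locus with $\sB(D+\varepsilon A)$ and Example~\ref{exa:quasi-psh}(b) for the positivity. The only cosmetic difference is that the paper fixes metrics $h$ on $D$ and $h_A$ on $A$ with $\Theta_h(D)=\alpha-\omega$ and $\Theta_{h_A}(A)=\omega$ upfront (which already absorbs the $\ddbar$-lemma step), whereas you choose an arbitrary metric on $D+\varepsilon A$ and correct by a smooth potential $u$ afterwards; the content is identical.
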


\begin{proof}
Fix a rational number $\varepsilon\in(0,1)$, let $h$ be a smooth metric on $D$ such that $\Theta_h(D)=\alpha-\omega\in\{D\}$, and let $h_A$ be the smooth metric on $A$ such that $\omega=\Theta_{h_A}(A)$. The $\Q$-divisor $D+\varepsilon A$ is big, hence by Remark \ref{rem:stableQ} there exists a positive integer $m$ such that
$$\sB(D+\varepsilon A)=\Bs|m(D+\varepsilon A)|.$$
Let $\sigma_1,\dots,\sigma_k$ be a basis of the vector space $H^0\big(X,m(D+\varepsilon A)\big)$, and set
$$\psi_\varepsilon=\frac{1}{2m}\log\sum_{i=1}^k|\sigma_i|^2_{h^mh_A^{m\varepsilon}}.$$
Then $(\alpha-\omega)+\varepsilon\omega+dd^c\psi_\varepsilon\geq0$ by Example \ref{exa:quasi-psh}(b) and $\psi_\varepsilon$ clearly has poles along $\Bs|m(D+\varepsilon A)|$. This concludes the proof.
\end{proof}

\begin{cor}\label{cor:logarithmic}
Let $X$ be a smooth projective variety with a K\"ahler form $\omega$, let $D$ be a big $\Q$-divisor on $X$ and let $\alpha\in\{D\}$ be a smooth form. Then there exists a rational number $\varepsilon\in(0,1)$ and a quasi-psh function $\psi$ on $X$ which has logarithmic singularities with poles along $\sB_+(D)$ such that
$$\alpha+dd^c\psi\geq \varepsilon\omega.$$
\end{cor}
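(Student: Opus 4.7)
The plan is to reduce the corollary to Lemma \ref{lem:logarithmic} by writing $D = (D - A) + A$ for an ample $\Q$-divisor $A$ of sufficiently small numerical class, and then to identify the pole locus with $\sB_+(D)$.

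First I would pick an ample $\Q$-divisor $A$ on $X$ whose numerical class has sufficiently small norm, so that by Remark \ref{rem:augmented} we have $\sB_+(D) = \sB(D - A)$, and so that $D - A$ is still big, hence pseudoeffective. Since $A$ is ample, its cohomology class $\{A\}$ contains a K\"ahler form $\omega_A$; by compactness of $X$ and positive definiteness of both $\omega$ and $\omega_A$, there exists a rational number $\mu > 0$ with $\omega_A \geq \mu\omega$ pointwise.

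Next I would fix a rational number $\varepsilon_0 \in (0,1)$ (say $\varepsilon_0 = 1/2$) and apply Lemma \ref{lem:logarithmic} to the pseudoeffective divisor $D - A$, the ample divisor $A$, the positive form $\omega_A \in \{A\}$, the smooth form $\alpha \in \{D\} = \{(D-A) + A\}$, and the parameter $\varepsilon_0$. This yields a quasi-psh function $\psi := \psi_{\varepsilon_0}$ with logarithmic singularities whose pole set is
$$\sB\bigl((D-A) + \varepsilon_0 A\bigr) = \sB\bigl(D - (1-\varepsilon_0)A\bigr),$$
and which satisfies
$$\alpha + dd^c \psi \geq (1-\varepsilon_0)\,\omega_A \geq (1-\varepsilon_0)\mu\, \omega.$$

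Finally I would identify the pole locus with $\sB_+(D)$. By definition of the augmented base locus as an intersection, $\sB_+(D) \subseteq \sB\bigl(D - (1-\varepsilon_0)A\bigr)$. Conversely, since $\varepsilon_0 A$ is ample, the standard inclusion $\sB(B + C) \subseteq \sB(B)$ for ample $C$ (which follows from the existence of a base-point-free multiple of $C$) gives
$$\sB\bigl(D - (1-\varepsilon_0)A\bigr) \subseteq \sB(D - A) = \sB_+(D),$$
so the two loci coincide. Choosing any rational $\varepsilon \in (0,1)$ with $\varepsilon \leq (1-\varepsilon_0)\mu$ completes the proof. I do not anticipate a serious obstacle; the only subtlety is choosing $A$ to simultaneously satisfy all three conditions (small enough norm that $\sB(D-A)=\sB_+(D)$, small enough that $D-A$ remains pseudoeffective, and with $\{A\}$ containing a K\"ahler form dominating $\mu\omega$), all of which hold for any ample $\Q$-divisor of sufficiently small class.
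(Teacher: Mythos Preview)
Your proof is correct and follows essentially the same route as the paper: write $D=(D-A)+A$ for a small ample $A$, compare $\omega$ with a K\"ahler form in $\{A\}$, and apply Lemma~\ref{lem:logarithmic}. The only cosmetic difference is in identifying the pole locus: the paper invokes Remark~\ref{rem:augmented} to pick $\varepsilon$ with $\sB_+(D)=\sB(D-\varepsilon A)$ directly, whereas you fix $A$ with $\sB_+(D)=\sB(D-A)$ and then sandwich $\sB\bigl(D-(1-\varepsilon_0)A\bigr)$ between $\sB_+(D)$ and $\sB(D-A)$; both arguments are equally valid.
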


\begin{proof}
Let $A$ be an ample $\Q$-divisor on $X$ such that the $\Q$-divisor $D-A$ is big, and let $\omega'\in\{A\}$ be a positive smooth form. Then by Lemma \ref{lem:diagonalisation} there exists a positive constant $C$ such that $C\omega'\geq\omega$, hence by replacing $\omega$ by $\omega'$, we may assume that $\omega\in\{A\}$.

By Remark \ref{rem:augmented} there exists a rational number $\varepsilon\in(0,1)$ such that
$$\sB_+(D)=\sB(D-\varepsilon A)=\sB\big((D-A)+(1-\varepsilon)A\big).$$
Then the result follows from Lemma \ref{lem:logarithmic} applied to the $\Q$-divisor $D-A$, the ample $\Q$-divisor $A$ and the rational number $1-\varepsilon$.
\end{proof}

\newpage

\part{Currents with minimal singularities}

\section{Singularities of currents}\label{sec:minimalsings}

In this section $X$ is always a compact complex manifold. Good sources for the foundational material on currents with minimal singularities are \cite{DPS01,Bou04}.

\subsection{Comparison of singularities}\label{subsection:comparisonsings}

Let $\varphi_1$ and $\varphi_2$ be quasi-psh functions on a compact complex manifold $X$. We say that $\varphi_1$ is \emph{less singular} than $\varphi_2$, and write $\varphi_1\preceq\varphi_2$, if there exists a constant $C$ such that $\varphi_2\leq\varphi_1+C$. We denote by $\varphi_1\approx\varphi_2$ the induced equivalence relation, i.e.\ we say that $\varphi_1$ and $\varphi_2$ have \emph{equivalent singularities} if $\varphi_1\preceq\varphi_2\preceq\varphi_1$. 

If $T_1$ and $T_2$ are two closed almost positive $(1,1)$-currents on $X$ with corresponding global potentials $\varphi_1$ and $\varphi_2$, we say that $T_1$ is \emph{less singular} than $T_2$, and write $T_1\preceq T_2$, if $\varphi_1\preceq\varphi_2$; and similarly for $T_1\approx T_2$. This does not depend on the choice of global potentials. It is immediate that any two closed almost positive $(1,1)$-currents with equivalent singularities have the same Lelong numbers.

\begin{rem}\label{rem:preceqproperties}
The relation $\preceq$ behaves well with respect to multiplication by positive constants and sums of currents. More precisely, let $\varphi_1, \varphi_2$ and $\varphi_3$ be quasi-psh functions on a compact complex manifold $X$ and let $\lambda$ be a positive real number. If $\varphi_1\preceq \varphi_2$, then it follows immediately that $\lambda \varphi_1\preceq\lambda \varphi_2$ and $\varphi_1+\varphi_3\preceq \varphi_2+\varphi_3$. Conversely, if $\varphi_1+\varphi_3\preceq \varphi_2+\varphi_3$, then $\varphi_1\preceq \varphi_2$: this is clear away from the pole set $\{\varphi_1+\varphi_2+\varphi_3={-}\infty\}$, hence it holds everywhere on $X$ by Corollary \ref{cor:strongusc}. Similar statements hold for currents, and are proved by considering their global potentials.
\end{rem}

Now, let $\varphi_1$ and $\varphi_2$ be quasi-psh functions on a compact complex manifold $X$ such that $\varphi_1\preceq\varphi_2$. Then it is immediate to check that
$$ \mathcal I(\varphi_2)\subseteq\mathcal I(\varphi_1).$$
In particular, if $\varphi_1$ and $\varphi_2$ have equivalent singularities, then they have the same multiplier ideal. 

\subsection{Minimal singularities}

Let $\alpha$ be a closed real continuous $(1,1)$-form on $X$ whose class $\{\alpha\}\in H^{1,1}_\mathrm{BC}(X,\R)$ is pseudoeffective. A minimal element $\varphi_{\min}\in\PSH(X,\alpha)$ with respect to the relation $\preceq$ is called a \emph{global potential with minimal singularities in $\PSH(X,\alpha)$}, and the corresponding current
$$T_{\min}=\alpha+dd^c\varphi_{\min}$$
is a \emph{current with minimal singularities in $\{\alpha\}$}; such a global potential and a current always exist by the next paragraph. Note that $T_{\min}\in\{\alpha\}$ is unique up to equivalence of singularities, but is in general not unique, see \cite[Proposition 5.2]{LX24}. One checks immediately that for each point $x\in X$ we have
\begin{equation}\label{eq:Tmin}
\nu(T_{\min},x)=\min_{T\in\alpha}\nu(T,x).
\end{equation}
It is also clear by Remark \ref{rem:preceqproperties} that for each positive number $\lambda$, the current $\lambda T_{\min}$ has minimal singularities in the class $\{\lambda\alpha\}$. By \S\ref{subsection:comparisonsings}, all currents with minimal singularities in a fixed cohomology class have the same multiplier ideal.

To show the existence of global potentials with minimal singularities, following the notation from \cite{GZ05,BEGZ10} we consider the upper envelope 
$$V_\alpha=\sup\big\{\varphi\in\PSH(X,\alpha)\mid \sup\nolimits_X\varphi=0\big\}.$$
The function $V_\alpha$ is again $\alpha$-psh. Indeed, by Theorem \ref{thm:compactnessquasipsh}(a) we have $V_\alpha^*\in\PSH(X,\alpha)$, and clearly $V_\alpha\leq V_\alpha^*$ by the definition of upper semicontinuous regularisations. But then
$$V_\alpha^*\in\big\{\varphi\in\PSH(X,\alpha)\mid \sup\nolimits_X\varphi=0\big\},$$
hence $V_\alpha^*\leq V_\alpha$ by the definition of $V_\alpha$. Thus, $V_\alpha=V_\alpha^*$.

The functions $V_\alpha$ are good for showing some existence results, such as the one above, and they have good regularity properties on the non-ample locus when $\alpha$ is a big class, see \cite{DT21} and the references therein. However, they seem to be too general to be useful in birational geometry. That is the reason why we will consider different global potentials with minimal singularities in this paper: \emph{supercanonical potentials}, studied in Section \ref{sec:supercanonical}.

The main reason why functions $V_\alpha$ are useful is that, as showed above, they themselves belong to the envelope $\big\{\varphi\in\PSH(X,\alpha)\mid \sup\nolimits_X\varphi=0\big\}$ (supercanonical potentials do not satisfy this property and this is one of the main issues in dealing with them). To demonstrate how this is used in practice, we prove the following result noted already in \cite[Lemma 5.2]{BEGZ10}.

\begin{lem}\label{lem:descendingminimal}
Let $X$ be a compact K\"ahler manifold with a K\"ahler form $\omega$, and let $\alpha$ be a real continuous $(1,1)$-form on $X$ whose class $\{\alpha\}\in H^{1,1}(X,\R)$ is pseudoeffective. Denote $\alpha_t:=\alpha+t\omega$ for $t\geq0$. Then the functions $V_{\alpha_t}$ decrease pointwise to $V_\alpha$ as $t\to0$. In particular, the positive currents $\alpha_t+dd^cV_{\alpha_t}$ converge weakly to $\alpha+dd^cV_\alpha$ as $t\to0$, and for real numbers $0\leq t_1\leq t_2$ we have $\mathcal I(V_{\alpha_{t_1}})\subseteq\mathcal I(V_{\alpha_{t_2}})$.
\end{lem}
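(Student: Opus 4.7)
The plan is to establish the monotonicity of $\{V_{\alpha_t}\}_{t\geq 0}$ in $t$, to identify the pointwise decreasing limit as $V_\alpha$, and then to read off the weak convergence of currents and the inclusion of multiplier ideals.

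First, for $0\leq s\leq t$ we have $\alpha_s\leq \alpha_t$ as continuous $(1,1)$-forms, so $\PSH(X,\alpha_s)\subseteq\PSH(X,\alpha_t)$. The envelope defining $V_{\alpha_s}$ is therefore contained in that defining $V_{\alpha_t}$, giving the pointwise inequality $V_{\alpha_s}\leq V_{\alpha_t}$. In particular, $V_\alpha$ itself lies in each envelope defining $V_{\alpha_t}$ for $t\geq 0$, so $V_\alpha\leq V_{\alpha_t}$ for all $t\geq 0$, and the pointwise decreasing limit $W:=\lim_{t\downarrow 0}V_{\alpha_t}$ exists and satisfies $W\geq V_\alpha$.

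Next I would show $W=V_\alpha$. Fix $t_0>0$ and a sequence $t_j\downarrow 0$ with $t_j\leq t_0$. The decreasing sequence $\{V_{\alpha_{t_j}}\}$ consists of $\alpha_{t_0}$-psh functions uniformly bounded from above by $0$, and since $V_\alpha\leq V_{\alpha_{t_j}}$ and $V_\alpha\not\equiv-\infty$, the sequence does not converge uniformly to $-\infty$. Theorem \ref{thm:compactnessquasipsh}(d) then yields both that the limit $W$ lies in $\PSH(X,\alpha_{t_0})$ and that $V_{\alpha_{t_j}}\to W$ in $L^1_\loc(X)$. Thus $(\alpha+t_0\omega)+dd^cW\geq 0$ as currents for every $t_0>0$; letting $t_0\downarrow 0$ we obtain $W\in\PSH(X,\alpha)$. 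The chain $V_\alpha\leq W\leq V_{\alpha_t}$, together with $\sup_X V_\alpha=\sup_X V_{\alpha_t}=0$, forces $\sup_X W=0$, so the defining property of $V_\alpha$ gives $W\leq V_\alpha$, whence $W=V_\alpha$.

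The weak convergence of currents follows at once: the $L^1_\loc$ convergence $V_{\alpha_t}\to V_\alpha$ implies $dd^cV_{\alpha_t}\to dd^cV_\alpha$ in the sense of currents, and since $\alpha_t\to\alpha$ uniformly on $X$, the currents $\alpha_t+dd^cV_{\alpha_t}$ converge weakly to $\alpha+dd^cV_\alpha$. Finally, for $0\leq t_1\leq t_2$ the pointwise inequality $V_{\alpha_{t_1}}\leq V_{\alpha_{t_2}}$ implies $e^{-V_{\alpha_{t_1}}}\geq e^{-V_{\alpha_{t_2}}}$, so every local section $f$ with $|f|e^{-V_{\alpha_{t_1}}}\in L^2_\loc$ satisfies $|f|e^{-V_{\alpha_{t_2}}}\in L^2_\loc$, yielding $\mathcal I(V_{\alpha_{t_1}})\subseteq\mathcal I(V_{\alpha_{t_2}})$.

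The only delicate step is the passage from $W$ being $\alpha_{t_0}$-psh for every $t_0>0$ to $W$ being $\alpha$-psh; this is handled by letting $t_0\downarrow 0$ in the current inequality, using that $t_0\omega\to 0$ uniformly, so that weak convergence preserves positivity.
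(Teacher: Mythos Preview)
Your proof is correct and follows essentially the same route as the paper's. The only minor difference is that the paper invokes Theorem~\ref{thm:compactnessquasipsh}(e) directly to conclude $W\in\PSH(X,\alpha)$ in one step, whereas you use part~(d) with a fixed $\alpha_{t_0}$ and then let $t_0\downarrow 0$; both are valid, and your argument is slightly more explicit about the weak convergence and the multiplier ideal inclusion that the paper leaves to the reader.
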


\begin{proof}
Since $\omega\geq0$, we have $V_{\alpha_{t'}}\in\big\{\varphi\in\PSH(X,\alpha_t)\mid \sup\nolimits_X\varphi=0\big\}$ when $t'\leq t$, hence $V_{\alpha_{t'}}\leq V_{\alpha_t}$ by the definition of $V_{\alpha_t}$; this also shows the statement on multiplier ideals in the lemma. Therefore, the limit $V_0:=\lim\limits_{t\to0}V_{\alpha_t}$ exists and clearly $V_\alpha\leq V_0\leq0$. Further, by Theorem \ref{thm:compactnessquasipsh}(e) we have $V_0\in\PSH(X,\alpha)$ and the functions $V_{\alpha_t}$ converge to $V_0$ in $L^1_\loc(X)$. Thus, $V_0\leq V_\alpha$ by the definition of $V_\alpha$, and so $V_0=V_\alpha$, as desired.
\end{proof}

\subsection{Minimal singularities under pullbacks and sums}

Currents with minimal singularities are stable under pullback:

\begin{prop}\label{pro:pullbackmincurrent} 
Let $\pi\colon Y\to X$ be a surjective morphism with connected fibres between compact complex manifolds and let $\theta\in H^{1,1}_\mathrm{BC}(X,\R)$ be a pseudoeffective class. Then a closed positive $(1,1)$-current $T\in\theta$ has minimal singularities if and only if the current $f^*T\in f^*\theta$ has minimal singularities.
\end{prop}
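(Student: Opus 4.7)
Fix a smooth representative $\alpha \in \theta$, so $\pi^*\alpha$ is a smooth representative of $\pi^*\theta$. Then every closed almost positive current in $\theta$ has the form $\alpha + dd^c\varphi$ for some $\alpha$-psh function $\varphi$, and likewise every closed almost positive current in $\pi^*\theta$ has the form $\pi^*\alpha + dd^c\psi$ for some $\pi^*\alpha$-psh function $\psi$. Throughout, write $T = \alpha + dd^c\varphi$. The goal is to compare global potentials of $T$ and $\pi^*T$ with those of arbitrary positive currents in the respective classes.

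The easy direction is to show that if $\pi^*T$ has minimal singularities in $\pi^*\theta$, then $T$ has minimal singularities in $\theta$. Given any closed positive current $T' = \alpha + dd^c\varphi' \in \theta$, the pullback $\pi^*T' = \pi^*\alpha + dd^c(\pi^*\varphi')$ is a closed positive current in $\pi^*\theta$. By hypothesis there exists a constant $C$ with $\pi^*\varphi' \leq \pi^*\varphi + C$ on $Y$. Since $\pi$ is surjective, this descends to $\varphi' \leq \varphi + C$ on $X$, which is exactly $T \preceq T'$.

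The substance of the proof is the converse: assuming $T$ has minimal singularities in $\theta$, I would show $\pi^*T$ has minimal singularities in $\pi^*\theta$. Given any closed positive current $S = \pi^*\alpha + dd^c\psi \in \pi^*\theta$, the key claim is that $S$ descends, i.e.\ there exists an $\alpha$-psh function $\varphi_S$ on $X$ with $\psi = \pi^*\varphi_S$ (outside a pluripolar set). Granting this claim, the minimality of $T$ yields a constant $C$ with $\varphi_S \leq \varphi + C$ on $X$, and pulling back gives $\psi \leq \pi^*\varphi + C$, i.e.\ $\pi^*T \preceq S$, as required. To establish the descent claim, I would argue fibrewise: for every point $x$ in the open set $U \subseteq X$ of regular values of $\pi$, the fibre $F_x := \pi^{-1}(x)$ is a smooth compact connected submanifold of $Y$ on which $\pi^*\alpha$ restricts to zero (the fibre tangent directions lie in $\ker d\pi$); thus whenever $\psi|_{F_x}$ is not identically $-\infty$ it is a psh function on a compact connected complex manifold, hence constant. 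By Fubini applied to $\psi \in L^1_{\mathrm{loc}}(Y)$, this happens for almost every $x \in U$, and one defines $\varphi_S(x)$ to be that constant value.

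The main obstacle is upgrading this pointwise descent to a globally $\alpha$-psh function on all of $X$. Locally, if $U_i \subseteq X$ is small enough that $\alpha|_{U_i} = dd^c f_i$ for some smooth $f_i$, then $\pi^*f_i + \psi$ is psh on $\pi^{-1}(U_i)$ and constant along generic fibres, so it equals $\pi^*g_i$ for a function $g_i$ defined a.e.\ on $U_i$; one then sets $\varphi_S|_{U_i} := g_i - f_i$, checks compatibility on overlaps, and verifies that $g_i$ is psh by lifting small holomorphic discs in the smooth locus of $\pi$ through local sections and invoking the sub-mean value property, finally extending across the critical values of $\pi$ by Theorem~\ref{thm:compactnessquasipsh}(a) applied to the upper semicontinuous regularisation (the necessary local uniform upper bound on $g_i$ coming from the one on $\psi$ together with compactness of the fibres). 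This gives $\varphi_S \in \PSH(X,\alpha)$ with $\pi^*\varphi_S = \psi$ off a pluripolar set of $Y$, and by Corollary~\ref{cor:strongusc} this equality holds everywhere, completing the proof.
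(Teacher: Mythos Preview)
The paper does not give its own proof of this proposition; it simply cites \cite[Proposition 1.12]{BEGZ10} and \cite[Proposition 5.1]{LX24}. Your argument is essentially the standard one from \cite{BEGZ10}: the easy direction is exactly as you wrote, and for the converse one shows that every $\pi^*\alpha$-psh function on $Y$ descends to an $\alpha$-psh function on $X$ by observing it is constant along generic (smooth, compact, connected) fibres, defining the quotient function on the regular values, and then extending across the critical locus.

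One point to tighten: your invocation of Theorem~\ref{thm:compactnessquasipsh}(a) for the extension across the critical values of $\pi$ is not quite the right tool---that result concerns upper envelopes of families of $\theta$-psh functions, not extension of a single psh function across a thin set. What you actually need is the classical removable singularity theorem for plurisubharmonic functions: if $A$ is a proper closed analytic subset of a domain $\Omega$ and $u$ is psh on $\Omega\setminus A$ and locally bounded above near $A$, then $u$ extends (uniquely) to a psh function on $\Omega$; see e.g.\ \cite[Theorem~I.5.24]{Dem12}. The local upper bound you need does follow, as you indicate, from the upper bound on $\psi$ via compactness of fibres. With this correction the argument goes through, and your final appeal to Corollary~\ref{cor:strongusc} to upgrade $\pi^*\varphi_S=\psi$ from an almost-everywhere equality to an everywhere equality is correct.
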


The proof is in \cite[Proposition 1.12]{BEGZ10}, see also \cite[Proposition 5.1]{LX24}.

\begin{rem}\label{rem:smallerTmin}
Let $T$ be a closed positive $(1,1)$-current on a compact complex manifold $X$ which is a current with minimal singularities in the class $\{T\}$, and let $T_1\leq T$ be a closed positive $(1,1)$-current on $X$. Then $T_1$ is a current with minimal singularities in the class $\{T_1\}$. Indeed, denote $T_2:=T-T_1\geq0$. If $S$ is any closed positive $(1,1)$-current in $\{T_1\}$, then $S+T_2\in\{T\}$. By the definition of currents with minimal singularities we have $T_1+T_2=T\preceq S+T_2$. But then $T_1\preceq S$ by Remark \ref{rem:preceqproperties}, as desired.
\end{rem}

We will need later the following results.

\begin{lem}\label{lem:Tminsigma}
Let $\pi\colon Y\to X$ be a surjective morphism with connected fibres from a smooth complex projective variety to a normal complex projective variety. Let $D$ be a pseudoeffective $\R$-divisor on $X$ and let $E$ be an effective $\pi$-exceptional $\R$-divisor on $Y$.
\begin{enumerate}[\normalfont (a)]
\item For each closed positive current $T\in\{\pi^*D+E\}$ we have $T\geq E$.
\item If a current $S\in\{\pi^*D\}$ has minimal singularities, then the current $S+E\in\{\pi^*D+E\}$ has minimal singularities.
\item If a current $T\in\{\pi^*D+E\}$ has minimal singularities, then $T-E\in\{\pi^*D\}$ is a positive current with minimal singularities.
\end{enumerate}
\end{lem}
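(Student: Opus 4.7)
The plan is to derive (b) and (c) from (a) using only the elementary properties of the singularity order $\preceq$ recorded in Remark \ref{rem:preceqproperties}, so the real content lies in (a). Writing $E=\sum e_jE_j$ for the prime decomposition, the Siu decomposition of $T$ yields $T\geq\sum\nu(T,E_j)E_j$, so for (a) it will suffice to verify that $\nu(T,E_j)\geq e_j$ for each prime component $E_j$ of $E$.

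The strategy is to reduce this to a numerical inequality for Boucksom's $\sigma_{E_j}$-function on $Y$. For any closed positive current $T\in\{\pi^*D+E\}$, a direct comparison with a current with minimal singularities in $\{\pi^*D+E+\varepsilon\omega\}$ for a fixed K\"ahler form $\omega$ together with the definition of $\sigma_{E_j}$ from \S\ref{subsec:nakayama} will give $\nu(T,E_j)\geq\sigma_{E_j}(\pi^*D+E)$; hence it will suffice to prove
$$\sigma_{E_j}(\pi^*D+E)\geq e_j.$$
To establish this, I will fix an ample $\Q$-divisor $A$ on $X$, so that $D+A$ is big on $X$ and $\pi^*(D+A)+E$ is big on $Y$. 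For every positive integer $m$ making $m(D+A)$ and $mE$ integral, the projection formula together with the identity $\pi_*\OO_Y(mE)=\OO_X$ -- which holds because $\pi_*\OO_Y(mE)$ is a rank-one reflexive sheaf on the normal variety $X$ that coincides with $\OO_X$ off the codimension $\geq 2$ subset $\pi(\Supp E)$ -- will give a natural isomorphism
$$H^0\big(Y,m(\pi^*(D+A)+E)\big)\cong H^0\big(X,m(D+A)\big).$$
Consequently every effective divisor $\Delta\in|m(\pi^*(D+A)+E)|$ will be of the form $\pi^*D_0+mE$ for some effective $D_0\in|m(D+A)|$, so that $\mult_{E_j}(\Delta)\geq me_j$; the algebraic description of $\sigma_{E_j}$ on big $\Q$-divisors (extended to big $\R$-divisors by the continuity of $\sigma_{E_j}$ on the big cone, approximating $D$ by $\Q$-divisors if necessary) will then yield $\sigma_{E_j}(\pi^*(D+A)+E)\geq e_j$. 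Since $\pi^*A$ is nef on $Y$, one has $\sigma_{E_j}(\pi^*A)=0$, and the sub-additivity of $\sigma_{E_j}$ -- a consequence of its convexity and homogeneity of degree one -- will then give
$$e_j\leq\sigma_{E_j}(\pi^*(D+A)+E)\leq\sigma_{E_j}(\pi^*D+E)+\sigma_{E_j}(\pi^*A)=\sigma_{E_j}(\pi^*D+E),$$
finishing the proof of (a).

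Parts (b) and (c) then follow formally. For (b), for any closed positive current $T\in\{\pi^*D+E\}$, part (a) shows that $T-E\in\{\pi^*D\}$ is closed and positive, so the minimality of $S$ gives $S\preceq T-E$, and Remark \ref{rem:preceqproperties} yields $S+E\preceq T$; since $T$ was arbitrary, $S+E$ has minimal singularities in $\{\pi^*D+E\}$. For (c), part (a) shows $T-E\geq 0$ in $\{\pi^*D\}$; for any closed positive current $S\in\{\pi^*D\}$ the sum $S+E\in\{\pi^*D+E\}$ is closed positive, so the minimality of $T$ together with Remark \ref{rem:preceqproperties} gives $T-E\preceq S$, proving the minimality of $T-E$.

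The main obstacle will be establishing the key numerical inequality $\sigma_{E_j}(\pi^*D+E)\geq e_j$, and in particular the projection-formula computation $\pi_*\OO_Y(mE)=\OO_X$ in the full generality of a surjective morphism with connected fibres (including the case when $\pi$ is not birational); once these ingredients are in place, the subsequent manipulations of singularity classes are essentially formal.
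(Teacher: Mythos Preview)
Your arguments for (b) and (c) are correct and essentially match the paper's approach (the paper invokes Remark~\ref{rem:smallerTmin} for (c), but your direct argument via Remark~\ref{rem:preceqproperties} is equivalent). For (a), your strategy of reducing to the inequality $\sigma_{E_j}(\pi^*D+E)\geq e_j$ and establishing it via the projection formula is the natural one and is in the spirit of the cited references.

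However, there is a gap in the non-birational case, and it is not where you locate it. The identity $\pi_*\OO_Y(mE)=\OO_X$ does hold in full generality for the reasons you give; the problem is the assertion that $\pi^*(D+A)+E$ is big on $Y$. When $\dim Y>\dim X$, the pullback $\pi^*(D+A)$ is never big, and adding the $\pi$-exceptional divisor $E$ does not help: for instance, take $X$ a smooth surface, $Z=\mathrm{Bl}_pX$ with exceptional curve $F$, $Y=Z\times\mathbb{P}^1$, $\pi=\mathrm{bl}\circ\mathrm{pr}_1$, and $E=F\times\mathbb{P}^1$; then $\pi^*(D+A)+E=\mathrm{pr}_1^*(\mathrm{bl}^*(D+A)+F)$ is a pullback from $Z$, hence not big on $Y$. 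Consequently you cannot invoke the formula $\sigma_{E_j}(\,\cdot\,)=\inf_\Delta\mult_{E_j}\Delta$: for a merely pseudoeffective class, Nakayama's $\sigma_{E_j}$ is defined as the limit $\lim_{\varepsilon\downarrow0}\sigma_{E_j}(\,\cdot\,+\varepsilon H)$ with $H$ ample on $Y$, and one has only $\sigma_{E_j}(\,\cdot\,)\leq\inf_\Delta\mult_{E_j}\Delta$, which points the wrong way. Your $H^0$-computation therefore bounds the wrong quantity from below.

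In the birational case your argument is complete: then $\pi^*A$ is big and nef, so $\pi^*(D+A)+E$ is genuinely big, and everything goes through. This is in fact the only case used later in the paper (in Proposition~\ref{pro:MMPminimalmetrics} and Theorem~\ref{thm:localPL}). For the general fibration case the paper defers to \cite[Lemma~2.15]{LP20b} and the alternatives \cite[Lemma~2.16]{GL13}, \cite[Lemma~2.4]{LP20a}; those results prove the inequality $N_\sigma(\pi^*D+E)\geq E$ directly in the fibration setting by working with an ample divisor on $Y$ rather than the pullback of one from $X$, which is precisely what is needed to close the gap.
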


\begin{proof}
Part (a) has the same proof as \cite[Lemma 2.15]{LP20b}; alternatively, the proof can be extracted from that of \cite[Corollary 4.4]{LX24}, by replacing there the reference \cite[Proposition III.5.7 and Lemma III.5.14]{Nak04} by either \cite[Lemma 2.16]{GL13} or \cite[Lemma 2.4]{LP20a}. Note that some of those results are stated for $\Q$-divisors, but the proofs work for $\R$-divisors.

For (b), consider a current $S'\in\{\pi^*D+E\}$. Then by (a) we have that $S'-E$ is a positive current in $\{\pi^*D\}$, hence $S\preceq S'-E$ since $S$ has minimal singularities. Therefore, $S+E\preceq S'$ by Remark \ref{rem:preceqproperties}, which shows that $S+E$ has minimal singularities.

To show (c), note that $T-E\geq0$ by (a). We conclude by Remark \ref{rem:smallerTmin}.
\end{proof}

\begin{lem}\label{lem:minsingeasy}
Let $X$ be a compact complex manifold and let $\alpha$ and $\beta$ be smooth forms whose classes in $H^{1,1}_\mathrm{BC}(X,\R)$ are pseudoeffective.
\begin{enumerate}[\normalfont (a)]
\item If $\varphi\in\PSH(X,\alpha)$ is bounded, then $\varphi$ has minimal singularities.
\item Assume that there exist $\varphi_\alpha\in\PSH(X,\alpha)$ and $\varphi_\beta\in\PSH(X,\beta)$ with minimal singularities such that the function $\varphi_\alpha+\varphi_\beta\in\PSH(X,\alpha+\beta)$ has minimal singularities. Then for all functions $\varphi_\alpha'\in\PSH(X,\alpha)$ and $\varphi_\beta'\in\PSH(X,\beta)$ with minimal singularities, the function $\varphi_\alpha'+\varphi_\beta'\in\PSH(X,\alpha+\beta)$ has minimal singularities.
\item Let $D_1$ and $D_2$ be semiample $\Q$-divisors on $X$, and let $T_1\in\{D_1\}$ and $T_2\in\{D_2\}$ be currents with minimal singularities. Then for each $0\leq t\leq 1$, the current $tT_1+(1-t)T_2\in\{tD_1+(1-t)D_2\}$ has minimal singularities.
\end{enumerate}
\end{lem}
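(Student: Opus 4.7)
The three parts are all direct consequences of the definitions together with the basic compatibility of $\preceq$ with sums from Remark~\ref{rem:preceqproperties}; for (c) one additionally exploits that semiample $\Q$-divisors carry smooth semipositive representatives.

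For (a), the key observation is that on a compact manifold every $\alpha$-psh function $\psi$ is upper semicontinuous and locally integrable, hence not identically ${-}\infty$; it therefore attains a finite maximum and is bounded above. Combined with $\varphi$ being bounded below, this yields
$$\psi-\varphi\leq\sup\nolimits_X\psi-\inf\nolimits_X\varphi<\infty$$
for every $\psi\in\PSH(X,\alpha)$. Hence $\varphi\preceq\psi$ for all such $\psi$, which is exactly the minimality of $\varphi$.

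For (b), I would use that having minimal singularities is a property of the equivalence class under $\approx$. Since $\varphi_\alpha,\varphi_\alpha'\in\PSH(X,\alpha)$ are both minimal, they satisfy $\varphi_\alpha\approx\varphi_\alpha'$, and likewise $\varphi_\beta\approx\varphi_\beta'$. The sum-compatibility of $\preceq$ in Remark~\ref{rem:preceqproperties} then gives $\varphi_\alpha+\varphi_\beta\approx\varphi_\alpha'+\varphi_\beta'$, so the minimality of the first sum transfers to the second.

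For (c), I would reduce to (a). Semiampleness of the $\Q$-divisor $D_i$ provides a sufficiently divisible multiple which is base-point-free, whose associated morphism to projective space pulls back the Fubini--Study form to a smooth semipositive representative $\alpha_i\in\{D_i\}$. The constant function $0$ thus lies in $\PSH(X,\alpha_i)$ and is bounded, so by (a) it has minimal singularities. Writing $T_i=\alpha_i+dd^c\varphi_i$ with $\varphi_i$ minimal, minimality of both $\varphi_i$ and $0$ forces $\varphi_i\approx 0$; equivalently, $\varphi_i$ is bounded. For any $t\in[0,1]$, the convex combination $t\varphi_1+(1-t)\varphi_2$ is then a bounded element of $\PSH\big(X,t\alpha_1+(1-t)\alpha_2\big)$, so it has minimal singularities by a second application of (a). The current associated to this potential is precisely $tT_1+(1-t)T_2\in\{tD_1+(1-t)D_2\}$. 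The only nontrivial ingredient in the entire argument is the existence of a smooth semipositive representative in (c), which is exactly why semiampleness (and not merely nefness) enters; modulo this, everything is bookkeeping with $\preceq$.
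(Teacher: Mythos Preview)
Your proof is correct and follows essentially the same approach as the paper. The only cosmetic difference is in (c): the paper first observes that the smooth semipositive forms $\alpha_i$ have minimal singularities by (a), then that $t\alpha_1+(1-t)\alpha_2$ has minimal singularities by (a), and finally invokes (b) to transfer this to $tT_1+(1-t)T_2$; you instead unpack (b) by noting $\varphi_i\approx 0$ directly, but the content is identical.
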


\begin{proof}
We first show (a). By assumption there exists a constant $C_\varphi$ such that $\varphi\geq C_\varphi$. If $\varphi'\in\PSH(X,\alpha)$, then it is bounded from above, hence there exists a constant $C_{\varphi'}$ such that $\varphi'\leq C_{\varphi'}$. Therefore, $\varphi'\leq\varphi+C_{\varphi'}-C_\varphi$, so that $\varphi\preceq\varphi'$ and consequently, $\varphi$ has minimal singularities.

For (b), by the definition of minimal singularities we have $\varphi_\alpha\approx\varphi_\alpha'$ and $\varphi_\beta\approx\varphi_\beta'$. Then $\varphi_\alpha+\varphi_\beta\approx\varphi_\alpha'+\varphi_\beta'$ by Remark \ref{rem:preceqproperties}, which gives (b).

Finally, we show (c). By passing to multiples, we may assume that $D_1$ and $D_2$ are integral basepoint free divisors. Then by Example \ref{exa:quasi-psh}(b) there exist smooth positive $(1,1)$-forms $\alpha_1\in\{D_1\}$ and $\alpha_2\in\{D_2\}$, which have minimal singularities by (a). Then again by (a), for each $0\leq t\leq 1$, the current $t\alpha_1+(1-t)\alpha_2\in\{tD_1+(1-t)D_2\}$ has minimal singularities, and we conclude by (b).
\end{proof}

\begin{rem}\label{rem:semiample}
If $D$ is a semiample $\Q$-divisor on a compact complex manifold $X$ and if $T\in\{D\}$ is a current with minimal singularities, then all Lelong numbers of $T$ are zero. Indeed, as in the proof of Lemma \ref{lem:minsingeasy} there exists a smooth positive $(1,1)$-form $\alpha\in\{D\}$, which clearly has all Lelong numbers zero. We conclude by \eqref{eq:Tmin}.
\end{rem}

\subsection{Siu decomposition of currents with minimal singularities}

One of the advantages of working with currents with minimal singularities is that the divisorial part of their Siu decomposition always has finitely many components. This was observed already in \cite[Lemma 4.11]{LM23}; here we provide a slightly different proof. In the following lemma $\rho(X)$ denotes the Picard number of a compact complex manifold $X$.

\begin{lem}\label{lem:Siu}
Let $X$ be a compact complex manifold, let $\theta\in H^{1,1}_\mathrm{BC}(X,\R)$ be a pseudoeffective class, and let $T_{\min}\in\alpha$ be a current with minimal singularities. Let
$$T_{\min}=D+R$$
be the Siu decomposition of $T_{\min}$, where $D$ is its divisorial part and $R$ is its residual part. Then $D$ is an $\R$-divisor which has at most $\rho(X)$ components.
\end{lem}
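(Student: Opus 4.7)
The plan is to argue by contradiction using the characterisation \eqref{eq:Tmin} of Lelong numbers of currents with minimal singularities. Write the Siu decomposition
$$T_{\min}=R+\sum_{k} \lambda_k [D_k],$$
where $\lambda_k:=\nu(T_{\min},D_k)>0$, the $D_k$ are distinct prime divisors on $X$, and by Siu's theorem the index set is at most countable. The first step is to observe that the divisorial part is automatically an $\R$-divisor as soon as we bound the number of components, so the entire content of the lemma is the inequality on the number of components.

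Suppose for contradiction that there are at least $\rho(X)+1$ prime components, and pick $D_1,\dots,D_{\rho(X)+1}$ among them. Since the classes of prime divisors on $X$ span a subspace of $H^{1,1}_\mathrm{BC}(X,\R)$ of dimension at most $\rho(X)$, there exist real numbers $a_1,\dots,a_{\rho(X)+1}$, not all zero, such that
$$\sum_{i=1}^{\rho(X)+1}a_i\{D_i\}=0\quad\text{in }H^{1,1}_\mathrm{BC}(X,\R).$$
After replacing $(a_i)$ by $(-a_i)$ if necessary, the set $I^+:=\{i\mid a_i>0\}$ is non-empty; set $I^-:=\{i\mid a_i<0\}$, and form the effective $\R$-divisors $E^+:=\sum_{i\in I^+}a_iD_i$ and $E^-:=\sum_{i\in I^-}|a_i|D_i$. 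These have disjoint supports and satisfy $\{E^+\}=\{E^-\}$ in $H^{1,1}_\mathrm{BC}(X,\R)$, so the current
$$S:=T_{\min}-t[E^+]+t[E^-]$$
lies in the class $\{T_{\min}\}$ for every $t\in\R$.

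The key step is to choose $t>0$ small enough that $S$ is positive, yet strictly less singular than $T_{\min}$ along some $D_i$. Expanding the Siu decomposition, for $i\in I^+$ the coefficient of $[D_i]$ in $S$ becomes $\lambda_i-ta_i$, while for $i\in I^-$ it becomes $\lambda_i+t|a_i|$, and all other components (and the residual part $R$) are unchanged. Taking $0<t<\min_{i\in I^+}\lambda_i/a_i$, all coefficients remain non-negative, so $S\geq0$. On the other hand, fix $i_0\in I^+$ and a very general point $x\in D_{i_0}$ that lies on no other component of the Siu decomposition of $T_{\min}$ and outside $E_c(R)$ for every $c>0$; then $\nu(T_{\min},x)=\lambda_{i_0}$, while $\nu(S,x)=\lambda_{i_0}-ta_{i_0}<\lambda_{i_0}$ because $x\notin\Supp E^-$. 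This contradicts \eqref{eq:Tmin}, finishing the argument.

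The main technical point is the positivity of $S$, which hinges on the disjointness of the supports of $E^+$ and $E^-$ obtained from the splitting of the linear relation; once that is secured, the Lelong number comparison is immediate because the residual part of the Siu decomposition has generic Lelong number zero along every prime divisor. The step that requires some care is the implicit appeal to a Picard-number bound in the analytic Bott–Chern setting: one has to invoke that the $\R$-span of classes of prime divisors in $H^{1,1}_\mathrm{BC}(X,\R)$ has dimension at most $\rho(X)$, which is the definition of $\rho(X)$ adopted throughout the paper.
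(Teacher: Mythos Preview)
Your proof is correct and follows essentially the same approach as the paper: both argue by contradiction, use that $\rho(X)+1$ divisor classes must satisfy a nontrivial linear relation, and perturb $T_{\min}$ along this relation to produce a positive current in the same class with a strictly smaller Lelong number along one component, contradicting \eqref{eq:Tmin}. The only cosmetic difference is that the paper normalises the relation by solving for one divisor $D_m$ in terms of the others, whereas you split the relation into its positive and negative parts $E^+,E^-$; the mechanics of the contradiction are identical.
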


\begin{proof}
Write
$$D=\sum_{i\in I}\lambda_i D_i,$$
where $D_i$ are prime divisors on $X$, and assume for contradiction that $\#I>\rho(X)$. If we denote $M=\{1,\dots,\rho(X)+1\}$, then there exist $m\in M$ and real numbers $\lambda_i'$ such that 
\begin{equation}\label{eq:1}
D_m\equiv\sum_{i\in M\setminus\{m\}}\lambda_i' D_i.
\end{equation}
Choose a positive real number $\varepsilon$ such that $\lambda_m>\varepsilon$ and $\lambda_i+\varepsilon\lambda_i'>0$ for all $i\in M\setminus\{m\}$. Then we have
$$\sum_{i>\rho(X)+1}\lambda_i D_i+R\geq 0 \quad\text{and}\quad\sum_{i\in M\setminus\{m\}}(\lambda_i+\varepsilon\lambda_i')D_i+(\lambda_m-\varepsilon)D_m\geq0,$$
hence
$$T:=\sum_{i\in M\setminus\{m\}}(\lambda_i+\varepsilon\lambda_i')D_i+(\lambda_m-\varepsilon)D_m+\sum_{i>\rho(X)+1}\lambda_i D_i+R\geq 0,$$
and note that, by \eqref{eq:1}, we have
$$T\equiv T+\varepsilon\Big(D_m-\sum\nolimits_{i\in M\setminus\{m\}}\lambda_i' D_i\Big)=T_{\min}.$$
But then
$$\nu(T,D_m)=\lambda_m-\varepsilon<\lambda_m=\nu(T_{\min},D_m),$$
a contradiction which proves the lemma.
\end{proof}

\section{Supercanonical currents}\label{sec:supercanonical}

In this section we introduce a special kind of currents with minimal singularities: \emph{supercanonical currents}. As mentioned in the introduction and as we will see in Lemma \ref{lem:supercanonical}, these are defined by an exponential $L^1$-condition. This property will yield in Part \ref{part:approximations} that supercanonical currents on big line bundles can be defined only by using global holomorphic sections of their multiples. This \emph{algebraicity} is the main reason why supercanonical currents should be fundamental for applications within the MMP, and this is spelled out in Theorem \ref{thm:main2} and in other results in Sections \ref{sec:supercanbig} and \ref{sec:proofofMain2}.

Supercanonical currents for $\Q$-divisors of the form $K_X+\Delta$, where $(X,\Delta)$ is a projective klt pair, were defined in \cite{BD12}. We use a similar, but somewhat simpler version of that definition in order to extend it to all pseudoeffective classes.

Supercanonical currents are defined in the following lemma, whose proof follows closely, for the most part, the presentation in \cite[Section 5]{BD12}.

\begin{lem}\label{lem:supercanonical}
Let $X$ be a compact complex manifold and let $\alpha$ be a smooth $(1,1)$-form on $X$ whose class $\{\alpha\}\in H_\mathrm{BC}^{1,1}(X,\R)$ is pseudoeffective. Let
$$\textstyle \mathcal S_\alpha:=\big\{\varphi\in\PSH(X,\alpha)\mid \int_X e^{2\varphi}dV_\omega\leq 1\big\},$$
and define the \emph{supercanonical potential} $\varphi_{\alpha,\can}$ associated to $\alpha$ as
$$\varphi_{\alpha,\can}(x):=\sup_{\varphi\in\mathcal S_\alpha}\varphi(x) \quad\text{for }x\in X.$$
Then:
\begin{enumerate}[\normalfont (a)]
\item $\mathcal S_\alpha\neq\emptyset$,
\item all $\varphi\in\mathcal S_\alpha$ are uniformly bounded from above on $X$,
\item $\varphi_{\alpha,\can}(x)=\max\limits_{\varphi\in\mathcal S_\alpha}\varphi(x)$ for $x\in X$,
\item $\varphi_{\alpha,\can}\in\PSH(X,\alpha)$,
\item the current
$$T_{\alpha,\can}=\alpha+dd^c\varphi_{\alpha,\can}$$ 
is a closed positive $(1,1)$-current with minimal singularities in $\{\alpha\}$, called the \emph{supercanonical current} associated to $\alpha$.
\end{enumerate}
\end{lem}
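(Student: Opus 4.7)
The plan is to establish the five parts in order, with (c) feeding (d) and both exploited in (e).

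For (a), pseudoeffectivity of $\{\alpha\}$ supplies some $\psi\in\PSH(X,\alpha)$, which is bounded above on the compact manifold $X$; after subtracting a sufficiently large constant, $\psi$ lies in $\mathcal S_\alpha$. For (b), I would cover $X$ by finitely many coordinate balls $B_i\Subset U_i$ on which $\alpha$ admits a bounded smooth local potential $g_i$, so that $u_i:=\varphi+g_i$ is psh on $U_i$ for every $\varphi\in\mathcal S_\alpha$. Since $e^{2u_i}$ is psh (Example \ref{exa:quasi-psh}(d)), the sub-mean value inequality on balls of uniform radius contained in $U_i$, combined with $\int_X e^{2\varphi}dV_\omega\leq 1$, bounds $e^{2u_i(x)}$, and hence $\varphi(x)=u_i(x)-g_i(x)$, by a constant independent of $\varphi\in\mathcal S_\alpha$ and $x\in X$.

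For (c), fix $x\in X$ with $\varphi_{\alpha,\can}(x)>{-}\infty$ (otherwise the claim is trivial), and choose $\varphi_n\in\mathcal S_\alpha$ with $\varphi_n(x)\to\varphi_{\alpha,\can}(x)$. By (b) the sequence is uniformly bounded above, and since $\varphi_n(x)\not\to{-}\infty$ it does not tend uniformly to ${-}\infty$; Theorem \ref{thm:compactnessquasipsh}(b) then yields a subsequence converging in $L^1_\loc(X)$ and almost everywhere to some $\varphi_\infty\in\PSH(X,\alpha)$. Fatou's lemma applied to $\{e^{2\varphi_n}\}$ gives $\int_X e^{2\varphi_\infty}dV_\omega\leq 1$, so $\varphi_\infty\in\mathcal S_\alpha$; and Lemma \ref{lem:convergencepsh} with the constant sequence $x_n\equiv x$ yields $\varphi_\infty(x)\geq\limsup_n\varphi_n(x)=\varphi_{\alpha,\can}(x)$. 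The opposite inequality is immediate from the definition, so the supremum is attained at $x$ by $\varphi_\infty$.

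Part (d) is the heart of the argument. Theorem \ref{thm:compactnessquasipsh}(a) applied to the family $\mathcal S_\alpha$ (uniformly bounded above by (b)) gives $\varphi_{\alpha,\can}^*\in\PSH(X,\alpha)$, so it suffices to show $\varphi_{\alpha,\can}^*=\varphi_{\alpha,\can}$. If instead $\varphi_{\alpha,\can}^*(x)>\varphi_{\alpha,\can}(x)$ at some $x\in X$, pick $x_k\to x$ with $\varphi_{\alpha,\can}(x_k)\to a>\varphi_{\alpha,\can}(x)$ and use (c) to select $\varphi_k\in\mathcal S_\alpha$ realising $\varphi_k(x_k)=\varphi_{\alpha,\can}(x_k)$. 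The compactness-plus-Fatou argument of (c) produces a weak subsequential limit $\varphi_\infty\in\mathcal S_\alpha$, and Lemma \ref{lem:convergencepsh} applied at the \emph{moving} points $x_k\to x$ forces $\varphi_\infty(x)\geq a>\varphi_{\alpha,\can}(x)$, contradicting the defining supremum. Finally, (e) follows quickly: $T_{\alpha,\can}$ is positive by (d), and any $\psi\in\PSH(X,\alpha)$ is bounded above on $X$, so as in (a) subtracting a suitable constant puts it in $\mathcal S_\alpha$, whence $\psi\preceq\varphi_{\alpha,\can}$; thus every $\alpha$-psh function is dominated by $\varphi_{\alpha,\can}$ up to a constant, i.e.\ $T_{\alpha,\can}$ has minimal singularities.

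The main obstacle will be (d): pointwise attainment from (c) is not by itself enough to upgrade $\varphi_{\alpha,\can}$ from a bare pointwise sup to an $\alpha$-psh function, and one must reapply the compactness-Fatou-Lemma \ref{lem:convergencepsh} cycle with a \emph{varying} base point to eliminate any potential jump of $\varphi_{\alpha,\can}^*-\varphi_{\alpha,\can}$. This is precisely where the exponential $L^1$-condition defining $\mathcal S_\alpha$ pays off: it is closed under $L^1_\loc$-limits of uniformly bounded sequences through Fatou's lemma, whereas $L^\infty$-type envelopes such as $V_\alpha$ do not directly admit this kind of flexible upgrade.
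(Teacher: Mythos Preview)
Your proposal is correct and follows essentially the same approach as the paper: the same covering/mean-value argument for (b), the same compactness--Fatou--Lemma~\ref{lem:convergencepsh} cycle for (c)/(d), and the same one-line argument for (e). The only organisational difference is that the paper proves (d) first (showing $(\varphi_{\alpha,\can})^*=\varphi_{\alpha,\can}$ directly via moving points $x_n\to x$ and approximating $\varphi_n$) and extracts (c) as a byproduct of that same computation, whereas you prove (c) first at a fixed point and then reuse it to supply exact maximisers in (d); the ingredients and logic are otherwise identical.
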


\begin{proof}
\emph{Step 1.}
Consider any $\varphi_0\in\PSH(X,\alpha)$. As $\varphi_0$ is bounded from above, there exists a constant $C_0$ such that $\int_X e^{2\varphi_0}dV_\omega\leq 2C_0$, hence $\varphi_0-\log C_0\in\mathcal S_\alpha$. This gives (a). 

\medskip

\emph{Step 2.}
By compactness of $X$ there exist finitely many coordinate balls $U_i=B(z_i,2r_i)$ for $z_i\in X$ such that the balls $V_i=B(z_i,r_i)$ cover $X$ and for each $i$ there is a smooth function $\theta_i$ on $\overline{U_i}$ such that $\alpha|_{U_i}=dd^c\theta_i$. Denote $M_{i,\min}:=\inf_{U_i}e^{2\theta_i}$ and $M_{i,\max}:=\sup_{U_i}e^{2\theta_i}$.

Let $\varphi\in\mathcal S_\alpha$ and let $x\in V_i$ for some $i$. Then $\theta_i+\varphi|_{U_i}$ is plurisubharmonic on $U_i$, hence so is $e^{\theta_i+\varphi|_{U_i}}$, and we have $B(x,r_i)\subseteq U_i$. The mean value inequality and the assumption $\varphi\in\mathcal S_\alpha$ give
\begin{align*}
e^{2\varphi(x)}M_{i,\min}&\leq e^{2(\theta_i(x)+\varphi(x))}\leq\frac{n!}{\pi^nr_i^{2n}}\int_{B(x,r_i)}e^{2(\theta_i+\varphi)}dV_\omega\\
&\leq\frac{n!}{\pi^nr_i^{2n}}\int_{U_i}e^{2\varphi}e^{2\theta_i}dV_\omega\leq \frac{n!}{\pi^nr_i^{2n}}M_{i,\max},
\end{align*}
hence
$$\varphi(x)\leq \frac12\log\Big(\frac{n!}{\pi^nr_i^{2n}}M_{i,\max}/M_{i,\min}\Big).$$
This shows (b).

\medskip

\emph{Step 3.}
The function $\varphi_{\alpha,\can}$ is well defined by (b), and set $\Phi=(\varphi_{\alpha,\can})^*$. Then $\Phi\in\PSH(X,\alpha)$ by Theorem \ref{thm:compactnessquasipsh}(a), and we claim that $\Phi=\varphi_{\alpha,\can}$.

To that end, fix $x\in X$. We may assume that $\Phi(x)\neq{-}\infty$, since otherwise the claim is clear. Then there exists a sequence $\{x_n\}$ of points in $X$ such that $x_n\to x$ and
$$\Phi(x)=\limsup\limits_{z\to x}\varphi_{\alpha,\can}(z)=\lim_{n\to\infty}\varphi_{\alpha,\can}(x_n),$$
hence, by the definition of $\varphi_{\alpha,\can}$, there exists a sequence of functions $\{\varphi_n\}$ in $\mathcal S_\alpha$ such that
\begin{equation}\label{eq:lim2}
\Phi(x)=\lim_{n\to\infty}\varphi_n(x_n).
\end{equation}
By (b) and by Theorem \ref{thm:compactnessquasipsh}(b), after passing to a subsequence we may assume that the sequence $\{\varphi_n\}$ converges in $L^1_{\loc}(X)$ and almost everywhere to a function $\widetilde\varphi\in\PSH(X,\alpha)$, and then $\widetilde\varphi\in\mathcal S_\alpha$ by Fatou's lemma. In particular, we have 
\begin{equation}\label{eq:7dd}
\widetilde\varphi(x)\leq\varphi_{\alpha,\can}(x)\leq\Phi(x)
\end{equation}
by the definition of $\varphi_{\alpha,\can}$. On the other hand, we have $\widetilde\varphi(x)\geq \Phi(x)$ by Lemma \ref{lem:convergencepsh} and by \eqref{eq:lim2}, which, together with \eqref{eq:7dd}, finishes the proof of the claim and of (d). The same proof also shows that $\varphi_{\alpha,\can}(x)=\widetilde\varphi(x)$, which gives (c).

\medskip

\emph{Step 5.}
Finally, we show (e). Consider any $\varphi_1\in\PSH(X,\alpha)$. Then as in Step 1 there exists a constant $C_1$ such that $\varphi_1-C_1\in\mathcal S_\alpha$, hence $\varphi_{\alpha,\can}\preceq\varphi_1$ by the definition of $\varphi_{\alpha,\can}$. This finishes the proof.
\end{proof}

The following lemma proves the first easy properties of supercanonical currents.

\begin{lem}\label{lem:supercanfirstproperties}
Let $X$ be a compact complex manifold, and let $\alpha$ and $\beta$ be smooth real $(1,1)$-forms on $X$ whose cohomology classes are pseudoeffective. With notation from Lemma \ref{lem:supercanonical} the following holds.
\begin{enumerate}[\normalfont (a)]
\item For each $0\leq\varepsilon\leq1$ we have
$$\varepsilon\varphi_{\alpha,\can}+(1-\varepsilon)\varphi_{\beta,\can}\leq \varphi_{\varepsilon\alpha+(1-\varepsilon)\beta,\can}.$$
\item There exists a constant $C$ such that for each $0\leq\varepsilon\leq1$ and each $\varphi\in\mathcal S_{\alpha+\varepsilon\beta}$ we have $\varphi\leq C$.
\item If $\beta\geq0$, then for each $0\leq\varepsilon_1\leq\varepsilon_2\leq1$ we have
$$\mathcal S_{\alpha+\varepsilon_1\beta}\subseteq\mathcal S_{\alpha+\varepsilon_2\beta}\quad\text{and}\quad\varphi_{\alpha+\varepsilon_1\beta,\can}\leq \varphi_{\alpha+\varepsilon_2\beta,\can}.$$
\end{enumerate}
\end{lem}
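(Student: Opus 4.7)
The plan is to treat the three parts essentially independently, with part (a) being the only one that uses a genuine analytic tool (H\"older's inequality) while (b) and (c) reduce to minor modifications of what was already observed in Lemma \ref{lem:supercanonical}.

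For part (a), I would start by noting that convex combinations respect the $\PSH$-condition: if $\varphi_1\in\PSH(X,\alpha)$ and $\varphi_2\in\PSH(X,\beta)$, then $\varepsilon\varphi_1+(1-\varepsilon)\varphi_2\in\PSH(X,\varepsilon\alpha+(1-\varepsilon)\beta)$, since
$$\big(\varepsilon\alpha+(1-\varepsilon)\beta\big)+dd^c\big(\varepsilon\varphi_1+(1-\varepsilon)\varphi_2\big)=\varepsilon(\alpha+dd^c\varphi_1)+(1-\varepsilon)(\beta+dd^c\varphi_2)\geq0.$$
Applying this to $\varphi_1=\varphi_{\alpha,\can}$ and $\varphi_2=\varphi_{\beta,\can}$, the only thing left is to check that the $L^2$-exponential integral condition is preserved. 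This is where H\"older's inequality enters, with conjugate exponents $1/\varepsilon$ and $1/(1-\varepsilon)$ (the edge cases $\varepsilon=0,1$ being trivial):
$$\int_X e^{2(\varepsilon\varphi_{\alpha,\can}+(1-\varepsilon)\varphi_{\beta,\can})}dV_\omega\leq\Big(\int_X e^{2\varphi_{\alpha,\can}}dV_\omega\Big)^{\varepsilon}\Big(\int_X e^{2\varphi_{\beta,\can}}dV_\omega\Big)^{1-\varepsilon}\leq 1.$$
Hence $\varepsilon\varphi_{\alpha,\can}+(1-\varepsilon)\varphi_{\beta,\can}\in\mathcal S_{\varepsilon\alpha+(1-\varepsilon)\beta}$, and the desired inequality follows from the definition of $\varphi_{\varepsilon\alpha+(1-\varepsilon)\beta,\can}$ as a sup over this set.

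For part (b), I would re-run the mean-value argument from Step 2 of the proof of Lemma \ref{lem:supercanonical}, but applied uniformly to the family $\{\alpha+\varepsilon\beta\}_{\varepsilon\in[0,1]}$. Cover $X$ by finitely many coordinate balls $U_i=B(z_i,2r_i)$ whose halves $V_i=B(z_i,r_i)$ still cover $X$, and on each $\overline{U_i}$ pick smooth local potentials $\theta_i^\alpha$ and $\theta_i^\beta$ so that $\theta_i^\alpha+\varepsilon\theta_i^\beta$ is a local potential for $\alpha+\varepsilon\beta$. Since $\theta_i^\alpha$ and $\theta_i^\beta$ are continuous on the compact set $\overline{U_i}$, there exist constants $c_i,C_i$ independent of $\varepsilon\in[0,1]$ with $c_i\leq\theta_i^\alpha+\varepsilon\theta_i^\beta\leq C_i$ on $\overline{U_i}$. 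Applying the mean value inequality to the plurisubharmonic function $\theta_i^\alpha+\varepsilon\theta_i^\beta+\varphi$ on $B(x,r_i)\subseteq U_i$ for $\varphi\in\mathcal S_{\alpha+\varepsilon\beta}$ and $x\in V_i$ yields an upper bound
$$\varphi(x)\leq \tfrac12\log\Big(\tfrac{n!}{\pi^n r_i^{2n}}e^{2(C_i-c_i)}\Big),$$
which is uniform both in $\varepsilon$ and across the finitely many $i$, giving the desired constant $C$.

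For part (c), the key observation is that if $\beta\geq 0$ and $\varepsilon_1\leq\varepsilon_2$, then $\alpha+\varepsilon_2\beta\geq\alpha+\varepsilon_1\beta$ as real $(1,1)$-forms, hence $\PSH(X,\alpha+\varepsilon_1\beta)\subseteq\PSH(X,\alpha+\varepsilon_2\beta)$ directly from the definition of $\theta$-psh; the $L^2$-exponential integrability condition is unchanged, so $\mathcal S_{\alpha+\varepsilon_1\beta}\subseteq\mathcal S_{\alpha+\varepsilon_2\beta}$ and the corresponding inequality of suprema is immediate.

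I do not anticipate a serious obstacle: the only delicate step is the H\"older estimate in (a), which is straightforward once the convexity identity for the $\PSH$-cone is noted. The main thing to verify carefully is the uniformity over $\varepsilon\in[0,1]$ in (b), since Lemma \ref{lem:supercanonical}(b) by itself only gives bounds for a fixed form; the fact that the potentials $\theta_i^\alpha$ and $\theta_i^\beta$ can be chosen once and for all and then combined linearly is what makes this uniformity work.
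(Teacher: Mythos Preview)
Your arguments for (b) and (c) are fine and match the paper's proof essentially verbatim. The issue is in (a).

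You apply the H\"older estimate directly to $\varphi_{\alpha,\can}$ and $\varphi_{\beta,\can}$, which presupposes that $\varphi_{\alpha,\can}\in\mathcal S_\alpha$ and $\varphi_{\beta,\can}\in\mathcal S_\beta$, i.e.\ that $\int_X e^{2\varphi_{\alpha,\can}}dV_\omega\leq 1$. This is \emph{not} established in Lemma~\ref{lem:supercanonical}: that lemma shows $\varphi_{\alpha,\can}\in\PSH(X,\alpha)$ and that the supremum defining it is a pointwise maximum, but the maximising $\widetilde\varphi\in\mathcal S_\alpha$ depends on the point $x$, so there is no reason for the envelope itself to satisfy the integral bound. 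Indeed the paper explicitly flags this in \S\ref{sec:minimalsings}: the $L^\infty$-envelopes $V_\alpha$ belong to their own defining family, whereas ``supercanonical potentials do not satisfy this property and this is one of the main issues in dealing with them.''

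The fix is the one the paper uses: run your H\"older argument for \emph{arbitrary} $\varphi_\alpha\in\mathcal S_\alpha$ and $\varphi_\beta\in\mathcal S_\beta$ to get
\[
\varepsilon\varphi_\alpha+(1-\varepsilon)\varphi_\beta\leq \varphi_{\varepsilon\alpha+(1-\varepsilon)\beta,\can},
\]
and then take the supremum over $\varphi_\alpha\in\mathcal S_\alpha$ and $\varphi_\beta\in\mathcal S_\beta$ separately on the left. This two-step supremum recovers exactly $\varepsilon\varphi_{\alpha,\can}+(1-\varepsilon)\varphi_{\beta,\can}$ without ever needing $\varphi_{\alpha,\can}\in\mathcal S_\alpha$.
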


\begin{proof}
Part (c) follows immediately from the inclusion $\PSH(X,\alpha+\varepsilon_1\beta)\subseteq\PSH(X,\alpha+\varepsilon_2\beta)$ and from the definition of supercanonical potentials, so we concentrate on (a) and (b).

For (a), fix $0\leq\varepsilon\leq1$. For fixed $\varphi_\alpha\in\mathcal S_\alpha$ and $\varphi_\beta\in\mathcal S_\beta$, it is immediate that
$$\varepsilon\varphi_\alpha+(1-\varepsilon)\varphi_\beta\in\PSH\big(X,\varepsilon\alpha+(1-\varepsilon)\beta\big),$$
and by H\"older's inequality we have
$$\int_X e^{2(\varepsilon\varphi_\alpha+(1-\varepsilon)\varphi_\beta)}dV_\omega\leq\bigg(\int_X e^{2\varphi_\alpha}dV_\omega\bigg)^\varepsilon\bigg(\int_X e^{2\varphi_\beta}dV_\omega\bigg)^{1-\varepsilon}\leq1.$$
Therefore, we have $\varepsilon\varphi_\alpha+(1-\varepsilon)\varphi_\beta\in\mathcal S_{\varepsilon\alpha+(1-\varepsilon)\beta}$, hence
$$\varepsilon\varphi_\alpha+(1-\varepsilon)\varphi_\beta\leq\varphi_{\varepsilon\alpha+(1-\varepsilon)\beta,\can}.$$
Then (a) follows by taking the pointwise supremum over all $\varphi_\alpha\in\mathcal S_\alpha$ and $\varphi_\beta\in\mathcal S_\beta$.

Next we show (b). The proof is analogous to that of Lemma \ref{lem:supercanonical}(b). By compactness of $X$ there exist finitely many coordinate balls $U_i=B(z_i,2r_i)$ for $z_i\in X$ such that the balls $W_i=B(z_i,r_i)$ cover $X$ and for each $i$ there are smooth functions $\theta_i$ and $\xi_i$ on $\overline{U_i}$ such that $\alpha|_{U_i}=dd^c\theta_i$ and $\beta|_{U_i}=dd^c\xi_i$. Denote
$$M_{i,\min}:=\inf_{\varepsilon\in[0,1]}\inf_{U_i}e^{2(\theta_i+\varepsilon\xi_i)}\quad\text{and}\quad M_{i,\max}:=\sup_{\varepsilon\in[0,1]}\sup_{U_i}e^{2(\theta_i+\varepsilon\xi_i)}.$$
Now, fix $0\leq\varepsilon\leq1$ and $\varphi\in\mathcal S_{\alpha+\varepsilon\beta}$, and let $x\in W_i$ for some $i$. Then $\theta_i+\varepsilon\xi_i+\varphi|_{U_i}$ is plurisubharmonic on $U_i$, hence so is $e^{\theta_i+\varepsilon\xi_i+\varphi|_{U_i}}$, and we have $B(x,r_i)\subseteq U_i$. Then as in Step 2 of the proof of Lemma \ref{lem:supercanonical}, the mean value inequality and the assumption $\varphi\in\mathcal S_{\alpha+\varepsilon\beta}$ give
$$e^{2\varphi(x)}\leq \frac{n!}{\pi^nr_i^{2n}}M_{i,\max}/M_{i,\min}.$$
This shows (b).
\end{proof}

\newpage

\part{Asymptotically equisingular approximations}\label{part:asymptoticapproximations}

In Part \ref{part:asymptoticapproximations} we prove the first main result of this paper, Theorem \ref{thm:main1}. We first study in detail the different instances of approximations of currents which are relevant for this paper, in an increasing order of complexity: \emph{asymptotically equisingular approximations}, \emph{good approximations} and finally \emph{excellent approximations}. One of the main technical results of this part is Corollary \ref{cor:samemodel}, which essentially says that in the context of the MMP, the approximation by currents with minimal singularities is asymptotically equisingular if and only if it is excellent. This is one of the main ingredients in the proof of Theorem \ref{thm:main1}.

\section{Asymptotically equisingular approximations}\label{sec:equisingular}

In this section we introduce the weakest form of approximations of currents relevant for this paper.

\begin{dfn}\label{dfn:asymptoticallyequising}
Let $T$ be a closed almost positive $(1,1)$-current on a compact complex manifold $X$. A sequence of closed almost positive $(1,1)$-currents $\{T_m\}_{m\in\N}$ on $X$ is an \emph{asymptotically equisingular approximation} of $T$ if there exist an effective divisor $D$ on $X$ and a sequence of positive integers $\{m_\ell\}_{\ell\in\N_{>0}}$ such that $m_\ell\to\infty$ and
$$\mathcal I(\ell T_{m_\ell})\otimes\OO_X({-}D)\subseteq\mathcal I(\ell T)\subseteq \mathcal I(\ell T_{m_\ell})\otimes\OO_X(D)\quad\text{for all }\ell.$$
\end{dfn}

\begin{rem}
In the definition we use the convention from Remark \ref{rem:sheaves}: in particular, all sheaves in Definition \ref{dfn:asymptoticallyequising} are understood as subsheaves of the sheaf of meromorphic functions on $X$.
\end{rem}

Definition \ref{dfn:asymptoticallyequising} is inspired by equisingular approximations from \cite[Definition 2.3]{Cao14} and \cite[Definition 4.1.3]{Dem15}, although equisingular approximations from op.\ cit.\ seem to be a too restrictive notion to consider in the context of the Minimal Model Program. Note that in Definition \ref{dfn:asymptoticallyequising} we do not require that the sequence $\{T_m\}_{m\in\N}$ converges weakly to $T$, hence the concept of asymptotically equisingular approximations seems to be a very weak one (we will see in Theorem \ref{thm:demailly} that any positive $(1,1)$-current on a compact K\"ahler manifold has such an approximation). The word \emph{approximation} might a priori be misleading, but is in some sense justified by the following lemma.

\begin{lem}\label{lem:valuationscurrents}
Let $X$ be a compact complex manifold and let $T$ be a closed almost positive $(1,1)$-current on $X$ with an asymptotically equisingular approximation $\{T_m\}_{m\in\N}$. Then there exists a sequence of positive integers $\{m_\ell\}_{\ell\in\N_{>0}}$ with $m_\ell\to\infty$, such that for each prime divisor $E$ over $X$ we have 
$$\nu(T,E)=\lim\limits_{\ell\to\infty}\nu(T_{m_\ell},E).$$
\end{lem}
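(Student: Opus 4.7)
The plan is to take the sequence $\{m_\ell\}$ provided by Definition~\ref{dfn:asymptoticallyequising} and to prove the quantitative estimate
\[
|\nu(T_{m_\ell},E)-\nu(T,E)|\leq\frac{1+a+d}{\ell}
\]
for every prime divisor $E$ over $X$, where $a$ and $d$ are non-negative integers depending on $E$ and on the effective divisor $D$ from Definition~\ref{dfn:asymptoticallyequising}, but not on $\ell$. Concretely, I fix a resolution $\pi\colon Y\to X$ on which $E$ is a prime divisor, set $A:=K_Y-\pi^*K_X$, and put $a:=\mathrm{ord}_E A$ and $d:=\mathrm{ord}_E\pi^*D$. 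Recall that by definition $\nu(S,E)=\nu(\pi^*S,E)$ for every closed almost positive $(1,1)$-current $S$ on $X$.

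The first step is to combine Theorem~\ref{thm:DEL+DP}(a) and (d) with the hypothesis to deduce the cleaner inclusions
\[
\mathcal I(\ell T_{m_\ell}+[D])\subseteq\mathcal I(\ell T)\quad\text{and}\quad\mathcal I(\ell T+[D])\subseteq\mathcal I(\ell T_{m_\ell})
\]
of ideal sheaves on $X$, and then to apply Lemma~\ref{lem:Cao} to global potentials of the currents involved in order to transfer these to the pulled-back inclusions
\begin{align*}
\mathcal I\big(\ell\,\pi^*T_{m_\ell}+[\pi^*D]\big)\otimes\OO_Y(-A)&\subseteq\mathcal I(\ell\,\pi^*T),\\
\mathcal I\big(\ell\,\pi^*T+[\pi^*D]\big)\otimes\OO_Y(-A)&\subseteq\mathcal I(\ell\,\pi^*T_{m_\ell})
\end{align*}
on $Y$.

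The second step is to read off orders of vanishing along $E$. Theorem~\ref{thm:DEL+DP}(e), applied to every closed almost positive $(1,1)$-current $S$ on $Y$ that appears in the previous step, asserts that $\mathcal I(S)$ agrees with $\OO_Y(-\lfloor D_S\rfloor)$, where $D_S$ is the divisorial part of the Siu decomposition of $S$, outside a closed analytic subset of $Y$ of codimension at least $2$. There are only countably many such currents in play, so the union of their failure loci meets $E$ in a set of measure zero, and I can fix a single point $y\in E$ at which all equalities from Theorem~\ref{thm:DEL+DP}(e) hold simultaneously. Using additivity of Lelong numbers and the equality $\pi^*[D]=[\pi^*D]$, the divisorial part of $\ell\,\pi^*T_{m_\ell}+[\pi^*D]$ (respectively, $\ell\,\pi^*T+[\pi^*D]$) at $E$ has coefficient $\ell\nu(T_{m_\ell},E)+d$ (respectively, $\ell\nu(T,E)+d$), and since $d\in\Z$ I obtain at $y$ the identities
\[
\mathrm{ord}_E\mathcal I(\ell\,\pi^*T)=\lfloor\ell\nu(T,E)\rfloor \quad\text{and}\quad \mathrm{ord}_E\mathcal I(\ell\,\pi^*T_{m_\ell})=\lfloor\ell\nu(T_{m_\ell},E)\rfloor,
\]
together with the analogous identities with an extra summand $d$ for the two currents twisted by $[\pi^*D]$. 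Combining these values with the two pulled-back inclusions of the previous paragraph yields
\[
\lfloor\ell\nu(T_{m_\ell},E)\rfloor+a+d\geq\lfloor\ell\nu(T,E)\rfloor \quad\text{and}\quad \lfloor\ell\nu(T,E)\rfloor+a+d\geq\lfloor\ell\nu(T_{m_\ell},E)\rfloor,
\]
whence $|\ell\nu(T_{m_\ell},E)-\ell\nu(T,E)|\leq 1+a+d$, and dividing by $\ell$ and letting $\ell\to\infty$ proves the lemma.

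The main technical hurdle is the passage from an approximation of multiplier ideals on $X$ to the comparison of Lelong numbers along an arbitrary prime divisor over $X$: Lemma~\ref{lem:Cao} performs the descent to $Y$ at the single cost of the twist by $A$, while Theorem~\ref{thm:DEL+DP}(e) provides the dictionary between multiplier ideals and Lelong numbers, but only outside codimension-$2$ analytic subsets. The one point to verify is therefore that a countable union of such subsets still misses generic points of $E$, which is immediate since their traces on $E$ all have measure zero.
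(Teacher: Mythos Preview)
Your proof is correct and follows essentially the same route as the paper's own argument: both use Theorem~\ref{thm:DEL+DP}(a)(d) to pass from the twisted inclusions to inclusions of the form $\mathcal I(\ell T_{m_\ell}+[D])\subseteq\mathcal I(\ell T)$, then Lemma~\ref{lem:Cao} to pull back to $Y$, then Theorem~\ref{thm:DEL+DP}(e) to translate into inequalities between $\lfloor\ell\nu(T_{m_\ell},E)\rfloor$ and $\lfloor\ell\nu(T,E)\rfloor$, and conclude by dividing by $\ell$. The only cosmetic difference is that the paper, for each fixed $\ell$, works on the Zariski open set $U$ from Theorem~\ref{thm:DEL+DP}(e) and uses that $E\cap U\neq\emptyset$, whereas you fix a single point $y\in E$ avoiding the countably many failure loci at once; both are equally valid.
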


\begin{proof}
Fix a prime divisor $E$ over $X$, let $\pi\colon Y\to X$ be a modification such that $E$ is a prime divisor on $Y$, and let
$$A:=K_Y-\pi^*K_X$$
denote the ramification divisor on $Y$. Let
$$\pi^*T=R+D\quad\text{and}\quad \pi^*T_m=R_m+D_m$$
be the Siu decompositions of $\pi^*T$ and each $\pi^*T_m$, respectively.

By the definition of asymptotically equisingular approximations, there exist an effective integral divisor $G$ on $X$ and a sequence of positive integers $\{m_\ell\}_{\ell\in\N_{>0}}$ such that $m_\ell\to\infty$ and
\begin{equation}\label{eq:10a}
\mathcal I(\ell T_{m_\ell})\otimes\OO_X({-}G)\subseteq\mathcal I(\ell T)\subseteq \mathcal I(\ell T_{m_\ell})\otimes\OO_X(G)\quad\text{for all }\ell.
\end{equation}
By Theorem \ref{thm:DEL+DP}(a)(d) we have
$$\mathcal I(\ell T_{m_\ell}+G)\subseteq\mathcal I(\ell T_{m_\ell})\otimes\OO_X({-}G),$$
which together with the first inclusion in \eqref{eq:10a} gives
$$\mathcal I(\ell T_{m_\ell}+G)\subseteq\mathcal I(\ell T).$$
Then Lemma \ref{lem:Cao} implies
\begin{equation}\label{eq:10}
\mathcal I(\ell\pi^*T_{m_\ell}+\pi^*G)\otimes\OO_Y({-}A)\subseteq \mathcal I(\ell\pi^*T).
\end{equation}
Similarly we obtain
\begin{equation}\label{eq:11}
\mathcal I(\ell\pi^*T+\pi^*G)\otimes\OO_Y({-}A)\subseteq \mathcal I(\ell\pi^*T_{m_\ell}).
\end{equation}
Now, note that for each $\ell$, the $(1,1)$-current $\ell D+\pi^*G$ is the divisorial part of the Siu decomposition of $\ell\pi^*T+\pi^*G$, and $\ell D_{m_\ell}+\pi^*G$ is the divisorial part of the Siu decomposition of $\ell\pi^*T_{m_\ell}+\pi^*G$. Thus, by Theorem \ref{thm:DEL+DP}(e) there exists an analytic open subset $U\subseteq Y$ such that $\codim_Y(Y\setminus U)\geq2$ and
\begin{align*}
\mathcal I(\ell\pi^*T+\pi^*G)|_U&=\OO_{U}(-\lfloor \ell D\rfloor-\pi^*G\big),\\
\mathcal I(\ell\pi^*T_{m_\ell}+\pi^*G)|_U&=\OO_{U}(-\lfloor \ell D_{m_\ell}\rfloor-\pi^*G\big),
\end{align*}
and
$$\mathcal I(\ell\pi^*T)|_U=\OO_{U}(-\lfloor \ell D\rfloor\big)\quad\text{and}\quad\mathcal I(\ell\pi^*T_{m_\ell})|_U=\OO_{U}(-\lfloor \ell D_{m_\ell}\rfloor\big).$$ 
This together with \eqref{eq:10} and \eqref{eq:11} gives
$$ \OO_{U}(-\lfloor \ell D_{m_\ell}\rfloor-\pi^*G-A\big)\subseteq \OO_{U}(-\lfloor \ell D\rfloor\big) $$
and
$$ \OO_{U}(-\lfloor \ell D\rfloor-\pi^*G-A\big)\subseteq\OO_{U}(-\lfloor \ell D_{m_\ell}\rfloor\big). $$
Considering the order of vanishing along $E$, these inclusions imply
\begin{align*}
\lfloor\ell\nu(T,E)\rfloor & -\mult_E \pi^*G-\mult_E A\leq \lfloor\ell\nu(T_{m_\ell},E)\rfloor\\
&\leq \lfloor\ell\nu(T,E)\rfloor+\mult_E \pi^*G+\mult_E A.
\end{align*}
We conclude by dividing these inequalities by $\ell$ and letting $\ell\to\infty$.
\end{proof}

\section{Good approximations}\label{sec:goodapproximations}

In the context of the MMP, asymptotically equisingular approximations carry a priori too little information on the currents involved.  We need first a stronger notion.

\begin{dfn}\label{dfn:goodapprox}
Let $T$ be a closed almost positive $(1,1)$-current on a compact complex manifold $X$. A sequence of closed almost positive $(1,1)$-currents $\{T_m\}_{m\in\N}$ on $X$ is a \emph{good approximation} of $T$ if:
\begin{enumerate}[\normalfont (i)]
\item $\{T_m\}_{m\in\N}$ is an asymptotically equisingular approximation of $T$, and
\item all $T_m$ have generalised analytic singularities.
\end{enumerate}
\end{dfn}

The definition of good approximations is motivated by the following result hidden in \cite[Section 1]{Dem15}, which shows that every closed positive $(1,1)$-current on a compact K\"ahler manifold always has at least one good approximation. Theorem \ref{thm:demailly} is not necessary for the remainder of the paper, but we include it for the sake of completeness: it demonstrates how the concept of good approximations is inspired by the approximation techniques of Demailly \cite{Dem92}.

\begin{thm}\label{thm:demailly}
Let $T$ be a closed positive $(1,1)$-current on a compact K\"ahler manifold $X$. Then there exists a good approximation of $T$.
\end{thm}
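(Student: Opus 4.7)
The plan is to invoke Demailly's equisingular approximation theorem, which is precisely the technology developed in \cite{Dem15} referenced in the statement. Since $X$ is compact K\"ahler and $T$ is a closed positive $(1,1)$-current, fix a smooth closed $(1,1)$-form $\alpha\in\{T\}$ and write $T=\alpha+dd^c\varphi$ for some $\alpha$-psh function $\varphi$. The Bergman kernel construction applied to the Hilbert spaces of holomorphic functions $L^2$-integrable with respect to the weights $e^{-2m\varphi}$ produces a sequence $\varphi_m$ of quasi-psh functions with analytic singularities, converging to $\varphi$ in $L^1_\loc(X)$, and satisfying $\alpha+dd^c\varphi_m\geq{-}\varepsilon_m\omega$ for some $\varepsilon_m\downarrow0$. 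By combining this construction with the Ohsawa--Takegoshi extension theorem and the subadditivity of multiplier ideals, Demailly's theorem upgrades this to the \emph{equisingular} statement: for every positive integer $\ell$ there exists $m_0(\ell)$ such that
$$\mathcal I(\ell\varphi_m)=\mathcal I(\ell\varphi)\quad\text{for all }m\geq m_0(\ell).$$

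Setting $T_m:=\alpha+dd^c\varphi_m$, each $T_m$ has analytic singularities, hence generalised analytic singularities in the sense of \S\ref{subsec:generalisedanalytic}. To obtain condition (i) of Definition \ref{dfn:goodapprox} I would then perform a diagonal extraction: define $m_\ell:=\max\{\ell,m_0(\ell)\}$, so that $m_\ell\to\infty$ and
$$\mathcal I(\ell T_{m_\ell})=\mathcal I(\ell T)\quad\text{for every }\ell\geq1.$$
Choosing $D:=0$ in Definition \ref{dfn:asymptoticallyequising}, both required inclusions
$$\mathcal I(\ell T_{m_\ell})\otimes\OO_X({-}D)\subseteq\mathcal I(\ell T)\subseteq \mathcal I(\ell T_{m_\ell})\otimes\OO_X(D)$$
hold trivially for all $\ell$, which establishes that $\{T_m\}_{m\in\N}$ is an asymptotically equisingular approximation of $T$. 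Combined with condition (ii) verified above, this shows $\{T_m\}_{m\in\N}$ is a good approximation of $T$.

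The hard part here is not the argument I am giving but the input from \cite{Dem15}: the Bergman kernel construction by itself only yields one inclusion, essentially $\mathcal I(m\varphi_m)\subseteq\mathcal I(m\varphi)$, while the reverse inclusion at scale $m$ relies on the Ohsawa--Takegoshi $L^2$-extension theorem, and the further promotion to equisingularity of $\ell\varphi_m$ for \emph{every} $\ell$ requires the subadditivity of multiplier ideals and additional regularisation. Once this deep ingredient is granted, the passage from Demailly's equisingularity to the weaker notion of asymptotic equisingularity introduced in Definition \ref{dfn:asymptoticallyequising} is just the diagonal argument above, and the analyticity of singularities of the $T_m$ yields condition (ii) of Definition \ref{dfn:goodapprox} immediately.
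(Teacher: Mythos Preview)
Your approach is the paper's: both use Demailly's Bergman kernel approximation from \cite{Dem15} and then read off the asymptotic equisingularity. The one substantive difference is that the paper does not cite an off-the-shelf equality $\mathcal I(\ell\varphi_m)=\mathcal I(\ell\varphi)$ for the raw Bergman sequence. Instead it sets $T_m:=\alpha+dd^c\big(\tfrac{m+1}{m}\varphi_m\big)$ and checks the two inclusions separately: $\mathcal I(\ell T)\subseteq\mathcal I(\ell T_m)$ follows from $\varphi_m\geq\varphi-C/m$ together with strong openness (Theorem~\ref{thm:GuanZhou}, giving $\mathcal I\big(\tfrac{m+1}{m}\ell\varphi\big)=\mathcal I(\ell\varphi)$ for $m\gg0$), while $\mathcal I(\ell T_m)\subseteq\mathcal I(\ell T)$ uses \cite[Corollary~1.12]{Dem15}, which compares $\mathcal I(\lambda'\varphi_m)$ with $\mathcal I(\lambda\varphi)$ only when $\lambda'>\lambda$. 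The $\tfrac{m+1}{m}$ rescaling is precisely what manufactures this strict gap $\lambda'=\ell\tfrac{m+1}{m}>\ell=\lambda$, so it is a genuine technical device rather than cosmetic; your black-box equisingularity for the unrescaled $\varphi_m$ is not stated in that form in \cite{Dem15}. Once you insert this rescaling, your diagonal extraction with $D=0$ matches the paper's argument exactly.
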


\begin{proof}
We recall the argument from \cite[Section 1]{Dem15}. Fix a smooth form $\alpha\in\{T\}$ and let $\varphi$ be a quasi-psh function on $X$ such that $T=\alpha+dd^c\varphi$. By subtracting a constant from $\varphi$ we may assume that $\varphi\leq0$. By the Bergman kernel approximation technique \cite[Theorem 1.2]{Dem15}, there exist quasi-psh functions with analytic singularities $\varphi_m\leq0$ and a constant $C>0$ such that:
\begin{enumerate}[\normalfont (i)]
\item $\alpha + dd^c \varphi_m\geq {-}\varepsilon_m\omega$, where $\lim\limits_{m\to+\infty}\varepsilon_m = 0$,
\item $\varphi_m\geq\varphi-\frac{C}{m}$ for every $m$.
\end{enumerate}
Setting $T_m:=\alpha+dd^c\big(\frac{m+1}{m}\varphi_m\big)$, we claim that $\{T_m\}_{m\in\N}$ is the desired sequence. Indeed, we only need to show $\{T_m\}_{m\in\N}$ is an asymptotically equisingular approximation. To that end, fix $\ell>0$. We have $\mathcal I\big(\frac{m+1}{m}\ell\varphi\big)\subseteq\mathcal I(\frac{m+1}{m}\ell\varphi_m)$ by (ii), and since $\mathcal I\big(\frac{m+1}{m}\ell\varphi\big)=\mathcal I(\ell\varphi)$ for $m\gg0$ by Theorem \ref{thm:GuanZhou}, we conclude that
$$\mathcal I(\ell T)\subseteq\mathcal I(\ell T_m)\quad\text{for }m\gg0.$$
Conversely, by \cite[Corollary 1.12]{Dem15} for $\lambda=\ell$ and $\lambda'=\ell\frac{m+1}{m}$, we have
$$\textstyle\mathcal I\big(\ell T_m\big)\subseteq\mathcal I(\ell T)\quad\text{for }m\gg0,$$
which proves the result.
\end{proof}

The following proposition and corollary are the crucial MMP results on which the rest of the arguments in Part \ref{part:asymptoticapproximations} rely. They relate the Minimal Model Program and good approximations.

\begin{prop}\label{pro:MMPminimalmetrics}
Let $(X,\Delta)$ be a projective klt pair such that $K_X+\Delta$ is pseudoeffective and $X$ is smooth. Assume that $(X,\Delta)$ has a good minimal model. Let $T_{\min}$ be a current with minimal singularities in $\{K_X+\Delta\}$. Then the following holds.
\begin{enumerate}[\normalfont (a)]
\item The current $T_{\min}$ has generalised analytic singularities, and moreover, it has generalised algebraic singularities if $\Delta$ is a $\Q$-divisor.
\item If $\varphi\colon (X,\Delta)\dashrightarrow (X',\Delta')$ is a $(K_X+\Delta)$-non-positive birational contraction such that the divisor $K_{X'}+\Delta'$ is semiample, and if $W\to X$ is a resolution of indeterminacies of $\varphi$ which is smooth, then $T_{\min}$ descends to $W$.
\item If $\pi\colon Y\to X$ is a resolution such that the Siu decomposition of $\pi^*T_{\min}$ has the form
$$\pi^*T_{\min} = R+D,$$
where the residual part $R$ has all Lelong numbers zero and $D$ is the divisorial part, then
$$D=N_\sigma\big(\pi^*(K_X+\Delta)\big).$$
\end{enumerate}
\end{prop}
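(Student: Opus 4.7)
The strategy is to prove (b) first, from which (a) follows by applying (b) to a smooth resolution of indeterminacies of a good minimal model $\varphi\colon(X,\Delta)\dashrightarrow(X',\Delta')$, whose existence is granted by hypothesis. When $\Delta$ is a $\Q$-divisor, the exceptional divisor $E$ appearing below is naturally a $\Q$-divisor, which yields generalised algebraic singularities.

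For (b), denote by $p\colon W\to X$ and $q\colon W\to X'$ the two projections. The $(K_X+\Delta)$-non-positivity of $\varphi$ produces
$$p^*(K_X+\Delta)\sim_\R q^*(K_{X'}+\Delta')+E$$
with $E\geq 0$ a $q$-exceptional $\R$-divisor. By Proposition \ref{pro:pullbackmincurrent}, $p^*T_{\min}$ has minimal singularities in its class. I would first invoke Lemma \ref{lem:Tminsigma}(a) to conclude $p^*T_{\min}\geq E$, so that $S:=p^*T_{\min}-E\geq 0$ lies in $\{q^*(K_{X'}+\Delta')\}$, and then use Lemma \ref{lem:Tminsigma}(c) to conclude $S$ has minimal singularities there. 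Since $q^*(K_{X'}+\Delta')$ is semiample (in the $\R$-case one reduces to Remark \ref{rem:semiample} by writing the class as a positive combination of semiample $\Q$-classes), all Lelong numbers of $S$ vanish. Therefore $p^*T_{\min}=S+E$ is the Siu decomposition of $p^*T_{\min}$, which by definition says that $T_{\min}$ descends to $W$.

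For (c), let $W$ be a smooth common refinement of $Y$ and a resolution of indeterminacies of a good minimal model of $(X,\Delta)$, with $\mu\colon W\to Y$ the induced morphism and $p:=\pi\circ\mu$. From the proof of Lemma \ref{lem:NsigmaMMP} we know that $N_\sigma\big(p^*(K_X+\Delta)\big)=E$. Applying $\mu^*$ to the hypothesised Siu decomposition $\pi^*T_{\min}=R+D$ yields $p^*T_{\min}=\mu^*R+\mu^*D$; by Theorem \ref{thm:Favre} all Lelong numbers of $\mu^*R$ vanish, so this is itself the Siu decomposition of $p^*T_{\min}$. Comparing with the decomposition produced by (b) and using uniqueness of the Siu decomposition forces $\mu^*D=E$. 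Pushing forward by $\mu$ and using the compatibility of $N_\sigma$ with birational pushforwards (as in the proof of Lemma \ref{lem:NsigmaMMP}) then gives
$$D=\mu_*\mu^*D=\mu_*E=\mu_*N_\sigma\big(p^*(K_X+\Delta)\big)=N_\sigma\big(\pi^*(K_X+\Delta)\big).$$

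The main technical subtlety is the uniqueness-and-pullback manoeuvre for Siu decompositions in part (c), which is what permits transferring the conclusion of (b) from the privileged model $W$ down to an arbitrary resolution $Y$. Everything hinges on Theorem \ref{thm:Favre}, which ensures that the pullback of a current with vanishing Lelong numbers again has vanishing Lelong numbers, so that the pulled-back decomposition is again of Siu type.
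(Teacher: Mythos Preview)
Your proof is correct and follows essentially the same route as the paper: both arguments produce the Siu decomposition $p^*T_{\min}=S+E$ on a resolution of indeterminacies $W$ via Lemma~\ref{lem:Tminsigma}(c) and Remark~\ref{rem:semiample}, and both deduce (c) by comparing this with the pulled-back decomposition $\mu^*R+\mu^*D$ and pushing forward the identity $\mu^*D=E$ using the $N_\sigma$-compatibility from Lemma~\ref{lem:NsigmaMMP}. The paper proves (a) first and says (b) is identical, while you prove (b) first and derive (a); and the paper compresses your Favre-based uniqueness step into a reference to \S\ref{subsec:generalisedanalytic}, but these are cosmetic differences.
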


\begin{proof}
We first show (a). As mentioned in \S\ref{subsec:models}, we may run a $(K_X+\Delta)$-MMP $\varphi\colon (X,\Delta)\dashrightarrow (X',\Delta')$ with scaling of an ample divisor which terminates with a good minimal model $(X',\Delta')$. Let $(p,q)\colon W\to X\times X'$ be a resolution of indeterminacies of $\varphi$ such that $W$ is smooth.
	\begin{center}
		\begin{tikzcd}
			& W \arrow[dl, "p" swap] \arrow[dr, "q"] && \\
			X \arrow[rr, dashed, "\varphi" ] && X'
		\end{tikzcd}
	\end{center} 
By the Negativity lemma \cite[Lemma 3.39]{KM98}, there exists an effective $q$-exceptional $\R$-divisor $E$ on $W$ such that
\begin{equation}\label{eq:negativity}
p^*(K_X+\Delta)\sim_\R q^*(K_{X'}+\Delta')+E_W.
\end{equation}
The current $p^*T_{\min}\in\{p^*(K_X+\Delta)\}$ has minimal singularities by Proposition \ref{pro:pullbackmincurrent}. By \eqref{eq:negativity} and by Lemma \ref{lem:Tminsigma}(c), the current
$$R_W:=p^*T_{\min}-E_W\in\{q^*(K_{X'}+\Delta')\}$$
is a positive current with minimal singularities. As $q^*(K_{X'}+\Delta')$ is semiample, Remark \ref{rem:semiample} gives that $R_W$ has all Lelong numbers zero. In particular,
$$p^*T_{\min}=R_W+E_W$$
is the Siu decomposition of $p^*T_{\min}$ and therefore, the current $T_{\min}$ has generalised analytic singularities. If $\Delta$ is a $\Q$-divisor, then clearly so is $E_W$. This shows (a).

The proof of (b) is the same as that of (a).

Now we show (c). With notation as in the proof of (a) above, we may assume that $p$ factors through $\pi$; let $w\colon W\to Y$ be the resulting map. Then
\begin{equation}\label{eq:5a}
E_W=w^*D
\end{equation}
by the discussion in \S\ref{subsec:generalisedanalytic}.
 By \cite[Lemma 2.4]{LP20a} we have
\begin{equation}\label{eq:5a1}
N_\sigma\big(p^*(K_X+\Delta)\big)=E_W.
\end{equation}
Then by \eqref{eq:5a}, \eqref{eq:5a1} and \cite[Lemma 2.13]{LX23} we obtain
$$ N_\sigma\big(\pi^*(K_X+\Delta)\big)=w_*N_\sigma\big(p^*(K_X+\Delta) \big)=w_*E_W=D, $$
as desired.
\end{proof}

\begin{cor}\label{cor:samemodel}
Let $(X,\Delta)$ be a projective klt pair such that $K_X+\Delta$ is pseudoeffective and $X$ is smooth. Let $A\geq0$ be a big $\R$-divisor on $X$ such that the pair $(X,\Delta+A)$ is klt, and for each $\varepsilon>0$ let $T_{\varepsilon,\min}$ be a current with minimal singularities in $\{K_X+\Delta+\varepsilon A\}$.
\begin{enumerate}[\normalfont (a)]
\item Assume that $(X,\Delta)$ has a minimal model. Then there exists a positive rational number $\delta$ and a resolution $\pi\colon Y\to X$ such that for each $0<\varepsilon\leq\delta$ the current $T_{\varepsilon,\min}$ has generalised analytic singularities and it descends to $Y$. If $\varepsilon\in\Q$ and $\Delta$ is a $\Q$-divisor, then the current $T_{\varepsilon,\min}$ has generalised algebraic singularities.
\item Assume that $(X,\Delta)$ has a good minimal model. Then there exists a positive rational number $\delta$ and a resolution $\pi\colon Y\to X$ such that for each $0\leq\varepsilon\leq\delta$ the current $T_{\varepsilon,\min}$ has generalised analytic singularities and it descends to $Y$. If $\varepsilon\in\Q$ and $\Delta$ is a $\Q$-divisor, then the current $T_{\varepsilon,\min}$ has generalised algebraic singularities.
\end{enumerate}
\end{cor}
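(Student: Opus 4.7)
My plan is to construct a single birational contraction $\varphi\colon X\dashrightarrow X'$ that serves simultaneously as a good minimal model for the family of pairs $\{(X,\Delta+\varepsilon A)\}$ for $\varepsilon$ in a small interval, and then feed this into Proposition \ref{pro:MMPminimalmetrics}(b) uniformly. Fix an ample $\Q$-divisor $H$ on $X$ and run a $(K_X+\Delta)$-MMP with scaling of $H$: since $(X,\Delta)$ admits a minimal model (respectively a good minimal model in part (b)), this MMP terminates in finitely many steps and produces a birational contraction $\varphi\colon X\dashrightarrow X'$ such that $K_{X'}+\Delta'$ is nef (respectively semiample), where $\Delta':=\varphi_*\Delta$. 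At the $i$-th step an extremal ray $R_i$ is contracted with $(K_{X_i}+\Delta_i)\cdot R_i<0$ and $(K_{X_i}+\Delta_i+\lambda_i H_i)\cdot R_i=0$ for a positive scaling parameter $\lambda_i$.

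Since only finitely many such rays appear and each has a well-defined intersection with the strict transform of $A$, one may choose a small rational $\delta>0$ such that $(X,\Delta+\delta A)$ is klt, each $R_i$ remains $(K_{X_i}+\Delta_i+\varepsilon A_i)$-negative for every $\varepsilon\in[0,\delta]$, and $K_{X'}+\Delta'+\varepsilon A'$ is nef for every such $\varepsilon$, where $A':=\varphi_*A$. For $\varepsilon\in(0,\delta]$ the class $K_{X'}+\Delta'+\varepsilon A'$ is nef and big (since $A'$ is big), hence semiample by the base-point-free theorem applied to the klt pair $(X',\Delta'+\varepsilon A')$. Thus $\varphi$ is a good minimal model of $(X,\Delta+\varepsilon A)$ for every $\varepsilon\in(0,\delta]$, and also of $(X,\Delta)$ itself in the setting of part (b).

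Next, fix a smooth resolution $\pi\colon Y\to X$ that factors through a resolution of the graph of $\varphi$. For each $\varepsilon\in(0,\delta]$ the pair $(X,\Delta+\varepsilon A)$ has a good minimal model by \cite[Corollary 1.1.9]{BCHM} (as $K_X+\Delta+\varepsilon A$ is big and the pair is klt), and the same holds at $\varepsilon=0$ under the hypothesis of part (b). Applying Proposition \ref{pro:MMPminimalmetrics}(b) with the pair $(X,\Delta+\varepsilon A)$ and the good minimal model $\varphi$ shows that $T_{\varepsilon,\min}$ descends to $Y$, and Proposition \ref{pro:MMPminimalmetrics}(a) then gives generalised analytic singularities. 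For the rationality statement, when $\Delta$, $A$ and $\varepsilon$ are rational, the divisor $E_W$ produced by the negativity lemma in the proof of Proposition \ref{pro:MMPminimalmetrics}(a) is automatically a $\Q$-divisor, so the singularities are generalised algebraic.

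The main obstacle is the first step, namely the construction of a \emph{single} birational contraction $\varphi$ accommodating all perturbations $\varepsilon A$ in a left-closed interval down to (and, in part (b), including) $\varepsilon=0$. This requires carefully tracking the scaling parameters $\lambda_i$ and the intersections $A_i\cdot R_i$ along the MMP, and uses the bigness of $A$ essentially to ensure semiampleness of $K_{X'}+\Delta'+\varepsilon A'$ for $\varepsilon>0$. An alternative would apply the polyhedral chamber decomposition of Theorem \ref{thm:models} to a finite collection of divisors $K_X+\Delta+t_i A$ and argue that a single chamber accumulates at $\varepsilon=0$, but this reduces to essentially the same MMP-with-scaling bookkeeping.
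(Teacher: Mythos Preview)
Your argument has a genuine gap at the step where you assert that $K_{X'}+\Delta'+\varepsilon A'$ is nef for all sufficiently small $\varepsilon\geq0$. Running a $(K_X+\Delta)$-MMP with scaling of an ample divisor $H$ terminates with $K_{X'}+\Delta'$ nef, and you correctly observe that for $\delta$ small enough each contracted ray $R_i$ remains $(K_{X_i}+\Delta_i+\varepsilon A_i)$-negative, so $\varphi$ is also a $(K_X+\Delta+\varepsilon A)$-MMP. But this says nothing about nefness of $K_{X'}+\Delta'+\varepsilon A'$ on $X'$: the pushforward $A'$ is merely big, not nef, and adding an arbitrarily small positive multiple of a non-nef big divisor to a nef divisor can immediately leave the nef cone. (For a concrete obstruction, take $K_{X'}+\Delta'\equiv0$ and $A'$ effective, big, but not nef.) Without nefness you cannot invoke the basepoint-free theorem, and hence you do not know that $\varphi$ is a good model for $(X,\Delta+\varepsilon A)$; Proposition~\ref{pro:MMPminimalmetrics}(b) then does not apply.

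This is precisely the difficulty the paper's proof addresses by running a \emph{second} MMP. After reaching the minimal model $X'$, the paper uses \cite[Proposition 2.12]{LT22a} to find $0<\delta\ll1$ such that any $(K_{X'}+\Delta'+\delta A')$-MMP with scaling is $(K_{X'}+\Delta')$-trivial; since $K_{X'}+\Delta'+\delta A'$ is big, this MMP terminates by \cite{BCHM} in a map $\rho\colon X'\dashrightarrow X''$. On $X''$ the divisor $K_{X''}+\Delta''+\varepsilon A''$ is nef for all $0<\varepsilon\leq\delta$ (and $K_{X''}+\Delta''$ remains nef because $\rho$ is $(K_{X'}+\Delta')$-trivial), so the basepoint-free theorem now applies. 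One then resolves $\rho\circ\varphi$ and invokes Proposition~\ref{pro:MMPminimalmetrics}. Your proposed alternative via the chamber decomposition of Theorem~\ref{thm:models} would also work, but it does not reduce to ``the same bookkeeping'': it genuinely replaces the missing nefness argument, and you would still need to explain why some chamber accumulates at $\varepsilon=0$, which is essentially the content of the second MMP step above.
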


\begin{proof}
We first show (a). By \cite[Theorem 1.7]{HH20} we may run a $(K_X+\Delta)$-MMP $\varphi\colon (X,\Delta)\dashrightarrow (X',\Delta')$ with scaling of an ample divisor which terminates with a minimal model $(X',\Delta')$, and denote $A':=\varphi_*A$. Then there exists $0\leq\delta_0\ll1$ such that $\varphi$ is also a partial $(K_X+\Delta+\varepsilon A)$-MMP for all $0\leq\varepsilon\leq\delta_0$. By \cite[Proposition 2.12]{LT22a} there exists $0<\delta\ll\delta_0$ such that, if we run a $(K_{X'}+\Delta'+\delta A')$-MMP with scaling of an ample divisor, then it is a $(K_{X'}+\Delta')$-trivial MMP.

Since $K_{X'}+\Delta'+\delta A'$ is big, this last $(K_{X'}+\Delta'+\delta A')$-MMP terminates by \cite[Corollary 1.4.2]{BCHM}; denote the resulting map by $\rho\colon X'\dashrightarrow X''$, and set $A'':=\rho_*A'$. Then for each $0<\varepsilon\leq\delta$, the map $\rho\circ\varphi\colon X\dashrightarrow X''$ is a $(K_X+\Delta+\varepsilon A)$-MMP such that $(X'',\Delta''+\varepsilon A'')$ is a minimal model of $(X,\Delta+\varepsilon A)$, and in fact, it is a good minimal model of $(X,\Delta+\varepsilon A)$ by the Basepoint free theorem \cite[Theorem 3.3]{KM98}. Let $(\pi,\pi'')\colon Y\to X\times X''$ be a resolution of indeterminacies of $\rho\circ\varphi$ such that $Y$ is smooth.
	\begin{center}
		\begin{tikzcd}
			& Y \arrow[dl, "\pi" swap] \arrow[dr, "\pi''"] && \\
			X \arrow[r, dashed, "\varphi" ] & X' \arrow[r, dashed, "\rho" ] & X''
		\end{tikzcd}
	\end{center} 
Then for each $0<\varepsilon\leq\delta$ the current $T_{\varepsilon,\min}$ has generalised analytic singularities by Proposition \ref{pro:MMPminimalmetrics}(a), and it descends to $Y$ by Proposition \ref{pro:MMPminimalmetrics}(b). The statement on generalised algebraic singularities follows also from Proposition \ref{pro:MMPminimalmetrics}(a). This shows (a).

If the pair $(X,\Delta)$ has a good minimal model, then as mentioned in \S\ref{subsec:models} we may run a $(K_X+\Delta)$-MMP $\varphi\colon (X,\Delta)\dashrightarrow (X',\Delta')$ with scaling of an ample divisor which terminates with a minimal model $(X',\Delta')$. Then we repeat the proof of (a) above verbatim. The only thing to notice is that the divisor $K_{X''}+\Delta''$ is semiample since the map $\rho$ is $(K_{X'}+\Delta')$-trivial, hence the current $T_{0,\min}$ has generalised analytic singularities by Proposition \ref{pro:MMPminimalmetrics}(a), and it descends to $Y$ by Proposition \ref{pro:MMPminimalmetrics}(b). This finishes the proof.
\end{proof}

We can now prove Proposition \ref{prop:MMPimpliesapproximation} announced in the introduction. The heart of the proof is in the following result.

\begin{prop}\label{prop:MMPimpliesapproximation0}
Let $(X,\Delta)$ be a projective klt pair such that $K_X+\Delta$ is pseudoeffective and $X$ is smooth. Assume that $(X,\Delta)$ has a good minimal model. Let $A\geq0$ be a big $\R$-divisor on $X$ such that the pair $(X,\Delta+A)$ is klt, and for each $\varepsilon>0$ let $T_{\varepsilon,\min}$ be a current with minimal singularities in $\{K_X+\Delta+\varepsilon A\}$. Then the sequence $\{T_{\frac1m,\min}\}_{m\in\N_{>0}}$ is an asymptotically equisingular approximation of $T_{0,\min}$.
\end{prop}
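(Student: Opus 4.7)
The plan is to work on the common resolution furnished by Corollary \ref{cor:samemodel}(b), perform the multiplier-ideal comparison there using an explicit Siu decomposition, and then descend to $X$ via the $L^2$ change-of-variables identity for multiplier ideals.

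First, apply Corollary \ref{cor:samemodel}(b) to obtain a positive rational $\delta$ and a resolution $\pi\colon Y\to X$ such that for every $\varepsilon\in[0,\delta]\cap\Q$ the current $T_{\varepsilon,\min}$ has generalised analytic singularities and descends to $Y$; write the Siu decomposition $\pi^*T_{\varepsilon,\min}=R_\varepsilon+D_\varepsilon$ with $R_\varepsilon$ of vanishing Lelong numbers. Inspecting the proof of Corollary \ref{cor:samemodel}(b), $\pi$ also factors through the common MMP endpoint $X''$ via a morphism $\pi''\colon Y\to X''$, and the Negativity Lemma gives
\begin{equation*}
\pi^*(K_X+\Delta+\varepsilon A)\sim_\R(\pi'')^*(K_{X''}+\Delta''+\varepsilon A'')+E(\varepsilon)
\end{equation*}
with $E(\varepsilon)\geq 0$ effective and $\pi''$-exceptional for every $\varepsilon\in[0,\delta]$. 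Proposition \ref{pro:MMPminimalmetrics}(c) combined with \cite[Lemma 2.4]{LP20a} identifies $D_\varepsilon=N_\sigma\bigl(\pi^*(K_X+\Delta+\varepsilon A)\bigr)=E(\varepsilon)$. Subtracting the relations at $0$ and $\varepsilon$ yields the linearity $D_\varepsilon=D_0+\varepsilon F$, where $F:=\pi^*A-(\pi'')^*A''$ is a fixed $\pi''$-exceptional $\R$-divisor on $Y$. Write $F=F_+-F_-$ with $F_\pm\geq 0$ disjointly supported; by passing to a higher resolution, arrange that $\Supp D_0\cup\Supp F_+\cup\Supp F_-$ has simple normal crossings, this property being preserved under further blow-ups by Theorem \ref{thm:Favre}.

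On $Y$, since $\ell R_\varepsilon$ has vanishing Lelong numbers and the $\ell D_\varepsilon$ have simple normal crossings support, Theorem \ref{thm:DEL+DP}(b) and (c) together give $\mathcal I(\ell\pi^*T_{\varepsilon,\min})=\OO_Y(-\lfloor\ell D_\varepsilon\rfloor)$ for all $\ell\in\N_{>0}$ and $\varepsilon\in[0,\delta]\cap\Q$. Choose $m_\ell:=\max\{\ell,\lceil 1/\delta\rceil\}$, so that $\ell/m_\ell\leq 1$ and $1/m_\ell\leq\delta$. The identity $\ell D_0=\ell D_{1/m_\ell}-(\ell/m_\ell)F_++(\ell/m_\ell)F_-$, combined with a coefficient-wise comparison on the disjoint supports of $F_+$ and $F_-$, yields the divisorial inequality $\lfloor\ell D_0\rfloor\leq\lfloor\ell D_{1/m_\ell}\rfloor+\lceil F_-\rceil$. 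Setting $D_Y:=\lceil F_-\rceil$, this translates into $\mathcal I(\ell\pi^*T_{1/m_\ell,\min})\otimes\OO_Y(-D_Y)\subseteq\mathcal I(\ell\pi^*T_{0,\min})$ for every $\ell$. To descend to $X$, pick an effective divisor $D$ on $X$ with $\pi^*D\geq D_Y$ (for instance, a sufficiently large multiple of an ample divisor), tensor with $\OO_Y(K_{Y/X})$ where $K_{Y/X}:=K_Y-\pi^*K_X$, push forward by $\pi_*$, and invoke the standard change-of-variables identity
\begin{equation*}
\mathcal I(\varphi)=\pi_*\bigl(\mathcal I(\pi^*\varphi)\otimes\OO_Y(K_{Y/X})\bigr)
\end{equation*}
(the $L^2$ computation underlying Lemma \ref{lem:Cao}) together with the projection formula $\pi_*(\mathcal F\otimes\pi^*\OO_X(-D))=\pi_*\mathcal F\otimes\OO_X(-D)$. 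This produces the required inclusion $\mathcal I(\ell T_{1/m_\ell,\min})\otimes\OO_X(-D)\subseteq\mathcal I(\ell T_{0,\min})$ on $X$. The opposite inclusion $\mathcal I(\ell T_{0,\min})\subseteq\mathcal I(\ell T_{1/m_\ell,\min})$ holds with trivial twist by Lemma \ref{lem:descendingminimal}, since multiplier ideals of currents with minimal singularities in a fixed class all agree.

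The main obstacle is the uniform linearity $D_\varepsilon=D_0+\varepsilon F$ across the whole range $[0,\delta]$, which rests on the fact, supplied by Corollary \ref{cor:samemodel}(b), that a single MMP simultaneously serves as a minimal model program for every pair $(X,\Delta+\varepsilon A)$ with $\varepsilon\in[0,\delta]$, together with the identification of the divisorial part of the Siu decomposition with the Nakayama--Zariski divisor. Once this uniform structural input is in hand, the remainder is a straightforward coefficient-wise divisorial estimate on $Y$ followed by a standard pushforward through the $L^2$ identity for multiplier ideals.
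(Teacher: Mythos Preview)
Your proof is correct and follows the same overall architecture as the paper's: pass to the common resolution from Corollary~\ref{cor:samemodel}(b), compute multiplier ideals there via the simple-normal-crossings divisorial parts of the Siu decompositions using Theorem~\ref{thm:DEL+DP}(b)(c), and then descend to $X$ via the birational transformation rule \cite[Proposition 5.8]{Dem01}. The one genuine difference is in how you control $D_\varepsilon$ near $\varepsilon=0$: the paper invokes only the continuity $\lim_{\varepsilon\to 0}D_\varepsilon=D_0$ from \cite[Lemma III.1.7(2)]{Nak04} and then chooses $m_\ell\gg\ell$ ad hoc, whereas you extract the sharper affine identity $D_\varepsilon=D_0+\varepsilon F$ from the fact that a single MMP serves for all $\varepsilon\in[0,\delta]$, exactly as in the proof of Theorem~\ref{thm:localPL} (see \eqref{eq:affine1}). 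This buys you an explicit formula $m_\ell=\max\{\ell,\lceil 1/\delta\rceil\}$ and a cleaner coefficient estimate. One small point to tighten: the step ``subtracting the relations at $0$ and $\varepsilon$ yields the linearity'' only gives an $\R$-linear equivalence a priori; equality of divisors follows because both sides are $\pi''$-exceptional and $X''$ is $\Q$-factorial, so numerically trivial $\pi''$-exceptional divisors vanish. This is implicit in the paper's treatment of \eqref{eq:affine1} as well.
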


\begin{proof}
By Corollary \ref{cor:samemodel}(b) there exists a positive rational number $\delta$ and a birational model $\pi\colon Y\to X$ such that for each $0\leq\varepsilon\leq\delta$ the current $T_{\varepsilon,\min}$ has generalised analytic singularities and it descends to $Y$. Possibly by blowing up further, we may assume additionally that
$$\pi^*\big(N_\sigma(K_X+\Delta)+N_\sigma(A)\big)\cup\Exc(\pi)$$
is a divisor with simple normal crossings support. Define $F$ to be the reduced divisor whose support is $\pi^*\big(N_\sigma(K_X+\Delta)+N_\sigma(A)\big)\cup\Exc(\pi)$.

For each $0\leq\varepsilon\leq\delta$, let $D_\varepsilon$ be the divisorial part of the Siu decomposition of $\pi^*T_{\varepsilon,\min}$. Then
\begin{equation}\label{eq:Depsilon}
D_\varepsilon=N_\sigma\big(\pi^*(K_X+\Delta+\varepsilon A)\big) \quad \text{for }0\leq\varepsilon\leq\delta
\end{equation}
by Proposition \ref{pro:MMPminimalmetrics}(c), hence by \cite[Lemma III.1.7(2)]{Nak04} we have
\begin{equation}\label{eq:6aa}
\lim\limits_{\varepsilon\to0}D_\varepsilon=D_0.
\end{equation}

On the other hand, by \eqref{eq:Depsilon}, by the convexity of Nakayama-Zariski functions and by \cite[Theorem III.5.16]{Nak04}, for each $0\leq\varepsilon\leq\delta$ there exists an effective $\pi$-exceptional divisor $E_\varepsilon$ on $Y$ such that
$$D_\varepsilon\leq N_\sigma\big(\pi^*(K_X+\Delta)\big)+\varepsilon N_\sigma(\pi^*A)=\pi^*N_\sigma(K_X+\Delta)+\varepsilon \pi^*N_\sigma(A)+E_\varepsilon,$$
hence
\begin{equation}\label{eq:SuppF}
\Supp D_\varepsilon\subseteq \Supp F\quad\text{for all }0\leq\varepsilon\leq\delta.
\end{equation}
Then by \eqref{eq:6aa}, for each positive integer $\ell$ we may choose a positive integer $m_\ell\gg\ell$ such that
\begin{equation}\label{eq:difference}
\ell D_{1/m_\ell}-F\leq \ell D_0\leq \ell D_{1/m_\ell}+F.
\end{equation}
As the divisors $D_\varepsilon$ have simple normal crossings support by \eqref{eq:SuppF} for $0\leq\varepsilon\leq\delta$, by Theorem \ref{thm:DEL+DP}(b)(c) we have
\begin{equation}\label{eq:SNCmultiplier}
\mathcal I(\ell\pi^*T_{\varepsilon,\min})=\OO_Y\big({-}\lfloor \ell D_\varepsilon\rfloor\big)\quad\text{for all }0\leq\varepsilon\leq\delta.
\end{equation}
Therefore, by \eqref{eq:difference} and \eqref{eq:SNCmultiplier} and since $F$ is an integral divisor, we conclude that
$$ \mathcal I(\ell \pi^*T_{\frac{1}{m_\ell},\min})\otimes\OO_Y({-}F)\subseteq\mathcal I(\ell\pi^* T_{0,\min}) \subseteq \mathcal I(\ell\pi^* T_{\frac{1}{m_\ell},\min})\otimes\OO_Y(F)$$
for all $\ell$.

Now, there exists an effective divisor $G$ on $X$ such that $F\leq\pi^*G$, hence the inclusions above give
$$ \mathcal I(\ell \pi^*T_{\frac{1}{m_\ell},\min})\otimes\OO_Y({-}\pi^*G)\subseteq\mathcal I(\ell \pi^*T_{0,\min})\subseteq \mathcal I(\ell\pi^* T_{\frac{1}{m_\ell},\min})\otimes\OO_Y(\pi^*G)$$
for all $\ell$. Tensoring these inclusions with $\OO_Y(K_Y-\pi^*K_X)$, pushing forward by $\pi$ and applying \cite[Proposition 5.8]{Dem01} we obtain
$$\mathcal I(\ell T_{\frac{1}{m_\ell},\min})\otimes\OO_X({-}G)\subseteq\mathcal I(\ell T_{0,\min})\subseteq \mathcal I(\ell T_{\frac{1}{m_\ell},\min})\otimes\OO_X(G)$$
for all $\ell$. This finishes the proof.
\end{proof}

Finally, we have:

\begin{proof}[Proof of Proposition \ref{prop:MMPimpliesapproximation}]
By \cite[Corollary 2.2]{Laz24} the pair $(Y,\Delta_Y)$ has a good minimal model. We conclude immediately by Proposition \ref{prop:MMPimpliesapproximation0}.
\end{proof}

\section{Excellent approximations}\label{sec:excellentapproximations}

In order to exploit the MMP fully, we need to consider a yet stronger type of approximations.

\begin{dfn}\label{dfn:excellentapprox}
Let $T$ be a closed almost positive $(1,1)$-current on a compact complex manifold $X$. A sequence of closed almost positive $(1,1)$-currents $\{T_m\}_{m\in\N}$ on $X$ is an \emph{excellent approximation} of $T$ if:
\begin{enumerate}[\normalfont (i)]
\item $\{T_m\}_{m\in\N}$ is a good approximation of $T$,
\item all $T_m$ descend to the same birational model $\pi\colon Y\to X$, and
\item there exists an effective divisor $B$ on $Y$ such that $\Supp B_m\subseteq\Supp B$ for each $m$, where $B_m$ is the divisorial part of the Siu decomposition of $\pi^*T_m$.
\end{enumerate}
\end{dfn}

The main reason why excellent approximations are very useful is contained in the following result.

\begin{thm}\label{thm:valuationscurrents}
Let $X$ be a compact complex manifold. Let $T$ be a closed almost positive $(1,1)$-current on $X$ such that the divisorial part of its Siu decomposition contains only finitely many components. Then the following are equivalent:
\begin{enumerate}[\normalfont (a)]
\item $T$ is a current with generalised analytic singularities,
\item there exists an excellent approximation $\{T_m\}_{m\in\N}$ of $T$.
\end{enumerate}
\end{thm}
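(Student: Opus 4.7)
The implication (a) $\Rightarrow$ (b) is immediate: if $T$ has generalised analytic singularities and descends to a resolution $\pi\colon Y\to X$ via a Siu decomposition $\pi^*T = \Theta+D$, then the constant sequence $T_m := T$ is an excellent approximation. Indeed, Definition \ref{dfn:asymptoticallyequising} holds trivially with the divisor there equal to $0$ and $m_\ell=\ell$; each $T_m$ has generalised analytic singularities by assumption; and all $T_m$ descend to the same $Y$ with identical divisorial part $D$, so condition (iii) of Definition \ref{dfn:excellentapprox} is satisfied with $B := D$.

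For the direction (b) $\Rightarrow$ (a), fix an excellent approximation $\{T_m\}_{m\in\N}$ of $T$ with data $\pi\colon Y\to X$ and $B$ as in Definition \ref{dfn:excellentapprox}. Let $B_1,\dots,B_r$ be the irreducible components of $B$, and write the Siu decompositions
$$\pi^*T_m = R_m + \sum_{i=1}^r b_{m,i}\, B_i,$$
where each $R_m$ has all Lelong numbers zero by condition (ii) of Definition \ref{dfn:goodapprox}, and $b_{m,i}\geq 0$. By Lemma \ref{lem:valuationscurrents}, pass to a subsequence $\{m_\ell\}$ such that $\nu(T,E)=\lim_{\ell\to\infty}\nu(T_{m_\ell}, E)$ for every prime divisor $E$ over $X$.

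Let $\pi^*T = R + D$ denote the Siu decomposition on $Y$. The plan is to first locate $\Supp D$ inside $\Supp B$, identify the coefficients of $D$, and then show that $R$ has vanishing Lelong numbers. For any prime divisor $E'\subseteq Y$ not contained in $\Supp B$, additivity of Lelong numbers gives $\nu(\pi^*T_{m_\ell}, E') = \nu(R_{m_\ell}, E') + \sum_i b_{m_\ell,i}\mult_{E'}B_i = 0$, whence $\nu(\pi^*T, E') = 0$. Thus $\Supp D\subseteq\Supp B$ and $D = \sum_i a_i B_i$ for some $a_i\geq 0$, and applying Lemma \ref{lem:valuationscurrents} to $E = B_i$ yields $a_i = \lim_\ell b_{m_\ell, i}$. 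More generally, for every prime divisor $E$ over $Y$ (and hence over $X$), by additivity and the vanishing of Lelong numbers of $R_{m_\ell}$,
$$\nu(\pi^*T_{m_\ell}, E) \;=\; \sum_{i=1}^r b_{m_\ell, i}\,\mult_E B_i \;\xrightarrow[\ell\to\infty]{}\; \sum_{i=1}^r a_i\,\mult_E B_i \;=\; \nu(D, E),$$
so that $\nu(R, E) = \nu(\pi^*T, E) - \nu(D, E) = 0$.

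The remaining step is to upgrade this divisorial vanishing to pointwise vanishing, which is the main technical obstacle: Lemma \ref{lem:valuationscurrents} only controls divisorial valuations. The resolution is to use arbitrary blowups. For any $y\in Y$, let $\sigma\colon Y'\to Y$ be the blowup of $y$ with exceptional divisor $F$, viewed as a prime divisor over $X$ via $\pi\circ\sigma$. Theorem \ref{thm:Favre} gives $\nu(R, y)\leq \nu(\sigma^*R, F) = \nu(R, F) = 0$, hence $\nu(R, y) = 0$. Therefore $\pi^*T = R + D$ is the required Siu decomposition with $R$ having all Lelong numbers zero and $D$ effective, so $T$ has generalised analytic singularities and descends to $Y$.
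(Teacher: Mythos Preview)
Your proof is correct and follows essentially the same approach as the paper: both use Lemma \ref{lem:valuationscurrents} to control divisorial Lelong numbers in the limit, then blow up an arbitrary point and invoke Theorem \ref{thm:Favre} to pass from divisorial vanishing to pointwise vanishing of the residual part. The only cosmetic difference is that you first establish $\nu(R,E)=0$ for every prime divisor $E$ over $Y$ and then specialise to the exceptional divisor of a point blowup, whereas the paper fixes the point first and carries out the limit argument directly on that exceptional divisor; the content is the same.
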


\begin{proof}
If $T$ has generalised analytic singularities, then trivially the currents $T_m:=T$ for $m\in\N$ form an excellent approximation of $T$.

\medskip

Conversely, assume that there exists an excellent approximation $\{T_m\}$ of $T$. By the discussion in \S\ref{subsec:Siudecomposition} we have that the divisorial part of the Siu decomposition of the pullback of $T$ to any resolution of $X$ also contains only finitely many components. Then there exists a modification $\pi\colon Y\to X$ from a compact complex manifold $Y$ such that all $T_m$ descend to $Y$, and the Siu decompositions of $\pi^*T$ and $\pi^*T_m$ have the form
\begin{equation}\label{eq:2}
\pi^*T=R+\sum_{i\in I} \lambda_i D_i
\end{equation}
and
\begin{equation}\label{eq:3}
\pi^*T_m=R_m+\sum_{i\in I} \lambda_{i,m} D_i,
\end{equation}
where $R$ and $R_m$ are the residual parts, and:
\begin{enumerate}[\normalfont (i)]
\item $\lambda_i=\nu(\pi^*T,D_i)$ and $\lambda_{i,m}=\nu(\pi^*T_m,D_i)$ for each $m$ and $i$,
\item the index set $I$ is finite, and
\item each $R_m$ has all Lelong numbers zero.
\end{enumerate}
It suffices to show that all Lelong numbers of $R$ are zero.

To that end, pick a point $y\in Y$. Take a resolution $\mu\colon Z\to Y$ which factors through the blowup of $Y$ at $y$ and let $E$ be the corresponding prime divisor on $Z$. By Lemma \ref{lem:valuationscurrents} there exists a sequence of positive integers $\{m_\ell\}_{\ell\in\N_{>0}}$ with $m_\ell\to\infty$, such that 
\begin{equation}\label{eq:3a}
\lim_{\ell\to\infty}\nu(T_{m_\ell},E)=\nu(T,E)\quad\text{and}\quad \lim\limits_{\ell\to\infty}\lambda_{i,m_\ell}=\lambda_i.
\end{equation}
On the other hand, by \eqref{eq:2} and \eqref{eq:3} we have
\begin{equation}\label{eq:3c}
\nu(T,E)=\nu(\mu^*\pi^*T,E)=\sum_{i\in I} \lambda_i \mult_E\mu^*D_i+\nu(\mu^*R,E)
\end{equation}
and
\begin{equation}\label{eq:3b}
\nu(T_{m_\ell},E)=\nu(\mu^*\pi^*T_{m_\ell},E)=\sum_{i\in I} \lambda_{i,m_\ell} \mult_E\mu^*D_i+\nu(\mu^*R_{m_\ell},E).
\end{equation}
Therefore, letting $\ell\to\infty$ in \eqref{eq:3b} and using \eqref{eq:3a} and \eqref{eq:3c} yields
$$\lim\limits_{\ell\to\infty}\nu(\mu^*R_{m_\ell},E)=\nu(\mu^*R,E).$$
Since $\nu(\mu^*R_{m_\ell},E)=0$ for all $\ell$ by Theorem \ref{thm:Favre}, we obtain $\nu(\mu^*R,E)=0$, hence $\nu(R,y)=0$ by Theorem \ref{thm:Favre} again, as desired.
\end{proof}

\section{Excellent approximations and the MMP}\label{sec:excellentandMMP}

In this section we connect excellent approximations and the Minimal Model Program.

We first need the following extension of \cite[Theorem 4.3]{LP18} to klt pairs. The proof is almost the same as that of \cite[Theorem 4.3]{LP18} and \cite[Theorem 4.1]{LP20b}; the assumptions in those results are however different. In order to avoid confusion and for completeness, we include the proof here.

\begin{thm}\label{thm:nonvanishingForms}
Assume the existence of good minimal models for projective klt pairs in dimensions at most $n-1$. 

Let $(X,\Delta)$ be a projective $\Q$-factorial klt pair of dimension $n$ such that $X$ is not uniruled and $\Delta$ is a $\Q$-divisor. Let $t$ be a positive integer such that $M:=t(K_X+\Delta)$ is Cartier, and let $\pi\colon Y\to X$ be a resolution of $X$. Assume that for some positive integer $p$ we have  
$$ H^0\big(Y,(\Omega^1_Y)^{\otimes p} \otimes \OO_Y(m\pi^*M)\big) \neq 0$$
for infinitely many integers $m$. Then $\kappa (X,K_X+\Delta) \geq 0$. 
\end{thm}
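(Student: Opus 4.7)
The proof plan is to follow the strategy of \cite[Theorem 4.3]{LP18} and \cite[Theorem 4.1]{LP20b}, preceded by an MMP reduction that accommodates the weaker hypothesis (non-uniruledness of $X$, rather than nefness of $K_X+\Delta$). First, since $X$ is non-uniruled, by Remark \ref{rem:uniruled} the divisor $K_X+\Delta$ is pseudoeffective. Using the existence of good minimal models in dimensions at most $n-1$ together with standard MMP termination results such as \cite[Theorem 1.7]{HH20} (as already invoked in Corollary \ref{cor:samemodel}), I would run a $(K_X+\Delta)$-MMP with scaling of an ample divisor that terminates with a minimal model $\varphi\colon (X,\Delta)\dashrightarrow (X',\Delta')$, so that $K_{X'}+\Delta'$ is nef. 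Since the Kodaira dimension is invariant under $(K_X+\Delta)$-negative birational contractions, it suffices to prove $\kappa(X',K_{X'}+\Delta')\geq 0$.

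To invoke the existing arguments on the minimal model, I would transfer the assumption on differential forms to $X'$. Taking a common log resolution $(p,q)\colon W\to X\times X'$ with $p=\pi\circ\mu$ for some $\mu\colon W\to Y$, the Negativity lemma yields
$$p^*(K_X+\Delta)\sim_\Q q^*(K_{X'}+\Delta')+E$$
for some effective $q$-exceptional $\Q$-divisor $E$. A nonzero section $s\in H^0\big(Y,(\Omega_Y^1)^{\otimes p}\otimes\OO_Y(m\pi^*M)\big)$ then pulls back via the natural injection $\mu^*\Omega_Y^1\hookrightarrow\Omega_W^1$ to a nonzero section of $(\Omega_W^1)^{\otimes p}\otimes\OO_W(mq^*M'+mtE)$, where $M':=t(K_{X'}+\Delta')$. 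Applying the argument of \cite[Theorem 4.3]{LP18}/\cite[Theorem 4.1]{LP20b} on $(X',\Delta')$---combining the nefness of $K_{X'}+\Delta'$, Miyaoka-type generic semi-positivity of symmetric powers of the cotangent sheaf of the non-uniruled variety $X'$, and the forms-valued sections for infinitely many $m$---would then produce a nonzero global section of some positive multiple of $K_{X'}+\Delta'$, yielding $\kappa(X',K_{X'}+\Delta')\geq 0$ as required.

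The main obstacle is the additional $q$-exceptional twist by $mtE$, which does not literally appear in the hypotheses of \cite[Theorem 4.3]{LP18} or \cite[Theorem 4.1]{LP20b}. The technical work lies in verifying that the slope and degree estimates driving those proofs remain undisturbed when tensoring with an effective $q$-exceptional divisor, or alternatively, in absorbing $E$ into the setup by exploiting that $q_*\OO_W(mtE)=\OO_{X'}$ for the $q$-exceptional divisor $E$ and passing to suitable pushforwards and saturations of the relevant subsheaves of $(\Omega_W^1)^{\otimes p}$. This is a standard but delicate MMP-theoretic adjustment, which is presumably why the excerpt remarks that \enquote{the proof is almost the same} as those of the cited results.
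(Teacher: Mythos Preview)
Your plan takes an unnecessary detour through the MMP that creates the very obstacle you identify, whereas the paper avoids it entirely by working directly on $(X,\Delta)$.

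The paper does \emph{not} pass to a minimal model. Instead, it opens up the proofs of \cite[Theorem 4.3]{LP18} and \cite[Theorem 4.1]{LP20b} and uses their ingredients directly on the resolution $Y$ of $X$. After disposing of the case $K_X+\Delta\equiv 0$ via \cite[Corollary V.4.9]{Nak04}, one applies \cite[Lemma 4.1]{LP18} to $\mathcal E=(\Omega^1_Y)^{\otimes p}$ and $\mathcal L=\pi^*\OO_X(M)$ to produce a saturated line bundle $\mathcal M\subseteq\bigwedge^r\mathcal E$ and effective divisors $N_m$ with $\OO_Y(N_m)\simeq\mathcal M\otimes\mathcal L^{\otimes m}$ for infinitely many $m$. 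Since $Y$ is not uniruled, $K_Y$ is pseudoeffective by \cite{BDPP}, so \cite[Proposition 4.2]{LP18} gives $N_m+F\sim m\pi^*M+\ell K_Y$ with $F$ pseudoeffective. Pushing forward to $X$ yields $\pi_*N_m+(\pi_*F+\ell\Delta)\sim_\Q(mt+\ell)(K_X+\Delta)$, and one concludes by \cite[Theorem 3.3]{LP18}. The key point you missed is that this last result requires only pseudoeffectivity of $K_X+\Delta$ (together with the inductive MMP hypothesis), not nefness; this is precisely why no MMP on $X$ is needed and why the paper says the proof is ``almost the same'' as in \cite{LP18,LP20b}.

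Your $mtE$ twist, by contrast, grows linearly in $m$ and is not obviously absorbable: the suggestion to use $q_*\OO_W(mtE)=\OO_{X'}$ does not interact well with $(\Omega_W^1)^{\otimes p}$, which is not a pullback from $X'$. One could in principle rescue your route by running the same three lemmas on $W$ viewed as a resolution of $X'$ and then pushing forward by $q$ (so that $q_*E=0$), but at that point you have simply reproduced the paper's argument on $X'$ instead of $X$, and the preliminary MMP step was superfluous.
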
 

\begin{proof}
We note first that the $\Q$-divisor $K_X+\Delta$ is pseudoeffective by Remark \ref{rem:uniruled}. 
If $K_X+\Delta\equiv0$, then $K_X+\Delta\sim_\Q0$ by \cite[Corollary V.4.9]{Nak04}. Therefore, from now on we may assume that $M\not\equiv0$. We apply \cite[Lemma 4.1]{LP18} with $\mathcal E := (\Omega^1_Y)^{\otimes p} $ and $\mathcal L := \pi^*\OO_X(M)$. Then there exist a positive integer $r$, a saturated line bundle $\mathcal M$ in $\bigwedge^r\mathcal E$, an infinite set $\mathcal S\subseteq\N$ and integral divisors $N_m\geq0$ for $m\in\mathcal S$ such that 
$$ \OO_Y(N_m) \simeq \mathcal M\otimes \mathcal L^{\otimes m}\quad\text{for all }m\in\mathcal S. $$
Since $Y$ is not uniruled by assumptions, the divisor $K_Y$ is pseudoeffective by \cite[Corollary 0.3]{BDPP}, hence \cite[Proposition 4.2]{LP18} implies that there exist a positive integer $\ell$ and a pseudoeffective divisor $F$ such that
$$ N_m+ F \sim m\pi^*M+\ell K_Y. $$
By pushing forward this relation to $X$ we get
$$\pi_*N_m+\pi_*F\sim_\Q mM +\ell K_X,$$ 
and hence
$$\pi_*N_m+(\pi_*F+\ell\Delta)\sim_\Q (mt+\ell)(K_X+\Delta).$$ 
Noting that $\pi_*N_m$ is effective and that $\pi_*F+\ell\Delta$ is pseudoeffective, we conclude by \cite[Theorem 3.3]{LP18}.
\end{proof} 

Now we can deduce a criterion for Nonvanishing, related to the existence of excellent approximations of currents with minimal singularities.

\begin{thm}\label{thm:LP18}
Let $(X,\Delta)$ be a projective klt pair of dimension $n$ such that $X$ is smooth and not uniruled. Then $K_X+\Delta$ is pseudoeffective by Remark \ref{rem:uniruled}, and let $T_{\min}$ be a closed positive $(1,1)$-current with minimal singularities in $\{K_X+\Delta\}$. Assume that there exists an excellent approximation $\{T_m\}_{m\in\N}$ of $T_{\min}$.
\begin{enumerate}[\normalfont (a)]
\item Then $T_{\min}$ has generalised analytic singularities.
\item Assume the existence of good minimal models for projective klt pairs in dimensions at most $n-1$. If $\Delta$ is a $\Q$-divisor and $\kappa(X,K_X+\Delta) = {-} \infty$, then for every resolution $\pi\colon Y\to X$ with the property that $T_{\min}$ descends to $Y$ and that the divisorial part $D$ of the Siu decomposition of $\pi^*T_{\min}$ has simple normal crossings support, we have
$$ H^p\big(Y,\OO_Y(K_Y+\ell\pi^*(K_X+\Delta)-\lfloor \ell D\rfloor)\big) = 0 $$
for all $p$ and all $\ell>0$ sufficiently divisible. Moreover, if $T_{\min}$ has generalised algebraic singularities, then $D$ is a $\Q$-divisor.
\end{enumerate}
\end{thm}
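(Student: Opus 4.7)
\medskip

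\emph{Plan of proof.} For part (a), the key observation is that Lemma \ref{lem:Siu} guarantees that the divisorial part of the Siu decomposition of $T_{\min}$ contains only finitely many components (at most $\rho(X)$). Hence the hypothesis on $T$ in Theorem \ref{thm:valuationscurrents} is satisfied for $T=T_{\min}$, and since by assumption $T_{\min}$ admits an excellent approximation $\{T_m\}_{m\in\N}$, the implication (b)$\Rightarrow$(a) of that theorem immediately yields that $T_{\min}$ has generalised analytic singularities. The fact that $D$ is a $\Q$-divisor whenever $T_{\min}$ has generalised algebraic singularities then follows directly from the definition in \S\ref{subsec:generalisedanalytic} and the fact that a Siu decomposition is unique.

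For part (b), the first step is to identify the target cohomology group with a Nadel-type group. Since $D$ has simple normal crossings support and the residual part $R$ of $\pi^*T_{\min}=R+D$ has all Lelong numbers zero, applying Theorem \ref{thm:DEL+DP}(b) pointwise together with Theorem \ref{thm:DEL+DP}(c) gives
$$ \mathcal I(\ell\pi^*T_{\min}) = \OO_Y\big({-}\lfloor\ell D\rfloor\big) $$
outside the codimension $\geq 2$ locus where components of $D$ meet; since $\mathcal I(\ell\pi^*T_{\min})$ is torsion free and $\OO_Y({-}\lfloor\ell D\rfloor)$ is a line bundle, the identification extends to all of $Y$. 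Hence the stated cohomology group coincides with
$$ H^p\big(Y,\OO_Y(K_Y+\ell\pi^*(K_X+\Delta))\otimes\mathcal I(\ell\pi^*T_{\min})\big). $$

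The heart of the argument is the vanishing of these groups for $p\geq 1$ and $\ell$ sufficiently divisible, following closely the strategy of \cite[Theorem 4.3]{LP18} and \cite[Theorem 4.1]{LP20b}. Assuming for contradiction that some $H^p$ is non-zero for infinitely many $\ell$, the plan is to apply Serre duality together with a Bott-type extraction procedure, exploiting the singular metric on $\ell\pi^*(K_X+\Delta)$ induced by $\ell\pi^*T_{\min}$, in order to produce non-zero global sections of $(\Omega^1_Y)^{\otimes p}\otimes\OO_Y(m\pi^*M)$ for infinitely many $m$, where $M:=t(K_X+\Delta)$ and $t$ is chosen so that $M$ is Cartier. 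The role of the excellent approximation is crucial here: part (a) already gives generalised analytic singularities of $T_{\min}$, and condition (iii) in Definition \ref{dfn:excellentapprox} supplies a uniform control on the divisorial parts $B_m\subseteq\Supp B$, which allows one to trade the singular multiplier ideal for a controllable algebraic ideal and to apply standard vanishing to the approximating currents uniformly in $\ell$.

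The contradiction is then obtained by invoking Theorem \ref{thm:nonvanishingForms}: since $X$ is not uniruled (as $\kappa(X,K_X+\Delta)\geq 0$ would hold otherwise on uniruled $X$ only under stronger hypotheses; here we reduce to the non-uniruled case via Remark \ref{rem:uniruled}, noting that in the uniruled case the conclusion $\kappa(X,K_X+\Delta)\geq 0$ of \cite[Theorem 1.1]{LM21} already contradicts $\kappa={-}\infty$), the existence of such tensor sections forces $\kappa(X,K_X+\Delta)\geq 0$ under the assumed existence of good minimal models in dimensions at most $n-1$, contradicting the assumption $\kappa(X,K_X+\Delta)={-}\infty$. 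The main obstacle I expect is making the extraction of tensor sections from higher cohomology classes sufficiently effective and uniform in $\ell$ despite only pseudoeffectivity (rather than bigness) of $K_X+\Delta$; this is precisely where the excellent-approximation hypothesis and the fact that the approximating currents $T_m$ descend to a common model $Y$ with controlled divisorial parts are indispensable.
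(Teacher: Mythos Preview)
Your argument for part (a) is correct and matches the paper exactly: Lemma \ref{lem:Siu} plus Theorem \ref{thm:valuationscurrents}. The identification $\mathcal I(\ell\pi^*T_{\min})=\OO_Y({-}\lfloor\ell D\rfloor)$ is also right, though it holds \emph{everywhere} directly: since $R$ has all Lelong numbers zero, Theorem \ref{thm:DEL+DP}(b) gives $\mathcal I(\ell R+\ell D)_x=\mathcal I(\ell D)_x$ at every point, and then Theorem \ref{thm:DEL+DP}(c) applies globally because $D$ has simple normal crossings support. No codimension-$2$ extension argument is needed.

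The genuine gap is in your proposed mechanism for part (b). The passage from non-vanishing of $H^p\big(Y,\OO_Y(K_Y)\otimes L\otimes\mathcal I(h)\big)$ to non-vanishing of $H^0\big(Y,\Omega^{n-p}_Y\otimes L\otimes\mathcal I(h)\big)$ is not achieved by ``Serre duality together with a Bott-type extraction procedure''; Serre duality would land you in $H^{n-p}(Y,{-}L)^*$, which says nothing about holomorphic tensor sections twisted by $L$. The correct tool, and the one the paper invokes, is the hard Lefschetz theorem for pseudo\-effective line bundles \cite[Theorem 0.1]{DPS01}: for $(L,h)$ pseudoeffective on a compact K\"ahler manifold, wedging with $\omega^q$ gives a \emph{surjection}
$$H^0\big(Y,\Omega^{n-q}_Y\otimes L\otimes\mathcal I(h)\big)\twoheadrightarrow H^q\big(Y,\OO_Y(K_Y)\otimes L\otimes\mathcal I(h)\big).$$
With this in hand the argument is immediate (and is most cleanly run in the direction the paper uses rather than by contradiction): since $\kappa(X,K_X+\Delta)=-\infty$, Theorem \ref{thm:nonvanishingForms} forces $H^0\big(Y,\Omega^p_Y\otimes\pi^*\OO_X(\ell(K_X+\Delta))\big)=0$ for all $p$ and all sufficiently divisible $\ell$; a fortiori the same holds after tensoring with $\mathcal I(\ell\pi^*T_{\min})$, and then the DPS surjectivity kills the target $H^q$.

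Note finally that once (a) is established, the excellent approximation plays no further role in (b): the uniform control on the $B_m$ that you anticipate needing is irrelevant, because the vanishing is proved directly for $T_{\min}$ via DPS01. The ``main obstacle'' you flag does not arise.
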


\begin{proof}
Part (a) follows from Lemma \ref{lem:Siu} and Theorem \ref{thm:valuationscurrents}. 

After part (a) is settled, the rest of the argument for (b) is hidden in the proofs of \cite[Corollary 4.5]{LP18} and \cite[Theorem 5.1]{LP20b} and we reproduce the details here. By (a) and by \S\ref{subsec:generalisedanalytic} there exists a resolution $\pi\colon Y\to X$ such that the Siu decomposition of $\pi^*T_{\min}$ has the form
$$\pi^*T_{\min} = R+D,$$
where the residual part $R$ has all Lelong numbers zero and $D$ is the divisorial part of the decomposition with simple normal crossings support. By Theorem \ref{thm:DEL+DP}(b)(c) we have
\begin{equation}\label{eq:metric1}
\mathcal I(\ell\pi^* T_{\min})=\OO_Y\big({-}\lfloor \ell D\rfloor \big)\quad\text{for all }\ell\geq0.
\end{equation}
Since we assume that $\kappa(X,K_X+\Delta) = {-} \infty$, we conclude by Theorem \ref{thm:nonvanishingForms} that for all $p\geq 0$ and for all $\ell>0$ sufficiently divisible we have
$$ H^0\big(Y,\Omega^p_Y \otimes \pi^*\OO_X(\ell(K_X+\Delta))\big)=0,$$
and thus
$$H^0\big(Y,\Omega^p_Y\otimes \pi^*\OO_X(\ell(K_X+\Delta))\otimes\mathcal I(\ell\pi^* T_{\min})\big) = 0.$$
Then \cite[Theorem 0.1]{DPS01} implies that for all $p\geq 0$ and for all $\ell>0$ sufficiently divisible we have
$$H^p\big(Y,\OO_Y(K_Y+\ell\pi^*(K_X+\Delta))\otimes\mathcal I(\ell\pi^* T_{\min})\big) = 0,$$
which together with \eqref{eq:metric1} finishes the proof.
\end{proof}

\section{Proof of Theorem \ref{thm:main1}}\label{sec:proofofMainThm1}

We now have all the ingredients to prove the first main result of the paper. As announced in the introduction, we can actually show the following much more precise version.

\begin{thm}\label{thm:main1a}
Let $(X,\Delta)$ be a projective klt pair of dimension $n$ such that $K_X+\Delta$ is nef and $\Delta$ is a $\Q$-divisor. Let $\pi\colon Y\to X$ be a log resolution of $(X,\Delta)$ and write 
$$K_Y+\Delta_Y\sim_\Q\pi^*(K_X+\Delta)+E,$$
where $\Delta_Y$ and $E$ are effective $\Q$-divisors without common components. Let $A$ be an ample $\R$-divisor on $Y$, and assume that there exist an effective divisor $D$ on $Y$ and a sequence of positive integers $\{m_\ell\}_{\ell\in\N_{>0}}$ such that $m_\ell\to\infty$ and
$$\textstyle\mathcal I\big(\ell(K_Y+\Delta_Y+\frac{1}{m_\ell} A)\big)_{\min}\subseteq \mathcal I\big(\ell(K_Y+\Delta_Y)\big)_{\min}\otimes\OO_Y(D)\quad\text{for all }\ell.$$
Then
\begin{enumerate}[\normalfont (a)]
\item any current with minimal singularities in the class $\{K_Y+\Delta_Y\}$ is a current with generalised algebraic singularities.
\end{enumerate}
Assume additionally the existence of good minimal models for projective klt pairs in dimensions at most $n-1$. Then:
\begin{enumerate}[\normalfont (a)]
\item[{\normalfont (b)}] if $\kappa(X,K_X+\Delta)\geq0$, then $K_X+\Delta$ is semiample,
\item[{\normalfont (c)}] if $\chi(X,\OO_X)\neq0$, then $K_X+\Delta$ is semiample.
\end{enumerate}
\end{thm}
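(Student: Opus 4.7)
The strategy for part (a) is to upgrade the hypothesis of asymptotically equisingular approximation to an excellent approximation and then apply Theorem \ref{thm:valuationscurrents}. First I would verify that $(Y,\Delta_Y)$ admits a minimal model: since $K_X+\Delta$ is nef and $K_Y+\Delta_Y\sim_\Q\pi^*(K_X+\Delta)+E$ with $E\geq 0$ effective and $\pi$-exceptional, a relative $(K_Y+\Delta_Y)$-MMP over $X$ is effectively an $E$-MMP over $X$; it terminates, and the Negativity lemma forces the final exceptional divisor to be zero, so $(X,\Delta)$ itself is a minimal model of $(Y,\Delta_Y)$. This allows me to invoke Corollary \ref{cor:samemodel}(a) for $(Y,\Delta_Y)$ and the ample $\R$-divisor $A$: there exist a rational $\delta>0$ and a resolution $\mu\colon Y'\to Y$ such that for every rational $\varepsilon\in(0,\delta]$, any current with minimal singularities $T_{\varepsilon,\min}\in\{K_Y+\Delta_Y+\varepsilon A\}$ has generalised algebraic singularities and descends to $Y'$.

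Next I would combine the hypothesis with Lemma \ref{lem:descendingminimal}, which supplies the reverse inclusion of multiplier ideals, to conclude that $\{T_{1/m_\ell,\min}\}_{m_\ell\geq 1/\delta}$ is an asymptotically equisingular approximation of a fixed $T_{\min}\in\{K_Y+\Delta_Y\}$. The support condition (iii) of Definition \ref{dfn:excellentapprox} will then follow exactly as in the proof of Proposition \ref{prop:MMPimpliesapproximation0}: by Proposition \ref{pro:MMPminimalmetrics}(c) the divisorial parts of $\mu^*T_{1/m_\ell,\min}$ equal $N_\sigma\bigl(\mu^*(K_Y+\Delta_Y+\tfrac{1}{m_\ell}A)\bigr)$, and the convexity of Nakayama--Zariski functions together with \cite[Theorem III.5.16]{Nak04} confines their supports to a common divisor on $Y'$. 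Thus $\{T_{1/m_\ell,\min}\}$ is an excellent approximation, and Theorem \ref{thm:valuationscurrents} (which applies since Lemma \ref{lem:Siu} gives finiteness of the divisorial part of $\mu^*T_{\min}$) yields that $T_{\min}$ has generalised analytic singularities. To upgrade this to generalised algebraic singularities I would use that the divisorial part of $\mu^*T_{\min}$ equals $N_\sigma(\mu^*(K_Y+\Delta_Y))$---via \eqref{eq:Tmin} and the identification of Boucksom--Zariski and Nakayama $\sigma$-functions for algebraic classes---which is a $\Q$-divisor by Lemma \ref{lem:NsigmaMMP}.

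For parts (b) and (c), if $X$ is uniruled then \cite[Theorem 1.1]{LM21} gives $\kappa(X,K_X+\Delta)\geq 0$, so I may assume $X$ is not uniruled. In case (c) I would argue by contradiction: suppose $\kappa(X,K_X+\Delta)={-}\infty$. Theorem \ref{thm:LP18}(b) applies to $(Y,\Delta_Y)$ with the excellent approximation from part (a), and since $K_X+\Delta$ is nef with $E$ exceptional one has $N_\sigma(\mu^*(K_Y+\Delta_Y))=\mu^*E$ by \cite[Lemma 2.4]{LP20a}, so for $\ell$ sufficiently divisible
\[K_{Y'}+\ell\mu^*(K_Y+\Delta_Y)-\lfloor\ell\mu^*E\rfloor=K_{Y'}+\ell(\pi\circ\mu)^*(K_X+\Delta).\]
By Grauert--Riemenschneider, the rationality of klt singularities and the projection formula, the vanishing of Theorem \ref{thm:LP18}(b) translates into $H^p(X,\OO_X(K_X+\ell(K_X+\Delta)))=0$ for all $p$ and all $\ell$ sufficiently divisible, so $\chi(X,K_X+\ell(K_X+\Delta))=0$ on a cofinal arithmetic progression. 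This quantity is a polynomial in $\ell$ of degree at most $n$ with value $(-1)^n\chi(X,\OO_X)\neq 0$ at $\ell=0$ by Serre duality, a contradiction. Hence $\kappa(X,K_X+\Delta)\geq 0$ and (c) reduces to (b).

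For case (b), granted $\kappa\geq 0$, the generalised algebraic singularities from part (a), and the MMP in dimensions $\leq n-1$, semiampleness will follow by further application of the methods of \cite{LP18,LP20b} and the main result of \cite{GM17}. I expect the principal obstacle to lie in part (a)---specifically in the simultaneous control of (i) the descent of the approximating currents $T_{1/m_\ell,\min}$ to a common birational model, (ii) the uniform support of their divisorial parts, and (iii) the passage from generalised analytic to generalised algebraic singularities---each of which relies delicately on the nefness of $K_X+\Delta$ and on the existence of a minimal model for $(Y,\Delta_Y)$.
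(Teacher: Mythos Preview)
Your overall strategy matches the paper's: verify that $(Y,\Delta_Y)$ has a minimal model, apply Corollary~\ref{cor:samemodel}(a) to obtain simultaneous descent of the $T_{1/m,\min}$ to a common resolution, use convexity of Nakayama--Zariski functions for the common-support condition, conclude excellence so that Theorem~\ref{thm:valuationscurrents} yields generalised analytic singularities, and then identify the divisorial part with $N_\sigma$ to get rationality via Lemma~\ref{lem:NsigmaMMP}.

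There is one genuine gap, however, in that last identification. You assert that the divisorial part $D_0$ of $\mu^*T_{\min}$ equals $N_\sigma\big(\mu^*(K_Y+\Delta_Y)\big)$ ``via \eqref{eq:Tmin} and the identification of Boucksom--Zariski and Nakayama $\sigma$-functions''. But \eqref{eq:Tmin} only says $\nu(T_{\min},\Gamma)=\min_{T}\nu(T,\Gamma)$, and the equality $\sigma_\Gamma(\alpha)=\nu(T_{\min},\Gamma)$ is recorded in \S\ref{subsec:nakayama} only for \emph{big} classes; the class $\{K_Y+\Delta_Y\}$ is merely pseudoeffective, so this shortcut is not available from the paper's toolkit. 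The paper (Step~2 of Theorem~\ref{thm:main1b}) instead uses Lemma~\ref{lem:valuationscurrents}---which is a consequence of the asymptotically equisingular \emph{hypothesis}, not a general fact---to obtain $\nu(\mu^*T_{\min},\Gamma)=\lim_\ell\nu(\mu^*T_{1/m_\ell,\min},\Gamma)$, identifies each term on the right with $\sigma_\Gamma$ of the corresponding \emph{big} class via Proposition~\ref{pro:MMPminimalmetrics}(c), and passes to the limit using \cite[III.1.7(2)]{Nak04}. This is the missing step in your argument; the same device also feeds into your part~(c), where you need $D_0=\mu^*E$.

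Your approach to (c) is a legitimate variant: exploiting $D_0=\mu^*E$ (available because $K_X+\Delta$ is nef) you rewrite the line bundle as a pullback and push the vanishing down to $X$ via Grauert--Riemenschneider and rationality of klt singularities, whereas the paper stays on the resolution and applies Serre duality there directly. Both routes yield the polynomial contradiction. Your sketch of (b), by contrast, is too thin to count as a proof: the paper carries this out explicitly (Step~4 of Theorem~\ref{thm:main1b}) by observing that the residual part $R_0$ lies in the class of the pullback of $K_{X'}+\Delta'$ for a minimal model $(X',\Delta')$ of $(Y,\Delta_Y)$, has all Lelong numbers zero, and then applies \cite[Theorem~1.5]{GM17} to conclude that $K_{X'}+\Delta'$ is semiample; the conclusion is transferred back to $X$ via \cite[Corollary~2.2]{Laz24} and the argument of \cite[Lemma~4.1]{LM21}.
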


We will give the proof of Theorem \ref{thm:main1a} -- and thus of Theorem \ref{thm:main1} -- at the end of the section. It will be an easy consequence of the following main technical result of this section.

\begin{thm}\label{thm:main1b}
Let $(X,\Delta)$ be a projective klt pair of dimension $n$ such that $X$ is smooth, $\Delta$ is a $\Q$-divisor and $K_X+\Delta$ is pseudoeffective. Assume that $(X,\Delta)$ has a minimal model. Let $A\geq0$ be a big $\R$-divisor on $X$ such that the pair $(X,\Delta+A)$ is klt, and for each $\varepsilon\geq0$ let $T_{\varepsilon,\min}$ be a current with minimal singularities in $\{K_X+\Delta+\varepsilon A\}$. Assume that the sequence $\{T_{\frac1m,\min}\}_{m\in\N_{>0}}$ is an asymptotically equisingular approximation of $T_{0,\min}$.
\begin{enumerate}[\normalfont (a)]
\item Then $T_{0,\min}$ is a current with generalised algebraic singularities.
\end{enumerate}
Assume additionally the existence of good minimal models for projective klt pairs in dimensions at most $n-1$. Then:
\begin{enumerate}[\normalfont (a)]
\item[{\normalfont (b)}] if $\kappa(X,K_X+\Delta)\geq0$, then $(X,\Delta)$ has a good minimal model,
\item[{\normalfont (c)}] if $\chi(X,\OO_X)\neq0$, then $(X,\Delta)$ has a good minimal model.
\end{enumerate}
\end{thm}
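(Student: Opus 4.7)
Part (a) is the core of the theorem. My plan is to upgrade the given asymptotically equisingular approximation $\{T_{1/m,\min}\}$ to an \emph{excellent} approximation of $T_{0,\min}$, so that Theorem \ref{thm:valuationscurrents} forces $T_{0,\min}$ to have generalised analytic singularities; the rationality of the divisorial part will then be read off from the existence of a minimal model via Lemma \ref{lem:NsigmaMMP}. Parts (b) and (c) will be deduced from (a) by standard MMP reductions combined with the Nonvanishing-to-Abundance machinery of \cite{LP18,LP20b,GM17}; the role of $\chi(X,\OO_X)\neq 0$ in (c) will be to furnish the Riemann--Roch obstruction of Theorem \ref{thm:LP18}.

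\textbf{Plan for (a).}
First, apply Corollary \ref{cor:samemodel}(a) to obtain a positive rational $\delta$ and a log resolution $\pi\colon Y\to X$ such that for every rational $0<\varepsilon\leq\delta$ the current $T_{\varepsilon,\min}$ descends to $Y$ with generalised algebraic singularities. Since the proof of that corollary in fact produces a good minimal model of $(X,\Delta+\varepsilon A)$ for each such $\varepsilon$, Proposition \ref{pro:MMPminimalmetrics}(c) identifies the divisorial part $D_\varepsilon$ of the Siu decomposition of $\pi^*T_{\varepsilon,\min}$ as $D_\varepsilon=N_\sigma\bigl(\pi^*(K_X+\Delta+\varepsilon A)\bigr)$. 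Next, using the subadditivity of Nakayama--Zariski functions together with Lemma \ref{lem:NsigmaMMP}, and after possibly blowing up further to achieve simple normal crossings, I would bound $\Supp D_{1/m}$ uniformly inside a fixed reduced divisor $F$ containing $\pi^*N_\sigma(K_X+\Delta)\cup\pi^*N_\sigma(A)\cup\Exc(\pi)$, exactly as in the proof of Proposition \ref{prop:MMPimpliesapproximation0}. Combined with the asymptotically equisingular approximation hypothesis, this makes $\{T_{1/m,\min}\}_{m\gg 0}$ an excellent approximation of $T_{0,\min}$ in the sense of Definition \ref{dfn:excellentapprox}. Since the divisorial part of $T_{0,\min}$ has finitely many components by Lemma \ref{lem:Siu}, Theorem \ref{thm:valuationscurrents} then yields a Siu decomposition $\pi^*T_{0,\min}=R_0+D_0$ with $R_0$ of identically zero Lelong numbers. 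Finally, Lemma \ref{lem:valuationscurrents} gives $\mult_E D_0=\lim_\ell\mult_E D_{1/m_\ell}=\lim_\ell\sigma_E\bigl(\pi^*(K_X+\Delta+\tfrac{1}{m_\ell}A)\bigr)$ for each prime divisor $E$ on $Y$, and Nakayama's convergence of $\sigma_E$ on the pseudoeffective cone identifies this subsequential limit with $\sigma_E(\pi^*(K_X+\Delta))$. Hence $D_0=N_\sigma(\pi^*(K_X+\Delta))$, which is a $\Q$-divisor by Lemma \ref{lem:NsigmaMMP}, proving (a).

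\textbf{Plan for (b) and (c).}
In both cases I would run a $(K_X+\Delta)$-MMP with scaling to reduce to the case when $K_X+\Delta$ is nef; passing to a common log resolution of $(X,\Delta)$ and its minimal model and invoking Proposition \ref{pro:pullbackmincurrent}, part (a) supplies a current of minimal singularities with generalised algebraic singularities in the class of the minimal model. For (b), the combination of $\kappa\geq 0$, MMP in lower dimensions and generalised algebraic singularities then yields a good minimal model by the Abundance arguments of \cite{LP20b,GM17}. For (c), if $X$ is uniruled then $\kappa(X,K_X+\Delta)\geq 0$ by \cite[Theorem 1.1]{LM21} and we reduce to (b); otherwise, assume $\kappa={-}\infty$ for contradiction. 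Theorem \ref{thm:LP18}(b), applied with the excellent approximation from (a), gives $H^p\bigl(Y,\OO_Y(K_Y+\ell\pi^*(K_X+\Delta)-\lfloor \ell D_0\rfloor)\bigr)=0$ for all $p$ and all $\ell$ sufficiently divisible. A Hirzebruch--Riemann--Roch calculation, using the nef-ness of $\pi^*(K_X+\Delta)$ and the rationality of $D_0$ from (a) as in \cite[Section 5]{LP20b}, then forces $\chi(X,\OO_X)=0$, contradicting the hypothesis of (c).

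\textbf{Main obstacle.}
The delicate point in (a) is the identification $D_0=N_\sigma(\pi^*(K_X+\Delta))$: Lemma \ref{lem:valuationscurrents} only delivers convergence of Lelong numbers along the subsequence $\{m_\ell\}$, so I must argue that this subsequential limit coincides with the Nakayama asymptotic valuation, relying on the monotonicity of $\varepsilon\mapsto\sigma_E(\pi^*(K_X+\Delta)+\varepsilon\pi^*A)$ together with the definition of $\sigma_E$ on the pseudoeffective cone. Everything that remains is essentially assembly of tools already available in the literature.
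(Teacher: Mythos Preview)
Your proposal is correct and follows essentially the same approach as the paper's proof: upgrade to an excellent approximation via Corollary~\ref{cor:samemodel}(a) and the convexity bound on $\Supp D_{1/m}$, apply Theorem~\ref{thm:valuationscurrents} and Lemma~\ref{lem:valuationscurrents} together with \cite[Lemma~III.1.7(2)]{Nak04} to identify $D_0=N_\sigma(\pi^*(K_X+\Delta))$, then finish (b) and (c) via \cite{LM21}, Theorem~\ref{thm:LP18}(b) with Serre duality/HRR, and \cite[Theorem~1.5]{GM17}. One small correction: the HRR step in (c) does not use nefness of $\pi^*(K_X+\Delta)$ --- the paper applies Serre duality to get $\chi\big(Y,\OO_Y(\ell qD_0-\ell q\pi^*(K_X+\Delta))\big)=0$ for all $\ell>0$ and then sets $\ell=0$ in this polynomial.
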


\begin{proof}
We divide the proof in several steps.

\medskip

\emph{Step 1.}
By Corollary \ref{cor:samemodel}(a) there exist a positive integer $m_0$ and a birational model $\pi\colon Y\to X$ such that for each $m\geq m_0$ the current $T_{\frac1m,\min}$ descends to $Y$. For each $m\geq m_0$, let $D_m$ be the divisorial part of the Siu decomposition of $\pi^*T_{\frac1m,\min}$. Then
\begin{equation}\label{eq:6a}
\textstyle D_m=N_\sigma\big(\pi^*(K_X+\Delta+\frac1m A)\big)\quad\text{for }m\geq m_0
\end{equation}
by Proposition \ref{pro:MMPminimalmetrics}(c), and we have 
$$\textstyle \Supp N_\sigma\big(\pi^*\big(K_X+\Delta+\frac1m A\big)\big)\subseteq\Supp N_\sigma\big(\pi^*(K_X+\Delta)\big)\cup\Supp N_\sigma(\pi^*A)$$
for all $m>0$ by the convexity of Nakayama--Zariski functions. This and \eqref{eq:6a} show that
\begin{equation}\label{eq:excellent}
\text{the sequence }\{T_{\frac1m,\min}\}_{m\geq m_0}\text{ is an excellent approximation of }T_{0,\min}.
\end{equation}

\medskip

\emph{Step 2.}
By \eqref{eq:excellent} and by Theorem \ref{thm:LP18}(a) we deduce that $T_{0,\min}$ has generalised analytic singularities. By possibly replacing $Y$ by a higher birational model, we may assume that $T_{0,\min}$ descends to $Y$, and let
$$\pi^*T_{0,\min} = R_0+D_0$$
be the Siu decomposition of $T_{0,\min}$, where the residual part $R_0$ has all Lelong numbers zero and $D_0$ is the divisorial part. By Lemma \ref{lem:valuationscurrents}, there exists a sequence of positive integers $\{m_\ell\}_{\ell\in\N_{>0}}$ with $m_\ell\to\infty$, such that 
$$D_0=\lim_{\ell\to\infty}D_{m_\ell},$$
which together with \eqref{eq:6a} and \cite[Lemma III.1.7(2)]{Nak04} gives
\begin{equation}\label{eq:6b}
D_0=\lim_{\ell\to\infty}\textstyle N_\sigma\big(\pi^*\big(K_X+\Delta+\frac{1}{m_\ell} A\big)\big)=N_\sigma\big(\pi^*(K_X+\Delta)\big).
\end{equation}
Hence, $D_0$ is a rational divisor by Lemma \ref{lem:NsigmaMMP}. In other words, $T_{0,\min}$ has generalised \emph{algebraic} singularities, which proves (a).

\medskip

\emph{Step 3.}
In this step we assume that $\chi(X,\OO_X)\neq0$, and we show that $\kappa(X,K_X+\Delta)\geq0$. 

If $X$ is uniruled, then $\kappa(X,K_X+\Delta)\geq0$ by \cite[Theorem 1.1]{LM21}. Therefore, from now on we may assume that $X$ is not uniruled. We follow the arguments of \cite[Corollary 4.5]{LP18} closely. 

Assume that $\kappa(X,K_X+\Delta)={-}\infty$. By possibly replacing $Y$ by a higher birational model, we may assume that the $\Q$-divisor $D_0$ on $Y$ has simple normal crossings support. Then by Theorem \ref{thm:LP18}(b) we have
$$\chi\big(Y,\OO_Y(K_Y+\ell\pi^*(K_X+\Delta)-\lfloor \ell D_0\rfloor)\big) = 0$$
for all $\ell>0$ divisible by some positive integer $q$, and we may assume that $qD_0$ and $q(K_X+\Delta)$ are Cartier. Then Serre duality gives
\begin{equation}\label{eq:89}
\chi\big(Y,\OO_Y( \ell qD_0 - \ell q\pi^*(K_X+\Delta))\big) = 0\quad \text{for all }\ell>0.
\end{equation}
Since the Euler--Poincar\'e characteristic $\chi\big(Y,\OO_Y( \ell qD_0 - \ell q\pi^*(K_X+\Delta))\big)$ is a polynomial in $\ell$ by the Hirzebruch--Riemann--Roch theorem, \eqref{eq:89} implies that it must be identically zero, hence $\chi(Y,\OO_Y) = 0$ by setting $\ell=0$. Thus, $\chi(X,\OO_X) = 0$ as $X$ has rational singularities, a contradiction which proves that $\kappa(X,K_X+\Delta)\geq0$.

\medskip

\emph{Step 4.}
Finally, in this step we show (b) and (c) simultaneously. By Step 3, we may assume that
\begin{equation}\label{eq:89a}
\kappa(X,K_X+\Delta)\geq0.
\end{equation}
By assumption, there exists a minimal model $\varphi\colon (X,\Delta)\dashrightarrow (X',\Delta')$ of $(X,\Delta)$. By possibly replacing $Y$ by a higher birational model, we may assume that $(\pi,\pi')\colon Y\to X\times X'$ is a resolution of indeterminacies of $\varphi$ such that $Y$ is smooth.
	\begin{center}
		\begin{tikzcd}
			& Y \arrow[dl, "\pi" swap] \arrow[dr, "\pi'"] && \\
			X \arrow[rr, dashed, "\varphi" ] && X'
		\end{tikzcd}
	\end{center} 
Then as in the proof of Lemma \ref{lem:NsigmaMMP} we obtain that
$$ P_\sigma\big(\pi^*(K_X+\Delta)\big)\sim_\Q(\pi')^*(K_{X'}+\Delta'). $$
This together with \eqref{eq:6b} implies
\begin{align*}
R_0=\pi^*T_{\min}-D_0&\equiv\pi^*(K_X+\Delta)-N_\sigma\big(\pi^*(K_X+\Delta)\big)\\
&=P_\sigma\big(\pi^*(K_X+\Delta)\big)\sim_\Q(\pi')^*(K_{X'}+\Delta').
\end{align*}
Since $\kappa(X',K_{X'}+\Delta')=\kappa(X,K_X+\Delta)\geq0$ by \eqref{eq:89a} and since $R_0$ has all Lelong numbers zero, we conclude that $K_{X'}+\Delta'$ is semiample by \cite[Theorem 1.5]{GM17}. Thus, $\varphi$ is a good minimal model of $(X,\Delta)$, which concludes the proof.
\end{proof}

Finally, we have:

\begin{proof}[Proof of Theorem \ref{thm:main1a}]
We first note that the pair $(Y,\Delta_Y)$ has a minimal model by \cite[Lemma 2.14(e)]{LX23}.

For each $\varepsilon\geq0$ let $T_{\varepsilon,\min}$ be a current with minimal singularities in $\{K_Y+\Delta_Y+\varepsilon A\}$. Then by Lemma \ref{lem:descendingminimal}, for all positive integers $m$ and $\ell$ we have
$$\mathcal I(\ell T_{0,\min})\subseteq \mathcal I(\ell T_{\frac1m,\min}),$$
hence the assumptions of Theorem \ref{thm:main1a} imply that $\{T_{\frac1m,\min}\}_{m\in\N_{>0}}$ is an asymptotically equisingular approximation of $T_{0,\min}$. Then part (a) follows from Theorem \ref{thm:main1b}(a) applied to the pair $(Y,\Delta_Y)$. 

For (b) and (c), note that $\kappa(X,K_X+\Delta)=\kappa(Y,K_Y+\Delta_Y)$, as well as $\chi(X,\OO_X)=\chi(Y,\OO_Y)$ since $X$ has rational singularities. Thus, if $\kappa(X,K_X+\Delta)\geq0$ or if $\chi(X,\OO_X)\neq0$, then $(Y,\Delta_Y)$ has a good minimal model by Theorem \ref{thm:main1b}(b)(c). This implies that $(X,\Delta)$ has a good minimal model by \cite[Corollary 2.2]{Laz24}. But then $K_X+\Delta$ is semiample by the same argument as in the third paragraph of the proof of \cite[Lemma 4.1]{LM21}. This proves (b) and (c), and finishes the proof of the theorem.
\end{proof}

\section{Local behaviour}\label{sec:localPL}

In this section we prove a general result on local linearity of currents with minimal singularities in the context of the Minimal Model Program, and on the local behaviour of the asymptotic base loci. It will be one of the ingredients in the proof of Theorem \ref{thm:main2}.

\begin{thm}\label{thm:localPL}
Let $(X,\Delta)$ be a projective klt pair of dimension $n$ such that $\Delta$ is a $\Q$-divisor and $K_X+\Delta$ is pseudoeffective. Assume that $(X,\Delta)$ has a minimal model. Let $A$ be an ample $\Q$-divisor on $X$. Then there exists a rational number $0<\delta\leq1$ such that the following holds.
\begin{enumerate}[\normalfont (a)]
\item The sets $\sB_-(K_X+\Delta+\varepsilon A)$ are independent of $\varepsilon\in[0,\delta)$.
\item For each $\varepsilon\in(0,\delta)$ we have
$$\sB_-(K_X+\Delta+\varepsilon A)=\sB(K_X+\Delta+\varepsilon A)=\sB_+(K_X+\Delta+\varepsilon A).$$
\item Assume that $X$ is additionally smooth, and for each $\varepsilon\geq0$ let $T_{\varepsilon,\min}$ be a current with minimal singularities in $\{K_X+\Delta+\varepsilon A\}$. Then for any two $\varepsilon_1,\varepsilon_2\in (0,\delta]$ and for any $t\in[0,1]$ the current
$$tT_{\varepsilon_1,\min}+(1-t)T_{\varepsilon_2,\min}$$
has minimal singularities in the class $\big\{K_X+\Delta+\big(t\varepsilon_1+(1-t)\varepsilon_2\big)A\big\}$.
\end{enumerate}
\end{thm}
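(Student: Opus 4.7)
The plan is to produce a single birational contraction $\psi: X \dashrightarrow X''$ that simultaneously serves as a good minimal model of $(X, \Delta+\varepsilon A)$ for every $\varepsilon$ in some interval $(0,\delta]$, and then to deduce all three claims from the geometry of this common model. Adapting the MMP construction from the proof of Corollary \ref{cor:samemodel}(a): first run a $(K_X+\Delta)$-MMP with scaling to a minimal model $\varphi: X \dashrightarrow X'$; then, for $\delta\in\Q_{>0}$ sufficiently small via \cite[Proposition 2.12]{LT22a}, run a $(K_{X'}+\Delta'+\delta\varphi_*A)$-MMP $\rho: X' \dashrightarrow X''$ which is $(K_{X'}+\Delta')$-trivial and terminates since the scaled divisor is big. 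Setting $\psi:=\rho\circ\varphi$, $\Delta'':=\psi_*\Delta$, $A'':=\psi_*A$, one checks that for each $\varepsilon\in(0,\delta]$ every $\rho$-extremal ray is $\varphi_*A$-negative (hence $(K_{X'}+\Delta'+\varepsilon\varphi_*A)$-negative), so every step of $\psi$ is $(K_X+\Delta+\varepsilon A)$-negative; since $K_{X''}+\Delta''+\varepsilon A''$ is then nef and big, it is semiample by the basepoint-free theorem.

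For parts (a) and (b), fix a smooth common resolution $(p,q): W \to X\times X''$ of $\psi$ and write
$$ p^*(K_X+\Delta+\varepsilon A) \sim_\Q q^*(K_{X''}+\Delta''+\varepsilon A'') + E_\varepsilon, $$
with $E_\varepsilon\ge 0$ effective, $q$-exceptional, and whose support contains the strict transform of every $\psi$-exceptional divisor. Since the pullback term is the pullback of a semiample $\Q$-divisor, $\sB\bigl(p^*(K_X+\Delta+\varepsilon A)\bigr) = \Supp E_\varepsilon$ on $W$, and pushing down yields $\sB(K_X+\Delta+\varepsilon A) = p(\Supp E_\varepsilon)$ on $X$. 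Since $E_\varepsilon = E_0 + \varepsilon(p^*A - q^*A'')$ depends linearly on $\varepsilon$, after possibly shrinking $\delta$ its support, and hence $\sB(K_X+\Delta+\varepsilon A)$, is constant for $\varepsilon\in(0,\delta]$. The identity $\sB_-(D) = \bigcup_{t>0}\sB(D+tA)$ (which follows from the definition of $\sB_-$ together with monotonicity of $\sB$ under adding an ample divisor) then yields that $\sB_-(K_X+\Delta+\varepsilon A)$ equals this same set for every $\varepsilon\in[0,\delta)$, giving (a); and Remark \ref{rem:augmented} gives $\sB_+(K_X+\Delta+\varepsilon A) = \sB(K_X+\Delta+(\varepsilon-\eta)A)$ for $0<\eta\ll\varepsilon$, which also equals this set, establishing (b) for $\varepsilon\in(0,\delta)$.

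For part (c), work on the same smooth resolution $W$ and exploit the linearity $tE_{\varepsilon_1}+(1-t)E_{\varepsilon_2} = E_{t\varepsilon_1+(1-t)\varepsilon_2}$. By Lemma \ref{lem:Tminsigma}(c), each positive current $S_i := p^*T_{\varepsilon_i,\min} - E_{\varepsilon_i}$ has minimal singularities in the class $\{q^*(K_{X''}+\Delta''+\varepsilon_i A'')\}$, which is semiample on $W$ as the pullback of a semiample $\Q$-divisor from $X''$. Lemma \ref{lem:minsingeasy}(c) applied on the smooth manifold $W$ then gives that $tS_1+(1-t)S_2$ has minimal singularities in $\{q^*(K_{X''}+\Delta''+(t\varepsilon_1+(1-t)\varepsilon_2)A'')\}$. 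Adding back $tE_{\varepsilon_1}+(1-t)E_{\varepsilon_2}=E_{t\varepsilon_1+(1-t)\varepsilon_2}$ via Lemma \ref{lem:Tminsigma}(b) shows that $p^*\bigl(tT_{\varepsilon_1,\min}+(1-t)T_{\varepsilon_2,\min}\bigr)$ has minimal singularities in $\{p^*(K_X+\Delta+(t\varepsilon_1+(1-t)\varepsilon_2)A)\}$, and Proposition \ref{pro:pullbackmincurrent} descends this conclusion to $X$.

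The main technical obstacle is verifying, uniformly in $\varepsilon\in(0,\delta]$, that $\psi$ really is a good minimal model of $(X,\Delta+\varepsilon A)$ with semiample pushforward; this is what enables the uniform identification $\sB(K_X+\Delta+\varepsilon A) = p(\Supp E_\varepsilon)$ and underlies every step in the above outline. A secondary point is that Theorem \ref{thm:localPL} assumes only that $(X,\Delta)$ is klt, so the MMP is run after passing to a small $\Q$-factorialisation of $X$; since this is small, the $\sB$-loci and the minimal-singularities currents transfer back to $X$ without loss, and parts (a), (b) are unaffected, while smoothness of $X$ in (c) is used precisely so that the $p$-pullback step is valid without an intermediate step.
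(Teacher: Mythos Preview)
Your proposal is correct and follows essentially the same approach as the paper: construct a single birational contraction that is a good minimal model of $(X,\Delta+\varepsilon A)$ for all $\varepsilon\in(0,\delta]$ via a two-step MMP (as in Corollary~\ref{cor:samemodel}(a)), use affine dependence of $E_\varepsilon$ on $\varepsilon$ to control the supports and hence the base loci, and for (c) invoke Lemmas~\ref{lem:Tminsigma} and~\ref{lem:minsingeasy}(c) together with Proposition~\ref{pro:pullbackmincurrent} exactly as you do. The only cosmetic differences are that the paper derives constancy of $\Supp E_\varepsilon$ on $[0,\delta)$ from monotonicity of Nakayama--Zariski functions rather than by shrinking $\delta$, and it does not make the $\Q$-factorialisation step explicit; your handling of both points is fine.
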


\begin{proof}
First note that by replacing $A$ by a sufficiently general divisor $\Q$-linearly equivalent to $A$, we may assume that the pair $(X,\Delta+A)$ is klt.

\medskip

\emph{Step 1.}
As in the proof of Corollary \ref{cor:samemodel}(a), there exists a rational number $\delta>0$ and a $(K_X+\Delta)$-non-positive birational contraction $\xi\colon X\dashrightarrow X'$ such that for each $0<\varepsilon\leq\delta$, the map $\xi$ is a $(K_X+\Delta+\varepsilon A)$-MMP. Moreover, if we set $\Delta':=\xi_*\Delta$ and $A':=\xi_*A$, then $(X',\Delta'+\varepsilon A')$ is a good minimal model of $(X,\Delta+\varepsilon A)$ for $0<\varepsilon\leq\delta$ by the Basepoint free theorem \cite[Theorem 3.3]{KM98}.

Let $(p,q)\colon Y\to X\times X'$ be a resolution of indeterminacies of $\xi$ such that $Y$ is smooth.
	\begin{center}
		\begin{tikzcd}
			& Y \arrow[dl, "p" swap] \arrow[dr, "q"] && \\
			X \arrow[rr, dashed, "\xi" ] && X'
		\end{tikzcd}
	\end{center} 
By the Negativity lemma \cite[Lemma 3.39]{KM98}, for each $\varepsilon\in[0,\delta]$ there exists an effective $q$-exceptional $\R$-divisor $E_\varepsilon$ on $Y$ such that
\begin{equation}\label{eq:simR}
p^*(K_X+\Delta+\varepsilon A)\sim_\R q^*(K_{X'}+\Delta'+\varepsilon A')+E_\varepsilon.
\end{equation}
Then
\begin{equation}\label{eq:affine1}
\text{the function }\varepsilon\mapsto E_\varepsilon\text{ is affine on }[0,\delta],
\end{equation}
since both functions $\varepsilon\mapsto p^*(K_X+\Delta+\varepsilon A)$ and $\varepsilon\mapsto q^*(K_{X'}+\Delta'+\varepsilon A')$ are.

\medskip

\emph{Step 2.}
In this step we prove (a). First note that by \eqref{eq:simR} and since each divisor $K_{X'}+\Delta'+\varepsilon A'$ is semiample for $\varepsilon\in(0,\delta]$, we have
$$\sB\big(p^*(K_X+\Delta+\varepsilon A)\big)=\Supp E_\varepsilon\quad\text{for all }\varepsilon\in(0,\delta],$$
which together with \cite[Proposition 2.8]{BBP13} and \cite[Lemma 2.3]{LMT23} implies
\begin{equation}\label{eq:Bepsilon}
\sB_-(K_X+\Delta+\varepsilon A)=\sB(K_X+\Delta+\varepsilon A)=p(\Supp E_\varepsilon)\quad\text{for all }\varepsilon\in(0,\delta].
\end{equation}
On the other hand, \eqref{eq:simR} and \cite[Lemma 2.4]{LP20a} give
\begin{equation}\label{eq:Eepsilon}
N_\sigma\big(p^*(K_X+\Delta+\varepsilon A)\big)=E_\varepsilon\quad\text{for all }\varepsilon\in[0,\delta].
\end{equation}
Moreover, since $A$ is ample, by the convexity of Nakayama--Zariski functions we have for each $0\leq\xi_1\leq\xi_2$:
\begin{align*}
N_\sigma\big(p^*(K_X+\Delta+\xi_2 A)\big)&\leq N_\sigma\big(p^*(K_X+\Delta+\xi_1 A)\big)+N_\sigma\big((\xi_2-\xi_1)p^*A)\big)\\
&=N_\sigma\big(p^*(K_X+\Delta+\xi_1 A)\big),
\end{align*}
hence $\Supp E_{\xi_2}\subseteq \Supp E_{\xi_1}$ when $0\leq\xi_1\leq\xi_2$. This together with \eqref{eq:affine1} and \eqref{eq:Eepsilon} shows that
\begin{equation}\label{eq:independent}
\Supp E_0=\Supp E_\varepsilon\quad\text{for all }\varepsilon\in[0,\delta).
\end{equation}
Now, \eqref{eq:Bepsilon} and \eqref{eq:independent} imply that
\begin{equation}\label{eq:independent1}
\sB_-(K_X+\Delta+\varepsilon A)=p(\Supp E_0)\quad\text{for all }\varepsilon\in(0,\delta),
\end{equation}
whereas \cite[Proposition 1.19]{ELMNP} together with \eqref{eq:Bepsilon} and \eqref{eq:independent} gives
$$\sB_-(K_X+\Delta)=\bigcup_{\varepsilon\in(0,\delta)}\sB(K_X+\Delta+\varepsilon A)=p(\Supp E_0)$$
This and \eqref{eq:independent1} give (a).

\medskip

\emph{Step 3.}
For (b), fix $\varepsilon\in(0,\delta)$. Then by \cite[Proposition 1.21]{ELMNP} there exists $\xi\in(0,\varepsilon)$ such that
$$\sB_+(K_X+\Delta+\varepsilon A)=\sB_-\big(K_X+\Delta+(\varepsilon-\xi)A\big).$$
Since $\sB_-\big(K_X+\Delta+(\varepsilon-\xi)A\big)=\sB_-(K_X+\Delta+\varepsilon A)$ by (a), we conclude that
$$\sB_-(K_X+\Delta+\varepsilon A)=\sB_+(K_X+\Delta+\varepsilon A),$$
which together with \eqref{eq:inclusionsloci} proves (b).

\medskip

\emph{Step 4.}
Finally, in this step we prove (c). By \eqref{eq:simR}, for $\varepsilon\in [0,\delta]$ we have
$$ p^*T_{\varepsilon,\min}\equiv q^*(K_{X'}+\Delta'+\varepsilon A')+E_\varepsilon, $$
and set
\begin{equation}\label{eq:S_epsilon}
S_\varepsilon:=p^*T_{\varepsilon,\min}-E_\varepsilon\in\{q^*(K_{X'}+\Delta'+\varepsilon A')\}.
\end{equation}
Since $p^*T_{\varepsilon,\min}$ is a positive current with minimal singularities by Proposition \ref{pro:pullbackmincurrent}, so is also $S_\varepsilon$ by Lemma \ref{lem:Tminsigma}(c).

Fix $\varepsilon_1,\varepsilon_2\in (0,\delta]$. Then by \eqref{eq:affine1} we have
\begin{equation}\label{eq:affine}
E_{t\varepsilon_1+(1-t)\varepsilon_2}=tE_{\varepsilon_1}+(1-t)E_{\varepsilon_2}\quad\text{for each }t\in[0,1].
\end{equation}
Since for every $\varepsilon\in(0,\delta]$ the current $S_\varepsilon$ has minimal singularities and the $\R$-divisor $q^*(K_{X'}+\Delta'+\varepsilon A')$ is semiample, for each $t\in[0,1]$ the current
$$tS_{\varepsilon_1}+(1-t)S_{\varepsilon_2}\in\big\{q^*\big(K_{X'}+\Delta'+(t\varepsilon_1+(1-t)\varepsilon_2) A'\big)\big\}$$
has minimal singularities by Lemma \ref{lem:minsingeasy}(c). Thus, by Lemma \ref{lem:Tminsigma}(b) and by \eqref{eq:simR} each current
\begin{equation}\label{eq:sumofminimalandexceptional}
tS_{\varepsilon_1}+(1-t)S_{\varepsilon_2}+E_{t\varepsilon_1+(1-t)\varepsilon_2}\in\big\{p^*\big(K_X+\Delta+(t\varepsilon_1+(1-t)\varepsilon_2) A\big)\big\}
\end{equation}
has minimal singularities. Note that by \eqref{eq:S_epsilon} and \eqref{eq:affine} we have
\begin{align*}
p^*\big(tT_{\varepsilon_1,\min}+(1-t)T_{\varepsilon_2,\min}\big)
&=t(S_{\varepsilon_1}+E_{\varepsilon_1})+(1-t)(S_{\varepsilon_2}+E_{\varepsilon_2})\\
&=tS_{\varepsilon_1}+(1-t)S_{\varepsilon_2}+E_{t\varepsilon_1+(1-t)\varepsilon_2},
\end{align*}
which together with \eqref{eq:sumofminimalandexceptional} gives that for each $t\in[0,1]$ the current
$$p^*\big(tT_{\varepsilon_1,\min}+(1-t)T_{\varepsilon_2,\min}\big)\in\{p^*(K_X+\Delta+(t\varepsilon_1+(1-t)\varepsilon_2) A)\}$$
has minimal singularities. We conclude by Proposition \ref{pro:pullbackmincurrent}.
\end{proof}

We conclude this section with a few comments on the behaviour of the asymptotic base loci; the following example and proposition were obtained in discussions with Nikolaos Tsakanikas.

Recall that according to \cite{ELMNP} a pseudoeffective $\R$-Cartier $\R$-divisor $D$ on a normal projective variety $X$ is called \emph{stable} if $\sB_-(D)=\sB_+(D)$. Therefore, Theorem \ref{thm:localPL}(b) shows that the stability of adjoint divisors holds, in a certain sense, locally on a klt pair $(X,\Delta)$. Proposition \ref{pro:local}, which complements results from \cite{BBP13,TX23}, says that in a similar situation as in Theorem \ref{thm:localPL}, actually \emph{all but finitely many} divisors of the form $K_X+\Delta+\varepsilon A$ are stable. We first note that, however, one cannot conclude that \emph{all} such divisors are stable.

\begin{exa}\label{exa:notstable}
This example is a slightly modified version of \cite[Example 3.5]{TX23}, and it shows that there exists a projective klt pair $(X,\Delta)$ such that $K_X+\Delta$ is big, but
$$\sB(K_X+\Delta)\neq\sB_+(K_X+\Delta).$$
To that end, let $X$ be the blowup of $\mathbb P^2$ along three distinct points which belong to a line $L\subseteq\mathbb P^2$, and let $E_1$, $E_2$ and $E_3$ be the exceptional divisors. Then ${-}K_X\sim 3L'+2(E_1+E_2+E_3)$, where $L'$ is the strict transform of $L$ on $X$. From this it is easy to check that ${-}K_X$ is nef, but it is not ample since $K_X\cdot L'=0$. Moreover, ${-}K_X$ is big as $K_X^2=6$. By \cite[Proposition 2.61(3)]{KM98} there exists an effective $\Q$-divisor $\Delta\sim_\Q{-}2K_X$ such that the pair $(X,\Delta)$ is klt. Therefore, $K_X+\Delta\sim_\Q{-}K_X$ is nef and big, hence semiample by the Basepoint free theorem \cite[Theorem 3.3]{KM98}. Thus, $\sB(K_X+\Delta)=\emptyset$, whereas $\sB_+(K_X+\Delta)\neq\emptyset$ since $K_X+\Delta$ is not ample.
\end{exa}

\begin{prop}\label{pro:local}
Let $(X,\Delta)$ be a projective klt pair such that $\Delta$ is a $\Q$-divisor and $K_X+\Delta$ is pseudoeffective, and assume that $(X,\Delta)$ has a minimal model. Let $A$ be an ample $\Q$-divisor on $X$. Then there exist only finitely many real numbers $\varepsilon\geq0$ such that $K_X+\Delta+\varepsilon A$ is not stable, and all such $\varepsilon$ are rational.
\end{prop}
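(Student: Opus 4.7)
My plan is to combine Theorem \ref{thm:models} with a chamber-wise version of the argument used in the proof of Theorem \ref{thm:localPL}. After a small $\Q$-factorialization I may assume $X$ is $\Q$-factorial, and after replacing $A$ by $H:=\tfrac{1}{m}H'$ with $H'\in|mA|$ very general and $m$ sufficiently large, I may assume that $(X,\Delta+\lambda H)$ is klt for every rational $\lambda$ below a preassigned bound. Both reductions preserve stability because the asymptotic base loci depend only on the numerical class and $\mu$-base loci transfer cleanly under a small modification.

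First, I would bound the potentially unstable range from above. Writing $K_X+\Delta+\varepsilon H=\varepsilon\bigl(H+\tfrac{1}{\varepsilon}(K_X+\Delta)\bigr)$ for $\varepsilon>0$ and using that the ample cone is open around $H$, I choose a rational $\eta_0>0$ such that $H+\eta(K_X+\Delta)$ is ample for every $\eta\in[0,\eta_0]$; then $K_X+\Delta+\varepsilon H$ is ample, and hence trivially stable, for every $\varepsilon\geq 1/\eta_0$. Setting $N:=1/\eta_0$ and taking $m$ large enough so that $(X,\Delta+NH)$ is klt, it suffices to locate the unstable values in $[0,N]$. Theorem \ref{thm:localPL}(b) applied to $(X,\Delta)$ with ample divisor $A$ produces a rational $\delta\in(0,N)$ for which $K_X+\Delta+\varepsilon H$ is stable for every $\varepsilon\in(0,\delta)$, handling the lower end.

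Next I would apply Theorem \ref{thm:models} to the ring $\mathfrak{R}=R(X;K_X+\Delta+\delta H,\,K_X+\Delta+NH)$. Both pairs are klt and both boundaries are big (being $\geq \delta H$ resp.\ $\geq NH$), so the theorem yields a finite rational polyhedral decomposition $\Supp \mathfrak{R}=\bigcup\mathcal{C}_k$ in which each chamber $\mathcal{C}_k$ admits a common birational contraction $\varphi_k\colon X\dashrightarrow X_k$ that is a minimal model for every klt pair whose adjoint class lies in $\mathcal{C}_k$. The segment $\{K_X+\Delta+\varepsilon H:\varepsilon\in[\delta,N]\}$ lies in $\Supp \mathfrak{R}$ via the convex combination $K_X+\Delta+\varepsilon H=\tfrac{N-\varepsilon}{N-\delta}(K_X+\Delta+\delta H)+\tfrac{\varepsilon-\delta}{N-\delta}(K_X+\Delta+NH)$, and the chamber decomposition induces a finite subdivision $\delta=\varepsilon_0<\varepsilon_1<\cdots<\varepsilon_s=N$ with rational endpoints. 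On each open subinterval, the common good minimal model $\varphi_{k_i}$ lets me run Steps 1--3 of the proof of Theorem \ref{thm:localPL} verbatim: on a smooth resolution $(p,q)\colon Y\to X\times X_{k_i}$ one obtains an $\R$-linear equivalence
$$p^*(K_X+\Delta+\varepsilon H)\sim_\R q^*\bigl(K_{X_{k_i}}+\Delta_{k_i}+\varepsilon H_{k_i}\bigr)+E_\varepsilon$$
with $E_\varepsilon\geq 0$ and $q$-exceptional, depending affinely on $\varepsilon$, and with $K_{X_{k_i}}+\Delta_{k_i}+\varepsilon H_{k_i}$ nef and big, hence semiample by the Basepoint Free theorem. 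Convexity of Nakayama--Zariski then forces $\Supp E_\varepsilon$ to be constant on the interior of the chamber, and \cite[Proposition 1.21]{ELMNP} yields $\sB_-(K_X+\Delta+\varepsilon H)=\sB_+(K_X+\Delta+\varepsilon H)$ there. Instability in $[\delta,N]$ is thereby confined to the finite rational set $\{\varepsilon_1,\ldots,\varepsilon_{s-1}\}$; combined with stability on $(0,\delta)$ and on $[N,\infty)$ and the possible instability at $\varepsilon=0$, this proves the proposition.

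The main obstacle is verifying that the common contraction $\varphi_k$ produced by Theorem \ref{thm:models} really plays the role of the MMP map $\xi$ from Theorem \ref{thm:localPL} uniformly across each chamber: namely, that $\varphi_k$ is $(K_X+\Delta+\varepsilon H)$-non-positive simultaneously for every $\varepsilon\in\mathcal{C}_k$, so that the Negativity lemma produces a genuinely affine family $E_\varepsilon$. This follows directly from the defining property of a minimal model together with the $\R$-linearity of the associated equivalences, but combined with the $\Q$-factorialization reduction it is the step requiring the most care; once it is in place the chamber-interior stability argument is a clean repetition of the proof of Theorem \ref{thm:localPL}.
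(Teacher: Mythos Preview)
Your proof is correct and follows essentially the same approach as the paper: bound the unstable range from above by ampleness, handle small $\varepsilon$ via Theorem \ref{thm:localPL}(b), and on the remaining compact interval apply the finite rational chamber decomposition from Theorem \ref{thm:models} together with the argument of Theorem \ref{thm:localPL} to get stability on each open chamber. The ``main obstacle'' you flag is not an obstacle at all, since Theorem \ref{thm:models} asserts precisely that $\varphi_k$ is a minimal model for every adjoint class in $\mathcal C_k$; on the other hand, your explicit reduction to the $\Q$-factorial case is a detail the paper glosses over.
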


When $(X,\Delta)$ is a projective klt pair such that $K_X+\Delta$ is big, then it has a minimal model by \cite{BCHM,CL13}, hence Proposition \ref{pro:local} applies unconditionally to such pairs.

\begin{proof}
Set $n:=\dim X$.

Assume first that $K_X+\Delta$ is big. By \cite[Theorem 3.7(1)]{KM98} we have that $K_X+\Delta+mA$ is ample for all $m>2n$, and in particular each such divisor is stable. On the other hand, by Theorem \ref{thm:models} applied to the ring
$$R(X;K_X+\Delta,K_X+\Delta+2nA),$$
there exist finitely many rational numbers $0=\varepsilon_1<\varepsilon_2<\dots<\varepsilon_k=2n$ such that for each $i$ there exists a $\Q$-factorial projective variety $X_i$ and a birational contraction $\varphi_i\colon X\dashrightarrow X_i$ such that $\varphi_i$ is a minimal model for every klt pair $(X,\Delta+\xi A)$ with $\varepsilon_i<\xi<\varepsilon_{i+1}$. Then for each $\xi\in(\varepsilon_i,\varepsilon_{i+1})$ the divisor $K_X+\Delta+\xi A$ is stable, by repeating verbatim the proof of Theorem \ref{thm:localPL}(b). Therefore, if $K_X+\Delta+\varepsilon A$ is not stable for some $\varepsilon\geq0$, then $\varepsilon\in\{\varepsilon_1,\dots,\varepsilon_k\}$. This proves the proposition when $K_X+\Delta$ is big.

In the general case, by Theorem \ref{thm:localPL}(b) there exists a rational number $0<\delta\leq1$ such that for each $\varepsilon\in(0,\delta)$ the divisor
$K_X+\Delta+\varepsilon A$ is stable. On the other hand, by the first part of the proof there exist only finitely many real numbers $\varepsilon\geq\delta$ such that $K_X+\Delta+\varepsilon A$ is not stable, and they are all rational. This finishes the proof.
\end{proof}

\newpage

\part{Approximations by supercanonical currents}\label{part:approximations}

\section{A uniform bound theorem}

In this section we prove a crucial result that will be used several times in the remainder of the paper. It shows the existence of global holomorphic sections of adjoint line bundles with precise properties, which depend only on a prescribed open cover of the given compact complex manifold.

The method of the proof is to construct holomorphic sections locally by the Ohsawa-Takegoshi extension theorem, and then use smooth cut-off functions and solve a $\dbar$-equation by a version of H\"ormander's $L^2$ estimates to find global holomorphic sections satisfying similar estimates. These techniques go back at least to the proofs of \cite[Theorem 4.2.7]{Hoer07} and \cite[Proposition 3.1]{Dem92}. One of the main difficulties is to organise the proof in such a way that all the constants depend only on the starting data.

\begin{thm}\label{thm:boundedsections}
Let $X$ be a compact K\"ahler manifold with a K\"ahler form $\omega$, and fix an open covering $\mathcal U$ of $X$ by coordinate balls on which $K_X$ trivialises. Then there exist positive constants $\delta$ and $C$, depending only on $\mathcal U$, with the following property. For each line bundle $L$ on $X$ which trivialises on $\mathcal U$, for every singular metric $h$ on $L$ with $\Theta_h(L)\geq \delta\omega$, and for each $x\in X$ such that the restriction of $h$ to the fibre $L_x$ is well defined, there is a section $\sigma_x\in H^0\big(X,\OO_X(K_X)\otimes L\big)$ such that
\begin{equation}\label{eq:twoconditions}
|\sigma_x(x)|_{h,\omega}=1\quad\text{and}\quad \|\sigma_x\|_{h,\omega}\leq C.
\end{equation}
\end{thm}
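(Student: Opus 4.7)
The plan is to build $\sigma_x$ by cutting off a local holomorphic section of $\OO_X(K_X) \otimes L$ near $x$ and correcting by a solution of a $\dbar$-equation whose twisted weight forces the correction to vanish at $x$. The critical feature is uniformity: every constant must come from $\mathcal U$ alone. Accordingly, I first extract from $\mathcal U$ a Lebesgue-type radius $r_0 > 0$ so that every ball $B(x, 2r_0)$ lies in some $U_i \in \mathcal U$ (in the coordinates of that chart), along with template smooth cutoffs $\chi, \eta \colon [0,\infty) \to [0,1]$ with $\chi \equiv 1$ on $[0, r_0^2]$ and $\Supp\chi \subseteq [0, 2r_0^2]$, and $\eta \equiv 1$ on $[0, r_0^2/16]$ and $\Supp\eta \subseteq [0, r_0^2/4]$. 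The gradients of these templates, the coefficients of $\omega$, and the norms $|dz|_\omega$ of the coordinate frames of $K_X$ are then uniformly bounded on all such balls by constants depending only on $\mathcal U$.

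Given $x \in X$, pick $U = U_i \supseteq B(x, 2r_0)$ with trivialising frames $e_L$ of $L$ and $dz = dz_1 \wedge \cdots \wedge dz_n$ of $K_X$. Write $h = e^{-2\varphi}$ on $U$ for a psh potential $\varphi$; the hypothesis that $h|_{L_x}$ is well defined means $\varphi(x) \in \R$. The pointwise section $\tau_0 := e^{\varphi(x)} dz \otimes e_L$ evaluated at $x$ satisfies $|\tau_0(x)|_{h,\omega} = |dz(x)|_\omega$. The $L^2$-extension theorem of Ohsawa--Takegoshi, applied to the pseudoconvex ball $B(x, 2r_0)$ with the line bundle $(L,h)$ (whose curvature satisfies $\Theta_h(L) \geq \delta\omega \geq 0$), produces a holomorphic section
$$ \tau \in H^0\big(B(x, 2r_0), \OO_X(K_X) \otimes L\big) \quad\text{with}\quad \tau(x) = \tau_0(x) $$
and $\int_{B(x, 2r_0)} |\tau|^2_{h,\omega}\, dV_\omega \leq C_1 |\tau_0(x)|^2_{h,\omega}$, with $C_1$ depending only on $n$, $r_0$ and the restriction of $\omega$ to such balls, hence only on $\mathcal U$.

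Next set $\chi_x(z) := \chi(|z-x|^2)$ so that $f := \chi_x \tau$ extends by zero to a global smooth $L$-valued $(n,0)$-form on $X$, and $\dbar f = \dbar\chi_x \wedge \tau$ is supported in the annulus $A := \{r_0^2 \leq |z-x|^2 \leq 2r_0^2\}$. To force the $L^2$-solution $u$ to vanish at $x$, introduce the weight $\psi_x(z) := n\,\eta(|z-x|^2)\,\log(|z-x|^2/r_0^2)$ and the twisted singular metric $H := h \cdot e^{-2\psi_x}$. A direct computation, splitting the region where $\eta \equiv 1$ near $x$ (where $dd^c\psi_x = n\,dd^c\log|z-x|^2 \geq 0$) from the smooth transition region (where $\eta$ is a fixed template and $\log(|z-x|^2/r_0^2)$ is bounded), gives $dd^c\psi_x \geq {-}C_2\omega$ for a constant $C_2$ depending only on $\mathcal U$. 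Setting $\delta := C_2 + 1$ yields $\Theta_H(L) \geq \omega$, and Theorem \ref{thm:Hoermander} provides $u \in L^2_{n,0}(X, L)_{H,\omega}$ with $\dbar u = \dbar f$ and $\|u\|^2_{H,\omega} \leq \tfrac{1}{2\pi}\|\dbar f\|^2_{H,\omega}$. Since $\eta = 0$, hence $\psi_x = 0$, on $A$, the right-hand side is bounded by $C_3 \int_{B(x,2r_0)} |\tau|^2_{h,\omega}\, dV_\omega \leq C_3 C_1 |\tau_0(x)|^2_{h,\omega}$.

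Because $\dbar u = 0$ outside $A$, the form $u$ is holomorphic and in particular smooth on $B(x, r_0)$, while on a small coordinate ball around $x$ the weight $e^{-2\psi_x}$ has a pole of order $4n$ and is thus not locally integrable; Remark \ref{rem:vanishat0} applied to $u$ on that ball forces $u(x) = 0$. The section $\sigma_x := \chi_x\tau - u$ is therefore global, holomorphic, and satisfies $\sigma_x(x) = \tau_0(x) \neq 0$. Moreover, $\psi_x \leq 0$ everywhere (since $\eta \geq 0$ and $|z-x| \leq r_0/2$ on $\Supp\eta$), so $\|u\|_{h,\omega} \leq \|u\|_{H,\omega}$ and
$$ \|\sigma_x\|_{h,\omega}^2 \leq 2\|\chi_x \tau\|_{h,\omega}^2 + 2\|u\|_{h,\omega}^2 \leq C_4 |\tau_0(x)|^2_{h,\omega}. $$
Dividing $\sigma_x$ by $|\sigma_x(x)|_{h,\omega} = |\tau_0(x)|_{h,\omega} = |dz(x)|_\omega$, which is uniformly bounded above and below on $X$ by $\mathcal U$, yields the desired section satisfying \eqref{eq:twoconditions}. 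The principal obstacle is checking that every constant in the argument--$C_1$ from Ohsawa--Takegoshi, $C_2$ from the curvature of $\psi_x$, $C_3$ from the cutoff derivative, and the normalisation factor $|dz(x)|_\omega$--is genuinely uniform in $x$ and independent of $L$ and $h$.
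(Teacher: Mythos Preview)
Your proof is correct and follows essentially the same approach as the paper: local Ohsawa--Takegoshi extension, a cutoff $\dbar$-equation solved via H\"ormander with a logarithmic twist forcing the correction to vanish at $x$, and uniform bookkeeping of constants. The only differences are cosmetic---a Lebesgue radius in place of nested finite subcovers $V_i\Subset W_i\Subset U_i$, direct extension from the point rather than iterating Ohsawa--Takegoshi across $n$ hyperplanes, and your normalisation $\psi_x\leq 0$, which slightly streamlines the passage from $\|u\|_{H,\omega}$ to $\|u\|_{h,\omega}$ compared with the paper's auxiliary constant $M_3$.
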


\begin{proof}
Let $n:=\dim X$.

\medskip

\emph{Step 1.}
In this step we prepare an open covering of $X$ and a sequence of new metrics we will need in the next steps.

We fix a finite covering $\{V_1,\dots,V_r\}$ of $X$ by coordinate balls such that for each $1\leq i\leq r$ we have $V_i\Subset W_i\Subset U_i$ for some coordinate balls $W_i$ and $U_i$, where the covering $\{U_1,\dots,U_r\}$ is subordinate to $\mathcal U$; this is possible by the compactness of $X$. For each $1\leq i\leq r$ fix a function $\chi_i\in C_c^\infty(X)$ such that $0\leq\chi_i\leq1$, $\Supp(\chi_i)\subseteq U_i$ and $\chi_i\equiv1$ on $W_i$. 

Given a point $x\in \overline{V_i}$ for some $1\leq i\leq r$, consider the function 
$$\varphi_{i,x}(z):=n\chi_i(z)\log|z-x|\quad\text{for }z\in U_i.$$
Then the extension by zero of $\varphi_{i,x}$ defines a function $\varphi_{i,x}\colon X\to\R\cup\{{-}\infty\}$ such that $\Supp(\varphi_{i,x})\subseteq U_i$, and such that the following properties hold:
\begin{enumerate}[\normalfont (i)]
\item $dd^c\varphi_{i,x}\geq0$ on $W_i$, since $\chi_i\equiv1$ on $W_i$ and the function $z\mapsto \log|z-x|$ is plurisubharmonic,
\item $dd^c\varphi_{i,x}$ is a smooth form on $X\setminus W_i$ whose coefficients are bounded independently of $i$ and $x$, since $(x,z)\mapsto\chi_i(z)\log|z-x|$ is a smooth function on the compact set $\overline{V_i}\times\big(\Supp(\chi_i)\setminus W_i\big)$.
\end{enumerate}
Then (i), (ii) and Lemma \ref{lem:diagonalisation} imply that there exists a constant $\eta>0$ such that
\begin{equation}\label{eq:9}
dd^c\varphi_{i,x}\geq{-}\eta\omega\quad\text{for all $i$ and $x$}.
\end{equation}
Note that $\eta$ depends only on the choice of sets $V_i,W_i$ and $U_i$ and on the choice of functions $\chi_i$.

Set
\begin{equation}\label{eq:9g}
M_1:=\max_{1\leq i\leq r}\max\big\{\dbar\chi_i(z)\mid z\in X\big\}
\end{equation}
and
\begin{equation}\label{eq:9h}
M_2:=\max_{1\leq i\leq r}\sup\big\{e^{{-}2\varphi_{i,x}(z)}\mid (x,z)\in V_i\times(U_i\setminus W_i)\big\};
\end{equation}
note that $M_2$ is well defined for the same reason as in (ii) above and since $\Supp(\varphi_{i,x})\subseteq U_i$. Further, set
\begin{equation}\label{eq:9k}
M_3:=\min_{1\leq i\leq r}\inf\big\{e^{{-}2\varphi_{i,x}(z)}\mid (x,z)\in V_i\times X\big\}.
\end{equation}
Then $M_3>0$ since $\varphi_{i,x}(z)\leq n\log\big(\diam(U_i)\big)$ for $(x,z)\in V_i\times U_i$ and $\varphi_{i,x}(z)=0$ otherwise. Note that $M_1,M_2$ and $M_3$ depend only on the choice of sets $V_i,W_i$ and $U_i$ and on the choice of functions $\chi_i$.

\medskip

\emph{Step 2.}
Set $\delta:=2\eta$. In the remainder of the proof we show that there exists a constant $C>0$ such that for any singular metric $h$ on $L$ with $\Theta_h(L)\geq \delta\omega$ and for each $x\in X$ such that the restriction of $h$ to the fibre $L_x$ is well defined, there is a section $\sigma_x\in H^0(X,K_X+L)$ such that \eqref{eq:twoconditions} holds.

Fix a singular metric $h$ on $L$ with
\begin{equation}\label{eq:0d}
\Theta_h(L)\geq \delta\omega,
\end{equation}
and fix a point $x\in V_i$ for some $1\leq i\leq r$ for which the restriction of $h$ to the fibre $L_x$ is well defined. By the Ohsawa--Takegoshi extension theorem \cite[Theorem]{OT87} on $U_i$, applied successively $n$ times to a collection of $n$ hyperplanes intersecting at $x$, there exist a constant $C_1$ depending only on the cover $\{U_1,\dots,U_r\}$ (and, in particular, not on $x$ and $h$) and a section $s_x\in H^0\big(U_i,\OO_X(K_X)\otimes L\big)$ such that
\begin{equation}\label{eq:9i}
|s_x(x)|_{h,\omega}=1\quad\text{and}\quad \int_{U_i}|s_x|_{h,\omega}^2dV_\omega\leq C_1.
\end{equation}
The problem is that $s_x$ is a holomorphic section only on $U_i$ and not on the whole $X$. The strategy is to use Theorem \ref{thm:Hoermander} to rectify this.
 
Note that $\chi_i s_x$ is a smooth $L$-valued $(n,0)$-form on $X$, and set 
$$f_x := \dbar(\chi_i s_x).$$
Then $f_x$ is a smooth $L$-valued $(n,1)$-form on $X$ such that $\dbar f_x = 0$, and note that
\begin{equation}\label{eq:9f}
f_x = \dbar\chi_i\cdot s_x
\end{equation}
since $s_x$ is holomorphic, hence
\begin{equation}\label{eq:9e}
\Supp(f_x)\subseteq U_i\setminus W_i
\end{equation}
since $\chi_i\equiv1$ on $W_i$ and $\Supp(\chi_i)\subseteq U_i$.

Now, consider the singular metric $h_{i,x}:=h e^{{-}2\varphi_{i,x}}$ on $L$. Then by \eqref{eq:9} and \eqref{eq:0d} we have
$$\Theta_{h_{i,x}}(L)\geq(\delta-\eta)\omega=\eta\omega.$$
Therefore, as $\eta>0$, by Theorem \ref{thm:Hoermander} there exists an $L$-valued $(n,0)$-form $u_x$ on $X$ such that $\dbar u_x = f_x$ in the sense of currents and
\begin{equation}\label{eq:estimateeta}
\|u_x\|^2_{h_{i,x},\omega}\leq \frac{1}{2\pi\eta}\|f_x\|^2_{h_{i,x},\omega}.
\end{equation}
Moreover, observe that
\begin{align*}
\|f_x\|^2_{h_{i,x},\omega}&=\int_X |f_x|^2_{h,\omega}e^{{-}2\varphi_{i,x}}dV_\omega & \\
&=\int_{U_i\setminus W_i} |f_x|^2_{h,\omega}e^{{-}2\varphi_{i,x}}dV_\omega & \text{by \eqref{eq:9e}} \\
&\leq M_2\int_{U_i\setminus W_i} |f_x|^2_{h,\omega}dV_\omega & \text{by \eqref{eq:9h}} \\
&=M_2\int_{U_i\setminus W_i} \big|\dbar\chi_i\cdot s_x\big|^2_{h,\omega}dV_\omega & \text{by \eqref{eq:9f}} \\
&\leq C_1M_1^2M_2, & \quad\text{by \eqref{eq:9g} and \eqref{eq:9i}}
\end{align*}
and therefore, setting $C_2:=\frac{1}{2\pi\eta} C_1M_1^2M_2$, by \eqref{eq:estimateeta} we have
\begin{equation}\label{eq:9j}
\|u_x\|^2_{h_{i,x},\omega}\leq C_2.
\end{equation}

\medskip

\emph{Step 3.}
Set 
$$\sigma_x := \chi_i s_x - u_x.$$
Then $\dbar \sigma_x = \dbar(\chi_i s_x)-\dbar u_x = 0$ in the sense of currents, which implies that $\sigma_x$ is a holomorphic $L$-valued $(n,0)$-form by the regularity of the $\dbar$-operator. In particular, $u_x$ is a smooth $L$-valued $(n,0)$-form, as it is the difference of smooth forms $\chi_i s_x$ and $\sigma_x$. Since
$$|u_x|^2_{h_{i,x},\omega}=|u_x|^2_{h,\omega}|z-x|^{{-}2n\chi_i(z)},$$
and since the function $|z-x|^{{-}2n}$ is not locally integrable at $z=x$, the inequality \eqref{eq:9j} and Remark \ref{rem:vanishat0} imply that $u_x(x) = 0$. Thus, $|\sigma_x(x)|_{h,\omega} = 1$ by \eqref{eq:9i}.

On the other hand, by \eqref{eq:9k} and \eqref{eq:9j} we have
\begin{equation}\label{eq:9l}
M_3\|u_x\|^2_{h,\omega}\leq\int_X |u_x|^2_{h,\omega}e^{{-}2\varphi_{i,x}}dV_\omega=\|u_x\|^2_{h_{i,x},\omega}\leq C_2.
\end{equation}
Finally, the triangle inequality together with \eqref{eq:9i} and \eqref{eq:9l} gives
\begin{align*}
\|\sigma_x\|_{h,\omega}&\leq \|\chi_i s_x\|_{h,\omega}+\|u_x\|_{h,\omega}\\
&\leq \Big(\int_{U_i}|s_x|_{h,\omega}^2dV_\omega\Big)^{1/2}+\|u_x\|_{h,\omega}\leq \sqrt{C_1}+\sqrt{C_2/M_3}.
\end{align*}
Therefore, $C:=\sqrt{C_1}+\sqrt{C_2/M_3}$ is the desired constant.
\end{proof}

\section{Supercanonical currents on big line bundles}\label{sec:supercanbig}

In this section we analyse supercanonical currents on big $\Q$-divisors on a projective manifold in detail. The main result of the section, Theorem \ref{thm:supercanbig}, says that the corresponding supercanonical potentials depend only on the global sections of multiples of $L$ in a very precise sense. 

The main technical result of the section is Theorem \ref{thm:supercanbigrepresentatives}. The proof uses the main ideas of the proof of \cite[Proposition 5.19]{BD12}, which we occasionally follow closely and which in turn uses essentially Demailly's estimates from his regularisation results \cite{Dem92}. However, several arguments in \cite{BD12} are difficult to follow. Instead, in this paper we use crucially the uniform bounds result (Theorem \ref{thm:boundedsections}) as well as the approximation result (Corollary \ref{cor:approximatedense}) to make arguments streamlined and more precise.

\begin{thm}\label{thm:supercanbigrepresentatives}
Let $X$ be a projective manifold with a K\"ahler form $\omega$. Let $L$ be a big $\Q$-divisor on $X$ and set $N:=\{m\in\N\mid mL\text{ is Cartier}\}$. Fix a smooth metric $h$ on $L$ and denote $\alpha:=\Theta_h(L)\in\{L\}$. Let $\varphi\in\PSH(X,\alpha)$ such that
\begin{equation}\label{eq:81}
\int_X e^{2\varphi}dV_\omega\leq 1.
\end{equation}
Then there exists a sequence of sections $\sigma_m\in H^0(X,mL)$ for $m\in N$ such that
$$\int_X|\sigma_m|^{2/m}_{h^m}dV_\omega\leq1\quad\text{and}\quad \varphi=\Big(\limsup_{m\to\infty}\log|\sigma_m|^{1/m}_{h^m}\Big)^*.$$
\end{thm}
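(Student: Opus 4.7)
The plan is to construct the sections via an Ohsawa--Takegoshi type extension (using Theorem \ref{thm:boundedsections}) with a weight tailored to $\varphi$, then to assemble them by a diagonal argument over a countable dense subset obtained from Corollary \ref{cor:approximatedense}. First, by Corollary \ref{cor:logarithmic} applied to a sufficiently large multiple of $L$ if needed, fix a quasi-psh function $\psi \leq 0$ on $X$ with $\alpha + dd^c\psi \geq \varepsilon_0 \omega$ for $\varepsilon_0 > 0$ chosen large enough to dominate the fixed Chern forms appearing below. Fix an open cover $\mathcal U$ producing constants $\delta, C$ from Theorem \ref{thm:boundedsections}, and choose $m_0 \in N$ so that for all $m \geq m_0$ in $N$ the singular metric $h^m \cdot h_{K_X}^{-1} \cdot e^{-2(m-1)\varphi - 2\psi}$ on $mL - K_X$ has curvature $\geq \delta\omega$ uniformly in $m$. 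Then apply Corollary \ref{cor:approximatedense} with $A := \{\psi = -\infty\}$ (pluripolar, hence of Lebesgue measure zero) to obtain a countable dense set $D = \{x_q\}_{q \geq 1} \subseteq X \setminus A$ such that every $y \in X$ is approached by a sequence in $D$ along which $\varphi$-values converge to $\varphi(y)$. For each $m \geq m_0$ and $x \in D$, Theorem \ref{thm:boundedsections} produces a section $\sigma_{m,x} \in H^0(X, mL)$ with
\begin{equation*}
|\sigma_{m,x}(x)|^2_{h^m} \asymp e^{2(m-1)\varphi(x) + 2\psi(x)}, \qquad \int_X |\sigma_{m,x}|^2_{h^m}\, e^{-2(m-1)\varphi - 2\psi}\, dV_\omega \leq C^2,
\end{equation*}
with implicit constants uniform in $m$ and $x$.

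Next, by H\"older's inequality with exponents $m$ and $m/(m-1)$,
\begin{equation*}
\int_X |\sigma_{m,x}|^{2/m}_{h^m}\, dV_\omega \leq C^{2/m}\Big(\int_X e^{2\varphi + 2\psi/(m-1)}\, dV_\omega\Big)^{(m-1)/m} =: E_m,
\end{equation*}
and since $\psi \leq 0$ and $\int_X e^{2\varphi}\, dV_\omega \leq 1$, one verifies that $E_m \to \int_X e^{2\varphi}\, dV_\omega \leq 1$. Rescale $\widetilde{\sigma}_{m,x} := \max(E_m,1)^{-m/2}\sigma_{m,x}$ so that $\int|\widetilde{\sigma}_{m,x}|^{2/m}_{h^m}\, dV_\omega \leq 1$; the rescaling factor tends to $1$. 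Partition $\{m \in N : m \geq m_0\}$ into disjoint infinite subsets $\{N_q\}_{q \geq 1}$ and set $\sigma_m := \widetilde{\sigma}_{m, x_q}$ for $m \in N_q$. A direct computation gives $\lim_{N_q \ni m \to \infty}\tfrac{1}{m}\log|\sigma_m(x_q)|_{h^m} = \varphi(x_q)$ for each $q$. Setting $\Phi := \big(\limsup_m \tfrac{1}{m}\log|\sigma_m|_{h^m}\big)^*$, which is $\alpha$-psh by Theorem \ref{thm:compactnessquasipsh}(c), the density of $D$ together with upper semicontinuity of $\Phi$ and the inequality $\Phi(x_q) \geq \varphi(x_q)$ at every point of $D$ yields $\Phi \geq \varphi$ on $X$.

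The main obstacle is the reverse inequality $\Phi \leq \varphi$. For this, fix $y \in X$ with $\psi(y) > -\infty$ and apply Lemma \ref{lem:holomorphicpsh}(b) to $\sigma_m$ on a small ball $B(y,r)$, giving
\begin{equation*}
|\sigma_m(y)|^2_{h^m} \leq e^{2mCr^2}\fint_{B(y,r)}|\sigma_m|^2_{h^m}\, dV_\omega.
\end{equation*}
Bound the average using the uniform weighted $L^2$ estimate combined with the pointwise upper bound $\sup_{B(y,r)}\big((m-1)\varphi + \psi\big) \to (m-1)\varphi(y) + \psi(y)$ as $r \to 0$, valid by upper semicontinuity of $\varphi$ and $\psi$. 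Taking $\tfrac{1}{m}\log$, letting $m \to \infty$ and then $r \to 0$ with a careful balance of the $\tfrac{n}{m}\log r$ term against $Cr^2$, one obtains $\limsup_m \tfrac{1}{m}\log|\sigma_m(y)|_{h^m} \leq \varphi(y)$ at every such $y$. Since $\{\psi = -\infty\}$ is pluripolar and both $\Phi$ and $\varphi$ are quasi-psh, Corollary \ref{cor:strongusc} upgrades this to $\Phi \leq \varphi$ on all of $X$. Combined with the previous paragraph, $\Phi = \varphi$, completing the proof. The delicate points requiring care are the uniform curvature bound at the very start and the rate-balancing in this last submean-value argument.
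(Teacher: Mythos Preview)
Your approach is the paper's: produce sections by Theorem \ref{thm:boundedsections} with a weight built from $\varphi$ and an auxiliary strictly positive potential $\psi$, diagonalise over a countable dense set from Corollary \ref{cor:approximatedense}, and use Lemma \ref{lem:holomorphicpsh} for the upper bound.

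There is one real gap. You cannot arrange $\varepsilon_0$ large in $\alpha+dd^c\psi\geq\varepsilon_0\omega$: the best such $\varepsilon_0$ is an invariant of $\{L\}$, and passing to $kL$ gives $\psi_k\in\PSH(X,k\alpha)$ with $k\alpha+dd^c\psi_k\geq\varepsilon_k\omega$, so $\psi_k/k\in\PSH(X,\alpha)$ only satisfies $\alpha+dd^c(\psi_k/k)\geq(\varepsilon_k/k)\omega$, which is no improvement. Consequently your metric on $mL-K_X$ has curvature $\geq(\varepsilon_0-C_\omega)\omega$ with no reason for this to dominate $\delta\omega$, so Theorem \ref{thm:boundedsections} is not applicable as stated. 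The paper's remedy is to use the weight $e^{-2(m-p)\varphi-2p\psi}$ for an integer $p$ chosen with $p\varepsilon_0-C_\omega\geq\delta$; once you insert $p$ copies of $\psi$ throughout, your argument runs. The same $p$ (taken also with $C\leq 2^{(p-1)/2}$), together with a three-term H\"older inequality using the extra factor $(\int e^{2p\psi/(p-1)})^{(p-1)/m}\leq 2^{(1-p)/m}$, lets the paper get $\int|\sigma_{m,x}|^{2/m}_{h^m}\,dV_\omega\leq1$ on the nose, so no rescaling is needed; your rescaling is harmless, however.

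A minor remark on the upper bound: your sequential limits ($m\to\infty$ then $r\to0$) work cleanly and need no ``careful balance''. With $r$ fixed, the weighted $L^2$ estimate and Lemma \ref{lem:holomorphicpsh}(b) give $\tfrac{1}{m}\log|\sigma_m(y)|_{h^m}\leq C_2r^2+O(1/m)+\tfrac{m-1}{m}\sup_{B(y,r)}\varphi+\tfrac{1}{m}\sup_{B(y,r)}\psi$; sending $m\to\infty$ yields $u(y)\leq C_2r^2+\sup_{B(y,r)}\varphi$ for every $y$, and then $r\to0$ and Lemma \ref{lem:limit}(c) give $u(y)\leq\varphi(y)$. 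Thus the restriction to $\{\psi>-\infty\}$ and the appeal to Corollary \ref{cor:strongusc} are unnecessary. The paper instead takes $r=1/m$ and applies Lemma \ref{lem:limit}(a)(b) directly.
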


\begin{proof}
\emph{Step 1.} 
In this step we prepare several constants that will be used throughout the proof.

Fix a finite covering $\mathcal U$ of $X$ by coordinate balls on which $K_X$ and all $mL$ trivialise, for $m\in N$. Fix constants $\delta$ and $C$, depending only on $\mathcal U$, as in Theorem \ref{thm:boundedsections}. As $L$ is big, there exist a positive constant $\varepsilon$ and $\psi\in\PSH(X,\alpha)$ such that
\begin{equation}\label{eq:8b}
\alpha+dd^c\psi\geq \varepsilon\omega.
\end{equation}
Since $\psi$ is bounded from above, by subtracting a constant from $\psi$ we may assume that
\begin{equation}\label{eq:7b11}
\psi\leq0\quad\text{and}\quad\int_X e^{2\psi}dV_\omega\leq \frac12.
\end{equation}
Let $h_\omega$ be the smooth metric on $K_X$ induced by the hermitian metric on $T_X$ whose fundamental form is $\omega$. Note that by Lemma \ref{lem:diagonalisation} there exists a constant $C_\omega>0$ such that 
\begin{equation}\label{eq:step1}
{-}\Theta_{h_\omega}(K_X)+C_\omega \omega\geq0.
\end{equation}

We fix for the remainder of the proof an integer $p>1$ such that:
\begin{enumerate}[\normalfont (i)]
\item $p\varepsilon-C_\omega\geq\delta$, 
\item $C\leq 2^{(p-1)/2}$.
\end{enumerate}
Since $\frac{p}{p-1}\psi\leq\psi$ by the first inequality in \eqref{eq:7b11}, by the second inequality in \eqref{eq:7b11} we have
\begin{equation}\label{eq:psi}
\int_X e^{2\frac{p}{p-1}\psi}dV_\omega\leq \frac12.
\end{equation}

\medskip

\emph{Step 2.}
In this step we prepare several Cartier divisors on $X$ and singular metrics on them.

Set
$$N_{\geq p}:=\{n\in N\mid n\geq p\}.$$
For each $m\in N_{\geq p}$ set 
$$L_m:=mL-K_X.$$
Then
$$h_m:=e^{{-}2(m-p)\varphi-2p\psi}h^m h_\omega^{-1}$$
is a singular metric on $L_m$ with curvature current
\begin{align*}
\Theta_{h_m}(L_m)&=m\alpha+(m-p)dd^c\varphi+p\, dd^c\psi-\Theta_{h_\omega}(K_X)\\
& \geq (p\varepsilon-C_\omega)\omega
\end{align*}
by \eqref{eq:8b}, by \eqref{eq:step1} and since $\alpha+dd^c\varphi\geq0$. This together with the property (i) from Step 1 yields
\begin{equation}\label{eq:9p}
\Theta_{h_m}(L_m)\geq\delta\omega.
\end{equation}
For each $m\in N_{\geq p}$ define the singular metric 
$$g_m:=h_m h_\omega=e^{{-}2(m-p)\varphi-2p\psi}h^m$$
on $K_X+L_m=mL$.

\medskip

\emph{Step 3.}
By Theorem \ref{thm:boundedsections}, by the choices of the constants $\delta$ and $C$ in Step 1 and by \eqref{eq:9p}, for each $m\in N_{\geq p}$ and each $x\in X\setminus\{\varphi+\psi={-}\infty\}$ there is a section 
$$\sigma_{m,x}\in H^0(X,K_X+L_m)$$
such that
\begin{equation}\label{eq:9d}
|\sigma_{m,x}(x)|_{g_m}=1\quad\text{and}\quad \|\sigma_{m,x}\|_{g_m}\leq C.
\end{equation}
For $m\in N_{\geq p}$, H\"older's inequality for conjugate exponents $\frac{1}{m}+\frac{m-p}{m}+\frac{p-1}{m}=1$ gives
\begin{align*}
\int_X |\sigma_{m,x}|^{2/m}_{h^m} dV_\omega&=\int_X\big(|\sigma_{m,x}|^2_{h^m}e^{-2(m-p)\varphi-2p\psi}\big)^{\frac{1}{m}}e^{2\frac{m-p}{m}\varphi}e^{2\frac{p}{m}\psi}dV_\omega\\
& \leq \|\sigma_{m,x}\|_{g_m}^{2/m}\bigg(\int_X e^{2\varphi}dV_\omega\bigg)^{\frac{m-p}{m}}\bigg(\int_X e^{2\frac{p}{p-1}\psi}dV_\omega\bigg)^{\frac{p-1}{m}} \\
&\leq C^{\frac{2}{m}}2^{\frac{1-p}{m}},
\end{align*}
where the last inequality follows from \eqref{eq:81}, \eqref{eq:psi} and \eqref{eq:9d}. This together with the property (ii) from Step 1 gives
\begin{equation}\label{eq:9a}
\int_X|\sigma_{m,x}|^{2/m}_{h^m} dV_\omega\leq 1 \quad\text{for all }m\in N_{\geq p}.
\end{equation}
Furthermore, for $x\in X\setminus\{\varphi+\psi={-}\infty\}$, from \eqref{eq:9d} we have
$$ 1=|\sigma_{m,x}(x)|_{g_m} = |\sigma_{m,x}(x)|_{h^m}e^{-(m-p)\varphi(x)-p\psi(x)}, $$
and thus
\begin{equation}\label{eq:9b}
\log |\sigma_{m,x}(x)|^{1/m}_{h^m}= \Big(1-\frac{p}{m}\Big)\varphi(x)+\frac{p}{m}\psi(x).
\end{equation}

\medskip

\emph{Step 4.}
Set 
$$\mathcal P:=\{z\in X\mid(\varphi+\psi)(z)={-}\infty\}.$$
Then the set $\mathcal P$ is of Lebesgue measure zero in $X$ and the set $X\setminus\mathcal P$ is dense in $X$. By Corollary \ref{cor:approximatedense} there exists a countable set 
$$\mathcal D:=\{x_q\mid q\in\N\}\subseteq X\setminus\mathcal P$$
which is dense in $X$, such that for each $z\in X$ there exists a sequence $\{z_s\}$ in $\mathcal D$ with
$$\lim\limits_{s\to\infty}z_s=z\quad\text{and}\quad \lim\limits_{s\to\infty}\varphi(z_s)=\varphi(z).$$

Fix a sequence $\{q_j\}_{j\in\N_{>0}}$ of positive integers, in which each positive integer occurs infinitely many times. For each integer $m\in N_{\geq p}$ we have $x_{q_m}\in X\setminus\{\varphi+\psi={-}\infty\}$, hence we may, by Step 3, define sections
$$\sigma_m\in H^0(X,mL)=H^0(X,K_X+L_m)$$
by
\begin{equation}\label{eq:sigmas1}
\sigma_m:=\sigma_{m,x_{q_m}},
\end{equation}
and note that $\sigma_{m,x_{q_m}}$ satisfy inequalities \eqref{eq:9a}. Set
$$u:=\limsup_{m\to\infty} \log|\sigma_m|^{1/m}_{h^m}.$$
We will show that 
$$\varphi=u^*.$$

\medskip

\emph{Step 5.}
Fix a point $x\in X$. In this step we show that
\begin{equation}\label{eq:11b}
\varphi(x)\geq u^*(x).
\end{equation}
By Lemma \ref{lem:holomorphicpsh} (applied to the $\Q$-divisor $L$ and the metric $h$) there exist constants $C_2>0$ and $r_0>0$ such that for every coordinate ball $B(x,r)$ with $r\leq r_0$ and for each integer $m\in N_{\geq p}$ we have
\begin{equation}\label{eq:11a}
|\sigma_m(x)|^2_{h^m}\leq e^{2m C_2r^2}\fint_{B(x,r)}|\sigma_m|^2_{h^m}dV_\omega.
\end{equation}
Now, since the functions $\varphi$ and $\psi$ are bounded from above on $B(x,r)$, \eqref{eq:9d} gives
\begin{align*}
\int_{B(x,r)}&|\sigma_m|^2_{h^m}dV_\omega=\int_{B(x,r)}|\sigma_m|^2_{g_m}e^{2(m-p)\varphi+2p\psi}dV_\omega \\
&\leq \|\sigma_m\|^2_{g_m}\sup_{B(x,r)}e^{2(m-p)\varphi+2p\psi}\leq C^2\sup_{B(x,r)}e^{2(m-p)\varphi+2p\psi},
\end{align*}
which together with \eqref{eq:11a} implies
$$|\sigma_m(x)|^2_{h^m}\leq \frac{e^{2m C_2r^2}n!C^2}{r^{2n}\pi^n}\sup_{B(x,r)}e^{2(m-p)\varphi+2p\psi}. $$
Plugging in $r:=\frac1m$ for $m\geq\max\{p,\frac{1}{r_0}\}$, taking logarithms of both sides and dividing by $2m$, we obtain
\begin{align*}
\log |\sigma_m(x)|^{1/m}_{h^m}&\leq \frac{C_2}{m^2}+\frac{n\log m}{m}+\frac{1}{2m}\log (n!C^2/\pi^n)\\
&+\sup_{B\left(x,\frac1m\right)}\Big(1-\frac{p}{m}\Big)\varphi+\sup_{B\left(x,\frac1m\right)}\frac{p}{m}\psi.
\end{align*}
Taking $\limsup$ as $m\to\infty$, by Lemma \ref{lem:limit}(a)(b) we obtain
$$u(x)=\limsup_{m\to\infty} \log|\sigma_m(x)|^{1/m}_{h^m}\leq \varphi(x),$$
which gives \eqref{eq:11b} since $\varphi$ is upper semicontinuous.

\medskip

\emph{Step 6.}
Fix a point $x\in X$. In this step we finally show that
$$\varphi(x)\leq u^*(x).$$

To this end, recalling the construction of the set $\mathcal D$ from Step 4, we may find a strictly increasing sequence $\{q_j'\}_{j\in\N_{>0}}$ of positive integers such that $x_{q_j'}\in\mathcal D$ for all $j$ and we have
\begin{equation}\label{eq:11c}
\lim_{j\to\infty}x_{q_j'}=x\quad\text{and}\quad \lim_{j\to\infty}\varphi(x_{q_j'})=\varphi(x).
\end{equation}
By the construction in Step 4, for each fixed $j$ there is a strictly increasing sequence $\{m_\ell\}_{\ell\in\N_{>0}}$ in the set $N_{\geq p}$ such that $q_j'=q_{m_\ell}$ for all $\ell$. Then $\sigma_{m_\ell}=\sigma_{m_\ell,x_{q_{m_\ell}}}$ by \eqref{eq:sigmas1}. Hence, by \eqref{eq:9b} and since $\psi(x_{q_j'})\neq{-}\infty$ by the construction of $\mathcal D$ we have
\begin{align*}
u(x_{q_j'})&\geq \limsup_{\ell\to\infty}\log |\sigma_{m_\ell}(x_{q_j'})|^{1/m_\ell}_{h^{m_\ell}}\\
&=\limsup_{\ell\to\infty}\log |\sigma_{m_\ell,x_{q_{m_\ell}}}(x_{q_{m_\ell}})|^{1/m_\ell}_{h^{m_\ell}}= \varphi(x_{q_j'}).
\end{align*}
Then this last inequality and \eqref{eq:11c} give
\begin{align*}
u^*(x)\geq\limsup_{j\to\infty}u(x_{q_j'})\geq \limsup_{j\to\infty}\varphi(x_{q_j'})=\varphi(x),
\end{align*}
which finishes the proof.
\end{proof}

\begin{rem}\label{rem:depends}
In the proof of Theorem \ref{thm:supercanbigrepresentatives} the auxiliary quasi-psh function $\psi$ had to be introduced for two reasons: (a) to create a singular metric on each $L_m$ whose curvature current is sufficiently positive, and (b) to be able to prove inequality \eqref{eq:9a}. The positive integer $p$ in the proof of Theorem \ref{thm:supercanbigrepresentatives} does not depend on $\varphi$ nor on any integer $m$ in the proof, but it does depend on the choice of $\psi$.
\end{rem}

The following main result of this section is inspired by \cite[Remark 5.23]{BD12}.

\begin{thm}\label{thm:supercanbig}
Let $X$ be a projective manifold with a K\"ahler form $\omega$. Let $L$ be a big $\Q$-divisor on $X$ and set $N:=\{m\in\N\mid mL\text{ is Cartier}\}$. Fix a smooth metric $h$ on $L$ and denote $\alpha:=\Theta_h(L)\in\{L\}$. For each $m\in N$ set
$$\textstyle V_{h,m}:=\big\{\sigma\in H^0(X,mL)\mid \int_X|\sigma|^{2/m}_{h^m}dV_\omega\leq1\big\}$$
and
$$ \varphi_{h,m}:=\sup_{\sigma\in V_{h,m}}\log|\sigma|^{1/m}_{h^m}.$$
Then:
\begin{enumerate}[\normalfont (i)]
\item \label{enu:1} for each $m\in N$ and each $\sigma\in H^0(X,mL)$ there exists a positive real number $\lambda$ such that $\lambda\sigma\in V_{h,m}$,
\item \label{enu:2} there exists a constant $C$ such that $\log|\sigma|^{1/m}_{h^m}\leq C$ for each $m\in N$ and each $\sigma\in V_{h,m}$,
\item \label{enu:3} $V_{h,m}$ is compact in $H^0(X,mL)$ for each $m\in N$,
\item \label{enu:4} $\varphi_{h,m}=\max\limits_{\sigma\in V_{h,m}}\log|\sigma|^{1/m}_{h^m}$ for each $m\in N$,
\item \label{enu:5} $\varphi_{h,m}\in\PSH(X,\alpha)$ for each $m\in N$,
\item \label{enu:7} $\varphi_{h,km}\geq\varphi_{h,m}$ for each $m\in N$ and each positive integer $k$,
\item \label{enu:8} the supercanonical potential of $L$ associated to $\alpha$ is 
$$\varphi_{\alpha,\can}=\Big(\sup_{m\in N}\varphi_{h,m}\Big)^*,$$
\item \label{enu:9} the sequence $\{\varphi_{h,m}\}_{m\in N}$ converges to $\varphi_{\alpha,\can}$ in $L^1_\loc(X)$,
\item \label{enu:13} $\mathcal I(\varphi_{h,m})=\mathcal I(\varphi_{\alpha,\can})$ for all $m\in N$ sufficiently large,
\item \label{enu:6} $\varphi_{h,m}$ is continuous on $X\setminus\sB(L)$ for all $m\in N$ sufficiently divisible,
\item \label{enu:10} the sequence $\{\varphi_{h,m}\}_{m\in N}$ converges uniformly on compact subsets of $X\setminus\sB_+(L)$ to $\varphi_{\alpha,\can}$,
\item \label{enu:11} $\varphi_{\alpha,\can}$ is bounded on $X\setminus\sB(L)$, it is continuous on $X\setminus\sB_+(L)$, and 
$$\varphi_{\alpha,\can}=\sup\limits_{m\in N}\varphi_{h,m}\quad\text{on }X\setminus\sB_+(L).$$
\end{enumerate} 
\end{thm}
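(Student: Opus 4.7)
The plan is to establish parts (i)--(xi) in roughly the order stated, using Theorem \ref{thm:supercanbigrepresentatives} as the crucial bridge between $\varphi_{\alpha,\can}$ and the finite-level approximants $\varphi_{h,m}$.

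For \eqref{enu:1}, since $h$ is smooth and $\sigma$ is holomorphic on compact $X$, the integral defining $V_{h,m}$ is finite, and scaling $\sigma$ by a sufficiently small positive $\lambda$ places it in $V_{h,m}$. For \eqref{enu:2}, observe that whenever $\sigma\in V_{h,m}$ the function $\frac{1}{m}\log|\sigma|_{h^m}$ lies in $\mathcal S_\alpha$, so Lemma \ref{lem:supercanonical}(b) supplies the required uniform upper bound. For \eqref{enu:3}, $V_{h,m}$ is closed in $H^0(X,mL)$ by continuity of the defining integral, and bounded in any fixed norm on this finite-dimensional space because \eqref{enu:2} combined with Lemma \ref{lem:holomorphicpsh} yields an $L^\infty$-bound on $|\sigma|_{h^m}$. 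For \eqref{enu:4} and \eqref{enu:5}: compactness of $V_{h,m}$ together with Lemma \ref{lem:compactuniform} shows that for any $x_j\to x$ in $X$ we can extract $\sigma_j\to\sigma_\infty\in V_{h,m}$ realising the values $\varphi_{h,m}(x_j)$, so $\limsup\varphi_{h,m}(x_j)\leq\frac{1}{m}\log|\sigma_\infty(x)|_{h^m}\leq\varphi_{h,m}(x)$; this gives both that the supremum is attained and that $\varphi_{h,m}$ is already upper semicontinuous, hence lies in $\PSH(X,\alpha)$ by Theorem \ref{thm:compactnessquasipsh}(a).

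For \eqref{enu:7}, given $\sigma\in V_{h,m}$ the section $\sigma^k$ satisfies $\int_X|\sigma^k|^{2/(km)}_{h^{km}}dV_\omega=\int_X|\sigma|^{2/m}_{h^m}dV_\omega\leq 1$, so $\sigma^k\in V_{h,km}$ and $\frac{1}{km}\log|\sigma^k|_{h^{km}}=\frac{1}{m}\log|\sigma|_{h^m}$ gives the pointwise inequality. For \eqref{enu:8}, the inclusion $\frac{1}{m}\log|\sigma|_{h^m}\in\mathcal S_\alpha$ yields $\sup_m\varphi_{h,m}\leq\varphi_{\alpha,\can}$; conversely, for any $\varphi\in\mathcal S_\alpha$ Theorem \ref{thm:supercanbigrepresentatives} produces sections $\sigma_m\in V_{h,m}$ with $\varphi=(\limsup_m\log|\sigma_m|^{1/m}_{h^m})^*\leq(\sup_m\varphi_{h,m})^*$, so the two agree after upper semicontinuous regularisation. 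For \eqref{enu:9}, the cofinal subfamily $\{\varphi_{h,m!}\}$ is increasing by \eqref{enu:7} and hence converges in $L^1_\loc$ to $(\lim\varphi_{h,m!})^*=\varphi_{\alpha,\can}$ by Theorem \ref{thm:compactnessquasipsh}(d) and \eqref{enu:8}; a subsequence extraction via Theorem \ref{thm:compactnessquasipsh}(b), together with the divisibility bound $\varphi_{h,m}\leq\varphi_{h,mk}$ from \eqref{enu:7}, rules out any other limit point and promotes convergence to the entire directed set $N$. For \eqref{enu:13}, the monotonicity $\varphi_{h,m}\leq\varphi_{\alpha,\can}$ gives one inclusion automatically, while Theorem \ref{thm:GuanZhou}, applied on coordinate balls after adding local potentials of $\alpha$, to the increasing cofinal sequence $\{\varphi_{h,k!}\}$ gives $\mathcal I(\varphi_{h,k!})=\mathcal I(\varphi_{\alpha,\can})$ for $k\gg0$; one then transfers the equality to all large $m$ by sandwiching $\varphi_{h,m}\leq\varphi_{h,mk!}$.

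For \eqref{enu:6}, when $m$ is sufficiently divisible so that $\Bs|mL|=\sB(L)$ (Remark \ref{rem:stableQ}), the map $(x,\sigma)\mapsto|\sigma(x)|_{h^m}$ on $X\times V_{h,m}$ is continuous with $V_{h,m}$ compact, so its maximum in $x$ is continuous (Lemma \ref{lem:compactuniform}); the maximum is strictly positive precisely off $\Bs|mL|=\sB(L)$, giving continuity of $\varphi_{h,m}$ there. For \eqref{enu:10} and \eqref{enu:11}, the strategy is to rerun the construction of Theorem \ref{thm:supercanbigrepresentatives} with the auxiliary $\alpha$-psh function $\psi$ chosen, by Corollary \ref{cor:logarithmic}, to have logarithmic singularities precisely along $\sB_+(L)$ with $\alpha+dd^c\psi\geq\varepsilon\omega$; then $\psi$ is bounded on any compact set $K\subseteq X\setminus\sB_+(L)$. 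Applying Theorem \ref{thm:boundedsections} to the singular metrics $h_m=e^{-2(m-p)\varphi_{\alpha,\can}-2p\psi}h^m h_\omega^{-1}$ at points $x\in K$ yields sections $\sigma_{m,x}\in V_{h,m}$ satisfying $\log|\sigma_{m,x}(x)|_{h^m}^{1/m}\geq\bigl(1-\tfrac{p}{m}\bigr)\varphi_{\alpha,\can}(x)+\tfrac{p}{m}\psi(x)$, where the error $\tfrac{p}{m}\psi$ is uniformly $O(1/m)$ on $K$. Taking the supremum in $\sigma$ and combining with $\varphi_{h,m}\leq\varphi_{\alpha,\can}$ gives uniform convergence on $K$, establishing \eqref{enu:10}. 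Finally, \eqref{enu:11} follows at once: the supremum $\sup_m\varphi_{h,m}$ is already upper semicontinuous on $X\setminus\sB_+(L)$ by uniform convergence, so it equals $\varphi_{\alpha,\can}$ without regularisation there; continuity of $\varphi_{\alpha,\can}$ on $X\setminus\sB_+(L)$ is then inherited from continuity of each $\varphi_{h,m}$ on $X\setminus\sB(L)$ given by \eqref{enu:6}, and boundedness on $X\setminus\sB(L)$ follows from \eqref{enu:6} applied to a single sufficiently divisible $m$ combined with the universal upper bound from \eqref{enu:2}.

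The main obstacle will be the uniform convergence in \eqref{enu:10}: the approximation in Theorem \ref{thm:supercanbigrepresentatives} is inherently a limsup statement obtained from a countable dense subset of points, and one must re-examine its proof in order to isolate the $\psi$-dependent error term and upgrade pointwise convergence to uniform convergence on compacts of $X\setminus\sB_+(L)$. Once this uniformity is in hand, the remaining boundedness and continuity statements in \eqref{enu:11} become formal consequences.
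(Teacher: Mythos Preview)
Your approach is essentially the paper's, and the overall architecture is right. Two small but genuine gaps:

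In \eqref{enu:10} you plug $\varphi_{\alpha,\can}$ directly into the weight $e^{-2(m-p)\varphi_{\alpha,\can}-2p\psi}$ and assert $\sigma_{m,x}\in V_{h,m}$. But the H\"older step from Theorem \ref{thm:supercanbigrepresentatives} that forces $\int_X|\sigma_{m,x}|^{2/m}_{h^m}\,dV_\omega\leq 1$ uses $\int_X e^{2\varphi}\,dV_\omega\leq 1$, and there is no reason $\varphi_{\alpha,\can}$ itself lies in $\mathcal S_\alpha$. The paper runs the construction for an \emph{arbitrary} $\varphi\in\mathcal S_\alpha$, obtains $\varphi_{h,m}(x)\geq(1-\tfrac{p}{m})\varphi(x)+\tfrac{p}{m}\psi(x)$ for all $x$, and only then takes the pointwise supremum over $\varphi\in\mathcal S_\alpha$ to get the same inequality with $\varphi_{\alpha,\can}$ on the right.

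In \eqref{enu:9} your promotion of $L^1_\loc$-convergence from the factorial subsequence to all of $N$ via ``the divisibility bound $\varphi_{h,m}\leq\varphi_{h,mk}$'' does not close: for a general large $m\in N$ there is no $k$ with $m_0 k!\mid m$, so you cannot bound $\varphi_{h,m}$ from \emph{below} by a member of the factorial sequence, and a hypothetical $L^1$-limit of a bad subsequence is only pinned down from above. The same issue recurs in your transfer step for \eqref{enu:13}. The clean fix is already in your hands: the global pointwise inequality $\varphi_{h,m}\geq(1-\tfrac{p}{m})\varphi_{\alpha,\can}+\tfrac{p}{m}\psi$ (once corrected as above), combined with $\varphi_{h,m}\leq\varphi_{\alpha,\can}$ and $\psi\in L^1(X)$, gives $\|\varphi_{\alpha,\can}-\varphi_{h,m}\|_{L^1(X)}=O(1/m)$ for \emph{every} $m\in N$, whence \eqref{enu:9} for the full sequence and then \eqref{enu:13} directly from Theorem \ref{thm:GuanZhou}. (The paper's own one-line deduction of \eqref{enu:9} from \eqref{enu:7} and Theorem \ref{thm:compactnessquasipsh}(d) is terse on precisely this point.)
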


\begin{proof}
Set
$$\textstyle \mathcal S_\alpha:=\big\{\varphi\in\PSH(X,\alpha)\mid \int_X e^{2\varphi}dV_\omega\leq 1\big\},$$
and recall from Lemma \ref{lem:supercanonical} that the supercanonical potential associated to $\alpha$ was defined as
$$\varphi_{\alpha,\can}(x):=\sup_{\varphi\in\mathcal S_\alpha}\varphi(x) \quad\text{for }x\in X.$$

\emph{Step 1.}
Let $m\in N$ and $\sigma\in H^0(X,mL)$. Then $\log|\sigma|^{1/m}_{h^m}\in\PSH(X,\alpha)$ by Example \ref{exa:quasi-psh}(b). Moreover, since $|\sigma|_{h^m}$ is bounded from above on $X$, there exists a constant $C_\sigma>0$ such that 
$$\int_X|\sigma|^{2/m}_{h^m}dV_\omega\leq C_\sigma,$$
hence $C_\sigma^{{-}m/2}\sigma\in V_{h,m}$, which gives \eqref{enu:1}. If $\sigma\in V_{h,m}$, then 
$$\int_X e^{2\log|\sigma|^{1/m}_{h^m}}dV_\omega=\int_X|\sigma|^{2/m}_{h^m}dV_\omega\leq1,$$
thus
\begin{equation}\label{eq:include}
\big\{\log|\sigma|^{1/m}_{h^m}\mid\sigma\in V_{h,m}\big\}\subseteq\mathcal S_\alpha.
\end{equation}
Then \eqref{enu:2} follows from Lemma \ref{lem:supercanonical}(b).

\medskip

\emph{Step 2.}
Define the norm $\|\cdot\|_{\max}$ on $H^0(X,mL)$ by
$$\|s\|_{\max}:=\sup_X|s|_{h^m}\quad\text{ for }s\in H^0(X,mL).$$
Consider a sequence $\{\sigma_\ell\}$ in $V_{h,m}$. Since $V_{h,m}$ is bounded in $H^0(X,mL)$ with respect to the norm $\|\cdot\|_{\max}$ by \eqref{enu:2}, by passing to a subsequence we may assume that the sequence $\{\sigma_\ell\}$ converges to a section $\sigma\in H^0(X,mL)$. To prove \eqref{enu:3} it suffices to show that $\sigma\in V_{h,m}$.

Note that $\log|\sigma|_{h^m}$, as well as all the functions $\log|\sigma_\ell|_{h^m}$, belong to $\PSH(X,m\alpha)$ by Example \ref{exa:quasi-psh}(b). By \eqref{enu:2} and by Theorem \ref{thm:compactnessquasipsh}(b), after passing to a subsequence we may assume that the sequence $\{\log|\sigma_\ell|_{h^m}\}$ converges in $L^1_\loc(X)$ and almost everywhere to a function $\varphi\in\PSH(X,m\alpha)$. Thus $\varphi=\log|\sigma|_{h^m}$ almost everywhere, hence everywhere by Corollary \ref{cor:strongusc}. But then $\log|\sigma|_{h^m}\in\mathcal S_{m\alpha}$ by Fatou's lemma, hence $\sigma\in V_{h,m}$, as desired.

\medskip

\emph{Step 3.}
Now we show \eqref{enu:4}. Fix $x\in X$. Then there exists a sequence of sections $\sigma_j\in V_{h,m}$ such that $\lim\limits_{j\to\infty}\log|\sigma_j(x)|_{h^m}^{1/m}=\varphi_{h,m}(x)$. By \eqref{enu:3} and by passing to a subsequence we may assume that there exists $\sigma\in V_{h,m}$ such that $\lim\limits_{j\to\infty}\sigma_j=\sigma$. Thus $\varphi_{h,m}(x)=\log|\sigma(x)|^{1/m}_{h^m}$.

\medskip

\emph{Step 4.}
Next we prove \eqref{enu:5}. By \eqref{enu:2} and Theorem \ref{thm:compactnessquasipsh}(a) we have that $(\varphi_{h,m})^*\in\PSH(X,\alpha)$. Fix $x\in X$. As $(\varphi_{h,m})^*(x)=\limsup\limits_{z\to x}\varphi_{h,m}(z)$, by \eqref{enu:4} there exists a sequence of sections $\sigma_j\in V_{h,m}$ and a sequence of points $x_j\in X$ such that 
$$\lim\limits_{j\to\infty}x_j=x\quad\text{and}\quad \lim\limits_{j\to\infty}\log|\sigma_j(x_j)|_{h^m}^{1/m}=(\varphi_{h,m})^*(x).$$
By \eqref{enu:3} and by passing to a subsequence we may assume that there exists $\sigma\in V_{h,m}$ such that $\lim\limits_{j\to\infty}\sigma_j=\sigma$. Then Lemma \ref{lem:compactuniform}(b) gives
$$(\varphi_{h,m})^*(x)=\lim_{j\to\infty}\log|\sigma_j(x_j)|_{h^m}^{1/m}=\log|\sigma(x)|_{h^m}^{1/m}\leq\varphi_{h,m}(x),$$
which shows that $\varphi_{h,m}=(\varphi_{h,m})^*$. Thus, $\varphi_{h,m}$ is $\alpha$-psh.

\medskip

\emph{Step 5.}
For \eqref{enu:7}, fix $x\in X$, $m\in N$ and a positive integer $k$. By \eqref{enu:4} there exists $\sigma\in V_{h,m}$ such that $\varphi_{h,m}(x)=\log|\sigma(x)|^{1/m}_{h^m}$. Since $|\sigma^k|^{1/mk}_{h^{mk}}=|\sigma|^{1/m}_{h^m}$, we have $\sigma^k\in V_{h,mk}$, and hence
$$\varphi_{h,km}(x)\geq\log|\sigma^k(x)|^{1/mk}_{h^{mk}}=\log|\sigma(x)|^{1/m}_{h^m}=\varphi_{h,m}(x),$$
which was to be shown.

\medskip

\emph{Step 6.}
By \eqref{enu:2} and by Theorem \ref{thm:compactnessquasipsh}(a) we have
$$\varphi_{h,\alg}:=\Big(\sup_{m\in N}\varphi_{h,m}\Big)^*\in\PSH(X,\alpha).$$
It is immediate that $\varphi_{h,\alg}\leq\varphi_{\alpha,\can}$ by \eqref{eq:include}. For the reverse inequality, let $\varphi\in\mathcal S_\alpha$. Then by Theorem \ref{thm:supercanbigrepresentatives} there exists a sequence of sections $\tau_m\in V_{h,m}$ for $m\in N$ such that
$$\varphi=\Big(\limsup_{m\to\infty}\log|\tau_m|^{1/m}_{h^m}\Big)^*.$$
Since $\log|\tau_m|^{1/m}_{h^m}\leq\varphi_{h,m}$ for each $m\in N$ by the definition of $\varphi_{h,m}$, we obtain
$$\varphi\leq\Big(\limsup_{m\to\infty}\varphi_{h,m}\Big)^*\leq\Big(\sup_{m\in N}\varphi_{h,m}\Big)^*=\varphi_{h,\alg},$$
hence
$$\varphi_{\alpha,\can}=\sup_{\varphi\in\mathcal S_\alpha}\varphi\leq\varphi_{h,\alg}.$$
This shows \eqref{enu:8}. Part \eqref{enu:9} follows from \eqref{enu:7} and from Theorem \ref{thm:compactnessquasipsh}(d).

\medskip

\emph{Step 7.}
Part \eqref{enu:13} follows from \eqref{enu:8} and \eqref{enu:9} and from Theorem \ref{thm:GuanZhou}.

\medskip

\emph{Step 8.}
In this step we prove \eqref{enu:6}. We first show that $\varphi_{h,m}\neq{-}\infty$ away from $\sB(L)$ for all $m\in N$ sufficiently divisible. Indeed, by Remark \ref{rem:stableQ} we have $\Bs|mL|=\sB(L)$ for all $m$ sufficiently divisible. Therefore, for each point $x\in X\setminus\sB(L)$ there exists $\sigma\in H^0(X,mL)$ such that $\sigma(x)\neq0$. By \eqref{enu:1} there exists a positive real number $\lambda$ such that $\lambda\sigma\in V_{h,m}$, hence $\varphi_{h,m}(x)\geq \log|\lambda\sigma(x)|^{1/m}_{h^m}>{-}\infty$.

Fix one such sufficiently divisible $m$. Fix $x_0\in X\setminus\sB(L)$ and a sequence of points $\{x_j\}$ in $X\setminus\sB(L)$ such that $\lim\limits_{j\to\infty}x_j=x_0$. By sequential continuity it suffices to show that
\begin{equation}\label{eq:equalityinthelimit}
\lim\limits_{j\to\infty}\varphi_{h,m}(x_j)=\varphi_{h,m}(x_0).
\end{equation}
To that end, set
$$a:=\limsup\limits_{j\to\infty}\varphi_{h,m}(x_j),$$
and note that $a\neq+\infty$ since $\varphi_{h,m}$ are uniformly bounded from above by \eqref{enu:2}. By \eqref{enu:4} there exists $\sigma_0\in V_{\alpha,m}$ such that $\varphi_{\alpha,m}(x_0)=\log|\sigma_0(x_0)|_{h^m}^{1/m}$.

Note first that $\varphi_{h,m}(x_j)\geq\log|\sigma_0(x_j)|_{h^m}^{1/m}$ by the definition of $\varphi_{\alpha,m}$, hence
\begin{equation}\label{eq:liminf}
a\geq\liminf_{j\to\infty}\log|\sigma_0(x_j)|_{h^m}^{1/m}=\log|\sigma_0(x_0)|_{h^m}^{1/m}=\varphi_{h,m}(x_0).
\end{equation}
We will now show that $a\leq \varphi_{h,m}(x_0)$, which together with \eqref{eq:liminf} will then prove \eqref{eq:equalityinthelimit}. By passing to a subsequence of $\{x_j\}$ we may assume that $a=\lim\limits_{j\to\infty}\varphi_{h,m}(x_j)$. By \eqref{enu:4}, for each $j\in\N$ there exists $\sigma_j\in V_{h,m}$ such that $\varphi_{h,m}(x_j)=\log|\sigma_j(x_j)|_{h^m}^{1/m}$. By \eqref{enu:3} and by passing to a subsequence we may assume that there exists $\widetilde\sigma\in V_{h,m}$ such that $\lim\limits_{j\to\infty}\sigma_j=\widetilde\sigma$. Then Lemma \ref{lem:compactuniform}(b) gives
$$a=\lim_{j\to\infty}\log|\sigma_j(x_j)|_{h^m}^{1/m}=\log|\widetilde\sigma(x_0)|_{h^m}^{1/m}\leq\varphi_{h,m}(x_0).$$
This concludes the proof of \eqref{enu:6}.

\medskip

\emph{Step 9.}
In this step we prove \eqref{enu:10}. To this end, we use the notation from the proof of Theorem \ref{thm:supercanbigrepresentatives}. We first note that, by Corollary \ref{cor:logarithmic} we may and do choose the function $\psi$ as in the proof of Theorem \ref{thm:supercanbigrepresentatives} such that it has logarithmic poles which all lie in $\sB_+(L)$. 

Let $\varphi\in\mathcal S_\alpha$ and $m\in N$. Then by \eqref{eq:9a}, \eqref{eq:9b} and Remark \ref{rem:depends} there exist a positive integer $p$ (independent of $\varphi$ and $m$) and sections $\sigma_{m,x}\in V_{h,m}$ for each $x\in X\setminus\{\varphi+\psi={-}\infty\}$ such that
$$ \log |\sigma_{m,x}(x)|^{1/m}_{h^m}= \Big(1-\frac{p}{m}\Big)\varphi(x)+\frac{p}{m}\psi(x), $$
hence by the definition of the function $\varphi_{h,m}$, for each $x\in X\setminus\{\varphi+\psi={-}\infty\}$ we obtain
$$ \varphi_{h,m}(x)\geq \Big(1-\frac{p}{m}\Big)\varphi(x)+\frac{p}{m}\psi(x). $$
This inequality holds trivially when $\varphi(x)={-}\infty$ or $\psi(x)={-}\infty$, hence it holds for all $x\in X$. By the definition of $\varphi_{\alpha,\can}$ this then implies
\begin{equation}\label{eq:mandcan}
\varphi_{h,m}(x)\geq\Big(1-\frac{p}{m}\Big)\varphi_{\alpha,\can}(x)+\frac{p}{m}\psi(x)\quad\text{for all }x\in X.
\end{equation}
Note that $\varphi_{h,m}\neq{-}\infty$ on $X\setminus\sB(L)$ by \eqref{enu:6}, hence $\varphi_{\alpha,\can}\neq{-}\infty$ on $X\setminus\sB(L)$ by \eqref{enu:8}. As $\psi\neq{-}\infty$ on $X\setminus\sB_+(L)$ by construction, we have
$$ 0\leq \varphi_{\alpha,\can}(x)-\varphi_{h,m}(x)\leq\frac{p}{m}\big(\varphi_{\alpha,\can}(x)-\psi(x)\big) \quad\text{for }x\in X\setminus\sB_+(L) $$
by \eqref{enu:8} and \eqref{eq:mandcan}. Since $\psi$ is smooth on $X\setminus\sB_+(L)$ and $\varphi_{\alpha,\can}$ is bounded from above, we conclude that for each compact set $K\subseteq X\setminus\sB_+(L)$ there exists a constant $C_K>0$ such that
$$ 0\leq \varphi_{\alpha,\can}(x)-\varphi_{h,m}(x)\leq\frac{C_K}{m}, $$
so \eqref{enu:10} follows.

\medskip

\emph{Step 10.}
Finally, the first part of \eqref{enu:11} was already noticed in Step 9, the second part of \eqref{enu:11} follows from \eqref{enu:6} and \eqref{enu:10} by the uniform convergence theorem, and then the third part of \eqref{enu:11} follows from the second part of \eqref{enu:11} and from \eqref{enu:8}.
\end{proof}

\section{Proof of Theorem \ref{thm:main2}}\label{sec:proofofMain2}

In this section we prove the second main result of this paper, Theorem \ref{thm:main2}. The first technical result is Theorem \ref{thm:supercanpsefrepresentatives}, which is a pseudoeffective analogue of Theorem \ref{thm:supercanbigrepresentatives}. A related, but somewhat more involved statement for klt pairs was mentioned without proof in \cite[Generalization 5.24]{BD12}. The proof is similar, but somewhat more involved than that of Theorem \ref{thm:supercanbigrepresentatives}, and we provide all the details. In particular, the very precise conclusion of Theorem \ref{thm:supercanpsefrepresentatives} will be needed in the proof of Theorem \ref{thm:supercanpsef}.

\begin{thm}\label{thm:supercanpsefrepresentatives}
Let $X$ be a projective manifold. Let $L$ be a pseudoeffective $\Q$-divisor on $X$ and set $N:=\{m\in\N\mid mL\text{ is Cartier}\}$. Fix a smooth metric $h$ on $L$ and denote $\alpha:=\Theta_h(L)\in\{L\}$. Let $A$ be an ample Cartier divisor on $X$, fix a K\"ahler form $\omega\in\{A\}$, and let $h_A$ be a smooth metric on $A$ such that $\omega=\Theta_{h_A}(A)$. For each positive integer $\ell$ denote $L_\ell:=L+\frac1\ell A$ and denote by $h_\ell:=hh_A^{1/\ell}$ the smooth metric on $L_\ell$.

Then there exists a positive integer $p$ such that the following holds. Let $\varphi\in\PSH(X,\alpha)$ such that
\begin{equation}\label{eq:81a}
\int_X e^{2\varphi}dV_\omega\leq 1.
\end{equation}
Then for any sequence $\{m_\ell\}_{\ell\in\N_{>0}}$ satisfying $m_\ell\in N\cap\ell\N$, $m_\ell\geq 2p\ell$ and $\lim\limits_{\ell\to\infty}\ell/m_\ell=0$, there exists a sequence of sections $\sigma_\ell\in H^0(X,m_\ell L_\ell)$ such that
$$\int_X|\sigma_\ell|^{2/m_\ell}_{h_\ell^{m_\ell}}dV_\omega\leq1\quad\text{and}\quad\varphi=\Big(\limsup_{\ell\to\infty}\log|\sigma_\ell|^{1/m_\ell}_{h_\ell^{m_\ell}}\Big)^*.$$
\end{thm}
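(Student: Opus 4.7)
The plan is to adapt the proof of Theorem \ref{thm:supercanbigrepresentatives}, with the key modification that the extra positivity needed to apply Theorem \ref{thm:boundedsections} no longer comes from an auxiliary quasi-psh function on $L$ itself (since $L$ need not be big), but rather from two combined sources: the ample perturbation $\frac{1}{\ell}A$, which contributes a term $\frac{m_\ell}{\ell}\omega \geq 2p\omega$ to the curvature thanks to the hypothesis $m_\ell \geq 2p\ell$, together with an auxiliary quasi-psh function $\chi_0$ arising from the fact that the $\Q$-divisor $L+2A$ is big.

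First I would prepare the constants. Fix an open covering $\mathcal U$ of $X$ by coordinate balls on which $K_X$, $A$ and every Cartier multiple $mL$ (for $m\in N$) trivialise; this suffices since $mL_\ell=mL+(m/\ell)A$ for $m\in N\cap\ell\N$. Apply Theorem \ref{thm:boundedsections} to obtain positive constants $\delta$ and $C$, and Lemma \ref{lem:diagonalisation} to fix $C_\omega>0$ with $-\Theta_{h_\omega}(K_X)+C_\omega\omega\geq0$. Since $L+2A$ is big, Corollary \ref{cor:logarithmic} yields a rational number $\beta\in(0,1]$ and a quasi-psh function $\chi_0\leq 0$ on $X$ with $\alpha+2\omega+dd^c\chi_0\geq\beta\omega$. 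Fix a positive integer $p$ satisfying $p\beta\geq\delta+C_\omega$ and $C\leq 2^{(p-1)/2}$, and, by subtracting a further constant from $\chi_0$, arrange that $\int_X e^{2p\chi_0/(p-1)}\,dV_\omega\leq\tfrac12$. The integer $p$ thus depends only on $L,A,h,h_A,\omega,h_\omega,\mathcal U$.

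Now, for each $\ell$ and $m:=m_\ell$, set $L_{\ell,m}:=mL_\ell-K_X$ and equip it with the singular metric $\tilde h_{\ell,m}:=h_\ell^m h_\omega^{-1}e^{-2(m-p)\varphi-2p\chi_0}$. A direct computation gives
$$\Theta_{\tilde h_{\ell,m}}(L_{\ell,m})=(m-p)(\alpha+dd^c\varphi)+p(\alpha+2\omega+dd^c\chi_0)+\Big(\tfrac{m}{\ell}-2p\Big)\omega-\Theta_{h_\omega}(K_X)\geq\delta\omega,$$
where the first three terms are non-negative by construction and the last is controlled by $C_\omega\omega$. Applying Theorem \ref{thm:boundedsections} at each $x\in X\setminus\{\varphi+\chi_0={-}\infty\}$ produces sections $\sigma_{\ell,x}\in H^0(X,mL_\ell)$ with $|\sigma_{\ell,x}(x)|_{\tilde h_{\ell,m},\omega}=1$ and $\|\sigma_{\ell,x}\|_{\tilde h_{\ell,m},\omega}\leq C$. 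Writing $g_{\ell,m}:=h_\omega\tilde h_{\ell,m}=h_\ell^m e^{-2((m-p)\varphi+p\chi_0)}$, the pointwise identity gives $\log|\sigma_{\ell,x}(x)|_{h_\ell^m}^{1/m}=(1-p/m)\varphi(x)+(p/m)\chi_0(x)$, and a H\"older inequality with exponents $\tfrac{1}{m}+\tfrac{m-p}{m}+\tfrac{p-1}{m}=1$ — exactly as in Step 3 of Theorem \ref{thm:supercanbigrepresentatives}, using $\int e^{2\varphi}\leq 1$ and the normalisation of $\chi_0$ — yields $\int_X|\sigma_{\ell,x}|^{2/m}_{h_\ell^m}\,dV_\omega\leq C^{2/m}2^{-(p-1)/m}\leq 1$.

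For the conclusion I would run the density argument of Steps 4--6 of Theorem \ref{thm:supercanbigrepresentatives} essentially verbatim: by Corollary \ref{cor:approximatedense} choose a countable dense subset $\mathcal D=\{x_q\}\subseteq X\setminus\{\varphi+\chi_0={-}\infty\}$ along which $\varphi$ can be approximated, fix a sequence $\{q_\ell\}$ in which every positive integer occurs infinitely often, and set $\sigma_\ell:=\sigma_{\ell,x_{q_\ell}}$. The inequality $(\limsup_\ell\log|\sigma_\ell|^{1/m_\ell}_{h_\ell^{m_\ell}})^*\leq\varphi$ follows from the mean-value estimate of Lemma \ref{lem:holomorphicpsh} applied to the family $\{h_\ell\}$ (whose curvatures $\alpha+\ell^{-1}\omega$ are locally uniformly bounded), by taking $r=1/m_\ell$ and invoking Lemma \ref{lem:limit} to absorb the $(p/m_\ell)\chi_0$-contribution (using that $\chi_0$ is bounded above). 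The reverse inequality is obtained by picking $x_{q_{j'}}\in\mathcal D$ with $x_{q_{j'}}\to x$ and $\varphi(x_{q_{j'}})\to\varphi(x)$, and extracting for each $j'$ a subsequence of $\ell$ along which $q_\ell=q_{j'}$. The principal technical obstacle is the uniform curvature estimate in the third paragraph: ensuring that the integer $p$ can be chosen once and for all, independently of $\ell$ and of the sequence $\{m_\ell\}$, is precisely where the hypothesis $m_\ell\geq 2p\ell$ conspires with the bigness of $L+2A$ (which produces $\chi_0$) to yield the required $\delta$-lower bound, valid uniformly across all admissible $\ell$.
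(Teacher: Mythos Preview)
Your proof is correct and takes a genuinely different route from the paper's. The paper introduces, for each $\ell$, an auxiliary function $\psi_\ell\in\PSH(X,\alpha_\ell)$ with logarithmic poles in $\sB_+(L_\ell)$ coming from the bigness of $L_\ell$, and uses weights $e^{-2(m-p_\ell)\varphi-2p_\ell\psi_\ell}$ with $p_\ell:=p\ell$; the strict positivity needed for Theorem~\ref{thm:boundedsections} is then extracted from the term $(m-p_\ell)(\alpha_\ell+dd^c\varphi)\geq\frac{m-p_\ell}{\ell}\omega\geq p\omega$. You instead fix a \emph{single} auxiliary function $\chi_0$ from the bigness of $L+2A$ and use the fixed weight $e^{-2(m-p)\varphi-2p\chi_0}$, obtaining the positivity from $p(\alpha+2\omega+dd^c\chi_0)\geq p\beta\omega$ together with the non-negative leftover $(\tfrac{m}{\ell}-2p)\omega$.

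Your variant buys two simplifications: the polar set $\{\varphi+\chi_0=-\infty\}$ is a single pluripolar set, so Remark~\ref{rem:pluripolarcountable} (controlling the countable union $\bigcup_\ell\{\psi_\ell=-\infty\}$) is not needed; and the H\"older estimate and the Step~6 limit are marginally cleaner since the exponent $p$ is fixed rather than $p_\ell=p\ell$. The paper's choice of $\psi_\ell$ with poles in $\sB_+(L_\ell)$ does not appear to be exploited in the proof of this theorem nor later, so for Theorem~\ref{thm:supercanpsefrepresentatives} as stated your approach is a strict streamlining. The curvature decomposition you wrote is correct, and the uniform applicability of Lemma~\ref{lem:holomorphicpsh} to the family $\{h_\ell\}_{\ell\geq 1}$ (curvatures $\alpha+\ell^{-1}\omega$) is justified since $\ell^{-1}\in(0,1]$.
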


\begin{proof}
\emph{Step 1.} 
In this step we prepare several constants that will be used throughout the proof.

Fix a finite covering $\mathcal U$ of $X$ by coordinate balls on which $K_X$, $A$ and all $mL$ trivialise, for $m\in N$. Fix constants $\delta$ and $C$, depending only on $\mathcal U$, as in Theorem \ref{thm:boundedsections}. For every positive integer $\ell$ let 
$$\alpha_\ell:=\alpha+\frac1\ell\omega\in\{L_\ell\},$$
and note that
\begin{equation}\label{eq:8b12}
\alpha_\ell+dd^c\varphi\geq\frac{1}{\ell}\omega.
\end{equation}
Since for each $\ell$ the divisor $L_\ell$ is big, by Corollary \ref{cor:logarithmic} there exist functions $\psi_\ell\in\PSH(X,\alpha_\ell)$ with logarithmic singularities such that $\{\psi_\ell={-}\infty\}\subseteq\sB_+(L_\ell)$ and
\begin{equation}\label{eq:8b1}
\alpha_\ell+dd^c\psi_\ell\geq 0.
\end{equation}
Since each $\psi_\ell$ is bounded from above, by subtracting constants from $\psi_\ell$ we may assume that for each $\ell$ we have
\begin{equation}\label{eq:0h1}
\psi_\ell\leq0\quad\text{and}\quad\int_X e^{2\psi_\ell}dV_\omega\leq\frac12.
\end{equation}
Let $h_\omega$ be the smooth metric on $K_X$ induced by the hermitian metric on $T_X$ whose fundamental form is $\omega$. Note that by Lemma \ref{lem:diagonalisation} there exists a constant $C_\omega>0$ such that 
\begin{equation}\label{eq:step1a}
{-}\Theta_{h_\omega}(K_X)+C_\omega \omega\geq0.
\end{equation}

We fix for the remainder of the proof an integer $p>1$ such that:
\begin{enumerate}[\normalfont (i)]
\item $p-C_\omega\geq\delta$,
\item $C\leq 2^{(p-1)/2}$.
\end{enumerate}
Set
$$p_\ell:=p\ell\quad\text{for each positive integer }\ell.$$
Since $\frac{p_\ell}{p_\ell-1}\psi_\ell\leq\psi_\ell$ for each $\ell$ by the first inequality in \eqref{eq:0h1}, by the second inequality in \eqref{eq:0h1} we have
\begin{equation}\label{eq:psi_ell}
\int_X e^{2\frac{p_\ell}{p_\ell-1}\psi_\ell}dV_\omega\leq \frac12\quad\text{for all positive integers }\ell.
\end{equation}

\medskip

\emph{Step 2.}
In this step we prepare several Cartier divisors on $X$ and singular metrics on them.

Set
$$N_{\geq 2p_\ell}:=\{n\in N\cap\ell\N\mid n\geq 2p_\ell\}.$$
For each $m\in N_{\geq 2p_\ell}$ set 
$$L_{m,\ell}:=mL_\ell-K_X.$$
Then
$$h_{m,\ell}:=e^{{-}2(m-p_\ell)\varphi-2p_\ell\psi_\ell}h_\ell^m h_\omega^{-1}$$
is a singular metric on $L_{m,\ell}$ with curvature current
\begin{align*}
\Theta_{h_{m,\ell}}(L_{m,\ell})&=m\alpha_\ell+(m-p_\ell)dd^c\varphi+p_\ell\, dd^c\psi_\ell-\Theta_{h_\omega}(K_X)\\
& \geq\frac{m-p_\ell}{\ell}\omega-\Theta_{h_\omega}(K_X) \geq (p-C_\omega)\omega
\end{align*}
by \eqref{eq:8b12}, \eqref{eq:8b1} and \eqref{eq:step1a}, and since $m\geq 2p_\ell=2p\ell$. This together with the property (i) from Step 1 yields
\begin{equation}\label{eq:9p1}
\Theta_{h_{m,\ell}}(L_{m,\ell})\geq\delta\omega.
\end{equation}
For each $m\in N_{\geq 2p_\ell}$, define the singular metric 
$$g_{m,\ell}:=h_{m,\ell}h_\omega=e^{{-}2(m-p_\ell)\varphi-2p_\ell\psi_\ell}h_\ell^m$$
on $K_X+L_{m,\ell}=mL_\ell$. 

\medskip

\emph{Step 3.}
By Theorem \ref{thm:boundedsections}, by the choices of the constants $\delta$ and $C$ in Step 1 and by \eqref{eq:9p1}, for each $m\in N_{\geq 2p_\ell}$ and each $x\in X\setminus\{\varphi+\psi_\ell={-}\infty\}$ there is a section 
$$\sigma_{m,\ell,x}\in H^0(X,K_X+L_{m,\ell})$$
such that
\begin{equation}\label{eq:9d1}
|\sigma_{m,\ell,x}(x)|_{g_{m,\ell}}=1\quad\text{and}\quad \|\sigma_{m,\ell,x}\|_{g_{m,\ell}}\leq C.
\end{equation}
Similarly as in Step 3 of the proof of Theorem \ref{thm:supercanbigrepresentatives}, for $m\in N_{\geq 2p_\ell}$, by H\"older's inequality for conjugate exponents $\frac{1}{m}+\frac{m-p_\ell}{m}+\frac{p_\ell-1}{m}=1$ and by \eqref{eq:81a}, \eqref{eq:psi_ell} and \eqref{eq:9d1} we obtain
$$ \int_X |\sigma_{m,\ell,x}|^{2/m}_{h_\ell^m} dV_\omega\leq C^{\frac{2}{m}}2^{\frac{1-p_\ell}{m}}\leq C^{\frac{2}{m}}2^{\frac{1-p}{m}}. $$
This together with the property (ii) from Step 1 gives
\begin{equation}\label{eq:9a1}
\int_X|\sigma_{m,\ell,x}|^{2/m}_{h_\ell^m} dV_\omega\leq 1 \quad\text{for all }m\in N_{\geq 2p_\ell}.
\end{equation}
Furthermore, for $x\in X\setminus\{\varphi+\psi_\ell={-}\infty\}$, from \eqref{eq:9d1} we have
$$ 1=|\sigma_{m,\ell,x}(x)|_{g_{m,\ell}} = |\sigma_{m,\ell,x}(x)|_{h_\ell^m}e^{-(m-p_\ell)\varphi(x)-p_\ell\psi_\ell(x)}, $$
and thus
\begin{equation}\label{eq:9b1}
\log |\sigma_{m,\ell,x}(x)|^{1/m}_{h_\ell^m}= \Big(1-\frac{p_\ell}{m}\Big)\varphi(x)+\frac{p_\ell}{m}\psi_\ell(x).
\end{equation}

\medskip

\emph{Step 4.}
Set 
$$\mathcal P:=\{z\in X\mid\varphi(z)={-}\infty\}\cup\bigcup_{\ell\in\N_{>0}}\{z\in X\mid\psi_\ell(z)={-}\infty\},$$
and note that the function $\varphi$, as well as all the functions $\psi_\ell$, are $\alpha_1$-psh. Therefore, $\mathcal P$ is a pluripolar set by Remark \ref{rem:pluripolarcountable}, hence $\mathcal P$ is of Lebesgue measure zero in $X$ and $X\setminus\mathcal P$ is dense in $X$. By Corollary \ref{cor:approximatedense} there exists a countable set 
$$\mathcal D:=\{x_q\mid q\in\N\}\subseteq X\setminus\mathcal P$$
which is dense in $X$, such that for each $z\in X$ there exists a sequence $\{z_s\}$ in $\mathcal D$ with
$$\lim\limits_{s\to\infty}z_s=z\quad\text{and}\quad \lim\limits_{s\to\infty}\varphi(z_s)=\varphi(z).$$

Fix a sequence $\{q_j\}_{j\in\N_{>0}}$ of positive integers, in which each positive integer occurs infinitely many times. Fix an arbitrary sequence $\{m_\ell\}_{\ell\in\N_{>0}}$ such that
\begin{equation}\label{eq:rapidgrowth}
m_\ell\in N_{\geq 2p_\ell}\text{ for each }\ell,\text{ and }\lim_{\ell\to\infty}{\ell/m_\ell}=0.
\end{equation}
Since $\{\varphi+\psi_\ell={-}\infty\}\subseteq\mathcal P$, we have $x_{q_\ell}\in X\setminus\{\varphi+\psi_\ell={-}\infty\}$ for each $\ell$, hence by Step 3 we may define sections 
$$\sigma_{m_\ell,\ell}\in H^0(X,m_\ell L_\ell)=H^0(X,K_X+L_{m_\ell,\ell})$$
by
\begin{equation}\label{eq:definesections}
\sigma_{m_\ell,\ell}:=\sigma_{{m_\ell,\ell},x_{q_\ell}},
\end{equation}
and note that $\sigma_{{m_\ell,\ell},x_{q_\ell}}$ satisfy inequalities \eqref{eq:9a1}. Set
$$u:=\limsup_{\ell\to\infty} \log|\sigma_{m_\ell,\ell}|^{1/m_\ell}_{h_\ell^{m_\ell}}.$$
We will show that 
$$\varphi=u^*.$$

\medskip

\emph{Step 5.}
Fix a point $x\in X$. In this step we show that
\begin{equation}\label{eq:11b1}
\varphi(x)\geq u^*(x).
\end{equation}
By Lemma \ref{lem:holomorphicpsh} (applied to the $\Q$-divisors $L_\ell$, the smooth metrics metric $h_\ell$ and the associated curvature forms $\alpha_\ell$) there exist constants $C_2>0$ and $r_0>0$ such that for every coordinate ball $B(x,r)$ with $r\leq r_0$, for each positive integer $\ell$ and for each integer $m_\ell\in N_{\geq 2p_\ell}$ we have
\begin{align}\label{eq:11a1}
|\sigma_{m_\ell,\ell}(x)|^2_{h_\ell^{m_\ell}}\leq e^{2m_\ell C_2r^2}\fint_{B(x,r)}|\sigma_{m_\ell,\ell}|^2_{h_\ell^{m_\ell}}dV_\omega.
\end{align}
Then similarly as in Step 5 of the proof of Theorem \ref{thm:supercanbigrepresentatives}, from \eqref{eq:9d1} and \eqref{eq:11a1} we obtain
$$|\sigma_{m_\ell,\ell}(x)|^2_{h_\ell^{m_\ell}}\leq \frac{e^{2m_\ell C_2r^2}n!C^2}{r^{2n}\pi^n}\sup_{B(x,r)}e^{2(m_\ell-p_\ell)\varphi+2p_\ell\psi_\ell}. $$
Plugging in $r:=\frac{1}{m_\ell}$ for $m_\ell\geq\max\{p_\ell,\frac{1}{r_0}\}$, taking logarithms of both sides, dividing by $2m_\ell$, and taking $\limsup$ as $\ell\to\infty$, as in Step 5 of the proof of Theorem \ref{thm:supercanbigrepresentatives} we obtain
$$u(x)=\limsup_{\ell\to\infty}\log|\sigma_{m_\ell,\ell}(x)|^{1/m_\ell}_{h_\ell^{m_\ell}}\leq \varphi(x),$$
which gives \eqref{eq:11b1} since $\varphi$ is upper semicontinuous.

\medskip

\emph{Step 6.}
Fix a point $x\in X$. In this step we finally show that
$$\varphi(x)\leq u^*(x).$$

To this end, recalling the construction of the set $\mathcal D$ from Step 4, we may find a strictly increasing sequence $\{q_j'\}_{j\in\N_{>0}}$ of positive integers such that $x_{q_j'}\in\mathcal D$ for all $j$ and we have
\begin{equation}\label{eq:11c1}
\lim_{j\to\infty}x_{q_j'}=x\quad\text{and}\quad \lim_{j\to\infty}\varphi(x_{q_j'})=\varphi(x).
\end{equation}
By the construction in Step 4, for each fixed $j$ there is a strictly increasing sequence $\{\ell_s\}_{s\in\N_{>0}}$ of positive integers such that $q_j'=q_{\ell_s}$ for all $s$ and $\sigma_{m_{\ell_s},\ell_s}=\sigma_{m_{\ell_s},\ell_s,x_{q_{\ell_s}}}$ by \eqref{eq:definesections}. Hence by \eqref{eq:9b1}, since $\lim\limits_{s\to\infty}(p_{\ell_s}/m_{\ell_s})=0$ by \eqref{eq:rapidgrowth}, and since $\psi_{\ell_s}(x_{q_j'})\neq{-}\infty$ for all $s$ by the construction of $\mathcal D$, we have
\begin{align*}
u(x_{q_j'})&\geq \limsup_{s\to\infty}\frac{1}{m_{\ell_s}}\log |\sigma_{m_{\ell_s},\ell_s}(x_{q_j'})|_{h_{\ell_s}^{m_{\ell_s}}}\\
&=\limsup_{s\to\infty}\frac{1}{m_{\ell_s}}\log |\sigma_{m_{\ell_s},\ell_s,x_{q_{\ell_s}}}(x_{q_{\ell_s}})|_{h_{\ell_s}^{m_{\ell_s}}}= \varphi(x_{q_j'}).
\end{align*}
Then this last inequality and \eqref{eq:11c1} give
\begin{align*}
u^*(x)\geq\limsup_{j\to\infty}u(x_{q_j'})\geq \limsup_{j\to\infty}\varphi(x_{q_j'})=\varphi(x),
\end{align*}
which finishes the proof.
\end{proof}

The following is the main result of this section.

\begin{thm}\label{thm:supercanpsef}
Let $X$ be a projective manifold. Let $L$ be a pseudoeffective $\Q$-divisor on $X$, fix a smooth metric $h$ on $L$ and denote $\alpha:=\Theta_h(L)\in\{L\}$. Let $A$ be an ample Cartier divisor on $X$, fix a K\"ahler form $\omega\in\{A\}$, and let $h_A$ be a smooth metric on $A$ such that $\omega=\Theta_{h_A}(A)$. For each positive integer $\ell$ denote $L_\ell:=L+\frac1\ell A$, denote by $h_\ell:=hh_A^{1/\ell}$ the smooth metric on $L_\ell$, let $\alpha_\ell:=\alpha+\frac1\ell\omega\in\{L_\ell\}$ be the corresponding smooth form, and set $N_\ell:=\{m\in\N\mid mL_\ell\text{ is Cartier}\}$. For each $m\in N_\ell$ set
$$\textstyle V_{h_\ell,m}:=\big\{\sigma\in H^0(X,mL_\ell)\mid \int_X|\sigma|^{2/m}_{h_\ell^m}dV_\omega\leq1\big\}$$
and
$$ \varphi_{h_\ell,m}:=\sup_{\sigma\in V_{h_\ell,m}}\log|\sigma|^{1/m}_{h_\ell^m}.$$
Then:
\begin{enumerate}[\normalfont (i)]
\item \label{enu:a} $\varphi_{h_\ell,m}\in\PSH(X,\alpha_\ell)$ for each $\ell$ and $m\in N_\ell$, and
$$\varphi_{h_\ell,m}=\max\limits_{\sigma\in V_{h_\ell,m}}\log|\sigma|^{1/m}_{h_\ell^m},$$ 
\item \label{enu:a1} for each $\ell$ the sequence $\{\varphi_{h_\ell,m}\}_{m\in N_\ell}$ is non-decreasing,
\item \label{enu:c} for each $\ell$ the supercanonical potential $\varphi_{\alpha_\ell,\can}$ of $L_\ell$ associated to $\alpha_\ell$ is
$$\varphi_{\alpha_\ell,\can}=\Big(\sup_{m\in N_\ell}\varphi_{h_\ell,m}\Big)^*,$$
\item \label{enu:b} for all $\ell$ and all $m\in N_\ell$, the functions $\varphi_{h_\ell,m}$ are uniformly bounded from above,
\item \label{enu:d} there exists a positive integer $p$ such that for any sequence $\{m_\ell\}_{\ell\in\N_{>0}}$ with the properties that $m_\ell\in N_\ell$, $m_\ell\geq 2p\ell$ and $\lim\limits_{\ell\to\infty}\ell/m_\ell=0$, the supercanonical potential of $L$ associated to $\alpha$ is
$$\varphi_{\alpha,\can}=\Big(\limsup_{\ell\to\infty}\varphi_{h_\ell,m_\ell}\Big)^*,$$
\item \label{enu:e} for positive integers $\ell'\geq\ell$ we have
$$\varphi_{\alpha_\ell,\can}\geq\varphi_{\alpha_{\ell'},\can}\geq\varphi_{\alpha,\can},$$
\item \label{enu:f0}
for each $\ell$ the function $\varphi_{\alpha_\ell,\can}$ is bounded on $X\setminus\sB(L)$, it is continuous on $X\setminus\sB_+(L_\ell)$, and 
$$\varphi_{\alpha_\ell,\can}=\sup\limits_{m\in N_\ell}\varphi_{h_\ell,m}\quad\text{on }X\setminus\sB_+(L_\ell),$$
\item \label{enu:f} we have
$$\varphi_{\alpha,\can}=\lim_{\ell\to\infty}\varphi_{\alpha_\ell,\can}.$$
\end{enumerate} 
\end{thm}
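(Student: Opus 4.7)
The plan is to reduce the easier parts directly to Theorem~\ref{thm:supercanbig} and Lemma~\ref{lem:supercanfirstproperties}, while the technical heart lies in (v) and (viii), which require Theorem~\ref{thm:supercanpsefrepresentatives} and careful handling of $\PSH(X,\alpha_\ell)$ as $\alpha_\ell\to\alpha$. Since each $L_\ell=L+\tfrac{1}{\ell}A$ is big, applying parts (iv)--(vi), (viii), and (xi) of Theorem~\ref{thm:supercanbig} with $L,h,\alpha$ replaced by $L_\ell,h_\ell,\alpha_\ell$ yields (i), (ii), (iii), and (vii) at once; for (vii) we also use the inclusion $\sB(L_\ell)\subseteq\sB(L)$, which follows by noting that any $D\in|L|_\R$ and $A'\in|A|_\R$ produce $D+\tfrac{1}{\ell}A'\in|L_\ell|_\R$ and then using $\sB(A)=\emptyset$. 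Part (vi) follows from Lemma~\ref{lem:supercanfirstproperties}(c) applied with $\omega\geq 0$ (noting $\alpha_\ell=\alpha+\tfrac{1}{\ell}\omega$), and (iv) follows because every $\sigma\in V_{h_\ell,m}$ produces a function $\log|\sigma|^{1/m}_{h_\ell^m}\in\mathcal S_{\alpha_\ell}$, so $\varphi_{h_\ell,m}\leq\varphi_{\alpha_\ell,\can}$, and Lemma~\ref{lem:supercanfirstproperties}(b) provides the uniform bound.

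For (v), let $p$ be the integer from Theorem~\ref{thm:supercanpsefrepresentatives} and fix $\{m_\ell\}$ satisfying the hypotheses. The inequality $\varphi_{\alpha,\can}\leq(\limsup\varphi_{h_\ell,m_\ell})^*$ is immediate from Theorem~\ref{thm:supercanpsefrepresentatives}: for every $\varphi\in\mathcal S_\alpha$ the theorem produces sections $\sigma_\ell\in V_{h_\ell,m_\ell}$ satisfying $\log|\sigma_\ell|^{1/m_\ell}_{h_\ell^{m_\ell}}\leq\varphi_{h_\ell,m_\ell}$ and $\varphi=(\limsup\log|\sigma_\ell|^{1/m_\ell}_{h_\ell^{m_\ell}})^*$. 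The reverse inequality is the main difficulty. I would argue pointwise at each $x\in X$: from the definition of upper semicontinuous regularization, a diagonal construction yields sequences $y_n\to x$ and $\ell_n\to\infty$ with $\varphi_{h_{\ell_n},m_{\ell_n}}(y_n)\to(\limsup\varphi_{h_\ell,m_\ell})^*(x)$. Theorem~\ref{thm:compactnessquasipsh}(e) then provides a subsequence converging in $L^1_\loc$ and almost everywhere to some $\varphi'\in\PSH(X,\alpha)$, and since $\varphi_{h_\ell,m_\ell}\leq\varphi_{\alpha_\ell,\can}\in\mathcal S_{\alpha_\ell}$ implies $\int_X e^{2\varphi_{h_\ell,m_\ell}}dV_\omega\leq 1$, bounded convergence applied along the subsequence gives $\varphi'\in\mathcal S_\alpha$. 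Lemma~\ref{lem:convergencepsh} then yields $(\limsup\varphi_{h_\ell,m_\ell})^*(x)\leq\varphi'(x)\leq\varphi_{\alpha,\can}(x)$.

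Finally for (viii), part (vi) implies that $\Phi:=\lim_{\ell\to\infty}\varphi_{\alpha_\ell,\can}$ exists pointwise as a decreasing limit and satisfies $\Phi\geq\varphi_{\alpha,\can}$. Applying Theorem~\ref{thm:compactnessquasipsh}(d) to each tail $\{\varphi_{\alpha_\ell,\can}\}_{\ell\geq\ell_0}$, viewed as a sequence in $\PSH(X,\alpha_{\ell_0})$, gives $\Phi\in\PSH(X,\alpha_{\ell_0})$ for every $\ell_0$, hence $\Phi\in\PSH(X,\alpha)$. Dominated convergence applied to $\int_X e^{2\varphi_{\alpha_\ell,\can}}dV_\omega\leq 1$ (dominated by the integrable function $e^{2\varphi_{\alpha_1,\can}}$, which is bounded above by (iv)) shows $\int_X e^{2\Phi}dV_\omega\leq 1$, so $\Phi\in\mathcal S_\alpha$ and $\Phi\leq\varphi_{\alpha,\can}$. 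The main obstacle in the plan is part (v): the naive Fatou-type reasoning on $\int_X e^{2\varphi_{h_\ell,m_\ell}}dV_\omega$ goes in the wrong direction for $\limsup$, forcing the delicate diagonal-plus-compactness argument above in order to pass from the functions $\varphi_{h_\ell,m_\ell}\in\PSH(X,\alpha_\ell)$ to an $\alpha$-psh limit while preserving the upper semicontinuous regularization.
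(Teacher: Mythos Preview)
Your reductions for (i)--(iv), (vi), (vii) and the lower bound in (v) via Theorem~\ref{thm:supercanpsefrepresentatives} match the paper. The gap lies in your repeated use of the claim $\varphi_{\alpha_\ell,\can}\in\mathcal S_{\alpha_\ell}$, i.e.\ $\int_X e^{2\varphi_{\alpha_\ell,\can}}\,dV_\omega\leq 1$. This is not established anywhere in the paper; in fact the discussion after Lemma~\ref{lem:descendingminimal} explicitly flags that supercanonical potentials are \emph{not} known to belong to their defining envelope, and calls this ``one of the main issues in dealing with them.'' A pointwise supremum of functions with $\int e^{2\varphi}\leq 1$ need not satisfy the same bound, so you cannot conclude $\int_X e^{2\varphi_{h_\ell,m_\ell}}\,dV_\omega\leq 1$ from $\varphi_{h_\ell,m_\ell}\leq\varphi_{\alpha_\ell,\can}$.

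For (v), the paper repairs this by passing to the \emph{sections} rather than the potentials: having chosen $y_n\to x$ and $\ell_n\to\infty$ with $\varphi_{h_{\ell_n},m_{\ell_n}}(y_n)\to(\limsup\varphi_{h_\ell,m_\ell})^*(x)$, part (i) gives maximising sections $\sigma_n\in V_{h_{\ell_n},m_{\ell_n}}$ with $\varphi_{h_{\ell_n},m_{\ell_n}}(y_n)=\log|\sigma_n(y_n)|^{1/m_{\ell_n}}$. One then applies Theorem~\ref{thm:compactnessquasipsh}(e) to the sequence $\log|\sigma_n|^{1/m_{\ell_n}}$ (not to $\varphi_{h_{\ell_n},m_{\ell_n}}$), obtaining a limit $\widetilde\varphi\in\PSH(X,\alpha)$; now Fatou applies legitimately because $\int|\sigma_n|^{2/m_{\ell_n}}\leq 1$ holds by the very definition of $V_{h_{\ell_n},m_{\ell_n}}$, yielding $\widetilde\varphi\in\mathcal S_\alpha$. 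Lemma~\ref{lem:convergencepsh} then finishes as you wrote.

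For (viii), your dominated-convergence argument collapses for the same reason, and the paper takes a different route: after noting $\widehat\varphi:=\lim_\ell\varphi_{\alpha_\ell,\can}\in\PSH(X,\alpha)$ and $\widehat\varphi\geq\varphi_{\alpha,\can}$, it proves $\widehat\varphi\leq\varphi_{\alpha,\can}$ only on $X\setminus\sB_-(L)$ (which suffices by Corollary~\ref{cor:strongusc}). For $x\notin\sB_-(L)$ one has $x\notin\sB_+(L_\ell)$ for every $\ell$, so (vii) and (ii) give $\varphi_{\alpha_\ell,\can}(x)\leq\varphi_{h_\ell,m_\ell}(x)+\varepsilon$ for suitable $\ell$; feeding this into (v) yields $\widehat\varphi(x)\leq\varphi_{\alpha,\can}(x)+\varepsilon$. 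Thus (viii) is deduced from (v) and (vii) rather than from an $L^1$ bound on $e^{2\varphi_{\alpha_\ell,\can}}$.
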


\begin{proof}
For a smooth $(1,1)$-form $\theta$ on $X$ whose class $\{\theta\}\in H^{1,1}(X,\R)$ is pseudoeffective, set
$$\textstyle \mathcal S_\theta:=\big\{\varphi\in\PSH(X,\theta)\mid \int_X e^{2\varphi}dV_\omega\leq 1\big\},$$
and recall from Lemma \ref{lem:supercanonical} that the supercanonical potential associated to $\theta$ was defined as
$$\varphi_{\theta,\can}(x):=\sup_{\varphi\in\mathcal S_\theta}\varphi(x) \quad\text{for }x\in X.$$

\emph{Step 1.}
Part \eqref{enu:a} follows from Theorem \ref{thm:supercanbig}\eqref{enu:4}\eqref{enu:5}. Part \eqref{enu:a1} follows from Theorem \ref{thm:supercanbig}\eqref{enu:7}, and \eqref{enu:c} follows from Theorem \ref{thm:supercanbig}\eqref{enu:8}.

For part \eqref{enu:b}, first notice that for each positive integer $\ell$, for each $m\in N_\ell$ and for each $\sigma\in V_{h_\ell,m}$ we have $\log|\sigma|^{1/m}_{h_\ell^m}\in\mathcal S_{\alpha_\ell}$ as in Step 1 of the proof of Theorem \ref{thm:supercanbig}. Then we conclude by Lemma \ref{lem:supercanfirstproperties}(b).

\medskip

\emph{Step 2.}
Next we show \eqref{enu:d}. Let $p$ be a positive integer as in Theorem \ref{thm:supercanpsefrepresentatives}. Fix a sequence $\mathfrak m:=\{m_\ell\}_{\ell\in\N_{>0}}$ satisfying $m_\ell\in N_\ell$, $m_\ell\geq 2p\ell$ and $\lim\limits_{\ell\to\infty}\ell/m_\ell=0$, and set
$$\varphi_{\mathfrak m,\alg}:=\limsup_{\ell\to\infty}\varphi_{h_\ell,m_\ell}.$$
By \eqref{enu:b} we have that all the functions $\varphi_{h_\ell,m_\ell}$ are uniformly bounded from above on $X$, hence $\varphi_{\mathfrak m,\alg}$ is well defined. It suffices to prove that
\begin{equation}\label{eq:equalityalgcan}
(\varphi_{\mathfrak m,\alg})^*=\varphi_{\alpha,\can}.
\end{equation}

We first show that
\begin{equation}\label{eq:inequality1}
(\varphi_{\mathfrak m,\alg})^*\leq\varphi_{\alpha,\can}.
\end{equation}
To that end, fix $x\in X$. We may assume that $(\varphi_{\mathfrak m,\alg})^*(x)\neq{-}\infty$, since otherwise the claim is clear. Then there exists a sequence $\{x_n\}_{n\in\N}$ of points in $X$ such that $x_n\to x$ and
$$(\varphi_{\mathfrak m,\alg})^*(x)=\limsup\limits_{z\to x}\varphi_{\mathfrak m,\alg}(z)=\lim_{n\to\infty}\varphi_{\mathfrak m,\alg}(x_n),$$
hence, by the definition of $\varphi_{\mathfrak m,\alg}$, there exists an increasing sequence $\{\ell_n\}_{n\in\N}$ of positive integers such that
\begin{equation}\label{eq:lim}
(\varphi_{\mathfrak m,\alg})^*(x)=\lim_{n\to\infty}\varphi_{h_{\ell_n},m_{\ell_n}}(x_n).
\end{equation}
By \eqref{enu:a}, for each $n$ there exists a section $\sigma_n\in V_{h_{\ell_n},m_{\ell_n}}$ such that
$$\varphi_{h_{\ell_n},m_{\ell_n}}(x_n)=\frac{1}{m_{\ell_n}}\log|\sigma_n(x_n)|_{h_{\ell_n}^{m_{\ell_n}}},$$
hence \eqref{eq:lim} gives
\begin{equation}\label{eq:lim3}
(\varphi_{\mathfrak m,\alg})^*(x)=\lim_{n\to\infty}\frac{1}{m_{\ell_n}}\log|\sigma_n(x_n)|_{h_{\ell_n}^{m_{\ell_n}}}.
\end{equation}
Note that as in Step 1 we have $\frac{1}{m_{\ell_n}}\log|\sigma_n|_{h_{\ell_n}^{m_{\ell_n}}}\in\PSH(X,\alpha_{\ell_n})$ and all these functions are uniformly bounded. Therefore, by Theorem \ref{thm:compactnessquasipsh}(e) and after passing to a subsequence we may assume that the sequence of functions $\big\{\frac{1}{m_{\ell_n}}\log|\sigma_n|_{h_{\ell_n}^{m_{\ell_n}}}\big\}_{n\in\N}$ converges in $L^1_\loc(X)$ and almost everywhere to a function $\widetilde\varphi\in\PSH(X,\alpha)$, and then $\widetilde\varphi\in\mathcal S_\alpha$ by Fatou's lemma. In particular, we have 
\begin{equation}\label{eq:7d}
\widetilde\varphi(x)\leq\varphi_{\alpha,\can}(x)
\end{equation}
by the definition of $\varphi_{\alpha,\can}$. On the other hand, by Lemma \ref{lem:convergencepsh} and by \eqref{eq:lim3} we have
$$\widetilde\varphi(x)\geq\limsup_{n\to\infty}\frac{1}{m_{\ell_n}}\log|\sigma_n(x_n)|_{h_{\ell_n}^{m_{\ell_n}}}=(\varphi_{\mathfrak m,\alg})^*(x),$$
which together with \eqref{eq:7d} shows \eqref{eq:inequality1}.

For the reverse inequality, let $\varphi\in\mathcal S_\alpha$. Then by Theorem \ref{thm:supercanpsefrepresentatives} there exists a sequence of sections $\tau_\ell\in V_{h_\ell,m_\ell}$ such that
$$\varphi=\Big(\limsup_{\ell\to\infty}\log|\tau_\ell|^{1/m_\ell}_{h_\ell^{m_\ell}}\Big)^*.$$
Since $\log|\tau_\ell|^{1/m_\ell}_{h_\ell^{m_\ell}}\leq\varphi_{h_\ell,m_\ell}$ for each $\ell$ by the definition of $\varphi_{h_\ell,m_\ell}$, we obtain
$$\varphi\leq\Big(\limsup_{\ell\to\infty}\varphi_{h_\ell,m_\ell}\Big)^*=(\varphi_{\mathfrak m,\alg})^*,$$
hence
$$\varphi_{\alpha,\can}=\sup_{\varphi_\in\mathcal S_\alpha}\varphi\leq(\varphi_{\mathfrak m,\alg})^*.$$
This together with \eqref{eq:inequality1} shows \eqref{eq:equalityalgcan}.

\medskip

\emph{Step 3.}
Part \eqref{enu:e} follows from Lemma \ref{lem:supercanfirstproperties}(c), and \eqref{enu:f0} follows from Theorem \ref{thm:supercanbig}\eqref{enu:11}.

\medskip

\emph{Step 4.}
Finally, in this step we show \eqref{enu:f}. Denote
$$\widehat\varphi:=\lim\limits_{\ell\to\infty}\varphi_{\alpha_\ell,\can},$$
which is well defined by \eqref{enu:e} and we have
\begin{equation}\label{eq:inequality2}
\widehat\varphi\geq\varphi_{\alpha,\can}.
\end{equation}

In order to prove \eqref{enu:f} we need to show the reverse inequality. First note that $\widehat\varphi\in\PSH(X,\alpha)$ by \eqref{eq:inequality2}, \eqref{enu:e} and Theorem \ref{thm:compactnessquasipsh}(e), hence by Corollary \ref{cor:strongusc} it suffices to show that
\begin{equation}\label{eq:inequality3}
\widehat\varphi\leq\varphi_{\alpha,\can}\quad\text{on }X\setminus\sB_-(L),
\end{equation}
since $\sB_-(L)$ is a countable union of analytically closed subsets of $X$, hence of Lebesgue measure zero. 

To show \eqref{eq:inequality3}, fix a point $x\in X\setminus\sB_-(L)$, and let $\varepsilon>0$. Fix a sequence $\{m_\ell\}_{\ell\in\N_{>0}}$ satisfying $m_\ell\in N_\ell$, $m_\ell\geq 2p\ell$ and $\lim\limits_{\ell\to\infty}\ell/m_\ell=0$, where $p$ is a positive integer as in \eqref{enu:d}. Since $x\in X\setminus\sB_+(L_\ell)$ by Remark \ref{rem:augmented}, we have by \eqref{enu:a1} and \eqref{enu:f0} that there exists a positive integer $\ell$ such that
\begin{equation}\label{eq:epsilon}
\varphi_{\alpha_\ell,\can}(x)\leq\varphi_{h_\ell,m_\ell}(x)+\varepsilon.
\end{equation}
Therefore, taking limes superior in \eqref{eq:epsilon} as $\ell\to\infty$, and then taking the upper semicontinuous regularisation, by \eqref{enu:d} we obtain
$$\widehat\varphi(x)\leq\varphi_{\alpha,\can}(x)+\varepsilon.$$
We conclude by letting $\varepsilon\to0$.
\end{proof}

Finally, we have:

\begin{proof}[Proof of Theorem \ref{thm:main2}]
Part (a) of the theorem is an immediate consequence of Theorem \ref{thm:supercanpsef}\eqref{enu:a}\eqref{enu:c}, whereas (b) follows from Theorem \ref{thm:supercanpsef}\eqref{enu:f} and Theorem \ref{thm:compactnessquasipsh}(d).

If $K_X+\Delta$ is nef, then the pair $(Y,\Delta_Y)$ has a minimal model by \cite[Lemma 2.14(e)]{LX23}. Then (c) follows from Theorem \ref{thm:localPL}(a)(b), whereas (e) is a special case of Theorem \ref{thm:localPL}(c). Finally, (d) follows from (c) and from Theorem \ref{thm:supercanpsef}\eqref{enu:f0}.
\end{proof}

\newpage

\addtocontents{toc}{\vspace{1\baselineskip}}

\bibliographystyle{amsalpha}

\bibliography{biblio}
\end{document}